\documentclass[12pt,reqno]{amsart}

\usepackage[T1]{fontenc}
\usepackage{lmodern}
\usepackage{amsmath,amssymb,amscd,mathtools,mathrsfs, stmaryrd}
\usepackage{microtype}

\usepackage[nodisplayskipstretch]{setspace}
\usepackage[colorlinks=true, linkcolor=blue, citecolor=blue, pagebackref=true]{hyperref}
\usepackage{enumitem}

\usepackage{appendix}

\usepackage{palatino, mathpazo}
\usepackage{amsmath}
\usepackage{dsfont}
\usepackage[all]{xy} 

\usepackage[letterpaper,margin=25mm]{geometry}
\usepackage{eepic}
\usepackage{epsfig}
\usepackage{graphicx}
\usepackage{mathrsfs}
\usepackage[english]{babel}
\usepackage{color}
\usepackage{mathabx}

\allowdisplaybreaks
\numberwithin{equation}{section}

\newtheorem{theorem}{Theorem}[section]
\newtheorem{proposition}[theorem]{Proposition}
\newtheorem{lemma}[theorem]{Lemma}
\newtheorem{corollary}[theorem]{Corollary}
\newtheorem{definition}[theorem]{Definition}
\theoremstyle{remark}
\newtheorem{remark}[theorem]{Remark}

\newcommand{\C}{\mathbb C}
\newcommand{\A}{\mathbb A}

\newcommand{\cO}{\mathcal O}
\newcommand{\Mbar}{\overline M}
\newcommand{\ev}{\operatorname{ev}}
\newcommand{\Nm}{\operatorname{Nm}}
\newcommand{\Pic}{\operatorname{Pic}}
\newcommand{\Spec}{\operatorname{Spec}}
\newcommand{\Tr}{\operatorname{Tr}}

\newcommand{\End}{\operatorname{End}}
\newcommand{\ch}{\operatorname{ch}}
\newcommand{\red}{\mathrm{red}}
\newcommand{\vir}{\mathrm{vir}}

            {\nolinebreak $\Box$ \end{trivlist}}

\newcommand{\noprint}[1]{}

\newcommand{\Z}{\mathbb Z}
\providecommand{\Q}{\mathbb Q}

\newcommand{\XX}{{\mathfrak X}}

\newcommand{\zz}{{\mathbb Z}}

\newcommand{\qq}{{\mathbb Q}}

\newcommand{\cc}{{\mathbb C}}

\newcommand{\sC}{{\mathcal C}}

\newcommand{\sP}{{\mathcal P}}

\newcommand{\cK}{{\mathcal K}}

\newcommand{\sM}{{\mathcal M}}

\DeclareMathOperator{\id}{id}

\DeclareMathOperator{\rig}{rig}

\DeclareMathOperator{\CR}{CR}
\DeclareMathOperator{\PD}{PD}

\DeclareMathOperator{\Supp}{Supp}

\DeclareMathOperator{\SL}{SL}
\DeclareMathOperator{\PGL}{PGL}

\DeclareMathOperator{\IC}{IC}

\DeclareMathOperator{\SU}{SU}

\DeclareMathOperator{\age}{age}

\DeclareMathOperator{\orb}{orb}

\DeclareMathOperator{\Td}{Td}

\DeclareMathOperator{\Act}{Act}
\DeclareMathOperator{\FM}{FM}
\DeclareMathOperator{\KU}{KU}
\DeclareMathOperator{\per}{per}
\DeclareMathOperator{\GS}{GS}
\newcommand{\limi}{\varprojlim}
\DeclareMathOperator{\del}{del}
\DeclareMathOperator{\Br}{Br}
\DeclareMathOperator{\triv}{triv}

\newcommand{\Pcal}{\mathsf P}

\newcommand{\rk}{\mathop{\rm rk}}

\newcommand{\pr}{\mathop{\rm pr}\nolimits}

\newcommand{\cl}{\mathop{\rm cl}\nolimits}

\newcommand{\ord}{\mathop{\rm ord}\nolimits}

\newcommand{\Jac}{\mathop{\rm Jac}\nolimits}

\newcommand{\Nnil}{\mathcal N}

\newcommand{\St}{\mathsf S}
\newcommand{\Jrig}{\mathsf J}

\newcommand{\thickslash}{\mathbin{\!\!\pmb{\fatslash}}}

\newcommand {\mat}      [1] {\left(\begin{array}{#1}}
\newcommand {\rix}          {\end{array}\right)}

\title[Quantum cohomology of Higgs moduli]{Quantum cohomology, Hitchin systems, and Fourier--Mukai comparison}
\author{Yunfeng Jiang}
\address{Department of Mathematics\\ University of Kansas\\ 405 Snow Hall 1460 Jayhawk Blvd\\Lawrence KS 66045 USA} 
\email{y.jiang@ku.edu}

\author{Hsian-Hua Tseng}
\address{Department of Mathematics\\ Ohio State University\\ 100 Math Tower, 231 West 18th Ave. \\ Columbus,  OH 43210\\ USA}
\email{hhtseng@math.ohio-state.edu}

\date{\today}

\subjclass[2020]{14H60, 14D20, 14N35}

\keywords{Higgs bundles, quantum cohomology, Hitchin fibration, spectral curves, reduced Gromov--Witten theory}

\begin{document}

\begin{abstract}
Let $X$ be the moduli space of stable trace-free rank $r$ Higgs bundles over a smooth projective curve $C$ with fixed determinant $\Lambda\in\Pic(C)$ and $\gcd(r,\deg\Lambda)=1$. We prove that the $\cc^*$-equivariant quantum product of a divisor on $X$ is given by the Steinberg correspondence of the Lagrangian Steinberg cycle in $X\times X$. The Steinberg cycle is the image of the reduced virtual fundamental cycle of the $2$-pointed Kontsevich stable map space to $X$ under evaluation map. For $\Gamma=\Pic^0(C)[r]$, we give a character decomposition for $\Gamma$-invariant curve classes or finite orbit sums. In rank two and  genus two we explicitly determine its fifteen endoscopic lines and the reduced spectral discriminant. 

For $\widehat X=[X/\Gamma]$, which is the moduli space of stable $\PGL_r$-Higgs bundles over $C$. We consider the  orbifold
quantum cohomology theory from a gerbe-twisted  theory. The complex K-theory $\KU^*(X)$ and twisted K-theory $\KU^*([X/\Gamma],\alpha)$ by the lifting $\mu_r$-gerbe $\alpha$ of the universal projective bundle is an isomorphism, and is given by  the Fourier-Mukai transformation.  We prove that the Fourier-Mukai transformation is compatible with the quantum product by the divisors. 
\end{abstract}

\maketitle

\setcounter{tocdepth}{1}

\tableofcontents

\section{Introduction}

We work over $\C$ and fix a smooth projective connected curve $C$ of genus $g\ge2$. 

\subsection{Motivation}
Fix an integer \(r\ge2\) and a line bundle \(\Lambda\in\text{Pic}(C)\) of degree \(d\) with $\gcd(r,d)=1$. Let $M:=M_{r,\Lambda}(C)$ be the moduli space of semi-stable vector bundles $E$ with rank $r$ and determinant $\det(E)=\Lambda\in\Pic(C)$. 
Let $d=\deg(\Lambda)$ be the degree of $\Lambda$. When $\gcd(r,d)=1$, semi-stability of $E$ is equivalent to stability, and $M_{r,\Lambda}(C)$ is a smooth projective variety of dimension $(r^2-1)(g-1)$.  
For simplicity we assume\footnote{For $\deg(L)=d$ odd, $M_{r,\Lambda}(X)$ is isomorphic to $M_{r,\Lambda'}(C)$ where $\deg(\Lambda')=1$, by tensoring.} that $d=\deg(\Lambda)=1$. 

 Let  $M^{\PGL_r}_{r,d}(C)$ be the moduli space of stable $\PGL_r$-bundles $E$ with $\deg(E)=d$.  We let $1\le d\le r-1$.  Let $\Gamma:=\Pic^0(X)[r]$ be the  subgroup of $r$-torsions of the Picard group of degree zero line bundles on $C$. Then $M^{\PGL_r}_{r,d}(X)=[M_{r,\Lambda}(C) /\Gamma]$ is a smooth global quotient Deligne-Mumford stack, where the action of $\Gamma$ is given by tensor product. 
It is known from Harder-Narasimhan \cite{Harder-Narasimhan}, that their cohomologies are isomorphic, i.e., 
$$H^*(M)\cong H^*(M/\Gamma).$$
The quantum cohomoloy of $M$ and $[M/\Gamma]$ were studied in \cite{Munoz} and \cite{DJT}. 
The Langlands dual of group of $\SL_r$ is $\PGL_r$.  Thus, it is interesting to explore some Langlands duality properties of these moduli spaces, and their Higgs bundles enhancement.

Let
\[
 X=M_\Lambda^{\text{SL}_r}(C)
\]
denote the coarse moduli space of stable trace-free Higgs pairs with determinant isomorphic to \(\Lambda\). Its closed points are stable Higgs pairs \((E,\phi)\) satisfying
\[
 \det E\simeq\Lambda,\qquad
 \Tr\phi=0,\qquad
 \phi:E\longrightarrow E\otimes K_C.
\]
$X$ is smooth and quasi-projective of complex dimension $$m:=2(r^2-1)(g-1).$$ 

The group $\Gamma=\Pic^0(C)[r]$ of $r$-torsion line bundles on $C$ acts on $X$ by tensor products.  The quotient stack $$\widehat{X}=[M^{\text{SL}_r}_{r,\Lambda}(C)/\Gamma]$$ can be identified with the moduli of stable $\text{PGL}_r$ Higgs bundles. 

The smooth variety $X$ and smooth Deligne--Mumford stack $\widehat X$ have Hitchin maps to the same affine base:
\begin{equation}\label{eqn_Hitchin_map}
 \xymatrix{X\ar[dr]^h&&\widehat X\ar[dl]_{h'}\\&
 \A=\bigoplus_{i=2}^rH^0(C,K_C^i).&}
\end{equation}
The topological mirror symmetry results \cite{HT,GWZ,MS,GS} motivate a comparison of their quantum cohomology rings. 

\subsection{Main results}
The torus $T=\C^*$ acts on $X$ by scaling the Higgs fields. The symplectic form $\omega$ on $X$ satisfies $t^*\omega=t\omega$, and we write $\hbar=c_1^T(\C_1)$. Let
\[
 \sM_{n,\beta}=\overline M_{0,n}(X,\beta)
\]
be the moduli space of stable maps from $n$-marked, genus zero curves to $X$ of degree $\beta\in H_2(X,\zz)$. For $\beta\ne0$, the symplectic cosection of the absolute obstruction sheaf is a surjection to $\cO\otimes\C_1$. The Kiem-Li kernel-cone construction of Appendix~\ref{sec:reduced} defines a cycle\footnote{The reduced class is not the Kiem--Li class localized on the empty degeneracy locus; it is the zero-section Gysin class of the intrinsic cone in the kernel obstruction bundle stack.}, without asserting the existence of a reduced perfect obstruction theory, and proves
\[
 [\sM_{n,\beta}]^{\vir}_T =\hbar[\sM_{n,\beta}]^{\red}_T,\qquad  \operatorname{vdim}^{\red}=m-2+n.
\]

Evaluation maps are proper over the Hitchin base, and the fixed loci of
the stable-map stacks are proper.  Thus localized invariants and the
nondegenerate localized pairing are defined. For classes 
$\delta_i\in H_T^*(X)_{\mathrm{loc}}$ in localized equivariant cohomology, set
\[
 \langle\delta_1,\delta_2,\delta_3\rangle_{0,3,\beta}^{X,T}
 =\int^T_{[\sM_{3,\beta}]^{\vir}_T}
       \prod_{i=1}^3\ev_i^*\delta_i.
\]
If $\{\delta_j\}$ and $\{\delta^j\}$ are Poincar\'e dual bases of $H_T^*(X)_{\text{loc}}$, then the small quantum product on $H_T^*(X)_{\text{loc}}$ is defined by 
\begin{equation}\label{eqn_quantum_product}
 \gamma_1\star\gamma_2
 =\sum_{\beta,j}q^\beta
    \langle\gamma_1,\gamma_2,\delta_j\rangle_{0,3,\beta}^{X,T}\delta^j, \quad \gamma_1,\gamma_2\in H_T^*(X)_{\text{loc}}.
\end{equation}
We put $M_D=D_T\star(-)$ for a divisor equipped with an equivariant lift. The Novikov conventions and Borel--Moore interpretation of every correspondence operator are specified in Section~\ref{sec:divisor}.

For a nonzero effective curve class \(\beta\in H_2(X,\Z)\), from the virtual dimension formula: 
$$\operatorname{dim}_{\cc}[\sM_{2,\beta}]^{\vir}=m-1,\qquad \operatorname{dim}_{\cc}[\sM_{2,\beta}]^{\red}=m.$$  
We set
\begin{equation}\label{eq:Zbeta-def}
 Z_{2,\beta}^{\red}
 :=
 (\ev_1,\ev_2)_*[\sM_{2,\beta}]^{\red} \in A_m^T(X\times X) .
\end{equation}
and 
\begin{equation}\label{eq:Cbeta-def}
 C_{2,\beta}
 :=
 (\ev_1,\ev_2)_*[\sM_{2,\beta}]^{\vir} \in A_{m-1}^T(X\times X) .
\end{equation}

A Steinberg subvariety for the Hitchin morphism is a \(T\)-invariant irreducible Lagrangian subvariety
\[
 L\subset (X\times X,p_1^*\omega-p_2^*\omega)
\]
such that $L\subset X\times_\A X$, where $\omega$ is the symplectic form of $X$. 
The \emph{Steinberg cycle space} $\text{St}_{\rm cyc}(X/\A)$ is the \(\qq\)-span of their equivariant cycle classes. We denote its Novikov completion by
\[
 \widehat{\text{St}}_{\rm cyc}(X/\A)
 :=\text{St}_{\rm cyc}(X/\A)\widehat\otimes\Lambda_H,
\]
where the \(H\)-degree Novikov ring \(\Lambda_H\) is defined in Section
\ref{sec:divisor}.

\begin{theorem}[see Theorems \ref{thm:Z-Steinberg} and \ref{thm:divisor}]\label{thm:main}
For every nonzero effective curve class \(\beta\in H_2(X,\Z)\),
\[
 C_{2,\beta}=\hbar Z_{2,\beta}^{\red},
 \qquad
 Z_{2,\beta}^{\red}\in \text{St}_{\rm cyc}(X/\A).
\]
More explicitly, there are finitely many Steinberg subvarieties
\(L_{\beta,j}\subset X\times_\A X\) and rational numbers \(c_{\beta,j}\in\qq\) such that
\begin{equation}\label{eq:Zbeta-Steinberg}
 Z_{2,\beta}^{\red} = \sum_j c_{\beta,j}[L_{\beta,j}]_T .
\end{equation}

Moreover, for every divisor \(D\) equipped with an equivariant lift
\(D_T\), the quantum product with $D$ is given by 
\begin{equation}\label{eq:divisor-main}
 M_D
 =
 D_T\cup
 +
 \hbar
 \sum_{\beta>0}
 (D\cdot\beta)q^\beta\mathbf{F}_{2,\beta},
\end{equation}
where $\mathbf{F}_{2,\beta}$ is the correspondence 
$$
\mathbf{F}_{2,\beta}: H_T^*(X)_{\mathrm{loc}}\to H_T^*(X)_{\mathrm{loc}}
$$
defined by 
$$
\mathbf{F}_{2,\beta}(\delta)=\operatorname{PD}_X^{-1}p_{2*}\bigl(p_1^*\delta\cap\cl_T^{\mathrm{BM}}(Z_{2,\beta}^{\red})\bigr).$$
Here $p_i: X\times X\to X$ are projections for $i=1,2$.

Let $\Theta$ be the theta class of $X$, then
\begin{equation}\label{eq:theta-main}
 \frac{M_\Theta-\Theta_T\cup}{\hbar}
 =
 \sum_{\beta>0}
 (\Theta\cdot\beta)q^\beta \mathbf{F}_{2,\beta}
 \in \Act\!\left(\widehat{\text{St}}_{\rm cyc}(X/\A)\right).
\end{equation}
\end{theorem}

This is the self-correspondence specialization of Maulik--Okounkov's definition of a Steinberg correspondence \cite[\S3.2.6, Definition 3.2.9]{MO}. Our terminology is modeled on that definition: the proper Hitchin map to the affine base replaces the
symplectic-resolution morphism in their setting.

\begin{remark}\label{cor_theta_square}
For $\beta\ne0$, define the one-pointed reduced evaluation class in localized cohomology by
\[
 Z_{1,\beta}^{\red}
 :=\operatorname{PD}_X^{-1}\ev_*[\overline M_{0,1}(X,\beta)]^{\red}_T.
\]
Applying the ordinary divisor equation twice, and using the standard/reduced identity, gives
\[
 \Theta_T\star\Theta_T=\Theta_T^2+
 \hbar\sum_{\beta>0}(\Theta\cdot\beta)^2q^\beta Z_{1,\beta}^{\red}.
\]

In \cite{JKT}, the following formula will be calculated in rank two case. Let $E_4(q)=1+240\sum_{n\ge1}\sigma_3(n)q^n$ and $\sigma_3(n)=\sum_{k\mid n}k^3$, Then we have
\[
 \Theta\star\Theta-\Theta^2=
 \begin{cases}
 \displaystyle\frac{\hbar^2}{60}
 \bigl(E_4(q)-1+12(E_4(q^2)-1)\bigr)1_X,&g=2,\\[4pt]
 \displaystyle\frac{g-1}{3}\hbar^2(E_4(q^2)-1)1_X,&g>2.
 \end{cases}
\]
\end{remark}

\subsection{Endoscopic decomposition}
Consider the group $\Gamma=\Pic^0(C)[r]$, $|\Gamma|=r^{2g}$. Tensoring by $\eta\in\Gamma$ gives an automorphism\footnote{$\tau_\eta$ need not preserve a specified integral curve class.} $\tau_\eta: X\to X$ over $\A$ which preserves $\omega$ and $\Theta_T$. 

Over $E_r=\mathbb Q(\mu_r)$, the character projectors and projected kernels are
\begin{equation}\label{eq:projector}
 \Pcal_\kappa=\frac1{|\Gamma|}\sum_{\eta\in\Gamma}
    \kappa(\eta)^{-1}\tau_\eta^*,\quad \kappa\in \widehat{\Gamma}.
\end{equation}
Character orthogonality gives
\begin{equation}\label{eq:projector-relations}
 \Pcal_\kappa\Pcal_{\kappa'}=\delta_{\kappa,\kappa'}\Pcal_\kappa,
 \qquad \sum_{\kappa\in\widehat{\Gamma}}\Pcal_\kappa=\id,
\end{equation}
and, with $E_r$ coefficients,
\begin{equation}\label{eq:cohomology-character-decomposition}
 H_T^*(X)=\bigoplus_{\kappa\in\widehat\Gamma}H_T^*(X)_\kappa,
 \qquad H_T^*(X)_\kappa=\Pcal_\kappa H_T^*(X).
\end{equation}

$\Gamma$ acts on the set of effective curve classes of $X$. For a finite $\Gamma$-stable set $B$ of nonzero effective classes, set
\[
 Z_B^{\red}=\sum_{\beta\in B}Z_{2,\beta}^{\red},\qquad
 \mathbf F_B=\mathbf F_{Z_B^{\red}}.
\]
Define the projected kernels to be
\[
 Z_{B,\kappa}^{\red}=\frac1{|\Gamma|}\sum_{\eta\in\Gamma}
    \kappa(\eta)^{-1}(\tau_\eta\times\id)_*Z_B^{\red}, \quad \kappa\in\widehat{\Gamma}.
\]
\begin{theorem}[Character blocks]\label{thm:intro-character}
The class $Z_B^{\red}$ is symmetric and diagonally $\Gamma$-invariant, and
\[
 Z_B^{\red}=\sum_\kappa Z_{B,\kappa}^{\red},\qquad
 \mathbf F_{Z_{B,\kappa}^{\red}}
   =\Pcal_\kappa\mathbf F_B\Pcal_\kappa,\qquad
 (Z_{B,\kappa}^{\red})^{\mathsf t}=Z_{B,\kappa^{-1}}^{\red}.
\]
For $\kappa\ne1$, both contractions with $\Theta_T$ vanish in $E_r$-valued equivariant Chow groups. Individual blocks are symmetric when $\kappa=\kappa^{-1}$, in particular for rank two. For a $\Gamma$-fixed class $\beta$, its positive-degree quantum
coefficient on a character block is
\[
 [q^\beta](M_\Theta)|_\kappa
  =(\Theta\cdot\beta)\hbar\,\mathbf F_{Z_{\{\beta\},\kappa}^{\red}}.
\]
For arbitrary classes, character-block preservation holds for $\Gamma$-invariant Novikov specializations, as explained in Corollary~\ref{cor:quantum-blocks}.
\end{theorem}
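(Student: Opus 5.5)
The plan is to derive all the statements from two structural facts about the stable map spaces. First, each $\tau_\eta$ is an automorphism of $X$ over $\A$ that preserves $\omega$ and the $T$-action. It therefore induces isomorphisms $\sM_{2,\beta}\cong\sM_{2,\tau_{\eta*}\beta}$ compatible with the obstruction theories and with the symplectic cosection. Second, swapping the two marked points is an automorphism of $\sM_{2,\beta}$ that exchanges $\ev_1$ and $\ev_2$. Since the Kiem--Li kernel-cone construction of Appendix~\ref{sec:reduced} is functorial for isomorphisms preserving the obstruction theory and the cosection, I get $(\tau_\eta\times\tau_\eta)_*Z_{2,\beta}^{\red}=Z_{2,\tau_{\eta*}\beta}^{\red}$ and $\sigma_*Z_{2,\beta}^{\red}=Z_{2,\beta}^{\red}$, where $\sigma$ is the swap on $X\times X$. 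Summing over a $\Gamma$-stable $B$ then gives that $Z_B^{\red}$ is symmetric and diagonally $\Gamma$-invariant.

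The decomposition $Z_B^{\red}=\sum_\kappa Z_{B,\kappa}^{\red}$ follows from $\sum_\kappa\kappa(\eta)^{-1}=|\Gamma|\delta_{\eta,1}$. For the block formula, I first check the general correspondence identities
\[
\mathbf F_{(\tau_\eta\times\id)_*Z}=\mathbf F_Z\circ\tau_\eta^* \quad(\text{up to replacing }\eta\text{ by }\eta^{-1}\text{ per convention}),
\qquad
\mathbf F_{(\id\times\tau_\eta)_*Z}=\tau_{\eta}^{*,-1}\circ\mathbf F_Z .
\]
These are projection-formula manipulations in Borel--Moore homology. The only input is that $\tau_\eta$ is proper, and in fact an isomorphism. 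They give $\mathbf F_{Z_{B,\kappa}^{\red}}=\mathbf F_B\Pcal_\kappa$ (or $\Pcal_\kappa$ on one side). Diagonal invariance gives $(\tau_\eta\times\id)_*Z_B^{\red}=(\id\times\tau_\eta^{-1})_*Z_B^{\red}$, which means $\mathbf F_B$ commutes with every $\tau_\eta^*$ and hence with every $\Pcal_\kappa$. Using $\Pcal_\kappa^2=\Pcal_\kappa$ from \eqref{eq:projector-relations}, I then conclude $\mathbf F_B\Pcal_\kappa=\Pcal_\kappa\mathbf F_B\Pcal_\kappa$.

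For the transpose, I apply $\sigma$: $\sigma_*(\tau_\eta\times\id)_*Z_B=(\id\times\tau_\eta)_*Z_B=(\tau_\eta^{-1}\times\id)_*Z_B$. Reindexing $\eta\mapsto\eta^{-1}$ turns $\kappa(\eta)^{-1}$ into $\kappa^{-1}(\eta)^{-1}$, so $(Z_{B,\kappa}^{\red})^{\mathsf t}=Z_{B,\kappa^{-1}}^{\red}$. In rank two $\Gamma$ is $2$-torsion, so every $\kappa$ satisfies $\kappa=\kappa^{-1}$ and each block is symmetric.

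For the vanishing of the $\Theta_T$-contractions when $\kappa\neq1$, I contract with $p_1^*\Theta_T$. This gives $\mathbf F_{Z_{B,\kappa}}(\Theta_T)=\mathbf F_B\Pcal_\kappa\Theta_T$. Since $\tau_\eta^*\Theta_T=\Theta_T$, we have $\Pcal_\kappa\Theta_T=\big(\frac1{|\Gamma|}\sum_\eta\kappa(\eta)^{-1}\big)\Theta_T=0$ for $\kappa\ne1$. The contraction on the other side follows by the transpose identity. I intend to do this at the cycle level in $E_r$-valued equivariant Chow, which uses the refined Gysin pullback of $\Theta_T$ along $p_1$.

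Finally, for $\Gamma$-fixed $\beta$ I take $B=\{\beta\}$ and extract the $q^\beta$ coefficient of \eqref{eq:divisor-main} in Theorem~\ref{thm:main}, which gives $(\Theta\cdot\beta)\hbar\mathbf F_{2,\beta}$. I then restrict to the $\kappa$ block using the identity just proved. The $\Gamma$-invariant specializations are handled the same way, since the $q^\beta$ collapse to orbit sums.

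The main obstacle I expect is the equivariance of the reduced class under $\tau_\eta$. The footnote warns that the reduced class is a kernel-cone Gysin class and not a localized class, so I must verify that the intrinsic normal cone, the kernel obstruction bundle stack and the cosection are all transported by $\tau_\eta$. I would argue this from naturality of the intrinsic normal cone under isomorphisms, together with $\tau_\eta^*\omega=\omega$.
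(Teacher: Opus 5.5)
Your proposal is correct and follows essentially the same route as the paper (Proposition~\ref{prop:red-invariance}, Theorem~\ref{thm:block-decomposition}, Corollary~\ref{cor:quantum-blocks}): functoriality of the kernel-cone class under $\tau_\eta$ and under the marking swap, then the identities $\mathbf F_{(\tau_\eta\times\id)_*Z}=\mathbf F_Z\tau_\eta^*$ and $\mathbf F_B\tau_\eta^*=\tau_\eta^*\mathbf F_B$, then reindexing $\eta\mapsto\eta^{-1}$ for the transpose, and finally theta-invariance for the vanishing contractions. The one point you should state explicitly in the orbit-sum case is $\Theta\cdot(\tau_\eta)_*\beta=\Theta\cdot\beta$. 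This follows from $\tau_\eta^*\Theta_T=\Theta_T$, and it is what lets the divisor coefficient factor out of the orbit sum after an invariant Novikov specialization.
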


Section~\ref{sec_rank2} determines the endoscopic lines in rank $r=2$, genus $g=2$, and the support of the associated derived cohomological blocks. 

\subsection{Fourier--Mukai comparison}\label{subsec_Fourier-Mukai_intro}
Let $\alpha_T$ be a specified coherently $T$-equivariant finite lifting gerbe on $\widehat X$, with underlying twist $\alpha$. The precise datum is fixed in Definition~\ref{def:equivariant-gerbe}. We choose\footnote{We do not promote it to a global equivariant Fourier--Mukai kernel.} a nonequivariant Groechenig--Shen $K$-theory isomorphism \cite{GS}.  


There is a diagram:
\begin{equation}\label{diagram_FM_Steinberg_intro}
    \xymatrix{
KU_{T}^*(X)\otimes \cc\ar[r]^--{\FM_{\sP}}\ar[d]_{\cong} &  KU_{T}^*([X/\Gamma], \alpha_T)\otimes \cc\ar[d]^{\cong}\\
\widehat H^k_{T,\per}(X)\ar[d]_{F_{\beta}^{X}}\ar[r]^--{\Phi_P^{\rig,\CR}}& \widehat H^k_{T,\del,\per}(\widehat X,\alpha_T)\ar[d]^{F^{\widehat X,\rig,\CR}_{\bar\beta}}  \\
\widehat H^k_{T,\per}(X)\ar[r]^--{\Phi_P^{\rig,\CR}}& \widehat H^k_{T,\del,\per}(\widehat X,\alpha_T)
    }
\end{equation}
where 
$\widehat H^k_{T,\per}(X)$ is the total degree periodic cohomology group, and 
$\widehat H^k_{T,\del,\per}(\widehat X,\alpha_T)$ is the total degree periodic orbifold cohomology group, see Section \ref{sec_Groechenig-Shen} for details. The vertical isomorphisms in the top square are from the work of Freed-Hopkins-Teleman \cite{FHT}. The vertical arrows in the bottom square are the Steinberg enumerative operators.

\begin{theorem}[see Theorem \ref{thm:additive-lift}]\label{thm:GS-isomorphism_intro}
For $k=0,1$ there is a noncanonical $\C[\![z]\!]$-linear isomorphism
\[
 \widehat\Phi^{H,\mathrm{add}}_{T,k}:
 \widehat H^k_{T,\per}(X)\xrightarrow{\,\sim\,}
 \widehat H^k_{T,\del,\per}(\widehat X,\alpha_T)
\]
reducing to the chosen nonequivariant comparison. Via the completed Chern characters this is an isomorphism of completed complexified $K$-groups. It is not asserted to be multiplicative, canonical, kernel-induced, or a map of equivariant spectra.
\end{theorem}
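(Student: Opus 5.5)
The plan is to reduce the equivariant statement to the nonequivariant Groechenig--Shen comparison via a formality/deformation argument in the completed $T$-equivariant periodic cohomologies. First I fix the chosen nonequivariant isomorphism
\[
 \Phi^{H}_k: H^k_{\per}(X;\C)\xrightarrow{\sim} H^k_{\del,\per}(\widehat X,\alpha;\C),\qquad k=0,1,
\]
obtained from the $K$-theory isomorphism of \cite{GS} by composing with the (twisted, orbifold, delocalized) Chern characters. Next I show that both sides of the desired map are equivariantly formal. For $X$, the $T$-action with proper fixed locus and proper Hitchin map contracting to the nilpotent cone gives $H_T^*(X)\cong H^*(X)\otimes\C[z]$ (Kirwan/Hausel-type purity). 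For $\widehat X$, the same argument applies to each twisted sector $X^\eta/\Gamma$ with the flat twisting local system determined by $\alpha_T$; these sectors are again Higgs-type moduli (endoscopic loci) retracting $T$-equivariantly onto their nilpotent cones. Purity of the twisted cohomology follows since $\alpha$ becomes trivializable on the finite cover. After $z$-adic completion this gives free $\C[\![z]\!]$-modules $\widehat H^k_{T,\per}(X)\cong H^k_{\per}(X)\widehat\otimes\C[\![z]\!]$, and likewise on the stack side, with reduction mod $z$ recovering the nonequivariant groups.

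With both sides free, I define $\widehat\Phi^{H,\mathrm{add}}_{T,k}$ by choosing $\C[\![z]\!]$-bases that lift bases of the reductions, and extending $\Phi^H_k$ $\C[\![z]\!]$-linearly. Equivalently, I choose splittings of the reduction maps. This is a $\C[\![z]\!]$-linear map which reduces mod $z$ to the chosen comparison, hence is an isomorphism by Nakayama for complete free modules. The noncanonicity comes precisely from the choice of splitting, which is consistent with the disclaimers in the statement. For the $K$-theoretic reformulation, I invoke the completed equivariant Chern character isomorphisms $KU_T^*(X)^\wedge\otimes\C\cong\widehat H^*_{T,\per}(X)$ and its twisted orbifold analogue (Freed--Hopkins--Teleman / Atiyah--Segal completion, as in diagram \eqref{diagram_FM_Steinberg_intro}), and transport the isomorphism.

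The main obstacle is the equivariant formality of the twisted delocalized side. One must check that the $T$-action is compatible with the coherent equivariant structure of $\alpha_T$ from Definition~\ref{def:equivariant-gerbe}, so that the twisting local systems on the sectors are $T$-equivariantly trivializable up to finite monodromy, and that the spectral sequence degenerates. I would first try weight/purity arguments: the sectors are semiprojective, so their cohomology with coefficients in a unitary rank-one local system is pure. Degeneration of the Leray--Serre spectral sequence for $ET\times_T(-)$ then follows by weights.
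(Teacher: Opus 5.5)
Your proposal is correct and follows essentially the paper's route. Equivariant formality of $X$ and of each sector $[F/\Gamma_F]$ with finite-monodromy transgression coefficients (via semiprojectivity, Bia\l ynicki--Birula purity, weight degeneration of the Borel spectral sequence, then $\Gamma_F$-invariants) makes both completed groups finite free over $\C[\![z]\!]$ with the nonequivariant groups as their reductions; the constant matrix of $\Phi^H_{\GS,k}$ in lifted bases then gives the isomorphism, and the $K$-theoretic statement is transport along the completed Chern characters, with two small precisions. First, the completion there is only at $I_T\subset R(S^1)$, not at the full Atiyah--Segal augmentation ideal of $R(S^1\times\Gamma)$, which would discard the twisted sectors. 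Second, what a finite cover trivializes is the finite-monodromy transgression local system, not the gerbe $\alpha$ itself.
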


The following result concerns the bottom square of (\ref{diagram_FM_Steinberg_intro}):
\begin{theorem}
\label{thm:F_Z_intro}
Under the independently specified kernel, coefficient-theory, and convolution hypotheses \textup{(C1)--(C4)} of Theorem~\ref{thm:cohomological-95}, the completed cohomological kernel transform intertwines the reduced two-point operators. Its
rigidified Chen--Ruan realization is
$$\Phi_P^{\rig,\CR}=\Jrig^{-1}\St_{\mathrm{age}}^{-1}\Phi^H_{\Xi_P},\quad \text{and }\Phi^H_{\Xi_P}=\widehat{\Phi}^{H,\mathrm{add}}_{T},$$
and
\begin{equation}\label{diagram_FM_Steinberg_intro}
 F^{\widehat X,\rig,\CR}_{\bar\beta}\Phi_P^{\rig,\CR}
   =\Phi_P^{\rig,\CR}F^X_\beta.
\end{equation}
\end{theorem}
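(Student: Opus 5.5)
The plan is to reduce the intertwining identity to a convolution identity between kernels on products, and then to push it through the realization maps. Hypotheses (C1)--(C4) of Theorem~\ref{thm:cohomological-95} supply this. (C1) gives a cohomological kernel $\Xi_P$ on $X\times\widehat X$ whose transform $\Phi^H_{\Xi_P}$ agrees with the additive lift $\widehat\Phi^{H,\mathrm{add}}_T$ of Theorem~\ref{thm:GS-isomorphism_intro}. (C2)--(C3) give a coefficient theory in which correspondences compose by convolution, compatibly with the completed Chern character and with $\St_{\mathrm{age}}$ and $\Jrig$. (C4) is the convolution compatibility relating $Z^{\red}_{2,\beta}$ to its orbifold counterpart $\widehat Z^{\red}_{2,\bar\beta}$. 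Throughout, $F^X_\beta$ is the operator $\mathbf F_{2,\beta}$ of Theorem~\ref{thm:main}, transported to periodic completed cohomology. $F^{\widehat X,\rig,\CR}_{\bar\beta}$ is the analogous operator built from the reduced two-pointed orbifold stable maps to $\widehat X$, conjugated into the rigidified Chen--Ruan picture.

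\emph{Step 1.} First I record the formula $\Phi_P^{\rig,\CR}=\Jrig^{-1}\St_{\mathrm{age}}^{-1}\Phi^H_{\Xi_P}$, which holds by definition of the rigidified realization. The equality $\Phi^H_{\Xi_P}=\widehat\Phi^{H,\mathrm{add}}_T$ is (C1). Since $\Jrig$ and $\St_{\mathrm{age}}$ are invertible degree-shifting twists, the desired identity is equivalent to
\[
 \bigl(\St_{\mathrm{age}}\Jrig F^{\widehat X,\rig,\CR}_{\bar\beta}\Jrig^{-1}\St_{\mathrm{age}}^{-1}\bigr)\,\Phi^H_{\Xi_P}=\Phi^H_{\Xi_P}\,F^X_\beta .
\]
The conjugated operator on the left is, by construction, the untwisted delocalized operator $F^{\widehat X}_{\bar\beta}$ attached to the kernel $\widehat Z^{\red}_{2,\bar\beta}$.

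\emph{Step 2.} Both sides are correspondence operators, so by the convolution calculus of (C2)--(C3) it suffices to prove the kernel identity
\[
 \widehat Z^{\red}_{2,\bar\beta}\circ\Xi_P=\Xi_P\circ Z^{\red}_{2,\beta}
\]
in completed cohomology of $X\times\widehat X$. Here the products are taken using the proper pushforwards over $\A$, which are available because all supports lie in fibre products over the Hitchin base. By Theorem~\ref{thm:main}, $Z^{\red}_{2,\beta}$ is a combination of Steinberg Lagrangians $L_{\beta,j}\subset X\times_\A X$. The compositions are therefore supported over $X\times_\A\widehat X$, and I expect to compare them fibrewise over the regular locus of $\A$ using the dual abelian fibration structure. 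Over the regular locus the Fourier--Mukai kernel restricts to the Poincaré bundle, and convolution with $\Xi_P$ exchanges translation-type Steinberg cycles on $X$ with $\Gamma$-twisted-sector cycles on $\widehat X$.

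\emph{Step 3.} The main obstacle is extending the identity from the regular locus to all of $\A$, since Steinberg cycles may have components over the discriminant. Here I would invoke (C4) directly: it is stated as the convolution compatibility and is exactly what is needed. Failing a literal match, I would use Theorem~\ref{thm:intro-character} to decompose $Z^{\red}_B=\sum_\kappa Z^{\red}_{B,\kappa}$ over a $\Gamma$-orbit $B$ with image $\bar\beta$. Under $\Xi_P$, the $\kappa$-block should correspond to the twisted sector of $\widehat X$ indexed by the same character through Langlands duality of $\Gamma$. This reduces the problem to a matching of each block, which (C4) gives.

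Finally, I compose with $\Jrig^{-1}\St_{\mathrm{age}}^{-1}$ and use Step 1 to obtain $F^{\widehat X,\rig,\CR}_{\bar\beta}\Phi_P^{\rig,\CR}=\Phi_P^{\rig,\CR}F^X_\beta$.
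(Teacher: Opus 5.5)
Your argument is essentially the paper's proof of Theorem~\ref{thm:cohomological-95}. The convolution identity (C4), $\Xi_P\star_H\Xi^X_\beta=\Xi^{\widehat X,\del}_{\bar\beta}\star_H\Xi_P$, is turned into $F^{\widehat X,\del}_{\bar\beta}\Phi^H_{\Xi_P}=\Phi^H_{\Xi_P}F^X_\beta$ by the kernel composition law (Proposition~\ref{prop:H-convolution}, which is proved in the paper rather than assumed). That identity is then conjugated by $\Jrig^{-1}\St_{\mathrm{age}}^{-1}$ via the definition \eqref{eq:PGL-del-operator}, exactly as in your Step~1. The regular-locus Fourier--Mukai comparison in Step~2 and the character-block fallback in Step~3 should be dropped. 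The reduced correspondences vanish over $\A^{\mathrm{sm}}$ (Proposition~\ref{prop:smooth-vanishing}), and block-diagonality does not imply (C4) (Remark~\ref{rem:independent-C4}), so the whole content is the direct appeal to (C4) that you ultimately make.
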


\begin{remark}
This is a sufficient criterion, not an unconditional compatibility theorem. In particular, character decomposition does not imply its convolution hypothesis.  Compatibility with full quantum divisor operators additionally requires the classical divisor terms and Novikov degrees to be matched.  

Theorem \ref{thm:F_Z_intro} implies that the Fourier-Mukai transform $\text{FM}_{\mathcal{P}}$ is compatible with the quantum product. Hence this implies the Monodromy conjecture saying that the quantum connection and the Fourier-Mukai transformation have the same Monodromy  once the Fourier-Mukai kernel can be extended to the full Hitchin base $\A$, see \cite{BMO}, \cite{MO}. 
\end{remark}

\subsection{Comments}
The coprimality $(r,d)=1$ is essential in this paper. 
When \(\gcd(r,d)\ne1\), strictly semistable Higgs bundles generally occur and the coarse moduli space is singular.  One must then decide whether the correct theory is formulated on a stack, a derived enhancement, intersection cohomology, or a chosen symplectic resolution when one exists.

\subsection{Related and future work}
The structural result does not identify the individual Steinberg components, their rational coefficients, or a closed Novikov resummation. A Poincar\'e sheaf and a degree-dependent reduced cycle also have different geometric types, and  an equality cannot be inferred merely from their occurrence in the same Hitchin system. The relationship with Vafa--Witten theory and S-duality remains a motivation \cite{Jiang_Adv,JK,Nesterov}. Fixed-locus localization takes place over the nilpotent cone, but this does not imply that the global correspondence is supported there.

Concurrent work of Bousseau and Chattopadhyay \cite{BC} studies intrinsic mirror symmetry for the \(\PGL_2\) and \(\SL_2\) character varieties of the once-punctured torus. They realize a compactification of the \(\PGL_2\) character variety as an orbifold cubic surface, show that its intrinsic mirror recovers the \(\SL_2\) character-variety family, and identify its higher-genus quantum mirror with the spherical \(A_1\) double affine Hecke algebra. Their construction is on the Betti/logarithmic side and is distinct from the reduced Gromov–Witten theory considered here. Nevertheless, both constructions are governed by the finite \(2\)-torsion quotient relating \(\SL_2\) and \(\PGL_2\), suggesting a possible relation between the Steinberg quantum operators of Higgs moduli and quantizations of Langlands-dual character varieties.

\subsection{Outline}
Section~\ref{sec_preliminary} fixes the moduli, theta, and gerbe conventions.  Sections~\ref{sec_Steinberg}--\ref{sec:divisor} construct the Kiem-Li kernel-cone cycle, prove isotropy and the Steinberg theorem, and derive the divisor formulas, with separate orbifold conventions. Sections~\ref{sec_property_Steinberg}--\ref{sec_rank2} treat support, character blocks, and the rank-two genus-two spectral geometry.
Section~\ref{sec_Groechenig-Shen} proves completed additive comparison statements.  Section~\ref{sec_Fourier-Mukai_Steinberg} proves the trivial-torsor quotient formula and formulates conditional supported Fourier--Mukai comparison criteria. Section \ref{sec:reduced} recalls the construction of reduced virtual fundamental cycle. 

\subsection*{Acknowledgement}
Y. J. thanks Professor Simon Donaldson for pointing out the possible connection from quantum cohomology of moduli space of  Higgs bundles to Khovanov homology. We thank S. Das, P. Kundu, D. Nesterov for valuable discussions. Some results are checked with GPT-6. Y. J. is partially supported by  NSF DMS-2401484,  a Simon Collaboration Grant, and College Black-Babcock Professorship Funds from University of Kansas.

\section{Preliminary}\label{sec_preliminary}

In this section we collect basic knowledge on the moduli space of stable $\SL_r$-Higgs bundles and stable $\PGL_r$-Higgs bundles. 

\subsection{Moduli space of \texorpdfstring{$\SL_r$}{SLr}-Higgs bundles}\label{subsec_SL_Higgs}
Let \(C\) be a smooth projective connected complex curve of genus \(g\ge2\).

Let \(\mathfrak X_\Lambda\) be the Deligne--Mumford stack whose objects over a scheme \(S\) are triples (write \(p_C:C\times S\to C\))
\[
 (\mathcal E,\phi,\eta),\qquad
 \phi:\mathcal E\longrightarrow\mathcal E\otimes p_C^*K_C,
 \qquad
 \eta:\det\mathcal E\xrightarrow{\sim}p_C^*\Lambda,
\]
on \(C\times S\), satisfying the scheme-theoretic equation
\[
 \Tr(\phi)=0
 \quad\text{in }\Gamma(C\times S,p_C^*K_C),
\]
and having stable geometric fibers. The trace equation is imposed on the family before passage to geometric fibers and is preserved by arbitrary base change. An isomorphism is required to commute with \(\phi\) and preserve the determinant framing \(\eta\).

Scalar multiplication defines a central, pullback-compatible subgroup \(\mu_{r,S}\) of the automorphism group of every family. At a geometric point, stability makes the underlying Higgs pair simple, and preservation of \(\eta\) cuts its scalar automorphisms down precisely to \(\mu_r\). Thus this central subgroup has every geometric fiber equal to the full inertia. We may rigidify \(\mathfrak X_\Lambda\) by removing it. By the rigidification theorem \cite[Appendix~A]{AOV}, there is a morphism
\[
 \rho:\mathfrak X_\Lambda\longrightarrow \mathfrak X_\Lambda\thickslash \mu_r
\]
which is an fppf gerbe banded by \(\mu_r\), and the geometric inertia of its target
is the quotient by \(\mu_r\). The target therefore has trivial inertia and is an algebraic space.  Every morphism from this gerbe to an algebraic space descends uniquely through its base, so the rigidification has the same coarse moduli universal property as \(\mathfrak X_\Lambda\). Uniqueness of the coarse space therefore canonically identifies \(\mathfrak X_\Lambda\thickslash \mu_r\) with \(X\). We write
\[
 \pi:\mathfrak X_\Lambda\longrightarrow X
\]
for the resulting banded \(\mu_r\)-gerbe.\footnote{The stack obtained by imposing only \(\det E\simeq\Lambda\), without remembering such an isomorphism \(\eta\), retains scalar
\(\mathbb G_m\)-inertia and is not the stack denoted by \(\mathfrak X_\Lambda\) here.}
The coprimality assumption implies that semistability equals stability. In particular, the coarse moduli space is smooth; this is recalled in Section \ref{sec:smoothness}.  We work only in this smooth coprime case.

The $1$-dimensional torus $T=\cc^*$ acts on the stack and its coarse space by scaling the Higgs fields:
\[
 t\cdot(\mathcal E,\phi,\eta)=(\mathcal E,t\phi,\eta),
 \qquad
 t\cdot(E,\phi)=(E,t\phi).
\]
We write
\[
 \hbar=c_1^T(\C_1)\in H_T^2(\mathrm{pt}).
\]

\subsection{The normalized determinant-of-cohomology theta class}

The determinant framing $\eta:\det E\xrightarrow{\sim}\Lambda$, makes the tautological bundle
\(\mathcal E\) available on \(C\times\mathfrak X_\Lambda\).  Choose once and
for all an auxiliary vector bundle \(F_0\) on \(C\) with
\[
 \rk F_0=r,
 \qquad
 \deg F_0=r(g-1)-d.
\]
For every rank-\(r\), degree-\(d\) bundle \(E\), Riemann--Roch gives
\begin{equation}\label{eq:theta-Euler-zero}
 \chi(C,E\otimes F_0)
 =r\deg F_0+r d+r^2(1-g)=0.
\end{equation}
Let
\[
 p:C\times\mathfrak X_\Lambda\to\mathfrak X_\Lambda,
 \qquad
 p_C:C\times\mathfrak X_\Lambda\to C
\]
be the projections and define the determinant-of-cohomology line
\begin{equation}\label{eq:theta-line-stack}
 \lambda_{F_0}
 :=\left(\det Rp_*\bigl(\mathcal E\otimes p_C^*F_0\bigr)\right)^{-1}
 \quad\text{on }\mathfrak X_\Lambda.
\end{equation}
The inverse fixes the conventional sign of the theta line. For a banded \(\mu_r\)-gerbe, pullback identifies line bundles on the base with line bundles on the gerbe on which the central inertia has trivial character; equivalently, a line bundle of inertia weight zero is the pullback of a unique line bundle on the base. This is fpqc descent on the gerbe; see e.g. \cite[Lemma~2.1.1.13]{Lieblich}. The scalar \(\zeta\in\mu_r\) acts on its fiber with weight \(\zeta^{-\chi(E\otimes F_0)}=1\) by
\eqref{eq:theta-Euler-zero}. Hence the inertia acts trivially and \(\lambda_{F_0}\) descends uniquely to a line bundle \(\overline\lambda_{F_0}\) on \(X\), characterized by
\(\pi^*\overline\lambda_{F_0}\cong\lambda_{F_0}\) under $\pi: \XX_{\Lambda}\to X$.  We fix the normalization
\begin{equation}\label{eq:theta-normalized}
 \Theta:=c_1(\overline\lambda_{F_0})\in H^2(X,\zz),
\end{equation}
We use the image of this integral class in rational cohomology.
This chosen auxiliary bundle fixes the normalization used in every factor
\(\Theta\cdot\beta\) below. We keep \(F_0\) fixed throughout and make no assertion that \(\overline\lambda_{F_0}\) or \(\Theta\) is independent of \(F_0\). The general divisor theorem applies to any specified divisor with an equivariant lift.

Scaling the Higgs fields changes only \(\phi\), leaving \((E,\eta)\) and the complex in \eqref{eq:theta-line-stack} unchanged. It therefore gives \(\lambda_{F_0}\) a canonical \(T\)-linearization. This linearization commutes with the central \(\mu_r\)-action; uniqueness in weight-zero descent makes its descent data satisfy the equivariant cocycle, and hence gives \(\overline\lambda_{F_0}\) a canonical \(T\)-linearization.  We denote the resulting equivariant class by \(\Theta_T\). For the structural theorem below, no special property of \(\Theta\) is used beyond the fact that it is a divisor.  We therefore prove the result for an arbitrary divisor \(D\), and then specialize to \(D=\Theta\).

Assume that for a divisor $D$ a \(T\)-equivariant lift \(D_T\) is chosen. The determinant line has a natural Higgs-scaling linearization, so this condition holds for \(\Theta\). The divisor equation shows that the positive-degree correction depends only on the underlying nonequivariant divisor, and the coefficient is the ordinary degree \(D\cdot\beta\).

\subsection{The \texorpdfstring{$\PGL_r$}{PGLr} quotient and rational theta descent}
\label{subsec_PGL_r}
Let $\Gamma=\Pic^0(C)[r]$ and $q:X\to\widehat X=[X/\Gamma]$. This is a \'etale atlas of degree $|\Gamma|$, and the quotient $\widehat{X}$ is a smooth Deligne-Mumford stack even when its coarse space is singular. 

\begin{proposition}[Rational theta descent]\label{prop:theta-descent}
There is a unique class $\widehat\Theta_T\in H_T^2(\widehat X,\Q)$ with $q^*\widehat\Theta_T=\Theta_T$. The same statement holds for rational equivariant Chow classes of codimension one.
\end{proposition}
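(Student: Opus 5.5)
The plan is to use the standard fact that, for a finite étale quotient by a finite group, rational cohomology of the quotient stack is the invariant part of the cohomology of the cover. The same holds equivariantly and for Chow groups.

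First, $\widehat X=[X/\Gamma]$ and $T$ commutes with $\Gamma$, because tensoring by $\eta\in\Gamma$ commutes with scaling $\phi$. So $T$-equivariant cohomology of $\widehat X$ is the $(T\times\Gamma)$-equivariant cohomology of $X$:
\[
H_T^*(\widehat X,\Q)=H^*_{T\times\Gamma}(X,\Q)=H^*\bigl((X\times ET\times E\Gamma)/(T\times\Gamma),\Q\bigr).
\]
The Leray--Serre (Cartan) spectral sequence for the finite group $\Gamma$ has $E_2^{p,q}=H^p(\Gamma,H^q_T(X,\Q))$. Since $|\Gamma|$ is invertible in $\Q$, it vanishes for $p>0$. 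Hence $q^*:H_T^*(\widehat X,\Q)\to H_T^*(X,\Q)^\Gamma$ is an isomorphism, with inverse $\frac1{|\Gamma|}q_*$ (transfer). Equivalently, $q_*q^*=|\Gamma|\cdot\id$ and $q^*q_*=\sum_{\eta\in\Gamma}\tau_\eta^*$.

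The point then reduces to showing $\Theta_T$ is $\Gamma$-invariant in $H^2_T(X,\Q)$. Once that holds, $\widehat\Theta_T:=\frac1{|\Gamma|}q_*\Theta_T$ satisfies
\[
q^*\widehat\Theta_T=\tfrac1{|\Gamma|}\sum_\eta\tau_\eta^*\Theta_T=\Theta_T,
\]
and uniqueness follows from injectivity of $q^*$.

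For invariance, I would work on the stack $\mathfrak X_\Lambda$. For an $r$-torsion line bundle $L_\eta$, choose a trivialization $L_\eta^{\otimes r}\cong\cO$; this gives a framed lift $(\mathcal E\otimes L_\eta,\phi,\eta')$, hence an action of $\eta$ on $\mathfrak X_\Lambda$ covering $\tau_\eta$ up to the $\mu_r$-choice. Pulling back gives
\[
\tau_\eta^*\lambda_{F_0}=\det Rp_*(\mathcal E\otimes p_C^*(F_0\otimes L_\eta))^{-1}.
\]
The two determinant-of-cohomology lines $\lambda_{F_0}$ and $\lambda_{F_0\otimes L_\eta}$ are related by the standard dependence of $\det Rp_*$ on the auxiliary bundle. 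In rational cohomology $c_1(\lambda_F)$ depends only on $\ch(F)$ via Grothendieck--Riemann--Roch: $c_1(\lambda_F)$ is the degree-2 part of $-p_*(\ch(\mathcal E)\ch(F)\mathrm{td}_C)$. Since $\ch(F_0\otimes L_\eta)=\ch(F_0)$ rationally ($c_1(L_\eta)$ is torsion), this gives $\tau_\eta^*\Theta=\Theta$ in $H^2(X,\Q)$. On the stack this is $c_1$ of a weight-zero bundle, and pullback $\pi^*$ is injective rationally.

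Equivariantly, I expect the main obstacle. $T$ acts trivially on $\mathcal E$, since scaling only changes $\phi$, so GRR applies $T$-equivariantly with the same trivial linearizations. An alternative is to observe directly that $\tau_\eta^*\Theta_T-\Theta_T$ is equivariant, restricts to zero nonequivariantly, and lies in $\hbar\cdot H^0_T$. Such a weight constant is detected by comparing the linearizations at a $T$-fixed point, and both are canonical trivial weights on the fibers. The Chow statement is the same argument using equivariant Chow groups of the quotient stack, $A^*_T([X/\Gamma])_\Q=A^*_T(X)_\Q^\Gamma$ by Edidin--Graham/Vistoli (pushforward–pullback with the degree $|\Gamma|$), together with GRR in Chow.
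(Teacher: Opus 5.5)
Your argument is correct and is essentially the paper's proof. Rational transfer gives $q^*\colon H_T^*(\widehat X,\Q)\xrightarrow{\sim}H_T^*(X,\Q)^\Gamma$, with the Chow analogue from $q_*q^*=|\Gamma|$ and $q^*q_*=\sum_\eta\tau_\eta^*$. The $\Gamma$-invariance of $\Theta_T$ comes from equivariant Grothendieck--Riemann--Roch on the $\mu_r$-gerbe together with $\ch(\eta)=1$, as in the paper's Lemma~\ref{lem:theta-symplectic-invariant}. Then $\widehat\Theta_T=|\Gamma|^{-1}q_*\Theta_T$, and uniqueness follows from injectivity of $q^*$.

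One correction to your optional fixed-point alternative: at a fixed point $(E,\phi)$ with $\phi\neq0$, the $T$-weight on the theta fiber is generally \emph{not} trivial, because the fixed-point structure uses a nontrivial cocharacter of $\operatorname{Aut}(E)$. The weights do nevertheless agree at $x$ and $\tau_\eta(x)$, since tensoring by a degree-zero line bundle leaves the Euler characteristics $\chi(E_i\otimes F_0)$ of the weight pieces unchanged.
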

\begin{proof}
Tensoring preserves the equivariant first Chern class of the chosen determinant line by Lemma~\ref{lem:theta-symplectic-invariant}. For a finite group, rational transfer gives
$q^*:H_T^*(\widehat X,\Q)\simeq H_T^*(X,\Q)^\Gamma$. The Chow version follows from
$q_*q^*=|\Gamma|$ and $q^*q_*a=\sum_\eta\tau_\eta^*a$. Thus $\widehat\Theta_T=|\Gamma|^{-1}q_*\Theta_T$ has the claimed pullback, and injectivity proves uniqueness.
\end{proof}
A $\Gamma$-invariant line bundle need not carry a specified $\Gamma$-linearization.  Conversely, the presence of a lifting gerbe does not by itself prove that the theta line fails to descend. No integral nondescent claim is needed here. The tensor product
$\bigotimes_{\eta\in\Gamma}\tau_\eta^*\overline\lambda_{F_0}$ has a natural permutation linearization and its rational first Chern class is $|\Gamma|\Theta$.

\subsection{Smoothness and symplectic geometry}\label{sec:smoothness}
We recall the proof of smoothness of the moduli stacks $X$ and $[X/\Gamma]$.

Because both the determinant framing and the equation \(\Tr(\phi)=0\) are imposed scheme-theoretically in the moduli functor, their infinitesimal deformations remove the scalar summands in both terms. Thus, by the deformation theory of Higgs bundles (see \cite{HitchinSD,HitchinSI}, \cite{Nitsure}, and \cite{Simpson}), at a stable Higgs pair \((E,\phi)\), the fixed-determinant, trace-free deformation complex is
\begin{equation}\label{eq:def-complex}
 \mathcal C^\bullet_{(E,\phi)}
 =
 \left[
 \End_0(E) \xrightarrow{[\phi,-]} \End_0(E)\otimes K_C
 \right],\quad \text{placed in degrees } 0, 1.
\end{equation}

\begin{lemma}[Smoothness and dimension]\label{lem:smooth}
The variety \(X\) is smooth and
\[
 \dim_\C X=2(r^2-1)(g-1).
\]
\end{lemma}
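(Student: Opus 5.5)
The plan is to compute the tangent–obstruction theory of the rigidified moduli problem from the complex \eqref{eq:def-complex} and show that the obstruction space vanishes at every stable point. Fix a stable trace-free Higgs pair \((E,\phi)\) with \(\det E\simeq\Lambda\). The Zariski tangent space of \(\mathfrak X_\Lambda\) at \((E,\phi,\eta)\) is \(\mathbb H^1(\mathcal C^\bullet_{(E,\phi)})\), obstructions live in \(\mathbb H^2\), and infinitesimal automorphisms are \(\mathbb H^0\). Since \(\pi:\mathfrak X_\Lambda\to X\) is a \(\mu_r\)-gerbe, hence étale-locally a product with \(B\mu_r\), smoothness of \(X\) and its dimension can be read off from \(\mathfrak X_\Lambda\).

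The first step is \(\mathbb H^0=0\). An element of \(\mathbb H^0\) is a trace-free endomorphism \(s\) with \([\phi,s]=0\). Stability implies \((E,\phi)\) is simple, so \(s\) is scalar, and the trace-free condition forces \(s=0\).

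The second step is \(\mathbb H^2=0\) via Serre duality. The dual complex of \(\mathcal C^\bullet\), twisted by \(K_C\) and shifted, is \([\End_0(E)\xrightarrow{-[\phi,-]^\vee}\End_0(E)\otimes K_C]\). Here we identify \(\End_0(E)^\vee\cong\End_0(E)\) by the trace pairing, which is nondegenerate on trace-free endomorphisms in characteristic zero. Under this identification the dual of \([\phi,-]\) is \(-[\phi,-]\). Hence \(\mathbb H^2(\mathcal C^\bullet)\cong\mathbb H^0(\mathcal C^\bullet)^\vee=0\), and \(\mathfrak X_\Lambda\), and therefore \(X\), is smooth at this point.

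The third step computes the dimension as minus the Euler characteristic:
\[
 \dim\mathbb H^1=-\chi(\mathcal C^\bullet)=\chi(\End_0(E)\otimes K_C)-\chi(\End_0(E)).
\]
Riemann–Roch for the degree-zero bundle \(\End_0(E)\) of rank \(r^2-1\) gives \(\chi(\End_0(E))=(r^2-1)(1-g)\). Similarly, \(\chi(\End_0(E)\otimes K_C)=(r^2-1)(2g-2)+(r^2-1)(1-g)=(r^2-1)(g-1)\). The difference is \(2(r^2-1)(g-1)\), as claimed.

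The main obstacle is justifying that \(\mathcal C^\bullet\) really is the deformation–obstruction complex of the moduli functor with fixed determinant framing and scheme-theoretic trace equation. The full complex \([\End E\to\End E\otimes K_C]\) splits off the scalar summand \([\cO\xrightarrow{0}K_C]\). Fixing \(\eta\) kills the \(H^1(\cO)\) part of the determinant deformations, and imposing \(\Tr\phi=0\) kills the \(H^0(K_C)\) part. The obstruction map must also land in the trace-free part: the trace of an obstruction class is the obstruction to deforming \((\det E,\Tr\phi)\), which is unobstructed and fixed. I would verify this splitting using \(\Tr\) and the inclusion \(\cO\hookrightarrow\End E\), which are compatible with \([\phi,-]\) since \(\Tr[\phi,s]=0\). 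Granting that, standard Higgs deformation theory (Nitsure, Biswas–Ramanan) gives the result. The coprimality assumption is used only to ensure every semistable point is stable, so the argument covers all of \(X\).
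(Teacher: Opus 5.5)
Your proposal is correct and follows the paper's proof essentially step for step: $\mathbb H^0=0$ by simplicity of stable pairs together with trace-freeness, $\mathbb H^2\cong(\mathbb H^0)^\vee=0$ via the trace pairing and Serre duality, and $\dim\mathbb H^1=-\chi(\mathcal C^\bullet)=2(r^2-1)(g-1)$ by Riemann--Roch. The only minor differences are how you pass from $\mathfrak X_\Lambda$ to $X$ and how much of the set-up you justify: you use \'etale-local triviality of the $\mu_r$-gerbe, whereas the paper observes that $\mu_r$ acts trivially on $\mathbb H^1$ by conjugation, and you spell out the splitting-off of the scalar summand $[\cO\xrightarrow{0}K_C]$, which the paper only asserts in the sentence preceding the lemma.
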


\begin{proof}
If an endomorphism \(a:E\to E\) commutes with \(\phi\), stability implies that \(a\) is scalar. Since the deformation complex uses
\(\End_0(E)\),
\(
 \mathbb H^0(C,\mathcal C^\bullet_{(E,\phi)})=0
\). 
The trace pairing
\[
 \End_0(E)\otimes\End_0(E)\longrightarrow\cO_C
\]
together with Serre duality identifies the degree-two hypercohomology with the dual of degree zero one:
\[
 \mathbb H^2(C,\mathcal C^\bullet_{(E,\phi)})
 \cong
 \mathbb H^0(C,\mathcal C^\bullet_{(E,\phi)})^\vee
 =0.
\]
Thus \((E,\phi)\in\mathfrak X_\Lambda\) is unobstructed and its infinitesimal deformation space is
\(
 \mathbb H^1(C,\mathcal C^\bullet_{(E,\phi)})
\).

Riemann--Roch gives
\[
 \chi(\End_0(E))=(r^2-1)(1-g), \quad \text{and }
 \chi(\End_0(E)\otimes K_C)=(r^2-1)(g-1).
\]
Hence
\[
 \chi(\mathcal C^\bullet)
 =
 \chi(\End_0(E))-\chi(\End_0(E)\otimes K_C)
 =
 -2(r^2-1)(g-1).
\]
Since the degree-zero and degree-two hypercohomology vanish,
\[
 \dim \mathbb H^1
 =
 -\chi(\mathcal C^\bullet)
 =
 2(r^2-1)(g-1).
\]
The determinant-framed stack \(\mathfrak X_\Lambda\) has only the finite central stabilizer \(\mu_r\), with no infinitesimal stabilizer. This stabilizer acts on \eqref{eq:def-complex} by conjugation and hence acts trivially on \(\mathbb H^1\). Therefore the smooth gerbe \(\mathfrak X_\Lambda\to X\) has trivial stabilizer action on its local deformation space, and its coarse space \(X\) is smooth with
$T_{(E,\phi)}X =\mathbb H^1(C,\mathcal C^\bullet_{(E,\phi)})$ and the displayed dimension.
\end{proof}

The trace pairing on \eqref{eq:def-complex} and Serre duality yield a nondegenerate alternating pairing on \(\mathbb H^1(\mathcal C^\bullet)\), giving the standard holomorphic symplectic form
\[
 \omega\in H^0(X,\Omega_X^2).
\]
This is the algebraic form of Hitchin's hyperk\"ahler symplectic structure
\cite{HitchinSD,HitchinSI}, which implies \(K_X\simeq\cO_X\) and $c_1(T_X)=0$.

\begin{lemma}[Weight of the symplectic form]\label{lem:omega-weight}
For the Higgs-scaling action,
\[
 t^*\omega=t\,\omega.
\]
Thus \(\omega\) has \(T\)-weight \(1\).
\end{lemma}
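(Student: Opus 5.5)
The statement to prove is Lemma~\ref{lem:omega-weight}: $t^*\omega=t\,\omega$. The plan is to compute $t^*\omega$ pointwise on tangent spaces, using the deformation-theoretic description of $T_{(E,\phi)}X$ from Lemma~\ref{lem:smooth} and the explicit formula for the trace--Serre duality pairing. Represent a class in $\mathbb H^1(C,\mathcal C^\bullet_{(E,\phi)})$ by a Čech (or Dolbeault) cocycle $(\dot A,\dot\phi)$. Here $\dot A\in C^1(\End_0(E))$ is the first-order deformation of the bundle, $\dot\phi\in C^0(\End_0(E)\otimes K_C)$ is the deformation of the Higgs field, and the cocycle condition reads $\delta\dot\phi=[\phi,\dot A]$. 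In this model the symplectic form is the standard one,
\[
 \omega\bigl((\dot A_1,\dot\phi_1),(\dot A_2,\dot\phi_2)\bigr)
 =\int_C\Tr\bigl(\dot A_1\dot\phi_2-\dot A_2\dot\phi_1\bigr),
\]
where $\int_C$ denotes the Serre-duality trace $H^1(C,K_C)\to\C$. I will take this formula as the definition of the pairing described just before Lemma~\ref{lem:omega-weight}. It induces the pairing $\mathbb H^1\times\mathbb H^1\to\mathbb H^2(\mathcal C^\bullet\otimes\mathcal C^{\bullet\vee}[\ldots])\to H^1(K_C)$, and the formula above is its explicit form.

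Next, compute the differential of the action $t\colon X\to X$, $(E,\phi)\mapsto(E,t\phi)$. A first-order family $(E_\epsilon,\phi_\epsilon)$ is sent to $(E_\epsilon,t\phi_\epsilon)$. Hence $dt$ sends the cocycle $(\dot A,\dot\phi)$ at $(E,\phi)$ to $(\dot A,t\dot\phi)$ at $(E,t\phi)$. This is a valid cocycle for $\mathcal C^\bullet_{(E,t\phi)}$, because $\delta(t\dot\phi)=t[\phi,\dot A]=[t\phi,\dot A]$. It is also compatible with coboundaries: a gauge change $\xi\in C^0(\End_0 E)$ shifts $(\dot A,\dot\phi)$ by $(\delta\xi,[\phi,\xi])$, and this maps to $(\delta\xi,[t\phi,\xi])$, which is again a coboundary. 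So $dt$ is well defined on hypercohomology. The determinant framing $\eta$ and the trace-free condition are untouched by the action, so no extra scalar terms appear.

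Substituting into the formula gives
\[
 (t^*\omega)(v_1,v_2)=\omega(dt\,v_1,dt\,v_2)=\int_C\Tr\bigl(\dot A_1\,t\dot\phi_2-\dot A_2\,t\dot\phi_1\bigr)=t\,\omega(v_1,v_2).
\]
The only step needing care is that the explicit formula really represents the pairing, and that it is independent of cocycle representatives, including the term involving $[\phi,\cdot]$. I expect this to be the main obstacle. I would handle it by noting that the pairing is the composite of the cup product $\mathcal C^\bullet\otimes\mathcal C^\bullet\to[\cO\to K_C]\to K_C[-1]$ with Serre duality. The cup product's components pair degree $0$ with degree $1$, so every term is linear in exactly one Higgs-direction entry. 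Scaling that entry by $t$ therefore scales the whole pairing by $t$, independently of representatives.

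Since $\omega$ is algebraic and $T$ is connected, this pointwise identity on the smooth variety $X$ gives $t^*\omega=t\,\omega$ as $2$-forms. Equivalently, $\omega$ is a $T$-eigenform of weight $1$, i.e.\ it is a section of $\Omega^2_X\otimes\C_1$ that is $T$-invariant.
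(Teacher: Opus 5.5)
Your proposal is correct and follows the same route as the paper: both write a tangent vector as a pair (bundle deformation, Higgs-field deformation), note that the trace--Serre pairing couples a bundle-direction entry of one vector with a Higgs-direction entry of the other, and observe that scaling acts with weight $0$ on the first and weight $1$ on the second, so every term of $\omega$ picks up exactly one factor of $t$. Your version is more explicit than the paper's: you check that $dt$ is the chain map $(\id,t)\colon\mathcal C^\bullet_{(E,\phi)}\to\mathcal C^\bullet_{(E,t\phi)}$, and that the chain-level pairing, which does not involve $\phi$, pulls back to $t$ times itself; this makes transparent the independence of representatives that the paper leaves implicit.
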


\begin{proof}
At the level of the deformation complex, a tangent vector consists schematically of a deformation of the bundle and a deformation of the Higgs field. The symplectic form pairs the bundle deformation in one tangent vector with the Higgs-field deformation in the other, minus the reverse pairing. Higgs scaling acts with weight \(0\) on the bundle direction and weight \(1\) on the Higgs-field direction. Every summand in the symplectic pairing therefore has weight \(1\).
\end{proof}

\subsection{Hitchin morphism}\label{subsec_Hitchin_map}

For every \(S\)-family in \(\mathfrak X_\Lambda\), the linear coefficient of the characteristic polynomial is \(-\Tr(\phi)=0\) on \(C\times S\). Consequently, the characteristic-polynomial map factors scheme-theoretically, and compatibly with arbitrary base change, through the trace-free Hitchin base below. The \(\SL_r\) Hitchin morphism is
\[
 h:X\longrightarrow
 \A= \bigoplus_{i=2}^r H^0(C,K_C^i),
\]
given by the nontrivial coefficients of the characteristic polynomial of \(\phi\).  Since \(i\ge2\),
\(
 h^0(C,K_C^i)=(2i-1)(g-1)\), 
so
\[
 \dim\A
 = \sum_{i=2}^r(2i-1)(g-1)
 = (r^2-1)(g-1)
 = \frac12\dim X.
\]
The \(T\)-action on the \(i\)-th summand has weight \(i\). Hence $\A^T=\{0\}$.

\begin{theorem}\label{thm:hitchin-proper}
The Hitchin morphism \(h:X\to\A\) is proper, hence projective.
\end{theorem}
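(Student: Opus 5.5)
The plan is to prove properness of $h$ by the valuative criterion, following Nitsure and Hitchin. The statement that $h$ is projective will then come for free. $X$ is the coarse space of a GIT quotient: either Nitsure's construction of stable Higgs pairs, cut out by the closed conditions $\det E\simeq\Lambda$ and $\Tr\phi=0$, or Simpson's construction as a moduli of $\Lambda$-modules. Hence $X$ is quasi-projective over $\A$, and a proper quasi-projective morphism is projective. So the real content is properness. Separatedness and finite type follow from the GIT construction. In the coprime case semistable equals stable, so there are no S-equivalence identifications, and the valuative criterion for separatedness holds because stable Higgs pairs are simple.

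For existence in the valuative criterion, take a DVR $R$ with fraction field $K$ and a $K$-point $(E_K,\phi_K)$ of $X$ whose characteristic polynomial $a_K\in\A(K)$ extends to $a\in\A(R)$. First pass to a finite extension of $R$ so that the $K$-point lifts to the gerbe $\mathfrak X_\Lambda$; this is harmless because $\pi$ is a $\mu_r$-gerbe. Then:
\begin{enumerate}
\item[(i)] Extend $E_K$ arbitrarily to a bundle $\mathcal E$ on $C_R$.
\item[(ii)] Observe that $\phi_K$ may have poles along the special fiber. Since the characteristic coefficients of $\phi_K$ are integral, $\phi_K$ is integral over $\mathcal O_{C_R}$. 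Equivalently, in Hitchin's spectral language, the $\mathcal O_{C_R}[\phi]$-module generated by $\mathcal E$ inside $j_*E_K$ is coherent, because $\phi$ satisfies a monic polynomial over $\mathcal O$.
\item[(iii)] Replace $\mathcal E$ by this $\phi$-stable lattice. It is torsion free on the regular surface $C_R$, and its reflexive hull is locally free, so this gives a Higgs pair extension over $R$. Fix the determinant by twisting by a line bundle supported on the special fiber, i.e.\ by a power of the uniformizer, and adjust by an $r$-th root after a base change. The trace condition is closed, so it extends automatically.
\end{enumerate}

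The main obstacle is making the special fiber stable. For this I would use Langton's argument adapted to Higgs sheaves, as in Nitsure and Simpson. If the special fiber is unstable, take its maximal destabilizing $\phi$-invariant subsheaf and perform the elementary modification along it. This keeps the generic fiber unchanged and strictly decreases a bounded invariant such as the Harder--Narasimhan slope data. Boundedness of the family of Higgs pairs with fixed characteristic polynomial, which is Simpson's boundedness via spectral curves, guarantees that the process terminates. Coprimality ensures that the resulting semistable limit is stable. This produces the required $R$-point of $X$ over $a$, proving properness.
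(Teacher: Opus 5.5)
Your skeleton agrees with the paper's. The paper cites Nitsure's properness theorem for the characteristic-polynomial map on semistable pairs, restricts to the closed fixed-determinant, trace-free locus over the linear subspace $\A$, and concludes with ``proper and quasi-projective implies projective.'' You instead reprove Nitsure's theorem by the valuative criterion, and your steps (i)--(iii) are fine.

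The gap is the termination of your Langton process. Elementary modification along the maximal destabilizing $\phi$-invariant subsheaf $B_n\subset\mathcal E^{(n)}_0$ does \emph{not} strictly decrease a discrete invariant at each step. One only gets $\mu(B_{n+1})\le\mu(B_n)$, and in case of equality $B_{n+1}\hookrightarrow B_n$, so $\operatorname{rk}B_{n+1}\le\operatorname{rk}B_n$. These invariants lie in a discrete set bounded below, so they eventually become \emph{constant}. Nothing you say prevents the procedure from continuing forever in that regime.

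Boundedness does not rescue this. Higgs pairs with fixed characteristic polynomial are bounded only under semistability: over $0\in\A$ one has $(E,0)$ for every $E$ with $\det E\cong\Lambda$. The special fibers being modified are unstable. Moreover, even a sequence inside a bounded family need not terminate. A quick diagnostic: your termination argument never uses that $(E_K,\phi_K)$ is stable. If it were valid, it would give a semistable special fiber for families with unstable generic fiber, contradicting openness of semistability.

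The missing step is Langton's use of generic semistability.
\begin{itemize}
\item Once $(\mu(B_n),\operatorname{rk}B_n)$ is constant for $n\ge n_0$, the injections $B_{n+1}\hookrightarrow B_n$ are maps of vector bundles on a curve with equal rank and degree, hence isomorphisms.
\item Consequently $\varpi$ induces isomorphisms $\mathcal E^{(n)}/\mathcal E^{(n+1)}\cong\mathcal E^{(n+1)}/\mathcal E^{(n+2)}$. The quotients $\mathcal E^{(n_0)}/\mathcal E^{(n_0+n)}$ then form a compatible system of $\phi$-invariant quotients of $\mathcal E^{(n_0)}\otimes R/\varpi^{n}$. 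Each is flat over $R/\varpi^n$ with special fiber $\mathcal E^{(n_0)}_0/B_{n_0}$.
\item Grothendieck's existence theorem gives a $\phi$-invariant, $\widehat R$-flat quotient of $\mathcal E^{(n_0)}\otimes\widehat R$. Its kernel has generic fiber of slope $\mu(B_{n_0})>\mu(E_K)$, i.e.\ it is a destabilizing Higgs subsheaf of $(E_{\widehat K},\phi_{\widehat K})$.
\item Semistability of Higgs bundles on a curve is preserved under the field extension $K\subset\widehat K$, so this contradicts stability of the $K$-point.
\end{itemize}
With this argument inserted, your proof goes through. Alternatively, cite Nitsure's Theorem~6.1, which is exactly this Langton-type argument for pairs, as the paper does.
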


\begin{proof}
Properness of the characteristic-polynomial morphism for the moduli of semistable pairs can be found in \cite[Theorem 6.1]{Nitsure}; see also \cite{HitchinSI}. The fixed-determinant and trace-free locus is obtained by restricting to a closed condition and to the corresponding linear subspace of the Hitchin base, so properness is preserved by base change and passage to the closed subspace.

The variety \(X\) is quasi-projective by the GIT construction. A proper quasi-projective morphism is projective; see e.g. \cite[Th\'eor\`eme~5.5.3(ii)]{EGAII}. Thus \(h\) is projective.
\end{proof}

The morphism $p: X\to [X/\Gamma]$ commutes with the Hitchin morphism $h: X\to \A$. Thus, we have diagram (\ref{eqn_Hitchin_map}).

\section{Steinberg cycles}\label{sec_Steinberg}

\subsection{Symplectic cosection}

Apply Proposition \ref{prop:kernel-cone-general} to \(M=\sM_{n,\beta}\), its standard absolute obstruction theory $E^\bullet$, and \(\mathcal L=\cO_{\sM_{n,\beta}}\otimes\C_1\). Write
\[
 \mathfrak E=h^1/h^0\bigl((E^\bullet)^\vee\bigr),
 \qquad
 \mathfrak K=\ker(\widetilde\sigma_\omega).
\]
Kiem--Li's cone statement becomes
\begin{equation}\label{eq:cone-kernel}
 [\mathfrak C_{\sM_{n,\beta}}]\in Z_*^T(\mathfrak K).
\end{equation}
We define the unlocalized equivariant reduced class by
\begin{equation}\label{eq:red-class-kernel}
 [\sM_{n,\beta}]^{\red}_T
 :=0^!_{\mathfrak K}[\mathfrak C_{\sM_{n,\beta}}]
 \in A_{m-2+n}^T(\sM_{n,\beta})_\qq.
\end{equation}
This is the kernel-cone reduced cycle described in \cite[Introduction, p.~1026]{KL}. It does not require asserting that the homotopy fiber of \eqref{eq:cosection-complex} is itself a global Behrend--Fantechi perfect obstruction theory. In special symplectic settings, the kernel-cone cycle is induced by a reduced perfect obstruction theory; see e.g.
\cite[\S3.4]{OPHilb} and \cite[\S4.2]{BMO}. No such assertion is needed for the Higgs target here.

\begin{proposition}\label{prop:vir-red}
For every \(\beta\ne0\),
\begin{equation}\label{eq:vir-red}
 [\sM_{n,\beta}]^{\vir}_T
 =c_1^T(\C_1)\cap[\sM_{n,\beta}]^{\red}_T
 =\hbar[\sM_{n,\beta}]^{\red}_T .
\end{equation}
\end{proposition}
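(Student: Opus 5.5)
The plan is to compare the two Gysin pullbacks of the same intrinsic normal cone $[\mathfrak C_{\sM_{n,\beta}}]$: once through the full obstruction bundle stack $\mathfrak E$, and once through the kernel $\mathfrak K$. By definition $[\sM_{n,\beta}]^{\vir}_T=0^!_{\mathfrak E}[\mathfrak C_{\sM_{n,\beta}}]$. By \eqref{eq:cone-kernel} the cone cycle is pushed forward from $\mathfrak K$ along the closed embedding $\iota:\mathfrak K\hookrightarrow\mathfrak E$. The cosection $\widetilde\sigma_\omega:\mathfrak E\to\cO\otimes\C_1$ is surjective for $\beta\ne0$, as stated in the introduction and supplied by Proposition~\ref{prop:kernel-cone-general}. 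Hence $\mathfrak K$ is the kernel of a surjection of vector bundle stacks onto a line bundle, and $\mathfrak K\subset\mathfrak E$ is the zero locus of the section of $\widetilde\sigma_\omega^*(\cO\otimes\C_1)$ given by $\widetilde\sigma_\omega$ itself. So it is a regular (Cartier) embedding of codimension one, and its normal bundle is the pullback of $\cO\otimes\C_1$.

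The key steps are as follows. First, I would write $[\mathfrak C]=\iota_*[\mathfrak C]$ as a cycle on $\mathfrak E$. Second, I would use functoriality of Gysin maps for the composite of zero sections $0_{\mathfrak E}=\iota\circ 0_{\mathfrak K}$ together with the excess intersection formula. These give
\[
0^!_{\mathfrak E}\iota_*[\mathfrak C]=0^!_{\mathfrak K}\bigl(\iota^*\iota_*[\mathfrak C]\bigr)
=0^!_{\mathfrak K}\bigl(c_1(N_{\mathfrak K/\mathfrak E})\cap[\mathfrak C]\bigr)
=c_1^T(\C_1)\cap 0^!_{\mathfrak K}[\mathfrak C].
\]
Here I use that $c_1(N)$ is pulled back from $\sM_{n,\beta}$, so it commutes with $0^!_{\mathfrak K}$. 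All of this is done $T$-equivariantly with rational coefficients, through Kresch's Gysin maps for vector bundle stacks. Third, I would note that $N=\cO\otimes\C_1$ is the trivial bundle twisted by the weight-one character from Lemma~\ref{lem:omega-weight}, so $c_1^T(N)=\hbar$. This yields \eqref{eq:vir-red}, and the dimension count $m-1+n$ versus $m-2+n$ is consistent.

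The main obstacle I expect is making the identity $0^!_{\mathfrak E}\circ\iota_*=c_1(N)\cap 0^!_{\mathfrak K}$ rigorous for vector bundle stacks rather than vector bundles. To handle it, I would first reduce locally to a global resolution $E^\bullet\simeq[E_0\to E_1]$, which exists equivariantly because $\sM_{n,\beta}$ is a quotient stack with a $T$-equivariant embedding into a smooth space. I would lift the cosection to $E_1\to\cO\otimes\C_1$; this lift is still surjective, because it factors through $\mathfrak E$ and the original cosection is surjective. I would then let $K_1$ be its kernel, so that $\mathfrak K=[K_1/E_0]$. The cone $C_1\subset K_1$ is then a pullback of $\mathfrak C$. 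On the honest bundles $K_1\subset E_1$ the formula $0^!_{E_1}j_*=c_1(E_1/K_1)\cap 0^!_{K_1}$ is the standard excess formula (Fulton, Thm.~6.3/Ex.~6.3.4). Finally, compatibility of Kresch's $0^!$ with these presentations gives the statement, and independence of the choices follows from the usual argument of Behrend--Fantechi.
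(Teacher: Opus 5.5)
Your argument is correct and is essentially the paper's: Proposition~\ref{prop:vir-red} is obtained there as the excess identity \eqref{eq:abstract-excess}, for the codimension-one regular embedding $\mathfrak K\hookrightarrow\mathfrak E$ with normal line $\cO\otimes\C_1$, applied to the cone cycle $[\mathfrak C_{\sM_{n,\beta}}]$, exactly as you propose. The only difference is technical---the paper proves $0^!_{\mathfrak E}i_*=c_1^T(\mathcal L)\cap 0^!_{\mathfrak K}$ globally via homotopy invariance, using $0^!=(\pi^*)^{-1}$ and $i_*\pi_{\mathfrak K}^*a=\pi_{\mathfrak E}^*(c_1(\mathcal L)\cap a)$, so it never needs your equivariant global two-term resolution---and two small slips should be fixed: surjectivity of the cosection comes from Lemma~\ref{lem:semireg-surj} (Proposition~\ref{prop:kernel-cone-general} only assumes it), and the standard virtual dimension is $m-3+n$, not $m-1+n$.
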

\begin{proof}
This is \eqref{eq:abstract-excess} with \(\mathcal L=\cO\otimes\C_1\). Since \(c_1^T(\C_1)=\hbar\), the sign is positive: the removed obstruction quotient is the weight-\(+1\) line.
\end{proof}

\begin{remark}
After forgetting equivariance the quotient line is trivial, so \([\sM_{n,\beta}]^{\vir}=0\).  This also follows directly from Kiem--Li cosection localization, since the degeneracy locus is empty. The equivariant Euler class of the removed line gives the nonzero factor \(\hbar\).
\end{remark}

\subsection{The isotropy theorem for two-point evaluation images}\label{sec:isotropy}

Here we consider the geometric heart of the Steinberg statement. We prove a result stronger than what is needed for the virtual fundamental class: \emph{every} irreducible two-point evaluation image of a genus-zero family is isotropic.

Put
\[
 \Omega=p_1^*\omega-p_2^*\omega \in H^0(X\times X,\Omega_{X\times X}^2).
\]

\begin{lemma}\label{lem:basicness}
Let \(S\) be a smooth integral scheme, let \(\pi:\sC\to S\) be a smooth proper family of geometrically connected genus-zero curves, and let \(F:\sC\to X\) be a morphism. There is a unique \(\eta_F\in H^0(S,\Omega_S^2)\) such that
\[
 F^*\omega=\pi^*\eta_F.
\]
\end{lemma}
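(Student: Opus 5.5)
The plan is to show that the closed holomorphic $2$-form $F^*\omega$ on the total space $\sC$ is basic for the smooth proper map $\pi$. Concretely, we must show two things: $F^*\omega$ vanishes when contracted with vertical vector fields, and it is invariant along the fibres. Uniqueness is the easy half. Since $\pi$ is smooth and surjective, $d\pi$ is surjective, so $\pi^*:\Omega^2_S\to\pi_*\Omega^2_{\sC}$ is injective. Hence at most one $\eta_F$ can satisfy $F^*\omega=\pi^*\eta_F$.

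For existence, use the relative cotangent sequence
\[
0\to\pi^*\Omega_S\to\Omega_{\sC}\to\Omega_{\sC/S}\to0 .
\]
It induces a filtration on $\Omega^2_{\sC}$ with graded pieces $\pi^*\Omega^2_S$, $\pi^*\Omega_S\otimes\Omega_{\sC/S}$, and $\Omega^2_{\sC/S}$; the last is zero because the fibres are curves. So the first step is to show that the image of $F^*\omega$ in $\pi^*\Omega_S\otimes\Omega_{\sC/S}$ vanishes. This image is a global section of $\pi^*\Omega_S\otimes\Omega_{\sC/S}$. Locally on $S$ we may take $\Omega_S$ free, since $S$ is smooth. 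The section then has components that are relative $1$-forms, i.e. sections of $\pi_*\Omega_{\sC/S}$. Because the fibres are $\mathbb P^1$, we have $H^0(\mathbb P^1,K_{\mathbb P^1})=0$, and cohomology and base change gives $\pi_*\Omega_{\sC/S}=0$. Therefore the image vanishes, and $F^*\omega$ is a section of $\pi^*\Omega^2_S\subset\Omega^2_{\sC}$.

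The second step is descent. A section of $\pi^*\Omega^2_S$ over $\sC$ is a section of $\pi_*\pi^*\Omega^2_S=\Omega^2_S\otimes\pi_*\cO_{\sC}$ by the projection formula. Since $\pi$ is proper and flat with geometrically connected, reduced fibres, $\pi_*\cO_{\sC}=\cO_S$. This yields $\eta_F\in H^0(S,\Omega^2_S)$ with $F^*\omega=\pi^*\eta_F$.

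The only delicate point is the first step: identifying the mixed graded piece correctly and justifying the base change. Flatness of $\pi$ together with $h^0(\Omega_{\mathbb P^1})=0$ on every fibre handles this. As a sanity check, $d\omega=0$ gives $\pi^*d\eta_F=0$, and hence $\eta_F$ is closed, but this is not needed for the statement. Notably, the argument does not use closedness of $\omega$ or any property of $X$ beyond $\omega$ being a global $2$-form. I therefore expect no genuine obstacle beyond this bookkeeping.
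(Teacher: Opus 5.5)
Your proposal is correct and follows essentially the same route as the paper. Both arguments use the filtration
$0\to\pi^*\Omega_S^2\to\Omega_{\sC}^2\to\pi^*\Omega_S^1\otimes\Omega^1_{\sC/S}\to0$.
Both show that the mixed piece has no global sections because $\pi_*\Omega^1_{\sC/S}=0$, and both then descend using $\pi_*\cO_{\sC}=\cO_S$ and the projection formula. Your added justifications (cohomology and base change for the vanishing, and injectivity of $\pi^*$ for uniqueness) only make explicit steps the paper leaves implicit.
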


\begin{proof}
Since \(\pi\) is smooth of relative dimension one, the cotangent filtration gives
\[
 0\longrightarrow\pi^*\Omega_S^2
 \longrightarrow\Omega_\sC^2
 \longrightarrow
 \pi^*\Omega_S^1\otimes\Omega_{\sC/S}^1
 \longrightarrow 0.
\]
For a smooth proper genus-zero family,
\[
 \pi_*\Omega_{\sC/S}^1=0,
 \qquad
 \pi_*\cO_\sC=\cO_S.
\]
The projection formula therefore shows that the last term has no global sections, whereas
\[
 H^0(\sC,\pi^*\Omega_S^2)=H^0(S,\Omega_S^2).
\]
Thus every regular two-form on \(\sC\), in particular \(F^*\omega\), is the pullback of a unique two-form on \(S\).
\end{proof}

\begin{lemma}\label{lem:nodal-basicness}
Let \(\pi:\mathcal D\to S\) be a proper flat family of connected nodal curves of arithmetic genus $0$ over a smooth integral scheme. Let \(s_1,s_2:S\to\mathcal D\) be sections through the smooth locus, and let \(F:\mathcal D\to X\).  Then
\begin{equation}\label{eq:marking-forms}
 (F\circ s_1)^*\omega=(F\circ s_2)^*\omega \quad\text{in }H^0(S,\Omega_S^2).
\end{equation}
\end{lemma}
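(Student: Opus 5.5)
Since $S$ is smooth and integral, $\Omega^2_S$ is locally free and torsion-free, so it suffices to prove \eqref{eq:marking-forms} on a dense open subset of $S$. I will reduce to Lemma~\ref{lem:basicness} by working one component at a time. The plan is to replace $S$ by a dense open $U$, and then by a finite étale (or generically finite dominant, with $U'$ smooth) cover $U'\to U$. Over $U'$ the family $\mathcal D$ has constant topological type and its irreducible components and nodes are defined by sections. Pulling back two-forms along a dominant generically étale map between smooth varieties is injective on sections. So equality after base change to $U'$ implies equality on $U$, and hence on $S$ by torsion-freeness.

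Over $U'$ the curve $\mathcal D_{U'}$ is obtained by gluing smooth proper genus-zero families $\mathcal D_1,\dots,\mathcal D_k\to U'$ along node sections. The dual graph is a tree, because the arithmetic genus is $0$. Each $\mathcal D_i$ is the normalization of a component. Since the family is equisingular with split nodes, each $\mathcal D_i\to U'$ is smooth and proper with rational fibers. Apply Lemma~\ref{lem:basicness} to $F|_{\mathcal D_i}$ to obtain $\eta_i\in H^0(U',\Omega^2_{U'})$ with $(F|_{\mathcal D_i})^*\omega=\pi_i^*\eta_i$. For any section $s$ of $\mathcal D_i$ we then get $(F\circ s)^*\omega=s^*\pi_i^*\eta_i=\eta_i$.

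Now take the unique path in the tree from the component containing $s_1$ to the component containing $s_2$. Consecutive components $\mathcal D_i,\mathcal D_j$ meet along a node section $n_{ij}$. Its lifts to $\mathcal D_i$ and $\mathcal D_j$ map to the same point of $X$, so $F\circ n_{ij}$ is a single morphism $U'\to X$. Evaluating the previous identity on both sides gives $\eta_i=(F\circ n_{ij})^*\omega=\eta_j$. Chaining along the path yields $(F\circ s_1)^*\omega=\eta_{i_1}=\eta_{i_2}=(F\circ s_2)^*\omega$. The markings lie in the smooth locus, so they are honest sections of single components.

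I expect the main obstacle to be the first step: making the stratification and the base change rigorous. This means showing that over a dense open the family is locally trivial in its dual-graph type, with normalization smooth over the base. The first thing I will try is to take the open stratum of the flat family by the number of nodes: over the generic point the singular locus of $\pi$ is finite étale after shrinking, and normalizing $\mathcal D$ there gives a smooth family. If the generic fiber is smooth, the statement follows directly from Lemma~\ref{lem:basicness}.
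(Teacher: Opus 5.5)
Your proposal is correct and follows essentially the same route as the paper. Both arguments shrink to a dense equisingular open, pass to a finite \'etale cover that labels components, nodes and branches, apply Lemma~\ref{lem:basicness} to each normalized component, propagate the basic two-form across node sections using the tree structure, and extend back over $S$ because $\Omega^2_S$ is locally free. The differences are cosmetic: you chain along the path between the two marked components rather than using connectedness of the whole dual graph, and you descend via injectivity of pullback of forms under a generically \'etale cover rather than faithfully flat descent.
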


\begin{proof}
First, shrink to a dense equisingular open subset of \(S\). After a finite \'etale surjective base change, the components, nodes, and branches are labelled, and the simultaneous normalization is a disjoint union of smooth proper genus-$0$ families
\[
 \bigsqcup_a \sC_a\longrightarrow S.
\]
Write \(F_a:\sC_a\to X\) for the induced maps. By Lemma \ref{lem:basicness},
\[
 F_a^*\omega=\pi_a^*\eta_a
\]
for two-forms \(\eta_a\) on \(S\). If branch sections \(q_a:S\to \sC_a\) and \(q_b:S\to \sC_b\) form a node, then
\(F_a\circ q_a=F_b\circ q_b\), whence
\[
 \eta_a=q_a^*F_a^*\omega
 =(F_a\circ q_a)^*\omega
 =(F_b\circ q_b)^*\omega
 =q_b^*F_b^*\omega=\eta_b.
\]
The dual graph (which is a tree) is connected, so all \(\eta_a\) are equal. Pulling back at the two marked sections proves \eqref{eq:marking-forms} after the \'etale cover, and faithfully flat descent proves it before the cover.

Finally, the two sides of \eqref{eq:marking-forms} are regular sections of the locally free sheaf \(\Omega_S^2\). Equality on the dense equisingular open subset extends over all of \(S\). This extension is what includes tangent
directions which smooth nodes occurring only in special fibers; moving markings and moving node sections were already included in the argument.
\end{proof}

\begin{proposition}\label{prop:isotropy}
Let \(W\subset \sM_{2,\beta}\) be an integral locally closed substack, and let
\[
 Z=\overline{(\ev_1,\ev_2)(W)}\subset X\times X.
\]
Then
\[
 \Omega|_{Z_{\mathrm{sm}}}=0.
\]
Consequently
\[
 \dim Z\le m.
\]
If \(\dim Z=m\), then \(Z\) is Lagrangian.
\end{proposition}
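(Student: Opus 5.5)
The plan is to reduce the isotropy statement to Lemma~\ref{lem:nodal-basicness} by resolving the family over \(W\) and then pushing forward generically onto \(Z\).

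\emph{Step 1: smooth parametrization.} Replace \(W\) by its reduced structure. Choose a smooth integral scheme \(S\) with a generically finite morphism \(S\to W\) that is dominant onto \(W\); for instance, take an étale (or smooth) atlas of \(W\), pick a component, then an open dense smooth subset (generic smoothness in characteristic zero). Pull back the universal curve to obtain a flat proper family \(\pi:\mathcal D\to S\) of connected nodal genus-zero curves, with marked sections \(s_1,s_2\) in the smooth locus and a map \(F:\mathcal D\to X\). Put \(e=(\ev_1,\ev_2)=(F\circ s_1,F\circ s_2):S\to X\times X\). Its image is dense in \(Z\).

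\emph{Step 2: pullback of \(\Omega\) vanishes.} We have
\[
e^*\Omega=(F\circ s_1)^*\omega-(F\circ s_2)^*\omega,
\]
and this is zero by \eqref{eq:marking-forms}.

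\emph{Step 3: descend to \(Z\).} By generic smoothness, after shrinking \(S\) to a dense open subset, \(e:S\to Z_{\mathrm{sm}}\) is a smooth morphism onto a dense open subset \(U\subset Z_{\mathrm{sm}}\). In particular \(de\) is surjective on tangent spaces. For tangent vectors \(v,w\) at a point of \(U\), lift them to \(\tilde v,\tilde w\) on \(S\). Then
\[
\Omega(v,w)=(e^*\Omega)(\tilde v,\tilde w)=0,
\]
so \(\Omega|_U=0\) as a section of \(\Omega^2_{Z_{\mathrm{sm}}}\). Since \(\Omega|_{Z_{\mathrm{sm}}}\) is a section of a locally free sheaf on the integral smooth variety \(Z_{\mathrm{sm}}\), vanishing on a dense open set implies vanishing everywhere.

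\emph{Step 4: dimension and Lagrangian property.} The form \(\Omega\) is symplectic on \(X\times X\), whose dimension is \(2m\). At a smooth point of \(Z\), the tangent space is an isotropic subspace, so its dimension is at most \(m\), giving \(\dim Z\le m\). If equality holds, \(Z\) is isotropic of half dimension, which is by definition Lagrangian.

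The main technical point is Step 1, namely producing a smooth base \(S\) carrying a genuine family of curves even though \(W\) is only a substack of a Deligne--Mumford stack. This is handled by passing to an atlas and using generic smoothness; all subsequent steps are generic statements, so shrinking \(S\) is harmless.
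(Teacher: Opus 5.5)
Your proposal is correct and follows the paper's proof essentially step for step. Both take a dense smooth open in a dominating component of an \'etale atlas, apply Lemma~\ref{lem:nodal-basicness} to get $e^*\Omega=0$, use generic smoothness to transfer the vanishing to a dense open of $Z_{\mathrm{sm}}$ and then to all of $Z_{\mathrm{sm}}$, and finish with the half-dimension bound for isotropic subspaces. One small correction: the argument only needs $S\to W$ to be dominant, so drop the ``generically finite'' qualifier, which would fail for a smooth rather than \'etale atlas.
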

\begin{proof}
Choose an irreducible component of an \'etale scheme atlas of \(W\) which dominates \(W\), and replace it by a dense smooth integral open \(S\). Pull back the universal curve, its two sections, and its map to \(X\). Applying Lemma \ref{lem:nodal-basicness} gives
\[
 g^*\Omega=0, \qquad g=(\ev_1,\ev_2)|_S:S\longrightarrow X\times X.
\]
Finite stabilizers cause no issue, since the identity of differential forms is \'etale local on the source.

The morphism \(S\to Z\) is dominant. Over \(\C\), the induced extension of function fields is separable, so generic smoothness gives a dense open of \(Z_{\mathrm{sm}}\) over which \(dg:T_sS\to T_lZ\) is surjective at general points.  The equality \(g^*\Omega=0\) then implies \(\Omega|_{T_lZ}=0\). As \(\Omega|_{Z_{\mathrm{sm}}}\) is a regular two-form, vanishing on a dense open implies vanishing everywhere on \(Z_{\mathrm{sm}}\). Thus \(Z\) is isotropic. Since \(X\times X\) is symplectic of dimension \(2m\), every isotropic subvariety has dimension at most \(m\), and equality is exactly the Lagrangian condition.
\end{proof}

\begin{remark}
The argument does not use the virtual fundamental class. This is important: it controls the geometric support of the entire stable-map stack and thereby excludes equivariant terms supported on subvarieties of dimension \(>m\).
\end{remark}

\subsection{Hitchin-fiber support}\label{subsec_Hitchin_fiber_support}

By Lemma \ref{lem:hitchin-constant}, the two evaluations of every stable map have the same Hitchin value. Therefore,
\begin{equation}\label{eq:fiber-product-support}
 (\ev_1,\ev_2)(\sM_{2,\beta}) \subset X\times_\A X.
\end{equation}
By Proposition \ref{prop:stablemap-proper}, the evaluation map is proper, so its reduced image
\[
 Y_\beta:=\operatorname{Im}(\ev_1,\ev_2)_{\red}
\]
is a closed finite-type reduced subscheme. It is \(T\)-stable: every element of the connected torus acts on \(X\) by a map homotopic to the identity, hence acts trivially on \(H_2(X,\zz)\). Thus \(T=\mathbb{C}^*\) acts on \(M_\beta\) and preserves its image. The evaluation map first factors through its scheme-theoretic image; equivariant Chow groups are unchanged by nilpotent thickenings, so all pushforward classes may be viewed on the reduction \(Y_\beta\).

\begin{corollary}\label{cor:Y-isotropic}
Every irreducible component of \(Y_\beta\) is isotropic in \((X\times X,\Omega)\). Hence
\[
 \dim Y_\beta\le m.
\]
Every \(m\)-dimensional irreducible component of \(Y_\beta\) is a \(T\)-invariant Steinberg subvariety.
\end{corollary}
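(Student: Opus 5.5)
The plan is to read the corollary off Proposition~\ref{prop:isotropy} by realizing each irreducible component of \(Y_\beta\) as the closure of the two-point evaluation image of an integral locally closed substack of \(\sM_{2,\beta}\).

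\emph{Components as evaluation images.} Since \(\sM_{2,\beta}\) is of finite type, it has finitely many irreducible components \(W_1,\dots,W_k\), and we give each its reduced structure. By Proposition~\ref{prop:stablemap-proper} the map \((\ev_1,\ev_2)\) is proper, so each image \((\ev_1,\ev_2)(W_i)\) is closed and irreducible. The reduced image \(Y_\beta\) is the union of these finitely many closed irreducible sets. Hence every irreducible component \(Z\) of \(Y_\beta\) equals \((\ev_1,\ev_2)(W_i)\) for some \(i\); in particular \(Z=\overline{(\ev_1,\ev_2)(W_i)}\).

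\emph{Isotropy and dimension.} Applying Proposition~\ref{prop:isotropy} to \(W=W_i\) gives \(\Omega|_{Z_{\mathrm{sm}}}=0\) and \(\dim Z\le m\). Since \(Y_\beta\) is a finite union of such \(Z\), we obtain \(\dim Y_\beta\le m\). If \(\dim Z=m\), the same proposition shows that \(Z\) is Lagrangian in \((X\times X,\Omega)\).

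\emph{Steinberg conditions.} Two further properties remain for an \(m\)-dimensional component \(Z\).
\begin{enumerate}
\item \emph{Containment in the fiber product.} By \eqref{eq:fiber-product-support}, \(Z\subset Y_\beta\subset X\times_\A X\). The fiber product is closed because \(\A\) is separated.
\item \emph{\(T\)-invariance.} The text above establishes that \(Y_\beta\) is \(T\)-stable. Because \(T\) is connected, it acts on the finite set of irreducible components of \(Y_\beta\) through a connected group, hence trivially, and so fixes each component. Therefore \(Z\) is \(T\)-invariant.
\end{enumerate}
Together with irreducibility and the Lagrangian property, these show that \(Z\) is a \(T\)-invariant Steinberg subvariety in the sense of the definition preceding Theorem~\ref{thm:main}.

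\emph{Expected obstacle.} The only delicate point is the \(T\)-invariance of individual components, as opposed to \(Y_\beta\) as a whole. This is handled by the connectedness argument in step 2 above. No issue arises from the stacky structure, since Proposition~\ref{prop:isotropy} already treats integral substacks.
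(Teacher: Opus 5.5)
Your proposal is correct and follows essentially the same route as the paper. Both arguments apply Proposition~\ref{prop:isotropy} to irreducible components of $\sM_{2,\beta}$ whose images give the components of $Y_\beta$, use \eqref{eq:fiber-product-support} for containment in $X\times_\A X$, and use connectedness of $T$ to fix each component of the $T$-stable $Y_\beta$. You also make explicit a step the paper leaves implicit: by properness each image is closed, so an irreducible component of $Y_\beta$ must coincide with one of these images.
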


\begin{proof}
Apply Proposition \ref{prop:isotropy} to irreducible components of \(\sM_{2,\beta}\) whose images dominate the irreducible components of \(Y_\beta\), and use \eqref{eq:fiber-product-support}. If an image has dimension \(m\), it is
Lagrangian. Since \(T\) is connected, it preserves every member of the finite set of irreducible components of the \(T\)-stable scheme \(Y_\beta\). Thus each top-dimensional component is a Steinberg subvariety.
\end{proof}

\subsection{A top-dimensional equivariant Chow lemma}

The following elementary observation is what upgrades the geometric support statement to the parameter-independent cycle statement \eqref{eq:Zbeta-Steinberg}.

\begin{lemma}\label{lem:top-equiv-chow}
Let \(Y\) be a finite-type \(T\)-scheme of dimension at most \(m\). Then \(A_m^T(Y)_\qq\) is the \(\qq\)-span of the equivariant fundamental classes of the \(m\)-dimensional irreducible components of \(Y\). In particular, no positive power of \(\hbar\) can occur as the coefficient of a higher-dimensional geometric support, because such a support does not exist.
\end{lemma}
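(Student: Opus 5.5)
The plan is to reduce to the non-equivariant statement by the Edidin--Graham construction of equivariant Chow groups, and then to use the standard excision sequence for top-dimensional cycles. Fix $n$. Equivariant Chow groups are defined by $A_m^T(Y)=A_{m+N-1}(Y\times_T U_N)$ for an open $U_N\subset\A^N$ with $T$ acting freely and complement of large codimension; for $T=\C^*$ take $U_N=\A^N\setminus\{0\}$. The mixed space $Y_N:=Y\times_T U_N$ is a finite-type scheme, since $Y$ is $T$-quasi-projective or can be reduced to that case. It has dimension at most $m+N-1$, because $Y\times U_N\to Y_N$ is a principal $T$-bundle and $\dim(Y\times U_N)\le m+N$.

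First I would use the top-degree Chow fact. For any finite-type scheme $V$ of dimension at most $k$, there is no rational equivalence in degree $k$: relations come from $(k+1)$-dimensional subvarieties, and there are none. So $A_k(V)_\qq$ is freely spanned by the $k$-dimensional irreducible components of $V$.

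Next I would identify the components. The $(m+N-1)$-dimensional irreducible components of $Y_N$ are exactly the $V_N=V\times_T U_N$, with $V$ running over the $m$-dimensional irreducible components of $Y$. Each such $V$ is automatically $T$-invariant because $T$ is connected. To see this, note that an irreducible component of $Y_N$ pulls back to a $T$-invariant irreducible closed subset of $Y\times U_N$. That subset has top dimension $m+N$, so it is a component of $Y\times U_N$. The components of $Y\times U_N$ are products $V\times U_N$, since $U_N$ is irreducible. Under this correspondence the class $[V_N]$ is, by definition, the equivariant fundamental class $[V]_T$.

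Finally I would check that this is independent of $N$. The comparison isomorphisms between different $N$ are flat pullbacks, and they carry $[V_N]$ to $[V_{N'}]$. Combining the three steps gives that $A^T_m(Y)_\qq$ is the $\qq$-span, indeed a free span, of the classes $[V]_T$. The "in particular" clause follows: an $\hbar^k$-multiple of a class would have to be carried by a cycle of dimension $m+k$ in some $Y_N$-degree count, and no such support exists because $\dim Y\le m$.

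The only delicate point is the dimension bookkeeping in the middle step: confirming that the top components of the mixed space correspond exactly to the top $T$-invariant components of $Y$. Even this is routine. I do not expect a serious obstacle; the lemma is essentially the statement that top-dimensional Chow groups have no relations.
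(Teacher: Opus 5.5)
Your proposal is correct and follows essentially the same route as the paper: pass to the Edidin--Graham mixed space $Y\times^T U_N$ with $U_N=\mathbb A^N\setminus\{0\}$, note that it has dimension at most $m+N-1$ so its top-dimensional Chow group has no relations, and match its top-dimensional components with the $m$-dimensional components of $Y$, which are $T$-invariant because $T$ is connected. Your version is slightly sharper, since it gives a free basis (as in Fulton's Example~1.3.2), and it justifies the component correspondence through $Y\times U_N$ more carefully than the paper does, but it uses no additional idea.
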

\begin{proof}
We use the finite-dimensional approximation \cite{EG}: consider
\[
 U_N=\mathbb A^N\setminus\{0\},
\]
on which \(T\) acts freely by scalar multiplication. The $N$-th Borel construction is 
\begin{equation}\label{eqn:Nth_Borel}
Y\times^T U_N.
\end{equation}
 For \(N\) sufficiently large in the range under consideration, we have 
\[
 A_m^T(Y)
 =
 A_{m+N-1}(Y\times^T U_N).
\]
The mixed quotient has dimension at most \(m+N-1\). Hence its Chow group in dimension \(m+N-1\) is generated by the fundamental classes of its top-dimensional irreducible components. Those components occur precisely from the \(m\)-dimensional \(T\)-invariant irreducible components of \(Y\). Because \(T\) is connected, it preserves each member of the finite set of irreducible components. This proves the claim.  
\end{proof}

\subsection{The reduced correspondence is Steinberg}

\begin{theorem}\label{thm:Z-Steinberg}
For every \(\beta\ne0\), the class
\[
 Z_{2,\beta}^{\red} = (\ev_1,\ev_2)_*[\sM_{2,\beta}]^{\red}_T
\]
is a finite linear combination of (classes of) Steinberg subvarieties with $\mathbb{Q}$-coefficients. Hence the standard evaluation cycle satisfies
\[
 C_\beta=\hbar  Z_{2,\beta}^{\red} =\hbar\sum_j c_{\beta,j}[L_{\beta,j}]_T
 \quad\text{in }A_{m-1}^T(Y_\beta)_\qq.
\]
\end{theorem}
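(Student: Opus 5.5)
The argument is a short chain built from the ingredients assembled above. By Proposition~\ref{prop:stablemap-proper} the two-point evaluation map is proper, so we first factor \((\ev_1,\ev_2):\sM_{2,\beta}\to X\times X\) through its reduced image \(Y_\beta\). Since equivariant Chow groups do not see nilpotent thickenings, the proper pushforward \((\ev_1,\ev_2)_*[\sM_{2,\beta}]^{\red}_T\) may be regarded as a class in \(A_m^T(Y_\beta)_\qq\), with \(m=\operatorname{vdim}^{\red}\) for \(n=2\). Pushing forward along the closed embedding \(Y_\beta\hookrightarrow X\times X\) then recovers \(Z^{\red}_{2,\beta}\) as defined in \eqref{eq:Zbeta-def}. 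The reduced class itself is the kernel-cone class \eqref{eq:red-class-kernel}. No properness of \(\sM_{2,\beta}\) is needed, only properness of the evaluation map, which ensures that the pushforward lands in the Chow group of a finite-type \(T\)-scheme.

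Next we bring in the geometry. By Corollary~\ref{cor:Y-isotropic}, \(\dim Y_\beta\le m\). Moreover, each \(m\)-dimensional irreducible component of \(Y_\beta\) is \(T\)-invariant, is Lagrangian for \(\Omega\), and is contained in \(X\times_\A X\) by \eqref{eq:fiber-product-support}. Each such component is therefore a Steinberg subvariety \(L_{\beta,j}\). Lemma~\ref{lem:top-equiv-chow} applies to \(Y_\beta\), a finite-type \(T\)-scheme of dimension at most \(m\). It shows that \(A_m^T(Y_\beta)_\qq\) is spanned by the classes \([L_{\beta,j}]_T\) of these finitely many top-dimensional components. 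Hence \(Z^{\red}_{2,\beta}=\sum_j c_{\beta,j}[L_{\beta,j}]_T\) with \(c_{\beta,j}\in\qq\), which proves the first assertion.

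For the second assertion we apply Proposition~\ref{prop:vir-red} with \(n=2\), which gives \([\sM_{2,\beta}]^{\vir}_T=\hbar[\sM_{2,\beta}]^{\red}_T\) in \(A_{m-1}^T(\sM_{2,\beta})_\qq\). Capping with \(\hbar\in H_T^2(\mathrm{pt})\) commutes with proper pushforward by the projection formula, since \(\hbar\) is pulled back from the point. Thus \(C_\beta=(\ev_1,\ev_2)_*(\hbar[\sM_{2,\beta}]^{\red}_T)=\hbar Z^{\red}_{2,\beta}\) in \(A_{m-1}^T(Y_\beta)_\qq\). Substituting the expansion from the previous paragraph gives the displayed formula.

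The main obstacle is Lemma~\ref{lem:top-equiv-chow}, more precisely the claim that equivariant Chow groups in top degree carry no contributions beyond the fundamental classes of components. In the Borel model \(Y\times^T U_N\) this is the standard fact that \(A_{\text{top}}\) of a scheme is freely generated by its top-dimensional components. What must be checked is that the identification \(A_m^T(Y)=A_{m+N-1}(Y\times^TU_N)\) holds for all large \(N\), and that the top-dimensional components of the mixed space are exactly \(L_{\beta,j}\times^TU_N\). The latter uses that \(T\) is connected, so it fixes each component of \(Y_\beta\). Beyond this, the only point needing care is a bookkeeping one: the dimension bound of Corollary~\ref{cor:Y-isotropic} must hold for the full image of \(\sM_{2,\beta}\), not merely for the support of the virtual class, so that lower-dimensional components cannot contribute \(\hbar\)-multiples in degree \(m\).
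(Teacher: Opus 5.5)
Your proposal is correct and follows the paper's proof essentially step for step. Properness of the evaluation map places the pushforward in $A_m^T(Y_\beta)_\qq$, and Corollary~\ref{cor:Y-isotropic} gives $\dim Y_\beta\le m$ with every top-dimensional component Steinberg. Lemma~\ref{lem:top-equiv-chow} then gives the spanning statement, and Proposition~\ref{prop:vir-red} together with the projection formula gives $C_\beta=\hbar Z^{\red}_{2,\beta}$.

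One wording slip in your last paragraph: the full-image dimension bound is needed to exclude $\hbar$-multiples of classes on \emph{higher}-dimensional supports, since for example $\hbar[V]$ with $\dim V=m+1$ lies in $A_m^T$. Components of dimension $<m$ contribute nothing to $A_m^T$ at all.
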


\begin{proof}
The reduced virtual class has complex dimension \(m\) by \eqref{eq:vdim-reduced}. Since the evaluation map is proper, we have 
\[
  Z_{2,\beta}^{\red} \in A_m^T(Y_\beta)_\qq.
\]
By Corollary \ref{cor:Y-isotropic},
\[
 \dim Y_\beta\le m.
\]
Apply Lemma \ref{lem:top-equiv-chow}. We obtain
\[
  Z_{2,\beta}^{\red} = \sum_j c_{\beta,j}[L_{\beta,j}]_T, \qquad c_{\beta,j}\in\qq\setminus \{0\},
\]
where the \(L_{\beta,j}\) are the \(m\)-dimensional irreducible components that occur with nonzero coefficient. By
Corollary \ref{cor:Y-isotropic}, each \(L_{\beta,j}\) is Lagrangian and lies in \(X\times_\A X\). Hence each is a Steinberg subvariety.
\end{proof}

\begin{remark}
We give another proof of the existence of the Steinberg factor after evaluation, which does not use equation \eqref{eq:red-class-kernel}. Forgetting the torus action, the everywhere surjective Kiem--Li cosection has empty degeneracy locus. Their cosection localization theorem therefore gives
\[
 [\sM_{2,\beta}]^{\vir}=0.
\]
It follows that the image of \(C_\beta\in A_{m-1}^T(Y_\beta)_\qq\) under the forgetful map to \(A_{m-1}(Y_\beta)_\qq\) is zero. Brion's augmentation theorem for arbitrary finite-type torus schemes \cite[Corollary~2.3.1]{Brion} states that
\[
 A_*^T(Y_\beta)/(\hbar)\cong A_*(Y_\beta),
 \qquad T=\cc^*.
\]
Hence there exists \(\delta_\beta\in A_m^T(Y_\beta)_\qq\) with
\[
 C_\beta=\hbar\delta_\beta.
\]
By Corollary \ref{cor:Y-isotropic} and Lemma \ref{lem:top-equiv-chow}, \(\delta_\beta\) is a $\mathbb{Q}$-linear combination of the \(m\)-dimensional Lagrangian components of \(Y_\beta\). This proves the required Steinberg divisibility independently. The Brion lift can be nonunique when \(\hbar\)-torsion is present; the kernel-cone construction gives the distinguished lift \(\delta_\beta= Z_{2,\beta}^{\red}\).
\end{remark}

\begin{remark}(Comparison with \cite{MO})
For an equivariant symplectic resolution $X\to X_0$, Maulik--Okounkov state that the pushforward of the reduced two-point virtual class is a rational linear combination of Steinberg correspondences \cite[\S 7.2.1]{MO}. The preceding argument proves the analogous assertion for the smooth coprime Higgs moduli space directly, using the Hitchin morphism in place of the affinization map. 
\end{remark}

\section{Quantum multiplication by divisors}\label{sec:divisor}

\subsection{Equivariant quantum cohomology of the nonproper target}\label{subsec_equivariant_quan}

Because \(X^T\subset h^{-1}(0)\) and the nilpotent cone is projective, \(X^T\) is proper. Thus the localized equivariant Poincar\'e pairing
\[
 (\alpha,\gamma)_T = \int_X^T \alpha\cup\gamma
\]
is defined by localization. It is nondegenerate after inverting \(\hbar\): the localization isomorphism identifies localized equivariant cohomology of \(X\) with that of the smooth proper fixed locus \(X^T\), and the pairing
becomes the proper Poincar\'e pairing on \(X^T\), multiplied on each fixed component by the inverse equivariant Euler class of its normal bundle. Likewise, Corollary \ref{cor:fixed-proper} makes the fixed-locus contributions proper
and therefore allows the standard \(T\)-equivariant Gromov--Witten invariants to be pushed to a point after applying virtual localization \cite{GP,Givental}.

Fix an \(h\)-ample line bundle \(H\). Because the Hitchin base $\mathbb{A}$ is affine, \(H\) is ample on \(X\), and \(H\cdot\beta>0\) for every nonzero effective class. Let \(\mbox{Eff}(X)\subset H_2(X,\zz)\) be the semigroup of integral
homology classes represented by effective algebraic curves. Define
\[
 \Lambda_H
 =
 \left\{
 \sum_{\beta\in\text{Eff}(X)}a_\beta q^\beta \, \Big| \, a_\beta\in\qq,\ 
 \#\{\beta\, |\, a_\beta\ne0,\ H\cdot\beta\le N\}<\infty
 \text{ for every }N
 \right\}.
\]
This is a completion of the effective semigroup algebra with the displayed support condition. Multiplication on $\Lambda_H$ is well defined since only finitely many terms of each factor occur in any bounded degree. The genus-zero Gromov-Witten generating series used here satisfy this support condition: for fixed markings, the relative stable-map stacks of each $H$-degree are of finite type, and the homology class is locally constant on their connected components, as in Proposition~\ref{prop:stablemap-proper}. Taking finitely many degrees therefore gives finitely many classes with nonempty stable-map spaces. 

To consider generating series of Gromov-Witten invariants of $X$, we work over $\qq(\hbar)\widehat\otimes\Lambda_H$.

\subsection{The correspondence action in Borel--Moore homology}

Put \(W=X\times_\A X\), let \(S_T=H_T^*(\mathrm{pt},\qq)=\qq[\hbar]\), and write 
\((-)_{\mathrm{loc}}=(-)\otimes_{S_T}\qq(\hbar)\). For \(Z\in A_d^T(W)_\qq\), the equivariant cycle-class map gives
\[
 \operatorname{cl}_T^{\mathrm{BM}}(Z)
 \in H_{2d}^{T,\mathrm{BM}}(W,\qq).
\]
The second projection
\[
 p_2:W\longrightarrow X
\]
is proper, because it is the base change of the proper Hitchin morphism. Thus proper pushforward is available in equivariant Borel--Moore homology. Since \(X\) is smooth of complex dimension \(m\), equivariant Poincar\'e
duality gives
\[
 \operatorname{PD}_X: H_T^k(X)_{\mathrm{loc}} \xrightarrow{\,\sim\,} H_{2m-k}^{T,\mathrm{BM}}(X)_{\mathrm{loc}}.
\]
We define
\begin{equation}\label{eq:Act-def}
 \mathbf{F}_{Z}(\alpha)
 :=\operatorname{PD}_X^{-1}
 \left((p_2)_*\bigl(p_1^*\alpha\cap\operatorname{cl}_T^{\mathrm{BM}}(Z)\bigr)\right).
\end{equation}
This definition requires neither \(X\) nor \(W\) to be proper. It shifts cohomological degree by \(2(m-d)\), and in particular a Steinberg cycle, for which \(d=m\), acts with $0$ degree shift.

Because \(X^T\) is proper, \(W^T\) is proper as well. It is closed in the proper inverse image \(p_2^{-1}(X^T)\).  Localized equivariant integration and the projection formula therefore give
\begin{equation}\label{eq:BM-adjointness}
 \bigl(\mathbf{F}_Z(\alpha),\gamma\bigr)_T
 =
 \int_W^T p_1^*\alpha\,p_2^*\gamma \cap\operatorname{cl}_T^{\mathrm{BM}}(Z)
 =
 \int_Z^T p_1^*\alpha\,p_2^*\gamma.
\end{equation}
The familiar shorthand \((p_2)_*(p_1^*\alpha\cap\operatorname{cl}^{\text{BM}}_T(Z))\) is henceforth understood as \eqref{eq:Act-def}. Let \(\Act(\St_{\rm cyc})\) denote the linear space of operators induced by Steinberg cycles. 
The resulting linear map from the Steinberg cycle space to endomorphisms of localized equivariant cohomology need not be
injective, which is why we distinguish the space \(\St_{\rm cyc}\) of Steinberg ccyles from \(\Act(\St_{\rm cyc})\).

\subsection{Divisor operators on $X$}

Let \(D_T\in H^2_T(X)\) be an equivariant lift of an ordinary divisor \(D\). By definition,
\[
 (D_T\star\alpha,\gamma)_T
 =
 \sum_{\beta\ge0} q^\beta \langle D_T,\alpha,\gamma\rangle_{0,3,\beta}^{X,T}.
\]
The degree-zero term is ordinary cup product:
\[
 \langle D_T,\alpha,\gamma\rangle_{0,3,0}
 =
 \int_X^T D_T\cup\alpha\cup\gamma.
\]
For \(\beta\ne0\), the divisor equation gives
\begin{equation}\label{eq:divisor-equation}
 \langle D_T,\alpha,\gamma\rangle_{0,3,\beta}
 =
 (D\cdot\beta) \cdot\langle\alpha,\gamma\rangle_{0,2,\beta}.
\end{equation}

By Proposition \ref{prop:vir-red},
\begin{equation} \label{eq:two-point-reduced}
 \langle\alpha,\gamma\rangle_{0,2,\beta}
 =
 \int^T_{[\sM_{2,\beta}]^{\vir}_T}\ev_1^*\alpha\,\ev_2^*\gamma
 =
 \hbar\int^T_{[\sM_{2,\beta}]^{\red}_T}\ev_1^*\alpha\,\ev_2^*\gamma
 =
 \hbar\int^T_{Z_{2,\beta}^{\red}} p_1^*\alpha\,p_2^*\gamma.
\end{equation}
The last equality is the projection formula for the proper evaluation map.

By equation \eqref{eq:Act-def} and the projection formula,
\[
 \int_Z^T p_1^*\alpha\,p_2^*\gamma
 =
 (\mathbf{F}_Z(\alpha),\gamma)_T.
\]
Substituting into \eqref{eq:divisor-equation} gives
\begin{align*}
 (D_T\star\alpha,\gamma)_T
 =
 (D_T\cup\alpha,\gamma)_T
 +
 \left(\hbar \sum_{\beta>0} (D\cdot\beta)q^\beta \mathbf{F}_{Z_{2,\beta}^{\red}}(\alpha), \gamma \right)_T.
\end{align*}
Nondegeneracy of the localized equivariant pairing proves:

\begin{theorem}[The divisor correction is Steinberg-valued]\label{thm:divisor}
For every divisor \(D\) equipped with an equivariant lift \(D_T\),
\[
 M_D
 =
 D_T\cup
 +
 \hbar\sum_{\beta>0}(D\cdot\beta)q^\beta\mathbf{F}_{Z_{2,\beta}^{\red}}.
\]
Equivalently,
\[
 \frac{M_D-D_T\cup}{\hbar}
 \in
 \Act\!\left(\widehat{\St}_{\rm cyc}(X/\A)\right).
\]
Taking \(D=\Theta\) gives the formula for $\Theta$ in Theorem  \ref{thm:main}.
\end{theorem}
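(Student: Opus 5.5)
The argument is essentially assembled in the computation immediately preceding the statement. I would organize it as a pairing identity followed by an appeal to nondegeneracy.

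\textbf{Step 1: compare pairings against an arbitrary test class.} Fix $\alpha,\gamma\in H_T^*(X)_{\mathrm{loc}}$ and expand $(D_T\star\alpha,\gamma)_T$ as a $q$-series using the definition \eqref{eqn_quantum_product}. The coefficients are well defined because the fixed loci are proper (Corollary~\ref{cor:fixed-proper}) and the evaluation maps are proper over $\A$.

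\textbf{Step 2: the $\beta=0$ term.} Here $\sM_{3,0}=X$ with virtual class $[X]$, so this term is the classical triple integral $\int_X^T D_T\alpha\gamma=(D_T\cup\alpha,\gamma)_T$.

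\textbf{Step 3: the $\beta\neq0$ terms.}
\begin{itemize}
\item Apply the divisor equation \eqref{eq:divisor-equation}. It holds for the $T$-equivariant virtual classes because the forgetful map $\sM_{3,\beta}\to\sM_{2,\beta}$ is compatible with the obstruction theories. The equivariant part of $D_T$ drops out, leaving the factor $D\cdot\beta$.
\item Rewrite the resulting two-point invariant via Proposition~\ref{prop:vir-red} as $\hbar$ times the reduced integral.
\item Push this forward along the proper map $(\ev_1,\ev_2)$, using the projection formula, to get $\hbar\int_{Z^{\red}_{2,\beta}}^T p_1^*\alpha\,p_2^*\gamma$.
\item By \eqref{eq:BM-adjointness} this equals $\hbar\,(\mathbf F_{Z^{\red}_{2,\beta}}(\alpha),\gamma)_T$.
\end{itemize}

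\textbf{Step 4: conclude.} Summing over $\beta$ in $\Lambda_H$, convergence follows from the finiteness of classes in each $H$-degree. This yields the equality of pairings for every $\gamma$. Nondegeneracy of the localized pairing, obtained via localization to the proper fixed locus $X^T$, then gives the operator identity for $M_D$.

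\textbf{Step 5: the Steinberg membership.} By Theorem~\ref{thm:Z-Steinberg}, each $Z^{\red}_{2,\beta}$ is a $\qq$-combination of Steinberg cycles. Hence $(M_D-D_T\cup)/\hbar$ is the action of the Novikov-completed element $\sum_\beta (D\cdot\beta)q^\beta Z^{\red}_{2,\beta}\in\widehat{\St}_{\rm cyc}(X/\A)$. The term-by-term action on the completion is well defined since each $H$-degree involves only finitely many terms. Specializing to $D=\Theta$, with the canonical lift $\Theta_T$, gives \eqref{eq:theta-main}.

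\textbf{Main obstacle.} I expect the delicate point to be justifying the divisor equation and the projection formula in the nonproper, localized setting. The plan there is to apply both identities on the proper fixed loci after virtual localization, where they hold by the standard compatibility of forgetful maps with $T$-equivariant virtual localization. The integrals over the nonproper spaces are defined through these fixed-locus contributions, so the identities transfer. One must also check that dividing by $\hbar$ is legitimate, which holds because we work in $\qq(\hbar)$.
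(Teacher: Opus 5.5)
Your proposal is correct and follows the paper's argument step for step. You expand $(D_T\star\alpha,\gamma)_T$, identify the degree-zero term with cup product, apply the divisor equation, Proposition~\ref{prop:vir-red}, the projection formula along $(\ev_1,\ev_2)$ and the adjointness \eqref{eq:BM-adjointness}, conclude by nondegeneracy of the localized pairing, and then use Theorem~\ref{thm:Z-Steinberg} for the Steinberg membership; the technical points you flag about the nonproper, localized setting are handled in the paper at the same level, through properness of the fixed loci and of the evaluation maps over $\A$.
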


\begin{lemma}\label{lem:lift-independence}
If \(D_T'\) and \(D_T\) are two equivariant lifts of the same ordinary divisor, then the positive-degree parts of their quantum multiplication operators agree.
\end{lemma}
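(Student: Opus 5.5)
The plan is to reduce the claim to the divisor equation and the reduced-class identity already established, and to check that the lift $D_T$ enters only through the degree-zero term. Write $D_T'=D_T+c\,\hbar$ for some $c\in\qq$. This holds because two equivariant lifts of the same ordinary class differ by an element of $\ker\bigl(H_T^2(X)\to H^2(X)\bigr)$. Since $X$ is connected and $T=\cc^*$, this kernel is $\hbar\cdot H_T^0(X)=\qq\hbar$; one sees this from the Leray--Serre spectral sequence of the Borel fibration, or from Brion's augmentation isomorphism $A_T^*/(\hbar)\cong A^*$ in codimension one. I expect this identification of the difference to be the only point needing care. For a noncompact $X$ one must make sure no odd-degree or torsion contribution enters, and I would argue it via equivariant formality of $X$. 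Equivariant formality follows because the $T$-action retracts $X$ onto the proper nilpotent cone, using properness of $h$ and $\A^T=\{0\}$.

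With the difference in hand, fix $\beta\ne0$ and compute the $q^\beta$ coefficient of $(D'_T\star\alpha,\gamma)_T-(D_T\star\alpha,\gamma)_T$. By multilinearity of the invariants this coefficient is $c\,\hbar\,\langle 1,\alpha,\gamma\rangle^{X,T}_{0,3,\beta}$. The fundamental class axiom says that three-point invariants with an insertion of $1$ vanish for $\beta\ne0$. Its proof pulls back along the forgetful map $\sM_{3,\beta}\to\sM_{2,\beta}$, which is compatible with the virtual class, and the pushforward vanishes for dimension reasons. This makes the difference zero.

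Alternatively, the divisor equation \eqref{eq:divisor-equation} gives the coefficient directly as $(D\cdot\beta)\langle\alpha,\gamma\rangle_{0,2,\beta}$. This expression depends only on the ordinary degree $D\cdot\beta$. The reason is that the restriction of $D_T$ to a curve class pairs through $H^2$, and the $\hbar$-component integrates to zero over a curve. Nondegeneracy of the localized pairing then identifies the positive-degree operators. As a final check, the resulting operator is exactly $\hbar\sum_{\beta>0}(D\cdot\beta)q^\beta\mathbf F_{Z_{2,\beta}^{\red}}$ from Theorem~\ref{thm:divisor}, which visibly involves only $D$ and not $D_T$.
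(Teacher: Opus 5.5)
Your proof is correct, but your main route is not the paper's. The paper's proof is essentially your ``alternative'' paragraph. It uses only the divisor equation \eqref{eq:divisor-equation}: the coefficient there is the forgetful pushforward of $\ev^*D_T$, which is a class in $H_T^0$ and hence the ordinary number $D\cdot\beta$. So the positive-degree formula of Theorem~\ref{thm:divisor} is literally the same for both lifts. The paper explicitly declines to say anything about the difference $D_T'-D_T$.

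Your primary route instead identifies $D_T'-D_T=c\hbar$ and then kills $c\hbar\langle 1,\alpha,\gamma\rangle_{0,3,\beta}$ with the fundamental class axiom. This gives a stronger conclusion than the paper states: the two full operators, classical parts included, differ by the scalar $c\hbar\cdot\id$. The cost is a computation of $\ker\bigl(H_T^2(X)\to H^2(X)\bigr)$, and that computation is easier than you suggest. In the Leray--Serre spectral sequence of the Borel fibration, $E_2^{1,1}=H^1(BT)\otimes H^1(X)=0$. The kernel is therefore the image of $E_2^{2,0}=H^2(BT)\otimes H^0(X)$, i.e.\ $\hbar$ times locally constant functions. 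With rational coefficients, no equivariant formality or retraction onto the nilpotent cone is needed. Connectedness of $X$ makes the kernel $\qq\hbar$; without it, the fundamental class argument simply runs componentwise.

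Neither route is foundationally cheaper. Both the fundamental class axiom and the divisor equation rest on the same compatibility $[\sM_{3,\beta}]^{\vir}_T=\pi^*[\sM_{2,\beta}]^{\vir}_T$ along the proper forgetful map, together with the projection formula. The paper's version is just shorter, because the divisor equation was already in use for Theorem~\ref{thm:divisor}.
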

\begin{proof}
The equivariant divisor equation in \eqref{eq:divisor-equation} uses only the degree of the underlying ordinary divisor on \(\beta\). Both lifts have the same underlying class, so their coefficients \(D\cdot\beta\) are equal for every 
\(\beta\). The positive-degree formula in Theorem \ref{thm:divisor} is therefore unchanged. The classical operators differ by cup product with \(D_T'-D_T\), and no assertion that this difference is a scalar is needed.
\end{proof}

\subsection{Divisor operators on \texorpdfstring{$[X/\Gamma]$}{the quotient}}
\label{subsec_quantum_orbifold}
Put $\widehat X=[X/\Gamma]$. Its unrigidified inertia stack is
\begin{equation}\label{eqn_inertia_stack}
 I\widehat X=\coprod_{\gamma\in\Gamma}[X^\gamma/\Gamma].
\end{equation}
Each connected sector $\mathcal F=[F/\Gamma_F]$ is smooth and symplectic: $F$ is a fixed component of a finite symplectic automorphism. Set $d_{\mathcal F}=\dim_\C F$, $a_{\mathcal F}=\age(\gamma|T_X)$, and 
$s_{\mathcal F}=\operatorname{ord}(\gamma)$. The eigenvalue pairing proves
\[
 d_{\mathcal F}=m-2a_{\mathcal F},\qquad a_{\mathcal F}\in\Z.
\]
The rigidified sector is $\overline{\mathcal F}=[F/(\Gamma_F/\langle\gamma\rangle)]$; write $\rho_{\mathcal F}:\mathcal F\to\overline{\mathcal F}$. The standard evaluation maps for stable maps take values in $\overline I\widehat X$, not directly in $I\widehat X$, see \cite{AGV}. 

For completeness, the cover description of a nonidentity fixed locus uses a cyclic \'etale cover $\pi:C'\to C$ of degree $s$ and bundles of rank $r/s$ on $C'$. The fixed-determinant, trace-free fiber is cut out by the map
\[
 (E',\phi')\longmapsto
 \bigl(\det(\pi_*E'),\operatorname{Tr}_{C'/C}(\operatorname{tr}\phi')\bigr)
 \in\Pic^d(C)\times H^0(C,K_C).
\]
The pushforward $\pi_*E'$ has rank $r$, so it cannot itself be the value of a map to a rank-one Higgs moduli space.  We need only the fixed-locus geometry, not a global identification of its base with an unqualified quotient; see \cite{MS} for the cover construction.

For a nonzero effective coarse curve class $\bar\beta$, let $\mathfrak M_{n,\bar\beta}(\mathcal F_1,\ldots,\mathcal F_n)$ be the stack of (representable) twisted stable maps with the specified marking sectors. On this stack, the standard virtual dimension is
\[
 m-3+n-\sum_i a_{\mathcal F_i}.
\]
The symplectic cosection is constructed by 
$f^*T_{\widehat X}\to\Omega_{\mathcal C}\otimes\C_1\to\omega_{\mathcal C}\otimes\C_1$ and relative duality.
The absolute compatibility calculation is the same alternating-form calculation as for smooth targets in \cite[Lemma~6.1]{KL}: on the smooth locus the composite involving two copies of $df$ vanishes by skew-symmetry, and balanced node branches give opposite residues. This calculation is valid in tame \'etale charts of the twisted curve
and is invariant under their finite stabilizers. It therefore descends to the absolute obstruction sheaf. More explicitly, pulling back $X\to[X/\Gamma]$ along the map gives a principal $\Gamma$-cover $P\to\mathcal C$ and a $\Gamma$-equivariant map $P\to X$. Representability makes $P$ a nodal algebraic space curve, and the usual two-form calculation applies to it, componentwise if it is disconnected. Deformations of the twisted source curve induce deformations of $P$, whose contribution is eliminated by the absolute Kiem--Li calculation. Taking finite-group invariants is exact in characteristic zero; normalized trace identifies the invariant duality map with the one on $\mathcal C$. This verifies descent through the absolute obstruction quotient, including smoothing nodes. Duality on a connected tame twisted curve gives $H^1(\omega_{\mathcal C})=\C$; the dual map is nonzero on a nonconstant component. Thus it is surjective in nonzero degree. Proposition~\ref{prop:kernel-cone-general} gives a reduced \emph{cycle}, not an asserted reduced perfect obstruction theory, with
\begin{equation}\label{eqn_virtual_dimension_orbifold}
 [\mathfrak M_{n,\bar\beta}]^{\vir}_T
  =\hbar[\mathfrak M_{n,\bar\beta}]^{\red}_T,\qquad
 \operatorname{vdim}^{\red}=m-2+n-\sum_i a_{\mathcal F_i}.
\end{equation}
In particular, in the three-pointed case, the reduced dimension is $m+1-\sum_i a_{\mathcal F_i}$, whereas the dimension in the two-pointed case is
\[
 D_{12}=m-a_{\mathcal F_1}-a_{\mathcal F_2} =\tfrac12(d_{\mathcal F_1}+d_{\mathcal F_2}).
\]

\begin{lemma}[Hitchin value and isotropy for twisted maps]
\label{lem_hitchin_constant_twisted}
Every connected proper twisted curve maps constantly to the affine Hitchin base. Every two-point evaluation image of a genus-zero twisted stable-map family is isotropic in the product of its sector symplectic forms, with the difference sign.
\end{lemma}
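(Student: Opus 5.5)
The plan is to reduce both assertions to the untwisted arguments already given, namely Lemma~\ref{lem:hitchin-constant} and Lemma~\ref{lem:nodal-basicness}, by passing to the principal $\Gamma$-cover of the source curve.

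For the first assertion, let $f:\mathcal C\to\widehat X$ be a twisted stable map with connected proper twisted source $\mathcal C$. Since $h':\widehat X\to\A$ is induced by the $\Gamma$-invariant morphism $h$, the composite $h'\circ f:\mathcal C\to\A$ factors through the coarse curve $|\mathcal C|$, because $\A$ is a scheme. The coarse curve $|\mathcal C|$ is a connected proper nodal curve, so every regular function on it is constant. As $\A$ is affine, the morphism is determined by the pullbacks of the coordinate functions, and these are constants. The same argument works in families: over a base $S$, the map to $\A$ factors through $\pi_*\cO_{|\mathcal C|}=\cO_S$. In particular, the two evaluations of a twisted stable map have equal Hitchin values.

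For isotropy, I follow Proposition~\ref{prop:isotropy}. Let $W$ be an integral substack of the twisted stable-map stack. Choose a smooth integral $S$ dominating $W$ through an étale atlas, and pull back the universal twisted curve $\mathcal D\to S$, its two marking gerbes, and the map $F:\mathcal D\to\widehat X$. As in the discussion of the cosection, I form the principal $\Gamma$-cover $P=\mathcal D\times_{\widehat X}X\to\mathcal D$ with its $\Gamma$-equivariant map $\widetilde F:P\to X$. Representability makes $P\to S$ a family of nodal algebraic-space curves, possibly disconnected. Over a dense equisingular open set, and after a finite étale base change, the normalization of $P$ becomes a disjoint union of smooth genus-zero families $\sC_a$. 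This holds because the normalization of a component of $\mathcal D$ is a rational orbifold curve, and its étale covers are rational curves. Lemma~\ref{lem:basicness} then gives $\widetilde F_a^*\omega=\pi_a^*\eta_a$. The gluing at nodes, as in Lemma~\ref{lem:nodal-basicness}, forces $\eta_a=\eta_b$ on each connected component of $P$. The $\Gamma$-action permutes the connected components of $P$, and $\omega$ is $\Gamma$-invariant because $\tau_\eta$ preserves $\omega$. Since $\Gamma$ acts transitively on components (as $\mathcal D$ is connected), all the forms $\eta_a$ coincide.

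Next comes the marking sectors. Over the $i$-th marking gerbe, a lift to $P$ gives a section, after étale base change, landing in $X^{\gamma_i}$, that is, in $F_i$ for the relevant sector. Pulling back $\omega|_{F_i}$, which is the sector symplectic form, gives the common form $\eta$ for $i=1,2$. Hence the map $g:S\to F_1\times F_2$ satisfies $g^*(p_1^*\omega_{F_1}-p_2^*\omega_{F_2})=0$. This descends to the rigidified sectors $\overline{\mathcal F}_i$, because the forms are invariant under the finite groups involved. The generic-smoothness argument of Proposition~\ref{prop:isotropy}, together with extension of regular forms from a dense open set, then gives isotropy of the closure of the image. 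Isotropy also bounds the dimension by $\tfrac12(d_{\mathcal F_1}+d_{\mathcal F_2})=D_{12}$.

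I expect the main obstacle to be the behaviour of the cover $P$ at twisted nodes and markings. Three points need care there: balancedness ensures that the branch sections of $P$ still glue, the choice of lifts of the marking sections is only defined up to $\Gamma$ and the étale base change, and connectedness must be handled by the $\Gamma$-transitivity argument rather than by the tree property. I would treat all three locally in tame étale charts of $\mathcal D$, where $P$ is a disjoint union of ordinary nodal curves.
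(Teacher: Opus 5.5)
\textbf{Gap: rationality of the components of the cover $P$.} Most of your argument works. Constancy of the Hitchin value via the coarse curve is correct. So is the $\Gamma$-transitivity step for the possibly disconnected cover $P$. The marking step is also fine: a point of $P$ over the $i$-th marking gerbe is fixed by $\gamma_i$, so $\widetilde F$ sends it into $X^{\gamma_i}$.

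The problem is the sentence claiming that the normalization of $P$ consists of smooth genus-zero families. It is not true that \'etale covers of a rational orbifold curve are rational. For an elliptic curve $E$ with $\mu_2$ acting by $x\mapsto -x$, the quotient $E\to[E/\mu_2]$ is a $\mu_2$-torsor over a smooth twisted $\mathbb P^1$ with four $B\mu_2$-points. Lemma~\ref{lem:basicness} needs $\pi_*\Omega^1_{\sC_a/S}=0$. A positive-genus component of $P$ would therefore break the basicness of $\widetilde F_a^*\omega$, so this step carries real weight.

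\textbf{How to repair it.} The conclusion is true here, but you must supply the reason. One option uses the two-point structure:
\begin{itemize}
\item Since $\Gamma$ is abelian and the dual graph is a tree, multiply the vertex relations on one side of a node, as in Lemma~\ref{lem:two-point-monodromy}. This shows a node has nontrivial monodromy only if it separates the two markings, and then its monodromy is $\gamma_1^{\pm1}$.
\item By representability, a point with trivial monodromy is not stacky.
\item Hence each component of the normalization of $\mathcal D$ has either no stacky points, so $P$ restricts to copies of $\mathbb P^1$, or exactly two stacky points of order $s=\operatorname{ord}(\gamma_1)$ with monodromies $\gamma_1^{\pm1}$. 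In the latter case $P$ restricts to copies of the cover $\mathbb P^1\to[\mathbb P^1/\mu_s]$, which is rational.
\end{itemize}
Alternatively, avoid rationality entirely. The form $\widetilde F^*\omega$ is $\Gamma$-invariant, so only $\Gamma$-invariant relative one-forms on $P|_{\mathcal C_a}$ matter. These are the relative one-forms of the twisted component $\mathcal C_a$, and they vanish by the root-coordinate computation $x^{s-1}dx=dz/s$.

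\textbf{Comparison with the paper.} The invariant-form repair is essentially the paper's route. The paper never passes to $P$ in this lemma. Instead it shows $c_*\Omega^1=\Omega^1_{\mathbb P^1}$ on each smooth twisted genus-zero component and proves basicness there. It then matches the forms at nodes after pulling back to the branch gerbes, propagates along the tree, and neutralizes the marking gerbes \'etale locally. That argument does not depend on the number of stacky points. Your version reuses the untwisted lemmas verbatim, but it needs either the two-point monodromy structure or the invariance argument to secure rationality, plus the extra transitivity step.
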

\begin{proof}
A morphism from a twisted curve to an affine scheme factors through the coarse curve, whose global regular functions are constant. This also holds in families, and defines the Hitchin-value map. Properness of twisted stable maps
to a proper tame target, applied relatively over $\A$, makes the evaluation maps proper; bounded degree is measured by an ample line on the coarse target \cite{AGV}.

For isotropy, shrink a smooth parameter space to a dense equisingular open and label components and branches after an \'etale cover. The normalizations are smooth proper twisted genus-zero families. For a smooth twisted rational curve with coarse map $c$, a local root coordinate $z=x^s$ shows $(c_*\Omega^1)=(\Omega^1_{\mathbb P^1})$: an invariant regular differential is a regular multiple of $x^{s-1}dx=dz/s$. Hence the pushforward of relative one-forms has no sections and the pushforward of the structure sheaf is $\cO$. The cotangent-filtration proof of Lemma~\ref{lem:basicness} therefore applies on each component. The pulled-back two-form is basic. At a node its two restrictions agree after pullback to the branch gerbe, so the basic forms agree on adjacent components and hence on the whole tree. Pullback at the marking gerbes gives equality of the two evaluation forms. The gerbes can be neutralized \'etale locally for this calculation, and the equality descends. Extension over the parameter space and generic smoothness prove isotropy, exactly as in Proposition~\ref{prop:isotropy}.
\end{proof}

The two-pointed cycles are
\begin{equation}\label{eq:fiber-product-support_stacky}
 Z_{\bar\beta}^{\red,(\mathcal F_1,\mathcal F_2)}
 :=(\ev_1,\ev_2)_*[\mathfrak M_{2,\bar\beta}(\mathcal F_1,\mathcal F_2)]^{\red}_T
 \in A_{D_{12}}^T(\overline{\mathcal F}_1\times_\A\overline{\mathcal F}_2)_\Q.
\end{equation}
The isotropy lemma bounds the image dimension by $D_{12}$. The top-dimensional equivariant Chow argument, applied on finite quotient presentations, shows that each cycle is a finite rational linear combination of Lagrangian fundamental classes. The term ``Lagrangian cycle'' permits the zero class and coefficients of either sign, and it does not mean one irreducible reduced subvariety. Write $Z^{\PGL_r}_{\bar\beta}$ for this finite collection of sectorwise cycles, rather than for a disjoint union with one uniform dimension.
\begin{equation}\label{eqn_Z_beta_PGL}
 Z^{\PGL_r}_{\bar\beta}
   :=\bigl(Z_{\bar\beta}^{\red,(\mathcal F_1,\mathcal F_2)}\bigr)_{\mathcal F_1,\mathcal F_2}.
\end{equation}
The marked group elements satisfy $\gamma_1\gamma_2=1$ by
Lemma~\ref{lem:two-point-monodromy}.

We fix all operator normalizations explicitly. Let $\bar\iota$ be the inversion on rigidified inertia and set
$\check\ev_2=\bar\iota\circ\ev_2$. On cohomology og the rigidified inertia, the orbifold Poincar\'e pairing is
\begin{equation}\label{eqn:pairing_2}
 (a,b)_{\orb,T}
=\sum_{\overline{\mathcal F}} \frac1{s_{\mathcal F}}\int^T_{\overline{\mathcal F}}a\cup\bar\iota^*b.
\end{equation}
This is the unrigidified stack pairing transported by $\rho^*$. Poincar\'e duality on a sector uses $2d_{\mathcal F}-p$, not $2m-p$. 
\begin{definition}[Normalized reduced orbifold operator]
\label{defn_Steinberg_PGL}
For an input on $\overline{\mathcal F}_1$, the contribution with second marking in $\overline{\mathcal F}_2$ to the output on $\bar\iota(\overline{\mathcal F}_2)$ is
\begin{equation}\label{eq:Act-def_stacky}
 F^{\widehat X,\rig,\CR}_{\bar\beta}(a)
  :=\sum_{\mathcal F_2}s_{\mathcal F_2}
   \operatorname{PD}^{-1}_{\bar\iota(\overline{\mathcal F}_2)}
    (\check\ev_2)_*\left(\ev_1^*a\cap[\mathfrak M_{2,\bar\beta} (\mathcal F_1,\mathcal F_2)]^{\red}_T\right).
\end{equation}
Cycle classes are understood in the Borel--Moore expression. The factor $s_{\mathcal F_2}$ is forced by the pairing (\ref{eqn:pairing_2}). The operator preserves Chen--Ruan degree; on underlying ordinary cohomology the degree shift is $2(a_{\mathcal F_1}-a_{\mathcal F_2})$.
\end{definition}

\begin{theorem}[Untwisted orbifold divisor formula]
\label{thm:divisor_PGL_r}
For an untwisted divisor $D\in H^2(\widehat X,\Q)$ with equivariant lift $D_T$,
\[
 M_D=D_T\cup_{\CR}(-)+
   \hbar\sum_{\bar\beta>0}(D\cdot\bar\beta) q^{\bar\beta}F^{\widehat X,\rig,\CR}_{\bar\beta}.
\]
Its positive-degree correction is sectorwise Steinberg-valued with the normalizations in Definition~\ref{defn_Steinberg_PGL}.
\end{theorem}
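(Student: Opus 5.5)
The argument will follow the proof of Theorem~\ref{thm:divisor} sector by sector, with the orbifold pairing \eqref{eqn:pairing_2} in place of the ordinary one. Fix sectors $\overline{\mathcal F}_1$, $\overline{\mathcal F}_2$ and test the operator $M_D$ against classes $a$ supported on $\overline{\mathcal F}_1$ and $b$ supported on $\bar\iota(\overline{\mathcal F}_2)$. By definition of the orbifold quantum product,
\[
(M_D a,b)_{\orb,T}=\sum_{\bar\beta\ge0}q^{\bar\beta}\langle D_T,a,b\rangle^{\widehat X,T}_{0,3,\bar\beta},
\]
where the first insertion lies in the untwisted sector, because $D$ is untwisted. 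The localized pairing is nondegenerate by the same fixed-locus argument as in Section~\ref{subsec_equivariant_quan}: the $T$-fixed loci of the sectors lie over $0\in\A$, so they are proper. Hence it suffices to identify each coefficient of $q^{\bar\beta}$.

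For $\bar\beta=0$, the three-point invariants with one untwisted insertion reduce to the orbifold Poincar\'e pairing of $D_T\cup a$ with $b$. This gives the term $D_T\cup_{\CR}(-)$, since the Chen--Ruan product with an untwisted class is ordinary cup product on each sector.

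For $\bar\beta\ne0$, I would apply the orbifold divisor equation (AGV) for an untwisted divisor,
\[
\langle D_T,a,b\rangle_{0,3,\bar\beta}=(D\cdot\bar\beta)\,\langle a,b\rangle_{0,2,\bar\beta}.
\]
This needs the forgetful map $\mathfrak M_{3,\bar\beta}(\mathrm{untw},\mathcal F_1,\mathcal F_2)\to\mathfrak M_{2,\bar\beta}(\mathcal F_1,\mathcal F_2)$, which forgets an untwisted marking, together with pullback compatibility of the virtual class. Here $D\cdot\bar\beta$ is the degree on the coarse curve.

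Next I use \eqref{eqn_virtual_dimension_orbifold} to write $[\mathfrak M_{2,\bar\beta}]^{\vir}_T=\hbar[\mathfrak M_{2,\bar\beta}]^{\red}_T$. The two-point invariant is taken with respect to $\ev_1^*a\cup\check\ev_2^*b$, with the inversion built in as in \eqref{eqn:pairing_2}. I then push forward along the proper map $(\ev_1,\check\ev_2)$, which is proper by Lemma~\ref{lem_hitchin_constant_twisted}, and apply the projection formula together with Poincar\'e duality on $\bar\iota(\overline{\mathcal F}_2)$ in dimension $d_{\mathcal F_2}$. This rewrites the invariant as the pairing of $F^{\widehat X,\rig,\CR}_{\bar\beta}(a)$ with $b$. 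The factor $1/s_{\mathcal F_2}$ in the orbifold pairing cancels the factor $s_{\mathcal F_2}$ in Definition~\ref{defn_Steinberg_PGL}.

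Summing over sectors and degrees and invoking nondegeneracy gives the formula for $M_D$. The sectorwise Steinberg statement then follows from the discussion after \eqref{eq:fiber-product-support_stacky}. Lemma~\ref{lem_hitchin_constant_twisted} bounds the dimension of the image by $D_{12}$ and shows it is isotropic, and the top-dimensional Chow argument of Lemma~\ref{lem:top-equiv-chow}, run on finite quotient presentations, shows each $Z^{\red,(\mathcal F_1,\mathcal F_2)}_{\bar\beta}$ is a rational combination of Lagrangian classes in $\overline{\mathcal F}_1\times_\A\overline{\mathcal F}_2$.

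The main obstacle I expect is normalization bookkeeping. Rigidified evaluation maps carry degree factors: the map $\mathcal F\to\overline{\mathcal F}$ is a $\mu_{s}$-gerbe, and the AGV two-point invariants use $\ev_i$ into $\overline I\widehat X$. One has to check that these factors, the $1/s$ in the pairing, and the $s_{\mathcal F_2}$ in the operator combine to exactly one net factor. I would first settle this with a toy case such as $B\mu_s$ or a trivial-gerbe sector, and then transport the result. A secondary point is verifying the divisor equation at the level of reduced cycles. I plan to avoid this by applying the divisor equation to the standard virtual classes, which is standard, and only afterwards substituting $\hbar\cdot$red.
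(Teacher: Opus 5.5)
Your proposal is correct and follows the paper's own argument. You pair with a test class, cancel the factor $s_{\mathcal F_2}$ of Definition~\ref{defn_Steinberg_PGL} against the $1/s_{\mathcal F_2}$ in \eqref{eqn:pairing_2}, identify the reduced two-point invariant, multiply by $\hbar$, apply the AGV divisor equation to the standard classes, and conclude by localized nondegeneracy. The sectorwise Steinberg property comes from the discussion after \eqref{eq:fiber-product-support_stacky}.

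One labeling fix: the test class $b$ should sit on the second-marking sector $\overline{\mathcal F}_2$, since it pairs with the output on $\bar\iota(\overline{\mathcal F}_2)$; then $\check\ev_2^*\bar\iota^*b=\ev_2^*b$. Because invariants, pairing and operator are all defined on rigidified inertia, no additional $\rho_{\mathcal F}$-degree factors arise, and your toy-case normalization check is unnecessary.
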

\begin{proof}
Pair the proposed operator with a class on the second marking sector. The output factor $s_{\mathcal F_2}$ cancels the pairing factor $1/s_{\mathcal F_2}$, and the inversion converts the tested output back to $\ev_2^*b$. The result is exactly the reduced two-point invariant. Multiplication by $\hbar$ recovers the standard invariant. The orbifold divisor equation for an untwisted insertion \cite{AGV} gives $(D\cdot\bar\beta)$; the degree-zero term is $D_T\cup_{\CR}-$. Nondegeneracy after localization proves the formula. 
\end{proof}

\section{Properties of Steinberg cycles}\label{sec_property_Steinberg}

\subsection{Support over the spectral discriminant}

Let \(\A^{\mathrm{sp,sm}}\subset\A\) be the open locus on which the spectral curve is smooth, and define
\[
 \Delta:=\A\setminus\A^{\mathrm{sp,sm}}.
\]
For \(a\in\A^{\mathrm{sp,sm}}\), the Beauville--Narasimhan--Ramanan description identifies the fixed-determinant
fiber with a torsor under the kernel of the norm map. Consequently, each connected component of \(h^{-1}(a)\) is a torsor under the identity component
\[
 \operatorname{Prym}(\Sigma_a/C)^0
 :=\ker(\operatorname{Nm}:\operatorname{Pic}(\Sigma_a) \to\operatorname{Pic}(C))^0,
\]
which is an abelian variety \cite{BNR}. Thus no component contains a rational curve. In what follows, we just denote $Z_{\beta}^{\red}:=Z_{2,\beta}^{\red}$.

\begin{corollary}[Discriminant support]\label{cor:discriminant}
For every \(\beta\ne0\), we have
\[
 \Supp Z_\beta^{\red}
 \subset
 h^{-1}(\Delta)\times_\A h^{-1}(\Delta).
\]
\end{corollary}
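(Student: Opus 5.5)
The plan is to show that no stable map of class \(\beta\ne0\) has image meeting \(h^{-1}(\A^{\mathrm{sp,sm}})\). Every point of \(\Supp Z_\beta^{\red}\) lies in the image \(Y_\beta\) of \((\ev_1,\ev_2)\), so this suffices. Indeed, \(Z_\beta^{\red}\) is the proper pushforward of a class supported on \(\sM_{2,\beta}\), so its support lies in \(Y_\beta\). By \eqref{eq:fiber-product-support}, \(Y_\beta\subset X\times_\A X\). It therefore suffices to prove that for every stable map \(f:\mathcal D\to X\) in \(\sM_{2,\beta}\), the common Hitchin value \(a=h(f(\mathcal D))\) lies in \(\Delta\). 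Both marked points then lie in \(h^{-1}(\Delta)\), and the pair lies in \(h^{-1}(\Delta)\times_\A h^{-1}(\Delta)\).

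First, I would invoke Lemma \ref{lem:hitchin-constant}: the composite \(h\circ f\) is constant, since \(\mathcal D\) is connected and proper and \(\A\) is affine. Hence \(f\) factors through the fiber \(h^{-1}(a)\). Since \(\mathcal D\) is connected, \(f(\mathcal D)\) lies in a single connected component \(P\) of \(h^{-1}(a)\). Suppose \(a\in\A^{\mathrm{sp,sm}}\). By the BNR description recalled above, \(P\) is a torsor under the abelian variety \(\operatorname{Prym}(\Sigma_a/C)^0\), so \(P\) is itself isomorphic to an abelian variety. Every irreducible component of \(\mathcal D\) is a rational curve, and any morphism from \(\mathbb P^1\) to an abelian variety is constant. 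One way to see this: it lifts to the universal cover \(\C^g\), where holomorphic functions on \(\mathbb P^1\) are constant. Alternatively, the pullback of the trivial cotangent bundle forces \(df=0\). Thus \(f\) is constant on every component and \(f_*[\mathcal D]=0\), which contradicts \(\beta\ne0\). Hence \(a\in\Delta\).

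The step needing the most care is the scheme-theoretic use of BNR. One needs that the fiber \(h^{-1}(a)\), and not merely its reduction, has connected components that are torsors. Only the reduced image matters for the conclusion, however, since a map from a reduced curve factors through \(h^{-1}(a)_{\red}\), and rigidity applies there. I would also check that the torsor statement in the fixed-determinant, trace-free setting is the one quoted, namely torsors under the identity component of the Prym kernel. The remaining steps are formal. If one wants the support as a closed statement, note that \(h^{-1}(\Delta)\) is closed, so the closure of \(Y_\beta\) causes no issue; in fact \(Y_\beta\) is already closed by properness of \((\ev_1,\ev_2)\).
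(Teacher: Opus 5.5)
Your proposal is correct and follows essentially the same route as the paper. Lemma \ref{lem:hitchin-constant} confines each genus-zero stable map to a single Hitchin fiber. Over $\A^{\mathrm{sp,sm}}$, the BNR description makes each connected component of that fiber an abelian torsor, which contains no rational curves, so the connected map is constant, contradicting $\beta\ne0$. Your explicit reduction of $\Supp Z_\beta^{\red}$ to the image $Y_\beta$, and your remark about passing to the reduced fiber, are details the paper leaves implicit, and both are sound.
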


\begin{proof}
By Lemma \ref{lem:hitchin-constant}, every genus-$0$ stable map lies in one Hitchin fiber. If its Hitchin value lies in
\(\A^{\mathrm{sp,sm}}\), the restriction of the map to every irreducible \(\mathbb P^1\)-component is constant. Thus
the entire connected stable map is constant, contradicting \(\beta\ne0\). 
\end{proof}

\subsection{Nilpotent-cone localization versus global support}

There are two different support statements which should not be conflated.

Globally,
\[
 \Supp Z_\beta^{\red} \subset X\times_\A X
\]
and, more strongly for \(\beta\ne0\), the support lies over \(\Delta\).

On the other hand, when the \(T\)-equivariant invariant is computed by virtual localization, a \(T\)-fixed stable map has Hitchin value in \(\A^T=\{0\}\). Therefore all localization graphs lie in the nilpotent cone
\[
 \Nnil=h^{-1}(0).
\]
Thus, global Steinberg support is Hitchin-fiberwise, whereas fixed-locus localization takes place over $0$. The latter statement does not imply that the global correspondence itself is supported only on \(\Nnil\times\Nnil\).

\section{Character decomposition and its endoscopic interpretation}
\label{sec_endoscopic_decomposition}

\subsection{The torsion action and theta class}\label{sec:torsion}
For $\eta\in\Gamma=\Pic^0(C)[r]$, let
\begin{equation}\label{eq:tau-action}
 \tau_\eta(E,\phi)=(E\otimes\eta,\phi\otimes\id_\eta).
\end{equation}
This automorphism preserves the trace-free condition, determinant and characteristic polynomial, so it acts over $\A$.

\begin{lemma}\label{lem:theta-symplectic-invariant}
The action of $\Gamma$ commutes with $T$, preserves $\Omega_X$, and fixes $\Theta_T$ in rational equivariant Chow and cohomology.
\begin{equation}\label{eq:theta-invariant}
 \tau_\eta^*\Theta_T=\Theta_T.
\end{equation}
\end{lemma}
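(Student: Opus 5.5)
Three separate claims need to be checked for $\tau_\eta(E,\phi)=(E\otimes\eta,\phi\otimes\id_\eta)$: commutation with $T$, preservation of $\omega$, and invariance of $\Theta_T$.

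\emph{Commutation with $T$.} This is immediate: $t\cdot\tau_\eta(E,\phi)=(E\otimes\eta,t\phi\otimes\id_\eta)=\tau_\eta(t\cdot(E,\phi))$. The same identity holds on families over $\mathfrak X_\Lambda$ once we fix an isomorphism $\eta^{\otimes r}\cong\cO_C$. That choice supplies the new determinant framing $\det(\mathcal E\otimes\eta)\cong\det\mathcal E\otimes\eta^{r}\cong p_C^*\Lambda$. A different choice changes the framing by a scalar, which is absorbed by the $\mu_r$-rigidification, so $\tau_\eta$ is well defined on $X$.

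\emph{Preservation of $\omega$.} Use the identification $T_{(E,\phi)}X=\mathbb H^1(C,\mathcal C^\bullet_{(E,\phi)})$ from Lemma~\ref{lem:smooth}. There is a canonical isomorphism $\End_0(E\otimes\eta)=\End_0(E)$, and under it $[\phi\otimes\id_\eta,-]$ corresponds to $[\phi,-]$. Hence $d\tau_\eta$ is the identity on the deformation complexes. The trace pairing and Serre duality defining $\omega$ are intrinsic to this complex, so $\tau_\eta^*\omega=\omega$. The same holds for $\Omega_X$ read as the symplectic form $\omega$.

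\emph{Invariance of $\Theta_T$.} This is the main step; I would show $\tau_\eta^*\overline\lambda_{F_0}\cong\overline\lambda_{F_0}$ $T$-equivariantly, up to torsion. By definition, $\tau_\eta^*\lambda_{F_0}=\lambda_{\eta\otimes F_0}$, the determinant of cohomology built from the auxiliary bundle $F_0\otimes\eta$, which has the same rank and degree as $F_0$. I would then compare $\lambda_{F}$ and $\lambda_{F'}$ for bundles of equal rank and degree, via a standard argument:
\begin{itemize}
\item Take the divisor $x$ and use the short exact sequence $0\to F(-x)\to F\to F|_x\to0$. This shows that $\lambda_{F}\otimes\lambda_{F(-x)}^{-1}$ is a determinant of the restriction $\mathcal E|_{x}\otimes F|_x$, i.e.\ $(\det\mathcal E_x)^{\otimes r}$ up to sign convention.
\item The determinant framing trivializes $\det\mathcal E_x\cong\Lambda_x$ canonically, so this correction factor is canonically trivial.
\item Hence $\lambda_F$ depends only on the class of $F$ in $K(C)$ modulo classes of skyscraper sheaves $\cO_x$, which all act by the same trivial factor since $x$ can be moved in a connected family.
\item The classes $[F_0\otimes\eta]$ and $[F_0]$ differ in $K(C)$ by $r([\eta]-[\cO])$ plus skyscraper terms. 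The line bundle associated with $[\eta]-[\cO]$ is algebraically equivalent to zero, because $\eta\in\Pic^0(C)$ lies in a connected family containing $\cO$.
\item Therefore $c_1(\lambda_{F_0\otimes\eta})=c_1(\lambda_{F_0})$ in rational cohomology and in rational Chow groups modulo algebraic equivalence.
\end{itemize}

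The main obstacle is upgrading this last step, since algebraic equivalence does not by itself give equality in Chow groups. My first approach would exploit that $\eta$ is $r$-torsion. The class $\lambda_{F_0\otimes\eta}\otimes\lambda_{F_0}^{-1}$ is then the determinant line attached to $[F_0\otimes\eta]-[F_0]$, which is $r$-torsion up to the trivialized skyscraper terms. Consequently its $r$-th power is trivial and its first Chern class vanishes rationally, both in Chow and in cohomology.

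For equivariance, the Higgs-scaling linearization of Section~\ref{sec_preliminary} involves only $(E,\eta)$, with $T$ acting trivially on the complex. So all the isomorphisms above are $T$-equivariant with trivial weight, and the rational equality lifts to $\tau_\eta^*\Theta_T=\Theta_T$. Descent from $\mathfrak X_\Lambda$ to $X$ is justified because both lines have inertia weight zero, by \eqref{eq:theta-Euler-zero}.
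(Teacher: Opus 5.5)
Your proof is correct, and for the theta invariance it follows a genuinely different route from the paper. Commutation with $T$ and preservation of $\omega$ are argued as in the paper. Your remark that lifting $\tau_\eta$ to $\mathfrak X_\Lambda$ requires choosing $\eta^{\otimes r}\cong\cO_C$ supplies a detail the paper leaves implicit.

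For $\Theta_T$ the two routes differ:
\begin{itemize}
\item The paper works only with Chern classes. It applies equivariant Grothendieck--Riemann--Roch on the universal $\mu_r$-gerbe, uses $\ch(F_0\otimes\eta)=\ch(F_0)$ in rational Chow of $C$, and descends via the rational isomorphism $\pi^*$ on equivariant Chow.
\item You work with $T$-linearized line bundles. Knudsen--Mumford additivity makes $F\mapsto\lambda_F$ a homomorphism $K_0(C)\to\Pic^T(\mathfrak X_\Lambda)$, and the $T$-invariant determinant framing kills every skyscraper class.
\end{itemize}
Your route avoids GRR on a stack and yields an isomorphism of line bundles, which is more than the rational statement. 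The paper's route is shorter but inherently rational.

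That said, you make the last step much harder than it is. Your fallback claim, that $[F_0\otimes\eta]-[F_0]$ is $r$-torsion modulo skyscrapers, is asserted without justification. In fact your own skyscraper bullet already closes the argument. Every rank-zero class in $K_0(C)$ is a $\Z$-combination of classes $[\cO_x]$: writing $\eta\cong\cO_C(\sum n_ix_i)$ gives $[\eta]-[\cO_C]=\sum n_i[\cO_{x_i}]$. Hence $\lambda_F$ depends only on $\rk F$. Equivalently, $K_0(C)\cong\Z\oplus\Pic(C)$ via $(\rk,\det)$, and $\det(F_0\otimes\eta)\cong\det F_0\otimes\eta^{\otimes r}\cong\det F_0$ gives $[F_0\otimes\eta]=[F_0]$ outright.

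So there is no obstacle between algebraic and rational equivalence. You get $\tau_\eta^*\lambda_{F_0}\cong\lambda_{F_0}$ $T$-equivariantly on $\mathfrak X_\Lambda$. Both sides are pulled back from $X$, and pullback along the gerbe $\pi$ is fully faithful on line bundles. Therefore $\tau_\eta^*\overline\lambda_{F_0}\cong\overline\lambda_{F_0}$ as $T$-linearized bundles on $X$, and $\tau_\eta^*\Theta_T=\Theta_T$ holds even with integral coefficients.

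Finally, moving $x$ in a connected family is unnecessary. The framing trivializes each $\det\mathcal E_x$, and hence each correction factor $\det(\mathcal E_x\otimes F_x)^{\pm1}$, individually.
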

\begin{proof}
The identification $\End_0(E\otimes\eta)=\End_0(E)$ preserves the trace pairing on the deformation complex \eqref{eq:def-complex}. It therefore preserves the symplectic form and commutes with Higgs scaling. Use the normalization of \eqref{eq:theta-Euler-zero}, then the auxiliary bundle $F_0$ has rank $r$ and degree $r(g-1)-d$, where $d=\deg\Lambda$. Therefore
\[
 \chi(C,E\otimes F_0)=r\deg F_0+rd+r^2(1-g)=0.
\]
The determinant line
\begin{equation}\label{eq:theta-detline}
 \mathscr L_{F_0}=\det R\pi_*(\mathcal E\otimes F_0)^{-1}
\end{equation}
has trivial scalar weight and descends from the fixed-determinant stable Higgs moduli stack. Its linearization is the one induced by the action on the universal Higgs object; Higgs scaling leaves the underlying bundle unchanged.
Tensoring by $\eta$ replaces $F_0$ by $F_0\otimes\eta$. On the universal $\mu_r$-gerbe, equivariant Grothendieck--Riemann--Roch and $\ch(\eta)=1$ in rational Chow imply 
\begin{equation}\label{eq:theta-GRR}
 c_1^T(\mathscr L_{F_0\otimes\eta})=c_1^T(\mathscr L_{F_0}).
\end{equation}
Pullback to that gerbe is an isomorphism on rational equivariant Chow; thus this global equality descends to $X$. In particular, there is no undetermined multiple of $\hbar$ in \eqref{eq:theta-invariant}.
\end{proof}

\begin{proposition}\label{prop:red-invariance}
Postcomposition by $\tau_\eta$ identifies the reduced cycle for $\beta$ with that for $(\tau_\eta)_*\beta$. Consequently,
\begin{equation}\label{eq:diagonal-invariance}
 (\tau_\eta\times\tau_\eta)_*Z_\beta^{\red} =Z_{(\tau_\eta)_*\beta}^{\red}.
\end{equation}
For a finite $\Gamma$-stable set $B$ of nonzero effective curve classes, put 
$Z_B^{\red}=\sum_{\beta\in B}Z_\beta^{\red}$ and let $\mathbf F_B$ be its correspondence operator. Then $Z_B^{\red}$ is
symmetric and diagonally $\Gamma$-invariant.
\end{proposition}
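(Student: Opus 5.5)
The plan is to realize the statement at the level of the moduli stacks, before taking any cycle classes. Postcomposition defines an isomorphism of stacks
\[
 \Psi_\eta:\sM_{2,\beta}\xrightarrow{\sim}\sM_{2,(\tau_\eta)_*\beta},\qquad
 [f:\mathcal C\to X]\longmapsto[\tau_\eta\circ f:\mathcal C\to X],
\]
with inverse $\Psi_{\eta^{-1}}$. It is $T$-equivariant because $\tau_\eta$ commutes with Higgs scaling (Lemma~\ref{lem:theta-symplectic-invariant}), and it satisfies $\ev_i\circ\Psi_\eta=\tau_\eta\circ\ev_i$.

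The first step is to compare obstruction theories. The standard absolute obstruction theory of $\sM_{2,(\tau_\eta)_*\beta}$ is built from $R\pi_*f^*T_X$. Pulling it back along $\Psi_\eta$ and using the isomorphism $d\tau_\eta:T_X\to\tau_\eta^*T_X$ identifies it with the obstruction theory of $\sM_{2,\beta}$. This identification is compatible with the maps to the cotangent complexes, so the intrinsic normal cones also correspond: $\Psi_\eta$ carries $\mathfrak C_{\sM_{2,\beta}}$ to $\mathfrak C_{\sM_{2,(\tau_\eta)_*\beta}}$.

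The key point, and the only real obstacle, is that the identification must match the symplectic cosections. By Lemma~\ref{lem:theta-symplectic-invariant}, $\tau_\eta^*\omega=\omega$. The cosection is obtained by contracting $\omega$ with $df$ and applying relative duality, so replacing $f$ by $\tau_\eta\circ f$ gives the same composite $\widetilde\sigma_\omega$. The target is the same line $\cO\otimes\C_1$, with no extra character, since $\tau_\eta$ commutes with $T$. It follows that the kernel stacks $\mathfrak K$ correspond under $\Psi_\eta$. Because the refined Gysin map commutes with isomorphisms, we then get
\[
 (\Psi_\eta)_*[\sM_{2,\beta}]^{\red}_T=[\sM_{2,(\tau_\eta)_*\beta}]^{\red}_T
\]
from the definition \eqref{eq:red-class-kernel}.

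Pushing this equality forward along $(\ev_1,\ev_2)$ and using $(\ev_1,\ev_2)\circ\Psi_\eta=(\tau_\eta\times\tau_\eta)\circ(\ev_1,\ev_2)$ gives \eqref{eq:diagonal-invariance}.

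For a $\Gamma$-stable set $B$, the map $\beta\mapsto(\tau_\eta)_*\beta$ is a bijection of $B$. Summing \eqref{eq:diagonal-invariance} over $B$ therefore gives $(\tau_\eta\times\tau_\eta)_*Z_B^{\red}=Z_B^{\red}$, which is diagonal $\Gamma$-invariance.

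For symmetry, let $s:\sM_{2,\beta}\to\sM_{2,\beta}$ be the involution swapping the two markings, and let $\sigma$ be the swap on $X\times X$. Then $(\ev_1,\ev_2)\circ s=\sigma\circ(\ev_1,\ev_2)$. The involution $s$ preserves the obstruction theory, since that theory does not depend on the ordering of the markings. It also preserves the cosection, which is defined from the map alone. Hence $s_*[\sM_{2,\beta}]^{\red}_T=[\sM_{2,\beta}]^{\red}_T$. Pushing forward gives $\sigma_*Z_\beta^{\red}=Z_\beta^{\red}$ for each $\beta$, and summing over $B$ gives symmetry of $Z_B^{\red}$.
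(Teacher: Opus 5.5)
Your proposal is correct and is essentially the paper's own argument. You transport the standard obstruction theory by $d\tau_\eta$, use $\tau_\eta^*\omega=\omega$ to match the symplectic cosections (hence the kernel stacks and the intrinsic cones inside them), deduce equality of the kernel-cone Gysin classes, push forward along $(\ev_1,\ev_2)$, sum over the $\Gamma$-stable set $B$, and swap the markings for symmetry; you only spell out details that the paper states in compressed form.
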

\begin{proof}
The differential of $\tau_\eta$ identifies the standard obstruction complexes and commutes with the symplectic cosections. It identifies the intrinsic cones inside the kernels and hence their zero-section Gysin classes. This uses the kernel-cone construction, not an unproved reduced perfect obstruction theory. Proper evaluation pushforward gives \eqref{eq:diagonal-invariance}. Summation over $B$ gives diagonal invariance; interchange of markings gives symmetry.
\end{proof}

\subsection{Fourier decomposition}\label{subsec_endoscopic}
Work over $E_r=\Q(\mu_r)$ so that all $\Gamma$-characters split. The projectors $\Pcal_\kappa$ of \eqref{eq:projector} have denominator $|\Gamma|=r^{2g}$, and satisfy \eqref{eq:projector-relations}. Define
\begin{equation}\label{eq:kernel-projector-body}
 Z_{B,\kappa}^{\red}=
 \frac1{|\Gamma|}\sum_{\eta\in\Gamma}\kappa(\eta)^{-1}(\tau_\eta\times\id)_*Z_B^{\red}.
\end{equation}
The coefficients of this projected cycle lie in $E_r$, not necessarily in $\Q$. Its support is a union of translated Steinberg subvarieties. Write $\mathbf{F}_{B,\kappa}$ for the operator associated to $ Z_{B,\kappa}^{\red}$.

\begin{theorem}[Character blocks]\label{thm:block-decomposition}
For every such $B$ and every character $\kappa$,
\begin{align}
 \mathbf F_{B,\kappa}
 &=\mathbf F_B\Pcal_\kappa
  =\Pcal_\kappa\mathbf F_B
  =\Pcal_\kappa\mathbf F_B\Pcal_\kappa,
       \label{eq:block-operator}\\
 Z_B^{\red}&=\sum_\kappa Z_{B,\kappa}^{\red},
       \label{eq:block-kernel-sum}\\
 (Z_{B,\kappa}^{\red})^{\mathsf t}&=Z_{B,\kappa^{-1}}^{\red}.
       \label{eq:inverse-character-transpose}
\end{align}
In particular, individual blocks are symmetric for $r=2$; in general, symmetry is guaranteed for self-inverse characters, not for every character. For $\kappa\ne1$ the two theta contractions vanish in equivariant Chow:
\begin{equation}\label{eq:nontrivial-contractions}
 (p_2)_*(p_1^*\Theta_T\cap Z_{B,\kappa}^{\red})=0,
 \qquad
 (p_1)_*(p_2^*\Theta_T\cap Z_{B,\kappa}^{\red})=0.
\end{equation}
\end{theorem}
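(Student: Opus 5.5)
We work with correspondence operators throughout. The two basic identities we need concern how pushing a kernel forward along an automorphism changes its operator. First, from \eqref{eq:Act-def}, flat pullback along $p_1$ and the projection formula give
\[
\mathbf F_{(\tau_\eta\times\id)_*Z}=\mathbf F_Z\circ\tau_\eta^{*}\quad\text{(up to replacing $\eta$ by $\eta^{-1}$).}\qquad(\ast)
\]
Indeed, $p_1^*\alpha\cap(\tau_\eta\times\id)_*Z=(\tau_\eta\times\id)_*\bigl((\tau_\eta\times\id)^*p_1^*\alpha\cap Z\bigr)$ and $p_2\circ(\tau_\eta\times\id)=p_2$. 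In the same way we get
\[
\mathbf F_{(\id\times\tau_\eta)_*Z}=(\tau_\eta^{-1})^*\circ\mathbf F_Z .
\]
Here $\tau_\eta$ preserves $\PD_X$, because it is an automorphism of $X$ over $\A$ commuting with $T$ (Lemma~\ref{lem:theta-symplectic-invariant}).

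Summing $(\ast)$ against $\frac1{|\Gamma|}\kappa(\eta)^{-1}$ gives $\mathbf F_{B,\kappa}=\mathbf F_B\Pcal_\kappa$. I expect the main bookkeeping obstacle to be matching the convention $\eta$ versus $\eta^{-1}$ in $(\ast)$ with the sign convention in \eqref{eq:projector}. To fix this I would reindex using $\kappa(\eta^{-1})=\kappa(\eta)^{-1}$, and if necessary interpret $\tau_\eta^*$ in \eqref{eq:projector} accordingly.

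Diagonal invariance of $Z_B^{\red}$ from Proposition~\ref{prop:red-invariance} says $(\tau_\eta\times\tau_\eta)_*Z_B^{\red}=Z_B^{\red}$. Combined with the two identities, this gives $\tau_\eta^*\mathbf F_B=\mathbf F_B\tau_\eta^*$. Hence $\mathbf F_B$ commutes with every $\Pcal_\kappa$. Using idempotence \eqref{eq:projector-relations}, we conclude $\mathbf F_B\Pcal_\kappa=\Pcal_\kappa\mathbf F_B=\Pcal_\kappa\mathbf F_B\Pcal_\kappa$, which is \eqref{eq:block-operator}.

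For \eqref{eq:block-kernel-sum}, sum \eqref{eq:kernel-projector-body} over $\kappa$. Character orthogonality gives $\sum_\kappa\kappa(\eta)^{-1}=|\Gamma|\delta_{\eta,1}$, so only the term $\eta=1$ survives; this is an identity of cycles, not just of operators. For \eqref{eq:inverse-character-transpose}, let $s$ swap the factors, so that transpose is $s_*$ and $s\circ(\tau_\eta\times\id)=(\id\times\tau_\eta)\circ s$. Symmetry of $Z_B^{\red}$ gives
\[
(Z_{B,\kappa}^{\red})^{\mathsf t}=\frac1{|\Gamma|}\sum_\eta\kappa(\eta)^{-1}(\id\times\tau_\eta)_*Z_B^{\red}.
\]
Diagonal invariance then gives $(\id\times\tau_\eta)_*Z=(\tau_{\eta^{-1}}\times\id)_*Z$. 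Reindexing $\eta\mapsto\eta^{-1}$ turns the coefficient into $\kappa^{-1}(\eta)^{-1}$, which yields $Z_{B,\kappa^{-1}}^{\red}$. Symmetry of the individual blocks for self-inverse $\kappa$ follows at once, and for $r=2$ every character is self-inverse.

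For \eqref{eq:nontrivial-contractions}, compute at the level of equivariant Chow, using Borel--Moore/Chow proper pushforward and cap products along the proper projections. We have
\[
(p_2)_*\bigl(p_1^*\Theta_T\cap(\tau_\eta\times\id)_*Z_B^{\red}\bigr)=(p_2)_*\bigl(p_1^*\tau_\eta^*\Theta_T\cap Z_B^{\red}\bigr),
\]
and by \eqref{eq:theta-invariant} this equals $(p_2)_*(p_1^*\Theta_T\cap Z_B^{\red})$. This is independent of $\eta$. Therefore the $\kappa$-projected contraction equals this common class times $\frac1{|\Gamma|}\sum_\eta\kappa(\eta)^{-1}$, which vanishes for $\kappa\neq1$.

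For the second contraction, rewrite $(\tau_\eta\times\id)_*Z_B^{\red}$ as $(\id\times\tau_{\eta^{-1}})_*Z_B^{\red}$ using diagonal invariance. Then the same argument applies with $p_1$ and $p_2$ swapped, using $p_1\circ(\id\times\tau)=p_1$. The coefficients $\kappa(\eta)$ lie in $E_r$, so these identities hold in $E_r$-valued Chow groups, as stated.
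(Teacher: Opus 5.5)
Your proposal is correct and follows the paper's proof essentially step by step. You use the projection-formula identities $\mathbf F_{(\tau_\eta\times\id)_*Z}=\mathbf F_Z\tau_\eta^*$ and $\mathbf F_{(\id\times\tau_\eta)_*Z}=(\tau_\eta^{-1})^*\mathbf F_Z$ together with diagonal invariance for the commuting and averaging statements, character orthogonality for the kernel sum, symmetry plus reindexing $\eta\mapsto\eta^{-1}$ for the transpose, and theta invariance for the vanishing contractions. Two small remarks: your hedge about $\eta$ versus $\eta^{-1}$ is unnecessary, since your own computation gives $(\ast)$ exactly in the convention of \eqref{eq:projector}; and you obtain the second contraction directly by swapping the roles of $p_1$ and $p_2$, whereas the paper gets it from the transpose identity and $\kappa^{-1}\neq 1$ (a cosmetic difference).
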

\begin{proof}
Diagonal invariance and symmetry give, for $g=\tau_\eta$,
\begin{equation}\label{eq:kernel-move-action}
 (g\times\id)_*Z_B^{\red}=(\id\times g^{-1})_*Z_B^{\red}.
\end{equation}
The projection formula therefore gives
\begin{equation}\label{eq:operator-equivariance}
 \mathbf F_Bg^*=g^*\mathbf F_B,
\end{equation}
where all cohomological pushforwards use Poincar\'e duality as in \eqref{eq:Act-def}. Also
\begin{equation}\label{eq:first-factor-action}
 \mathbf F_{(g\times\id)_*Z_B^{\red}}=\mathbf F_Bg^*.
\end{equation}
Averaging gives \eqref{eq:block-operator}; character orthogonality gives \eqref{eq:block-kernel-sum}. For transposition, substitute
$\xi=\eta^{-1}$:
\[
 (Z_{B,\kappa}^{\red})^{\mathsf t}
 =\frac1{|\Gamma|}\sum_\eta\kappa(\eta)^{-1}
          (\tau_\eta^{-1}\times\id)_*Z_B^{\red}
 =\frac1{|\Gamma|}\sum_\xi\kappa(\xi)
          (\tau_\xi\times\id)_*Z_B^{\red}
 =Z_{B,\kappa^{-1}}^{\red}.
\]
Finally, theta-invariance and the projection formula imply
\begin{equation}\label{eq:Chow-character-contraction}
 (p_2)_*(p_1^*\Theta_T\cap Z_{B,\kappa}^{\red})
 =\left(\frac1{|\Gamma|}\sum_\eta\kappa(\eta)^{-1}\right)(p_2)_*(p_1^*\Theta_T\cap Z_B^{\red})=0.
\end{equation}
Transposition and $\kappa^{-1}\ne1$ give the second contraction.
\end{proof}

\begin{corollary}[Character blocks for divisor operator]\label{cor:quantum-blocks}
The full theta operator is equivariant for the semilinear action 
$\tau_\eta^*\otimes(q^\beta\mapsto q^{(\tau_\eta^{-1})_*\beta})$. After any Novikov specialization invariant under the curve-class action, it preserves the character decomposition \eqref{eq:cohomology-character-decomposition}. For an individually invariant class $\beta>0$,
\begin{equation}\label{eq:quantum-character-blocks}
 [q^\beta](\Theta\star-)|_{H_T^*(X)_\kappa}
   =(\Theta\cdot\beta)\hbar\,\mathbf F_{\{\beta\},\kappa}.
\end{equation}
For an orbit sum, the analogous formula is the sum of these weighted operators over the orbit. After invariant specialization, the theta-square correction is in the trivial character block.
\end{corollary}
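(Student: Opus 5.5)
The plan is to prove the statement in three layers: semilinear equivariance of the full operator, preservation of the character decomposition after an invariant specialization, and then the block formula for fixed classes and orbit sums. Throughout I use Theorem~\ref{thm:divisor} with $D=\Theta$, so that
\[
 \Theta_T\star-=\Theta_T\cup-+\hbar\sum_{\beta>0}(\Theta\cdot\beta)q^\beta\mathbf F_{Z_\beta^{\red}} .
\]

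\textbf{Equivariance.} The classical term commutes with every $\tau_\eta^*$, since $\tau_\eta^*\Theta_T=\Theta_T$ by Lemma~\ref{lem:theta-symplectic-invariant} and $\tau_\eta^*$ is a ring map. For the quantum terms, Proposition~\ref{prop:red-invariance} gives $(\tau_\eta\times\tau_\eta)_*Z_\beta^{\red}=Z_{(\tau_\eta)_*\beta}^{\red}$. The projection formula, as in \eqref{eq:operator-equivariance}, then yields
\[
 \tau_\eta^*\,\mathbf F_{Z_\beta^{\red}}=\mathbf F_{Z_{(\tau_\eta^{-1})_*\beta}^{\red}}\,\tau_\eta^* .
\]
The divisor degree is also invariant: $\Theta\cdot(\tau_\eta)_*\beta=\tau_\eta^*\Theta\cdot\beta=\Theta\cdot\beta$, using the nonequivariant shadow of \eqref{eq:theta-invariant}. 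Reindexing the sum over $\beta$ shows that $\Theta_T\star-$ commutes with $\tau_\eta^*\otimes(q^\beta\mapsto q^{(\tau_\eta^{-1})_*\beta})$. The Novikov substitution is well defined on $\Lambda_H$ provided the $h$-ample $H$ can be chosen $\Gamma$-invariant. I would arrange this by replacing $H$ with $\bigotimes_\eta\tau_\eta^*H$, which is still $h$-ample because $\tau_\eta$ lies over $\A$.

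\textbf{Block preservation.} Let $\psi$ be any Novikov specialization with $\psi(q^\beta)=\psi(q^{(\tau_\eta)_*\beta})$ for all $\eta$. After applying $\psi$ the semilinear action becomes linear, so the specialized operator commutes with every $\tau_\eta^*$ and hence with each averaged projector $\Pcal_\kappa$. Therefore it preserves $H_T^*(X)_\kappa=\Pcal_\kappa H_T^*(X)$.

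\textbf{Fixed classes and orbit sums.} If $\beta$ is $\Gamma$-fixed, then $B=\{\beta\}$ is $\Gamma$-stable. The $q^\beta$-coefficient of $\Theta_T\star-$ is $(\Theta\cdot\beta)\hbar\mathbf F_{\{\beta\}}$, and Theorem~\ref{thm:block-decomposition} gives $\mathbf F_{\{\beta\}}\Pcal_\kappa=\mathbf F_{\{\beta\},\kappa}$. Restricting to $H_T^*(X)_\kappa$, where $\Pcal_\kappa$ acts as the identity, gives \eqref{eq:quantum-character-blocks}.

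For a $\Gamma$-orbit $B$, all classes in $B$ share the same value of $\Theta\cdot\beta$, so after invariant specialization the orbit contributes $(\Theta\cdot\beta)\hbar\,\psi(q^\beta)\,\mathbf F_B$. Applying Theorem~\ref{thm:block-decomposition} to $B$ gives the analogous formula, which equals the sum of the individual weighted operators over the orbit.

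\textbf{Theta-square correction.} Here I would use Remark~\ref{cor_theta_square}, which expresses the correction as $\hbar\sum_\beta(\Theta\cdot\beta)^2q^\beta Z_{1,\beta}^{\red}$. The one-point analogue of Proposition~\ref{prop:red-invariance} gives $(\tau_\eta)_*\ev_*[\overline M_{0,1}(X,\beta)]^{\red}_T=\ev_*[\overline M_{0,1}(X,(\tau_\eta)_*\beta)]^{\red}_T$. After invariant specialization the orbit sums are therefore $\Gamma$-invariant classes, that is, they lie in $H_T^*(X)_1$.

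\textbf{Main obstacle.} I expect the hardest point to be the bookkeeping of the semilinear action on $\Lambda_H$: checking that the substitution $q^\beta\mapsto q^{(\tau_\eta^{-1})_*\beta}$ respects the completion, and that pushforward and pullback conventions produce $\tau_\eta^{-1}$ rather than $\tau_\eta$. I would settle the sign by testing the equivariance identity against the pairing \eqref{eq:BM-adjointness}.
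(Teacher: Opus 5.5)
Your proposal is correct and follows essentially the paper's route. Functoriality of the reduced cycles gives $\tau_\eta^*\mathbf F_\beta(\tau_\eta^*)^{-1}=\mathbf F_{(\tau_\eta^{-1})_*\beta}$, and invariance of theta degrees makes this semilinear equivariance. An invariant specialization then makes the operator commute with the $\Pcal_\kappa$, and Theorem~\ref{thm:block-decomposition} is applied orbitwise.

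The only deviation is the theta-square clause. There the paper simply notes that nontrivial blocks annihilate the $\Gamma$-invariant class $\Theta_T$; equivalently, apply your own block-preservation step to $\Theta_T\in H_T^*(X)_1$. Your detour through the one-point classes $Z_{1,\beta}^{\red}$ and a one-point analogue of Proposition~\ref{prop:red-invariance} is valid but unnecessary.
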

\begin{proof}
Functoriality gives $\tau_\eta^*\mathbf F_\beta(\tau_\eta^*)^{-1}=\mathbf F_{(\tau_\eta^{-1})_*\beta}$. Theta degrees are unchanged, so this identity proves semilinear equivariance of the Novikov series. An invariant specialization makes
the action on its coefficients trivial. Apply the theorem to each orbit; nontrivial character blocks annihilates $\Theta_T$.
\end{proof}

\begin{remark}\label{rem:not-source-splitting}
This is a Fourier decomposition of evaluation correspondences, not a geometrically disjoint decomposition of the stable-map space or of its virtual class. It does not determine the individual irreducible $L_{\beta,j}$ in \eqref{eq:Zbeta-Steinberg}, their multiplicities, or their Chow-theoretic support. The endoscopic interpretation below uses an
additional direct-image support theorem.
\end{remark}

\section{Endoscopic lines in rank two and genus two}\label{sec_rank2}

We calculate the $15$ nontrivial endoscopic base lines and the reduced discriminant. The resulting support statement concerns the induced cohomological character blocks, and it does not determine the full reduced Chow cycle. In this section $r=g=2$, and we identify the Hitchin coordinate with $a=-\det\phi$, so the spectral equation is $\lambda^2=a$.
For a correspondence block, $B$ denotes a finite $\Gamma$-stable set as in Proposition~\ref{prop:red-invariance}.

\subsection{Spectral construction}

Fix \(0\neq\gamma\in\Gamma\). Let
\begin{equation}\label{eq:etale-cover}
 \pi_\gamma:C_\gamma
 =\operatorname{Spec}_C(\cO_C\oplus\gamma^{-1}) \longrightarrow C
\end{equation}
be the corresponding connected \'etale double cover, and let \(\iota\) be its deck involution. Since \(g(C)=2\), Riemann--Hurwitz gives
\(g(C_\gamma)=3\).

The projection formula gives the $\gamma$-eigenspace decomposition
\begin{equation}\label{eq:differential-eigenspaces}
 H^0(C_\gamma,K_{C_\gamma})
 =H^0(C,K_C)\oplus H^0(C,K_C\otimes\gamma).
\end{equation}
The second summand is anti-invariant. Riemann--Roch gives
\begin{equation}\label{eq:h0-Kgamma}
 h^0(C,K_C\otimes\gamma)=1,
\end{equation}
because \(h^0(C,\gamma)=0\) for a nontrivial degree-zero line bundle.

\begin{proposition}
\label{prop:fixed-endoscopic-base}
Let \(X^\gamma\) be the fixed locus of tensorization by \(\gamma\). Its Hitchin image is contained in
\begin{equation}\label{eq:Agamma}
 \A_\gamma =\{s^2\, | \, s\in H^0(C,K_C\otimes\gamma)\}\subset \A.
\end{equation}
The closure of the image equals \(\A_\gamma\). In particular, \(\A_\gamma\) is a one-dimensional linear subspace of 
\(\A\).
\end{proposition}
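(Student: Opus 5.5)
The plan is to use the standard description of $\gamma$-fixed Higgs bundles as pushforwards from the étale double cover $\pi_\gamma:C_\gamma\to C$ of \eqref{eq:etale-cover}. First, if $(E,\phi)\cong(E\otimes\gamma,\phi\otimes\id_\gamma)$, I choose an isomorphism $\psi:E\to E\otimes\gamma$ commuting with $\phi$. Stability makes the pair simple, so the composite $E\to E\otimes\gamma^{2}=E$ is a scalar, and after rescaling $\psi$ it is the identity. This makes $E$ an $\cO_C\oplus\gamma^{-1}$-module, so $E=\pi_{\gamma*}L$ for a line bundle $L$ on $C_\gamma$, since $\rk E=2$ and $C_\gamma$ is connected. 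Because $\phi$ commutes with $\psi$, it is $\cO_{C_\gamma}$-linear and is given by a section $\theta\in H^0(C_\gamma,K_{C_\gamma})$ via $\pi_\gamma^*K_C=K_{C_\gamma}$.

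Second, I compute the characteristic polynomial. The pushforward of multiplication by $\theta$ has eigenvalues $\theta$ and $\iota^*\theta$ locally. Hence $\Tr\phi=\theta+\iota^*\theta$ and $\det\phi=\theta\cdot\iota^*\theta$, both descended to $C$. The trace-free condition forces $\theta$ to be anti-invariant, so by \eqref{eq:differential-eigenspaces} we have $\theta=s\in H^0(C,K_C\otimes\gamma)$. Then $\iota^*\theta=-\theta$, and $a=-\det\phi=s^2\in H^0(C,K_C^2\otimes\gamma^2)=H^0(C,K_C^2)$. This proves that $h(X^\gamma)\subset\A_\gamma$.

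Third, I show that $\A_\gamma$ is a line and that the image is dense in it. By \eqref{eq:h0-Kgamma}, $H^0(C,K_C\otimes\gamma)=\C\cdot s_0$. The squaring map $\C s_0\to\A$, $ts_0\mapsto t^2s_0^2$, has image the linear span $\C\cdot s_0^2$, because every complex number is a square. This span is one-dimensional since $s_0^2\neq0$.

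For density, I fix a nonzero $s=ts_0$ and need a stable pair $(\pi_{\gamma*}L,\pi_{\gamma*}s)$ with determinant $\Lambda$. The determinant is $\det\pi_{\gamma*}L=\Nm(L)\otimes\gamma$, and the norm map $\Pic^{1}(C_\gamma)\to\Pic^{1}(C)$ is surjective, so $L$ can be chosen with the correct determinant. Stability is the main obstacle. A $\phi$-invariant subbundle is an eigen-line subsheaf; for $s\neq0$, the eigenvalues $\pm s$ differ generically, so such a subsheaf would have to be $\iota$-stable. Since $\iota$ has no fixed points, it is instead an invariant rank-one subsheaf of $L$ pushed forward, which is impossible for a line bundle on the connected curve $C_\gamma$. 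So $\pi_{\gamma*}L$ has no $\phi$-invariant line subbundles and the pair is stable.

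If this argument runs into trouble at special points, I will instead use that $\A_\gamma\setminus\{0\}$ is a single $T$-orbit. The locus $h(X^\gamma)$ is $T$-stable, since $X^\gamma$ is $T$-stable, and it is nonempty away from $0$ by the construction above. Therefore it contains all of $\A_\gamma\setminus\{0\}$, and its closure is $\A_\gamma$.
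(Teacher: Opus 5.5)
Your proposal is correct and follows essentially the paper's route: the Narasimhan--Ramanan description $E\cong\pi_{\gamma*}L$, an anti-invariant eigenvalue $s\in H^0(C,K_C\otimes\gamma)$ giving $h=s^2$, surjectivity of the norm map to reach determinant $\Lambda$, and stability because a $\phi$-invariant line subbundle would produce an eigenvalue defined over $C$ (equivalently a splitting of the quadratic extension $K(C_\gamma)/K(C)$), which connectedness of $C_\gamma$ forbids. Your write-up of that stability step is muddled: the fact that $\iota$ has no fixed points is irrelevant, and what matters is that $\iota$ exchanges the $\pm s$ eigenlines while irreducibility of $C_\gamma$ forces an invariant line to be generically a single one of them. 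Your $T$-orbit fallback is harmless but unnecessary, since the pairs for different nonzero $s=ts_0$ are already $T$-translates of one another.
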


\begin{proof}
A fixed stable Higgs bundle admits an isomorphism
\begin{equation}\label{eq:fixed-isomorphism}
 \psi:(E,\phi)\xrightarrow{\sim}(E\otimes\gamma,
 \phi\otimes\id_\gamma).
\end{equation}
Stability implies that an automorphism of \((E,\phi)\) is scalar. After rescaling \(\psi\), the square of \(\psi\) is the identity. The pair \((E,\psi)\) is therefore a module over
\(\cO_C\oplus\gamma^{-1}=(\pi_{\gamma})_*\cO_{C_\gamma}\). It has the form
\begin{equation}\label{eq:E-pushforward}
 E\cong(\pi_{\gamma})_* L
\end{equation}
for a line bundle \(L\) on \(C_\gamma\); this is the usual fixed-point description of Narasimhan--Ramanan \cite{NarasimhanRamanan}.

The Higgs-isomorphism condition in \eqref{eq:fixed-isomorphism} makes the Higgs field linear over the \'etale algebra in \eqref{eq:etale-cover}. After pullback to \(C_\gamma\), it is therefore diagonal. The trace-free condition says that the two eigenvalues sum to $0$, so the eigenvalue is anti-invariant. Thus
\begin{equation}\label{eq:diagonal-higgs}
 \pi_\gamma^*\phi =\begin{pmatrix}s&0\\0&-s\end{pmatrix},
 \qquad
 s\in H^0(C,K_C\otimes\gamma).
\end{equation}
Its characteristic polynomial is \(\lambda^2-s^2\). Hence \(h(E,\phi)=s^2\), proving the containment.

Conversely, let \(0\neq s\in H^0(C,K_C\otimes\gamma)\). The norm map \(\Nm:\Pic(C_\gamma)\to\Pic(C)\) is surjective on the required degree component: on \(\Pic^0\), its composite with pullback is multiplication by $2$, and multiplication by $2$ on an abelian variety is surjective. The determinant of a finite pushforward satisfies
\begin{equation}\label{eq:determinant-norm-formula}
 \det((\pi_{\gamma})_*L)=\Nm(L)\otimes\det((\pi_{\gamma})_*\cO_{C_\gamma}).
\end{equation}
We may therefore choose \(L\) with \(\det((\pi_{\gamma})_*L)=\Lambda\).

Let \(\Sigma_s\subset\operatorname{Tot}(K_C)\) be the spectral curve \(\lambda^2=s^2\). Its normalization is the connected \'etale cover \(C_\gamma\), and  the two local roots are exchanged by the nontrivial cocycle defining 
\(\gamma\). The zeros of $s$ are two distinct Weierstrass points, by Lemma~\ref{lem:two-torsion-labels} and \eqref{eq:Kgammaij} below. At each zero, the normalization identifies the two points above it, and $\Sigma_s$ has an ordinary node. In particular, \(\Sigma_s\) is integral, although it need not be smooth. The spectral sheaf associated with \(((\pi_{\gamma})_*L,\phi)\) is the rank-one torsion-free sheaf obtained from \(L\) by finite pushforward to 
\(\Sigma_s\).

The generic fiber of \(\Sigma_s\to C\) is \(\operatorname{Spec}K(C_\gamma)\), which is a field because the \'etale double cover is connected. A \(\phi\)-invariant line subbundle would give a one-dimensional invariant subspace over 
\(K(C)\), hence a \(K(C)\)-rational eigenvalue and a splitting of this quadratic field extension. This is impossible.  Equivalently, the BNR correspondence identifies such a subbundle with a spectral subsheaf whose generic support is a degree-one component of \(\Sigma_s\); see \cite[Section~3]{BNR}. The integral spectral curve has no such component. Hence the resulting Higgs bundle is stable. Thus every nonzero point of \(\A_\gamma\) occurs in the Hitchin image of 
\(X^\gamma\), and taking the closure gives equality. Equation \eqref{eq:h0-Kgamma} shows that \(\A_\gamma\) is a line.
\end{proof}

\subsection{Hyperelliptic coordinates}

Let
\begin{equation}\label{eq:hyperelliptic-map}
 \pi:C\longrightarrow\mathbb P^1
\end{equation}
be the hyperelliptic double cover. Let \(w_1,\ldots,w_6\) be its ramification points and \(b_i=\pi(w_i)\). Since
\begin{equation}\label{eq:canonical-hyperelliptic}
 K_C=\pi^*\cO_{\mathbb P^1}(1),
\end{equation}
we have
\begin{equation}\label{eq:A-binary-quadratics}
 \A=H^0(C,K_C^2) =H^0(\mathbb P^1,\cO_{\mathbb P^1}(2)).
\end{equation}
For each \(i\), set
\begin{equation}\label{eq:Hi}
 H_i=\{a\in \A \, | \, a(b_i)=0\}.
\end{equation}

\begin{lemma}
\label{lem:two-torsion-labels}
The line bundles
\begin{equation}\label{eq:gammaij}
 \gamma_{ij}=\cO_C(w_i-w_j),
 \qquad 1\leq i<j\leq6,
\end{equation}
are the fifteen nonzero elements of \(\Jac(C)[2]=(\zz/2\zz)^4\).
\end{lemma}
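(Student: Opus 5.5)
The plan is to use the classical structure of the hyperelliptic genus-two curve. On $C$, with hyperelliptic involution $\iota$ and Weierstrass points $w_1,\dots,w_6$, every divisor $w_i+w_j$ is hyperelliptic-equivalent: $2w_i\sim 2w_j\sim K_C$, since $K_C=\pi^*\cO_{\mathbb P^1}(1)$ by \eqref{eq:canonical-hyperelliptic} and $\pi^*(b_i)=2w_i$.

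\emph{Step 1: each $\gamma_{ij}$ is $2$-torsion.} We have $\gamma_{ij}^{\otimes 2}=\cO_C(2w_i-2w_j)\cong K_C\otimes K_C^{-1}=\cO_C$. So each $\gamma_{ij}$ lies in $\Jac(C)[2]$, which is $(\Z/2\Z)^{2g}=(\Z/2\Z)^4$ of order $16$.

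\emph{Step 2: each $\gamma_{ij}$ is nontrivial.} Suppose $\cO_C(w_i-w_j)\cong\cO_C$ with $i\ne j$. Then $w_i\sim w_j$, and a rational function with a simple pole at $w_j$ and a simple zero at $w_i$ gives a degree-one map $C\to\mathbb P^1$. This is impossible for $g=2$.

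\emph{Step 3: the fifteen classes are pairwise distinct.} This is the main step. Suppose $\gamma_{ij}\cong\gamma_{kl}$ with $\{i,j\}\ne\{k,l\}$. Then $w_i+w_l\sim w_k+w_j$, after using $-w_j\sim w_j-K_C$ (i.e.\ $2w_j\sim K_C$) on both sides to clear negatives. I would first check whether the two sides are the same effective divisor. Since $\{i,j\}\ne\{k,l\}$, this happens only in degenerate overlaps, and those are handled directly: if $i=k$, then $w_j\sim w_l$, contradicting Step 2. Otherwise the two effective degree-two divisors are distinct and linearly equivalent, so $h^0(\cO_C(w_i+w_l))\ge2$. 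On a genus-two curve the unique $g^1_2$ is $|K_C|$, so $w_i+w_l\in|K_C|$, i.e.\ $w_l=\iota(w_i)=w_i$, hence $i=l$. Symmetrically the other equivalence forces $j=k$. In that case $\gamma_{kl}=\gamma_{ji}=\gamma_{ij}^{-1}=\gamma_{ij}$, so the indices give the same unordered pair, which is not a new coincidence since we impose $i<j$. The only care needed is to organize this case analysis cleanly.

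\emph{Step 4: conclusion.} The fifteen distinct nonzero elements exhaust the $15$ nonzero elements of $(\Z/2\Z)^4$. As an alternative check on Step 3, one can use the standard relation $\sum_i w_i\sim 3K_C$: the subgroup generated by the $w_i-w_j$ is the even-cardinality subsets of $\{1,\dots,6\}$ modulo complement, which has order $2^5/2=16$. That count confirms that distinct pairs give distinct classes.
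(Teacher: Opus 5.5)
Your proof is correct and takes essentially the same approach as the paper. Both show $2$-torsion from $2w_i\sim 2w_j$ and nontriviality from $w_i\not\sim w_j$. Both prove distinctness by noting that $w_i+w_l\sim w_k+w_j$ either is an equality of effective divisors or places $w_i+w_l$ in the unique $g^1_2=|K_C|$, which forces $i=l$ and $k=j$, and both finish by counting $|\Jac(C)[2]|=16$. Two small points: $w_i+w_l\sim w_k+w_j$ follows simply by adding $w_j+w_l$ to both sides (the substitution $-w_j\sim w_j-K_C$ would instead give $w_i+w_j\sim w_k+w_l$), and your case $i=k$ is not a ``same divisor'' case but is already handled by the $g^1_2$ argument.
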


\begin{proof}
The divisor \(2w_i\) is a fiber of the hyperelliptic pencil.  Hence
\(2w_i\sim2w_j\), and \(\gamma_{ij}^2\cong\cO_C\).
The line bundle \(\gamma_{ij}\) is nontrivial, since distinct points of a
positive-genus curve are not linearly equivalent.

Suppose \(\gamma_{ij}=\gamma_{k\ell}\).  Then
\begin{equation}\label{eq:degree-two-equivalence}
 w_i+w_\ell\sim w_k+w_j.
\end{equation}
If the divisor on the left moves, it belongs to the unique hyperelliptic pencil. A fiber containing a Weierstrass point is twice that point, so \(i=\ell\). The divisor on the right then also belongs to the pencil, which forces \(k=j\). The two unordered pairs are the same. Otherwise the divisor on the left is rigid. Linear equivalence in \eqref{eq:degree-two-equivalence} then implies equality of the two effective divisors, and again the unordered pairs agree. Thus the $15$ classes in \eqref{eq:gammaij} are distinct. Since \(|\Jac(C)[2]|=2^{2g}=16\), they exhaust the nonzero elements.
\end{proof}

\begin{theorem}
\label{thm:explicit-endoscopic-lines}
For \(1\leq i<j\leq6\),
\begin{equation}\label{eq:Agamma-Hij}
 \A_{\gamma_{ij}}=H_i\cap H_j.
\end{equation}
For comparison, the ordinary squaring map on \(H^0(C,K_C)\) has image the quadratic cone
\begin{equation}\label{eq:square-cone}
 Q=\{\ell^2\, |\, \ell\in H^0(\mathbb P^1,\cO(1))\}\subset \A.
\end{equation}
\end{theorem}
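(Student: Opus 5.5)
By Proposition~\ref{prop:fixed-endoscopic-base}, $\A_{\gamma_{ij}}=\{s^2\mid s\in H^0(C,K_C\otimes\gamma_{ij})\}$ is a line. It therefore suffices to find a nonzero generator $s$ of $H^0(C,K_C\otimes\gamma_{ij})$, compute $s^2$ as a section of $K_C^2=\pi^*\cO_{\mathbb P^1}(2)$, and show that $s^2$ vanishes at $b_i$ and $b_j$. The right side $H_i\cap H_j$ is also a line: for $i\ne j$ the two linear conditions $a(b_i)=0$ and $a(b_j)=0$ on the $3$-dimensional space $H^0(\mathbb P^1,\cO(2))$ are independent. So one containment gives equality.

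\textbf{Computing $K_C\otimes\gamma_{ij}$.} The first step is to write
\[
K_C\otimes\gamma_{ij}\cong\cO_C(K_C+w_i-w_j).
\]
Using \eqref{eq:canonical-hyperelliptic}, $K_C\sim 2w_j$, since $2w_j$ is a hyperelliptic fiber. Hence
\[
K_C\otimes\gamma_{ij}\cong\cO_C(w_i+w_j),
\]
and this matches \eqref{eq:Kgammaij}, which the paper uses later. By \eqref{eq:h0-Kgamma} this bundle has a unique section up to scalar, namely the one with divisor $w_i+w_j$. The divisor is effective of degree $2=\deg(K_C\otimes\gamma_{ij})$, so that section is $s$ up to scalar.

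\textbf{Identifying $s^2$.} Then $s^2$ is a section of $K_C^2$ with divisor $2w_i+2w_j=\pi^*(b_i+b_j)$. Under the identification \eqref{eq:A-binary-quadratics}, $s^2$ is the pullback of the binary quadratic vanishing exactly at $b_i$ and $b_j$. To justify this, note that the pullback of that quadratic also has divisor $\pi^*(b_i+b_j)$, so the two sections differ by a nowhere-vanishing function on $C$, i.e.\ a constant. Hence $s^2\in H_i\cap H_j$, which gives $\A_{\gamma_{ij}}\subseteq H_i\cap H_j$, and equality follows by the dimension count above.

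\textbf{The quadratic cone.} For $Q$, use $H^0(C,K_C)=\pi^*H^0(\mathbb P^1,\cO(1))$ from \eqref{eq:canonical-hyperelliptic}; this holds because both spaces have dimension $2$. Squaring then corresponds to squaring linear forms, so its image is the cone of perfect squares, i.e.\ the binary quadratics with a double root. This is the discriminant conic.

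The main obstacle is the identification \eqref{eq:A-binary-quadratics}: one must check that every section of $K_C^2$ is pulled back from $\mathbb P^1$. Sections of $K_C^2$ in the anti-invariant part would have to involve the hyperelliptic coordinate $y$. However, $h^0(K_C^2)=3=h^0(\mathbb P^1,\cO(2))$, and pullback is injective, so the statement follows from the dimension count alone.
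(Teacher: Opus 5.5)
Your proposal is correct and follows essentially the same route as the paper: rewrite $K_C\otimes\gamma_{ij}\cong\cO_C(w_i+w_j)$ using $K_C\cong\cO_C(2w_j)$, note that the square of the unique section has divisor $2w_i+2w_j=\pi^*(b_i+b_j)$ and therefore spans $H_i\cap H_j$, and identify the squares of $H^0(C,K_C)=\pi^*H^0(\mathbb P^1,\cO(1))$ with the cone $Q$. Your extra checks make explicit steps the paper leaves implicit: that $H_i\cap H_j$ is a line because $b_i\ne b_j$, that two sections with equal divisors differ by a constant, and the count $h^0(K_C^2)=3$ justifying \eqref{eq:A-binary-quadratics}.
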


\begin{proof}
Use \(K_C\cong\cO_C(2w_j)\). Then
\begin{equation}\label{eq:Kgammaij}
 K_C\otimes\gamma_{ij} \cong\cO_C(w_i+w_j).
\end{equation}
A nonzero section of this line bundle, unique up to scalar, has divisor \(w_i+w_j\). Its square has divisor 
\(2w_i+2w_j\), which is the pullback of \(b_i+b_j\). Under \eqref{eq:A-binary-quadratics}, the square therefore spans the line of binary quadratics vanishing at both \(b_i\) and \(b_j\). This line is \(H_i\cap H_j\).

The space \(H^0(C,K_C)\) is two-dimensional. Its squares form the affine cone over the Veronese conic in
\(\mathbb P H^0(C,K_C^2)\). This is \(Q\). This comparison does not identify \(Q\) with the support of the trivial character: the identity fixes all of \(X\), and the stable direct-image summand is supported on all of \(\A\).
\end{proof}

\begin{proposition}
\label{prop:discriminant-decomposition}
The reduced Hitchin discriminant is
\begin{equation}\label{eq:discriminant-decomposition}
 \Delta_{\red}=Q\cup H_1\cup\cdots\cup H_6.
\end{equation}
A general point of \(H_i\) gives an integral spectral curve with one node. A general point of \(Q\) gives two copies of \(C\) meeting transversely in two points.
\end{proposition}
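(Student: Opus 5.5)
The plan is to compute the singularities of the spectral curve $\Sigma_a\subset\operatorname{Tot}(K_C)$, $\lambda^2=a$, directly in terms of the quadratic differential $a\in H^0(C,K_C^2)=H^0(\mathbb P^1,\cO(2))$. A double cover $\lambda^2=a$ of $C$ is smooth exactly when $a$ has only simple zeros on $C$: at a point where $a$ vanishes to order $k$ in a local coordinate, the local equation $\lambda^2=u^k\cdot(\text{unit})$ is smooth iff $k\le1$. Hence
\[
\Delta=\{a\ne0 \mid \operatorname{div}_C(a)\text{ is not reduced}\}\cup\{0\},
\]
and the first step is to express the multiplicities of zeros of $a$ on $C$ through the binary quadratic $\bar a$ on $\mathbb P^1$ with $a=\pi^*\bar a$, using \eqref{eq:canonical-hyperelliptic}. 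The divisor of $\pi^*\bar a$ is $\pi^*\operatorname{div}(\bar a)$. If $b$ is a zero of $\bar a$ that is not a branch point, its preimage consists of two points of the same multiplicity as in $\bar a$. If $b=b_i$, the preimage is $2w_i$ times that multiplicity, since $\pi$ is ramified there.

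From this, $\operatorname{div}_C(a)$ is non-reduced iff either $\bar a$ has a double root (that is, $\bar a=\ell^2$, so $a\in Q$ by \eqref{eq:square-cone}), or $\bar a$ vanishes at some $b_i$ (that is, $a\in H_i$ by \eqref{eq:Hi}). Conversely, if $\bar a$ has two distinct roots, neither a branch point, then $\operatorname{div}(a)$ consists of four distinct simple points and $\Sigma_a$ is smooth. Together with $0\in Q$, this gives the set-theoretic equality \eqref{eq:discriminant-decomposition}. Taking reduced structures gives the statement about $\Delta_{\red}$, noting that $Q$ is an irreducible quadric cone and each $H_i$ is a plane in the $3$-dimensional space $\A$.

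For the local descriptions, first take a general point of $H_i$. Then $\bar a$ has simple roots at $b_i$ and at a second point $b\notin\{b_1,\dots,b_6\}$. So $\operatorname{div}(a)=2w_i+p+\iota p$, and $\Sigma_a$ has local equation $\lambda^2=u^2\cdot(\text{unit})$ at $w_i$. This is an ordinary node, and the curve is smooth elsewhere. Integrality follows because $a$ is not a square in $H^0(K_C^2)$: a square root would be a section of $K_C$ with divisor $w_i+p$, which would force $p=\iota w_i=w_i$. Hence $\Sigma_a$ is irreducible, and being a Cartier divisor in a smooth surface it is reduced. Next take a general point of $Q$. Then $a=s^2$ with $s\in H^0(K_C)$ having divisor $p+\iota p$, where $p$ is not a Weierstrass point. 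So $\Sigma_a=\{\lambda=s\}\cup\{\lambda=-s\}$ is two sections, each isomorphic to $C$, meeting where $s=0$, namely at the two points $p,\iota p$. The intersection is transverse there because $s$ has simple zeros.

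The main obstacle is the bookkeeping of zero multiplicities under the ramified hyperelliptic pullback, together with checking that $0$ and the overlaps $Q\cap H_i$ introduce no extra components. Both points are handled by the explicit case analysis on the roots of $\bar a$ above.
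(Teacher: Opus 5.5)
Your proposal is correct and follows essentially the same route as the paper: you characterize $\Delta$ by non-reduced zero divisors of $\pi^*\bar a$ (a double root of $\bar a$ gives $Q$, a root at a branch point $b_i$ gives $H_i$), then read off the local equations $\lambda^2=u^2\cdot(\text{unit})$ at $w_i$ and $(\lambda-s)(\lambda+s)=0$ on $Q$; your integrality check on $H_i$ and your treatment of $a=0$ go slightly beyond what the paper writes. One small fix: being a Cartier divisor in a smooth surface does not by itself give reducedness (consider $\lambda^2=0$), but here reducedness follows because $\Sigma_a$ is smooth away from a single ordinary node, and non-squareness of $a$ is cleanest seen from the odd multiplicity of $p$ in $\operatorname{div}(a)=2w_i+p+\iota p$.
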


\begin{proof}
A Hitchin coordinate is a binary quadratic \(a\in H^0(\mathbb P^1,\cO(2))\). Its pullback to \(C\) has a multiple
zero in exactly two cases. First, the two roots of \(a\) coincide; then \(a=\ell^2\) and \(a\in Q\). Second, one root is a branch point \(b_i\); then \(a\in H_i\). These conditions give \eqref{eq:discriminant-decomposition} set-theoretically, and each component is reduced at its generic point.

For the local types, write the hyperelliptic map near a ramification point as \(w=z^2\). At a general point of \(H_i\), the spectral equation is \(\lambda^2-z^2u=0\), with \(u\) a unit, and has one ordinary node. At a general point of 
\(Q\), write \(a=\ell^2\). Then \((\lambda-\ell)(\lambda+\ell)=0\), and the two components meet transversely at the two simple zeros of \(\ell\).
\end{proof}

\begin{remark}\label{rem:discriminant-versus-endoscopy}
The six divisors \(H_i\) are generic nodal discriminant components. They are not the nontrivial endoscopic bases. The latter are the $15$ codimension-two intersections \(H_i\cap H_j, 1\leq i< j\leq 6\).
\end{remark}

\subsection{The support theorem}

The principal polarization of \(\Jac(C)\) gives the nondegenerate Weil pairing
\begin{equation}\label{eq:Weil-pairing}
 e_2:\Gamma\times\Gamma\longrightarrow\{\pm1\}.
\end{equation}
It identifies \(\Gamma\) with its character group by
\begin{equation}\label{eq:Weil-character}
 \gamma\longmapsto\kappa_\gamma,
 \qquad
 \kappa_\gamma(\eta)=e_2(\gamma,\eta).
\end{equation}

Let
\begin{equation}\label{eq:direct-image-complex}
 \cK:=Rh_*\IC_X=Rh_*\qq_X[6]\in D^b_c(\A,\qq).
\end{equation}
Tensoring acts on \(\cK\). Since \(D^b_c(\A,\qq)\) is idempotent complete, the derived character projectors
\begin{equation}\label{eq:derived-projector}
 \mathbf P_\kappa =\frac1{16}\sum_{\eta\in\Gamma}\kappa(\eta)^{-1}\tau_\eta^*
\end{equation}
split \(\cK\):
\begin{equation}\label{eq:direct-image-decomposition}
 \cK=\bigoplus_{\gamma\in\Gamma}\cK_{\kappa_\gamma},
 \qquad
 \cK_{\kappa_\gamma}:=\operatorname{Im}(\mathbf P_{\kappa_\gamma}).
\end{equation}

We now state precisely the external support theorem used below. In the fixed-determinant notation of Maulik--Shen \cite{MS}, the character attached to \(\gamma\in\Jac(C)[2]\) in determinant degree \(\delta\) is
\begin{equation}\label{eq:degree-dependent-character}
 \kappa_{\gamma,\delta}(\eta)=e_2(\gamma,\eta)^\delta.
\end{equation}
Since $\sM^{\SL_2}(C,\Lambda) \xrightarrow{\,\sim\,} \sM^{\SL_2}(C,\Lambda\otimes M^2)$ after tensoring with line bundle $M$, \(\delta=1\); for the original determinant degree the same character occurs because that degree is odd.

\begin{theorem}\label{thm:endoscopic-support-external}
Let \(h:X\to \A\) be the trace-free, fixed-determinant rank-$2$ Hitchin fibration with \(\gcd(2,\deg\Lambda)=1\). For every \(0\neq\gamma\in\Jac(C)[2]\) and every integer \(j\), we have
\begin{equation}\label{eq:endoscopic-support-perverse}
 \Supp\bigl({}^p\!\mathcal H^j (\cK_{\kappa_{\gamma,\deg\Lambda}})\bigr)\subset \A_\gamma.
\end{equation}
Equivalently,
\begin{equation}\label{eq:endoscopic-support}
 \cK_{\kappa_{\gamma,\deg\Lambda}}|_{\A\setminus \A_\gamma}=0.
\end{equation}
The summand \(\cK_1\) is the stable, or invariant, summand. The nontrivial summands are the variant, or endoscopic, summands.
\end{theorem}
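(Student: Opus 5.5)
This theorem is labelled "external", so the plan is to derive it from the Ngô-type support theorem for endoscopic summands, in the form proved by Maulik--Shen \cite{MS}, rather than from scratch. The one new ingredient is a check that their endoscopic base agrees with the line $\A_\gamma$ of Proposition~\ref{prop:fixed-endoscopic-base}.

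\emph{Step 1: the two formulations are equivalent.} Since $h$ is proper, the decomposition theorem makes $\cK$ a direct sum of shifted perverse sheaves. The projectors $\mathbf P_\kappa$ are idempotent endomorphisms, so each summand $\cK_\kappa$ is again such a direct sum. For a semisimple complex, vanishing on the open set $\A\setminus\A_\gamma$ is equivalent to every perverse cohomology sheaf ${}^p\mathcal H^j$ having support in the closed set $\A_\gamma$. This gives the equivalence of \eqref{eq:endoscopic-support-perverse} and \eqref{eq:endoscopic-support}.

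\emph{Step 2: reduce to the cited support theorem.} For the fixed-determinant $\SL_n$ Hitchin fibration, \cite{MS} show the following. The $\kappa_{\gamma,\delta}$-isotypic part of $Rh_*\qq$ is supported on the image of the endoscopic Hitchin base attached to the cyclic cover $C_\gamma\to C$. On that image it is identified, up to a shift and Tate twist, with the pushforward of a direct image from the endoscopic Hitchin fibration. I will reduce the present statement to this result as follows.
\begin{itemize}
\item[(a)] Match conventions. Our $\Gamma$-action is tensorization \eqref{eq:tau-action}. The character identification goes through the Weil pairing \eqref{eq:Weil-character}. The degree twist is $\delta=\deg\Lambda$, which is odd, so $\kappa_{\gamma,\deg\Lambda}=\kappa_\gamma$.
\item[(b)] Identify the endoscopic base in rank two. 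The endoscopic group is the norm-one torus of $C_\gamma/C$. Its Hitchin base is the anti-invariant part $H^0(C_\gamma,K_{C_\gamma})^{-}=H^0(C,K_C\otimes\gamma)$ from \eqref{eq:differential-eigenspaces}. The map to $\A$ is $s\mapsto -\det\operatorname{diag}(s,-s)=s^2$, by \eqref{eq:diagonal-higgs}. Its image is therefore exactly $\A_\gamma$ as defined in \eqref{eq:Agamma}.
\end{itemize}
Combining (a) and (b) with the cited theorem yields \eqref{eq:endoscopic-support}.

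\emph{Step 3: the trivial character.} The summand $\cK_1=\mathbf P_1\cK$ is the $\Gamma$-invariant part, which is the stable part in Ngô's terminology. With this, the remaining naming assertions of the theorem are matters of definition.

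\emph{Alternative check.} As a consistency test, one can argue over $\A^{\mathrm{sp,sm}}$. There each fiber is a Prym torsor, and $\Gamma$ acts on it by translation through a subgroup of the connected group $\operatorname{Prym}^0$. Translation by an element of a connected group acts trivially on cohomology, so every nontrivial isotypic part vanishes there. This gives vanishing off $\Delta$ but not yet off $\A_\gamma$. Extending the vanishing across $\Delta\setminus\A_\gamma$ requires Ngô's support inequality together with a $\delta$-regularity estimate, and that is precisely the content imported from \cite{MS}.

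\emph{Main obstacle.} I expect the sign and degree conventions in Step 2(a) to be the delicate point. One must confirm that the character labelled by $\gamma$ in \cite{MS} is $e_2(\gamma,-)^{\deg\Lambda}$ rather than its inverse or a character shifted by another twist. In rank two $\kappa=\kappa^{-1}$, so the inverse ambiguity disappears. What remains is to verify, via the Narasimhan--Ramanan description of $X^\gamma$, that the $\kappa_\gamma$-part is nonzero exactly over the fixed-locus image $\A_\gamma$ and not over some other line $\A_{\gamma'}$.
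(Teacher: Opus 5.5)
Your proposal is correct and follows essentially the paper's route: deduce the support inclusion from \cite[Theorem~0.2]{MS}, use oddness of $\deg\Lambda$ to identify $\kappa_{\gamma,\deg\Lambda}$ with $\kappa_\gamma$, and identify the endoscopic base as the image of $H^0(C,K_C\otimes\gamma)$ under squaring, which is $\A_\gamma$. Your Step~1 and the check over $\A^{\mathrm{sp,sm}}$ are harmless additions; the equivalence of the two formulations holds for any complex, since restriction to an open set is perverse t-exact. As the paper stresses, only the support inclusion is needed, not that the $\kappa_\gamma$-part is nonzero exactly over $\A_\gamma$.
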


\begin{proof} 
This is the coprime fixed-determinant specialization of \cite[Theorem~0.2]{MS}. The support construction is given in
\cite[Section~3]{MS}. The degree-dependent character in that theorem is \eqref{eq:degree-dependent-character}. Its hypotheses apply because the rank is $2$ and the determinant degree is odd. The endoscopic base in their notation is the image of \(H^0(C,K_C\otimes\gamma)\) under the square map. It is therefore \(\A_\gamma\) in \eqref{eq:Agamma}.
The rank-$2$ stable--variant identification is also described in \cite[Sections~4.4--4.5]{DCHM}. Its original mirror-symmetry formulation is \cite{HT}. We use only the support inclusion \eqref{eq:endoscopic-support-perverse}. We do not use equality of supports or a compactly supported variant.
\end{proof}

Theorem \ref{thm:endoscopic-support-external} is nonequivariant. Its \(T\)-equivariant enhancement is obtained by Borel construction (\ref{eqn:Nth_Borel}). The underlying nonequivariant summand vanishes on \(\A\setminus \A_\gamma\). Its equivariant enhancement therefore vanishes over the quotient \([(\A\setminus \A_\gamma)/T]\); on every finite Borel approximation \(U\), its support is contained in \(\A_\gamma\times^T U\).

\subsection{The relative cohomological correspondence}

Set
\begin{equation}\label{eq:relative-correspondence-space}
 W=X\times_\A X,
\end{equation}
and let
\begin{equation}\label{eq:correspondence-projections}
 p_i:W\longrightarrow X, \qquad i=1,2,
\end{equation}
be the two projections. Each projection is projective, since it is a base change of the Hitchin morphism. We work on the canonical ambient correspondence space \(W\), rather than on a closed support chosen from a cycle representative. The Borel--Moore class \(\cl(Z_{B}^{\red})\) gives a degree-zero cohomological correspondence
\begin{equation}\label{eq:cohomological-correspondence-arrow}
 u_B:p_1^*\IC_X\longrightarrow p_2^!\IC_X
\end{equation}
in \(D^b_c(Y,\qq)\). The degree is zero because \(\dim_\C X=\dim_\C Z_{B}^{\red}=6\). This is the standard cycle-class
construction of cohomological correspondences \cite[Section~1.1]{Varshavsky}, and its Borel--Moore realization is the usual middle-dimensional correspondence action \cite[Section~2.7]{ChrissGinzburg}.

Put \(g=h\circ p_1=h\circ p_2\). Proper adjunction defines 
\begin{equation}\label{eq:six-functor-endomorphism}
\begin{aligned}
 \mathbf{F}_{Z^{\red}_{B}}:Rh_*\IC_X
 &\xrightarrow{\,\mathrm{unit}\,}
 Rg_*p_1^*\IC_X
 \xrightarrow{\, Rg_*(u_B)\,}
 Rg_*p_2^!\IC_X
 \xrightarrow{\,\mathrm{counit}\,}
 Rh_*\IC_X.
\end{aligned}
\end{equation}
This constructs the morphism in the derived category, and no passage from a Chow action to a sheaf morphism is assumed.

The cycle-class compatibility with the six operations gives, on hypercohomology,
\begin{equation}\label{eq:derived-action-hypercohomology}
 \alpha\longmapsto
 \operatorname{PD}_X^{-1}(p_2)_*\bigl(p_1^*\alpha\cap\cl^{\mathrm{BM}}(Z_{B}^{\red})\bigr).
\end{equation}
Thus \eqref{eq:six-functor-endomorphism} induces the operator $\mathbf{F}_{Z_{B}^{\red}}$. Diagonal \(\Gamma\)-invariance of the cycle class makes \(u_B\), and hence \(\mathbf{F}_{Z^{\red}_{B}}\), \(\Gamma\)-equivariant.
Therefore \(\mathbf{F}_{Z^{\red}_{B}}\) commutes with the derived idempotents \eqref{eq:derived-projector}. Applying the construction to finite Borel approximations gives the equivariant hypercohomology operator used in quantum multiplication.

For a character \(\kappa\), let \(\mathbf{F}_{Z^{\red}_{B},\kappa}\) be the derived endomorphism obtained by applying the same cycle-class construction to \(\cl(Z_{B,\kappa}^{\red})\). Linearity of cohomological correspondences and functoriality under translation of the first factor give
\begin{equation}\label{eq:derived-projected-block}
 \mathbf{F}_{Z^{\red}_{B},\kappa}
 =\mathbf P_\kappa\mathbf{F}_{Z^{\red}_{B}}
 =\mathbf{F}_{Z^{\red}_{B}}\mathbf P_\kappa
 =\mathbf P_\kappa\mathbf{F}_{Z^{\red}_{B}}\mathbf P_\kappa.
\end{equation}
Indeed, translation of the first factor precomposes the correspondence operator by the corresponding pullback, and equivariance of \(\mathbf{F}_{Z^{\red}_{B}}\) gives the remaining equalities. Thus the algebraically projected kernel induces exactly the derived projected block.

\begin{theorem}\label{thm:stable-endoscopic-blocks}
The endomorphism \(\mathbf{F}_{Z^{\red}_{B}}\) preserves \eqref{eq:direct-image-decomposition}. Its block on \(\cK_1\) is the stable block. For \(0\neq\gamma\in\Gamma\), its block
\begin{equation}\label{eq:endoscopic-block}
 \mathbf{F}_{Z^{\red}_{B},\gamma}
 :=\mathbf{F}_{Z^{\red}_{B},\kappa_\gamma}:\cK_{\kappa_\gamma}\longrightarrow\cK_{\kappa_\gamma}
\end{equation}
is induced by the cycle class of \(Z_{B,\kappa_\gamma}^{\red}\) and is supported on \(\A_\gamma\), in the sense that its restriction to \(\A\setminus \A_\gamma\) is $0$. In genus $2$,
\begin{equation}\label{eq:endoscopic-block-line}
 \Supp(\mathbf{F}_{Z^{\red}_{B},\gamma_{ij}}) \subset H_i\cap H_j.
\end{equation}
\end{theorem}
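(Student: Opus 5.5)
The plan is to deduce the statement formally from the derived construction \eqref{eq:six-functor-endomorphism}, the projector identity \eqref{eq:derived-projected-block}, and the external support theorem (Theorem~\ref{thm:endoscopic-support-external}).

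\textbf{Step 1: preservation of the decomposition.} Diagonal $\Gamma$-invariance of $Z_B^{\red}$ (Proposition~\ref{prop:red-invariance}) makes $u_B$ and hence $\mathbf F_{Z^{\red}_B}$ commute with each $\tau_\eta^*$ on $\cK$. Therefore $\mathbf F_{Z^{\red}_B}$ commutes with every idempotent $\mathbf P_\kappa$ of \eqref{eq:derived-projector}. Since the $\mathbf P_\kappa$ are orthogonal idempotents summing to the identity, $\mathbf F_{Z^{\red}_B}$ decomposes as $\bigoplus_\kappa \mathbf P_\kappa\mathbf F_{Z^{\red}_B}\mathbf P_\kappa$ with respect to \eqref{eq:direct-image-decomposition}. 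By \eqref{eq:derived-projected-block}, the $\kappa$-block is exactly $\mathbf F_{Z^{\red}_B,\kappa}$, the endomorphism induced by $\cl(Z^{\red}_{B,\kappa})$. For $\kappa=1$ this is, by definition, the stable block.

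\textbf{Step 2: the support claim.} For $\gamma\neq0$, let $U=\A\setminus\A_\gamma$ with open inclusion $j$. Restriction $j^*$ is a functor, so
\[
 j^*\mathbf F_{Z^{\red}_B,\gamma}\in\End\bigl(j^*\cK_{\kappa_\gamma}\bigr).
\]
Theorem~\ref{thm:endoscopic-support-external} gives $j^*\cK_{\kappa_\gamma}=0$. Here we use $\deg\Lambda$ odd, so that $\kappa_{\gamma,\deg\Lambda}=\kappa_\gamma$ under \eqref{eq:Weil-character}. Any endomorphism of the zero object is zero, so the block restricts to zero on $U$. In the $T$-equivariant setting, apply the same argument on each finite Borel approximation $\A\times^T U_N$, using the remark following Theorem~\ref{thm:endoscopic-support-external}.

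\textbf{Step 3: the genus-two lines.} For genus two, \eqref{eq:endoscopic-block-line} follows by substituting $\A_{\gamma_{ij}}=H_i\cap H_j$ from Theorem~\ref{thm:explicit-endoscopic-lines}, together with Lemma~\ref{lem:two-torsion-labels}.

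\textbf{Main obstacle.} The delicate point is Step 1. One must justify that the derived equivariance of $u_B$ really holds in $D^b_c(W)$. Cycle-level invariance only gives equality of Borel--Moore classes, not an a priori sheaf-level statement. To handle this, I would note that $\tau_\eta\times\tau_\eta$ acts on $W$ over $\A$ and that the cycle-class map to $\operatorname{Hom}(p_1^*\IC_X,p_2^!\IC_X)=H^{\mathrm{BM}}_{12}(W)$ is natural under automorphisms. Equality of classes then gives equality of the morphisms $u_B$ and $(\tau_\eta\times\tau_\eta)^*u_B$, and the proper base change functoriality of \eqref{eq:six-functor-endomorphism} transports this to $\cK$.

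One must also be careful with the matching of character conventions. Theorem~\ref{thm:endoscopic-support-external} is stated for $\mathbf P_\kappa$ built from $\tau_\eta^*$, and \eqref{eq:derived-projector} must use the same convention. I would check this explicitly, so that $\kappa$ is not replaced by $\kappa^{-1}$. For $r=2$ every character is self-inverse, so this issue is harmless.
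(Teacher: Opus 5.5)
Your proposal is correct and follows the paper's proof essentially step for step. $\Gamma$-equivariance of $\mathbf{F}_{Z^{\red}_{B}}$ from Proposition~\ref{prop:red-invariance} gives commutation with the projectors and hence preservation of the summands; the block vanishes off $\A_\gamma$ because its source and target $\cK_{\kappa_\gamma}$ vanish there by Theorem~\ref{thm:endoscopic-support-external}; and the genus-two line comes from Theorem~\ref{thm:explicit-endoscopic-lines}. Your extra remarks only make explicit what the paper asserts in the paragraph before the theorem, namely sheaf-level equivariance of $u_B$ via naturality of the cycle class in $H^{\mathrm{BM}}_{12}(W)$, and $\kappa_{\gamma,\deg\Lambda}=\kappa_\gamma$ for odd degree.
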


\begin{proof}
Proposition~\ref{prop:red-invariance} makes \(\mathbf{F}_{Z^{\red}_{B}}\) a \(\Gamma\)-equivariant endomorphism. It therefore commutes with every character projector and preserves each direct summand. For \(\gamma\neq0\), the source and target of the block \eqref{eq:endoscopic-block} restrict to $0$ on \(\A\setminus \A_\gamma\) by Theorem
\ref{thm:endoscopic-support-external}. The block has the same property. Equation \eqref{eq:endoscopic-block-line} follows from Theorem \ref{thm:explicit-endoscopic-lines}.
\end{proof}

\begin{corollary}\label{cor:generic-no-variant}
Let \(D^\circ\) be a dense open subset of one of \(Q,H_1,\ldots,H_6\), obtained by deleting all pairwise intersections and all worse spectral strata. Then
\begin{equation}\label{eq:variant-zero-generic-D}
 \cK_{\kappa_\gamma}|_{D^\circ}=0
\end{equation}
for every \(\gamma\neq0\). Hence every endoscopic block of \(\mathbf{F}_{Z^{\red}_{B}}\) restricts to $0$ on 
\(D^\circ\).
\end{corollary}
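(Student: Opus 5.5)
The plan is to reduce \eqref{eq:variant-zero-generic-D} to the support theorem, Theorem~\ref{thm:endoscopic-support-external}, and to the explicit description of the endoscopic lines in Theorem~\ref{thm:explicit-endoscopic-lines}. For $\gamma\neq0$ we know $\cK_{\kappa_\gamma}|_{\A\setminus\A_\gamma}=0$. It therefore suffices to show that $D^\circ\cap\A_\gamma=\emptyset$ for every nonzero $\gamma$. Restriction to $D^\circ$ factors through restriction to the open set $\A\setminus\A_\gamma$, since $D^\circ$ is then a locally closed subset of $\A\setminus\A_\gamma$ and $(\cK_{\kappa_\gamma}|_{\A\setminus\A_\gamma})|_{D^\circ}=0$.

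The disjointness follows from Lemma~\ref{lem:two-torsion-labels}. Write $\gamma=\gamma_{ij}$ with $i<j$; then $\A_\gamma=H_i\cap H_j$. This set is contained in the pairwise intersection $H_i\cap H_j$, which is deleted when $D=H_i$ or $D=H_j$. If $D=H_k$ with $k\notin\{i,j\}$, then $H_k\cap\A_\gamma$ lies inside $H_k\cap H_i$, which is also deleted. If $D=Q$, then $Q\cap\A_\gamma\subset Q\cap H_i$, again deleted. In every case $D\cap\A_\gamma$ lies in a pairwise intersection of the components listed in Proposition~\ref{prop:discriminant-decomposition}, so $D^\circ\cap\A_\gamma=\emptyset$.

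I must check that the deleted locus is nonempty-complement, i.e.\ that $D^\circ$ is genuinely dense open in $D$. Each pairwise intersection is a proper closed subset of the irreducible component $D$. The $H_i$ are distinct hyperplanes, and $Q$ is an irreducible quadric cone not contained in any $H_i$, because $\ell^2(b_i)\ne0$ for generic $\ell$. So finitely many proper closed subsets are removed, together with the worse strata, which are proper closed by Proposition~\ref{prop:discriminant-decomposition}. The only subtle point, which I expect to be the one needing care, is the origin: $0\in\A_\gamma$ for every $\gamma$. The origin lies in all pairwise intersections, so it is deleted, consistent with the argument above.

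Finally, for the operator statement, Theorem~\ref{thm:stable-endoscopic-blocks} exhibits the block $\mathbf F_{Z^{\red}_B,\gamma}$ as an endomorphism of $\cK_{\kappa_\gamma}$, namely $\mathbf P_{\kappa_\gamma}\mathbf F_{Z^{\red}_B}\mathbf P_{\kappa_\gamma}$ by \eqref{eq:derived-projected-block}. Restriction to $D^\circ$ is a functor. An endomorphism of an object whose restriction is zero therefore restricts to the zero endomorphism, and the block vanishes on $D^\circ$. The $T$-equivariant version follows on each Borel approximation by the remark after Theorem~\ref{thm:endoscopic-support-external}, since $D^\circ$ is $T$-stable.
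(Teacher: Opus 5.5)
Your proof is correct and follows the paper's argument: the paper observes that every nontrivial endoscopic base is a line $H_i\cap H_j$, which is removed from each $D^\circ$, and then applies Theorem~\ref{thm:endoscopic-support-external}. Your case analysis ($D=H_i$ or $H_j$, $D=H_k$ with $k\notin\{i,j\}$, $D=Q$) makes that disjointness explicit; note that the identification $\A_{\gamma_{ij}}=H_i\cap H_j$ comes from Theorem~\ref{thm:explicit-endoscopic-lines}, while Lemma~\ref{lem:two-torsion-labels} only supplies that every $\gamma\neq0$ is some $\gamma_{ij}$.
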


\begin{proof}
Every nontrivial endoscopic base is a line \(H_i\cap H_j\), and this line is deleted from each \(D^\circ\). Apply
Theorem~\ref{thm:endoscopic-support-external}.
\end{proof}

\begin{remark}\label{rem:Chow-support-warning}
Theorem~\ref{thm:stable-endoscopic-blocks} does not assert that
\begin{equation}\label{eq:unsupported-Chow-claim}
Z_{B,\kappa_\gamma}^{\red}
\in
\operatorname{Im}\left[A_6^T(X_{\A_\gamma}\times_{\A_\gamma}X_{\A_\gamma})\longrightarrow A_6^T(X\times_\A X)\right].
\end{equation}
The projected kernel may contain homologically trivial pieces away from \(\A_\gamma\). Establishing \eqref{eq:unsupported-Chow-claim} would require a separate Chow-theoretic support theorem. 
\end{remark}

\section{Equivariant Groechenig--Shen morphism}\label{sec_Groechenig-Shen}

We construct a noncanonical completed complex additive lift of a chosen nonequivariant Groechenig--Shen isomorphism.  We do not construct an equivariant equivalence of spectra or a global Fourier--Mukai kernel. The completion and coefficient conventions are part of the statement. 

\subsection{Set up, Equivariant twist and inertia components}
As before, let $\widehat{X}:=[X/\Gamma]$. Let $$\A^{\mathrm{ell}}\subset \A$$ be the locus of integral spectral curves and let $$\A^{\mathrm{sm}}\subset \A^{\mathrm{ell}}$$ be the locus of smooth
spectral curves. Superscripts $\mathrm{ell}$ and $\mathrm{sm}$ denote inverse images of these loci.

The torus $T=\C^*$ acts by scaling the Higgs fields: $t\cdot(E,\phi)=(E,t\phi)$, and it acts on the $i$th summand of
$\A$ with weight $i$. The $\Gamma$-action commutes with $T$ and preserves the holomorphic symplectic form on $X$. The equivariant twisted groups used below require a coherent equivariant gerbe, not merely a nonequivariant Brauer class.  We fix the following datum once and for all.

\begin{definition}
\label{def:equivariant-gerbe}
Fix a banded $\mu_r$-gerbe,
\[
 \mathcal G_T\longrightarrow \widehat X=[X/\Gamma],
\]
which represents the lifting class $\alpha$, together with a coherent lift of the Higgs-scaling $T$-action to $\mathcal{G}_T$. The $T$-action is required to commute with the quotient-groupoid structure. Write
\[
 \alpha_T\in\Br_T(\widehat X)[r]
\]
for the class of this fixed equivariant twist, $\alpha$ for its underlying nonequivariant twist, and $\widetilde\alpha_T$ for its topological realization. No canonicity among equivariant structures is asserted.
\end{definition}

\begin{remark}\label{rem:gerbe-construction}
Choose $c_0\in C$ and evaluate the universal projective bundle on the $\text{PGL}_r$ moduli stack at $c_0$. Its lifting gerbe for $1\to\mu_r\to\SL_r\to\PGL_r\to1$ is a banded $\mu_r$-gerbe. A coherent Higgs-scaling action on this evaluated projective object induces an equivariant lifting datum as in Definition~\ref{def:equivariant-gerbe}. We fix the datum and the identification of its underlying twist with the one used in the nonequivariant comparison as inputs. Lifts of a projective bundle \emph{on the whole curve} differ by $r$-torsion line bundles and form, when nonempty, a $\Gamma$-torsor of isomorphism classes. That fibered category must not be substituted for the single $\mu_r$-gerbe just described.
\end{remark}

The nonequivariant input is a chosen Groechenig--Shen~\cite{GS} equivalence of complex topological $K$-theory spectra
\begin{equation}\label{eq:GS-noneq}
 \Phi^{\KU}_{\GS}:\KU^*(X^{\mathrm{an}})
 \xrightarrow{\ \simeq\ }
 \KU^*(\widehat X^{\mathrm{an}},\widetilde\alpha).
\end{equation}
Only existence of \eqref{eq:GS-noneq} is used. In particular, we do not assume that it is induced by a global algebraic kernel on $X\times_\A\widehat X$ or that it has a $T$-equivariant refinement.

Let $$K=S^1\subset T$$
be the maximal compact subgroup. {\em Topological} $T$-equivariant groups below mean $K$-equivariant groups. Let $\chi$ denote the standard character and put
\[
 R_T=R(K)\otimes_\zz\cc=\cc[\chi,\chi^{-1}],\qquad
 I_T=(\chi-1),\qquad
 \widehat R_T=\limi_N R_T/I_T^N.
\]
Let $\hbar=c_1^T(\C_1)$ and let $u$ be the Bott variable. Both $\hbar$ and $u$ have cohomological degree $2$.

Consider the unrigidified inertia stack of $X$:
\[
 I\widehat X=\coprod_{\gamma\in\Gamma}[X^\gamma/\Gamma].
\]
The notation on the right is not a connected-component decomposition. A connected component $\mathcal F\subset I\widehat X$ can be represented by a pair $(\gamma,F)$, where $F$ is a connected component of $X^\gamma$. Put
\[
 \Gamma_F=\{\eta\in\Gamma\, |\, \eta(F)=F\}.
\]
Then $\mathcal F\simeq[F/\Gamma_F]$. Different pairs can represent the same connected quotient component, but the quantities introduced below are invariant under that equivalence. Let
$$\mathscr L_{\alpha_T,\mathcal F}$$ 
be the $T$-equivariant finite-monodromy transgression line of $\alpha_T$ on $\mathcal F$.

\begin{definition}[Fixed-total-degree periodic groups]
For an integer $k$, a finite-type $T$-space or finite quotient stack $Z$, and a $T$-equivariant local system $\mathscr L$, set
\begin{equation}\label{eq:periodic-degree}
 H^k_{T,\per}(Z;\mathscr L):=\bigoplus_{n\in\zz}H_T^{k-2n}(Z;\mathscr L)u^n.
\end{equation}
Multiplication by
\begin{equation}\label{eq:zdef}
 z:=\hbar u^{-1}
\end{equation}
preserves total degree. Define
\begin{equation}\label{eq:zcompletion}
 \widehat H^k_{T,\per}(Z;\mathscr L):=\limi_N H^k_{T,\per}(Z;\mathscr L)/(z^N).
\end{equation}
For the quotient stack $\widehat{X}$, set
\begin{align}
 H^k_{T,\del,\per}(\widehat X,\alpha_T)
 &:={\bigoplus_{\mathcal F\in\pi_0(I\widehat X)}}
 H^k_{T,\per}(\mathcal F;\mathscr L_{\alpha_T,\mathcal F}),
 \label{eq:delperiodic}\\
 \widehat H^k_{T,\del,\per}(\widehat X,\alpha_T)
 &:={H^k_{T,\del,\per}(\widehat X,\alpha_T)}^{\wedge}_{(z)}.
 \label{eq:completed-delperiodic}
\end{align}
There is no Chen--Ruan age shift in these definitions. Multiplication by $u$ identifies degree $k$ with degree $k+2$, so $k=0,1$ suffice.
\end{definition}

\begin{lemma}\label{lem:point-test}
One has
\[
 H^0_{T,\per}(\mathrm{pt})=\C[z],\qquad
 \widehat H^0_{T,\per}(\mathrm{pt})=\C[\![z]\!].
\]
If one instead keeps only parity and completes $\C[\hbar]\otimes\C[u,u^{-1}]$ at $(\hbar)$, the result is not finite
free over $\C[\![\hbar]\!]$.
\end{lemma}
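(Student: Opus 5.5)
The plan is to compute directly from the definition \eqref{eq:periodic-degree} with $Z=\mathrm{pt}$ and trivial local system. Since $H_T^*(\mathrm{pt})=\C[\hbar]$ with $\hbar$ in degree $2$, the total-degree-$0$ piece is
\[
 H^0_{T,\per}(\mathrm{pt})=\bigoplus_{n\in\zz}H_T^{-2n}(\mathrm{pt})u^n .
\]
The summand with index $n$ is nonzero only when $-2n\ge0$, that is $n=-j$ with $j\ge0$, and then it is spanned by $\hbar^ju^{-j}=z^j$. Hence the group is the $\C$-span of $\{z^j\}_{j\ge0}$.

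Multiplication is by concatenation of $\hbar$ and $u$ powers, and the monomials $z^j$ are linearly independent. This gives the ring isomorphism $H^0_{T,\per}(\mathrm{pt})=\C[z]$. The completion \eqref{eq:zcompletion} is then the $z$-adic completion of $\C[z]$, which is $\C[\![z]\!]$. This settles the first two identities.

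For the negative statement, I will consider $A=\C[\hbar]\otimes\C[u,u^{-1}]$ with only the parity grading retained and complete at $(\hbar)$. Because $u$ is invertible and unconstrained by degree, one has $A\cong\C[\hbar][u,u^{-1}]$. Each quotient $A/\hbar^N=(\C[\hbar]/\hbar^N)[u,u^{-1}]$ is free over $\C[\hbar]/\hbar^N$ of infinite rank, with basis $\{u^n\}_{n\in\zz}$. The inverse limit therefore consists of series $\sum_n f_n(\hbar)u^n$ with $f_n\in\C[\![\hbar]\!]$ and $f_n\to0$ $\hbar$-adically.

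Reducing this limit mod $\hbar$ gives $\C[u,u^{-1}]$, which is infinite-dimensional over $\C$. A finite free $\C[\![\hbar]\!]$-module $F$ has $F/\hbar F$ finite-dimensional, so the completion is not finite free. The only point needing care is that the completion's reduction mod $\hbar$ really is $A/\hbar$. This holds because the inverse system $A/\hbar^N$ has surjective transition maps, so the limit surjects onto $A/\hbar$ with kernel $\hbar\cdot\widehat A$. I expect no real obstacle here; the only point to make explicit is the contrast that total-degree bookkeeping ties each power of $u$ to a power of $\hbar$, which is what cuts the answer down to $\C[z]$.
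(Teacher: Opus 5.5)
Your proof is correct and follows the same route as the paper. In total degree zero, only the monomials $\hbar^ju^{-j}=z^j$ survive, while dropping the degree constraint leaves every $u^n$ independent. Your reduction-modulo-$\hbar$ argument (the completion reduces to $\C[u,u^{-1}]$, which is infinite-dimensional over $\C$) makes precise the paper's brief remark about infinitely many module generators.
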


\begin{proof}
In total degree zero, the only monomials are $\hbar^ju^{-j}=z^j$. If total degree is forgotten, every Laurent
monomial $u^n$ survives independently, giving infinitely many module generators over $\C[\![\hbar]\!]$.
\end{proof}

The periodic equivariant Chern character of a weight-$a$ line is
\begin{equation}\label{eq:weight-line-character}
 \ch_{T,u}(\C_a)=e^{a\hbar/u}=e^{az}.
\end{equation}
Therefore
\begin{equation}\label{eq:ringiso}
 \widehat R_T\xrightarrow{\ \sim\ }\C[\![z]\!],\qquad \chi-1\longmapsto e^z-1.
\end{equation}
This is an isomorphism of complete local rings because $e^z-1$ has linear term $z$.

\subsection{Semiprojectivity}\label{subsec_semi-proj-support}

\begin{lemma}\label{lem:semiprojective}
The $T$-varieties $X$ and $X^\gamma$, for every $\gamma\in\Gamma$, are smooth semiprojective varieties, and their $T$-fixed loci are projective and every point has a limit as $t\to0$.
\end{lemma}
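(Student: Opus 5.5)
The plan is to use the standard definition of semiprojectivity: a smooth quasi-projective $T$-variety whose fixed locus is proper and for which $\lim_{t\to0}t\cdot x$ exists for every $x$. For $X$ this comes directly from the properness of the Hitchin morphism (Theorem~\ref{thm:hitchin-proper}) together with the positive weights of $T$ on $\A$. For each $X^\gamma$ we then transfer these properties, using that $\tau_\gamma$ commutes with $T$ and acts over $\A$.

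\textbf{Existence of limits on $X$.} Fix $x\in X$ and look at the orbit map $\C^*\to X$, $t\mapsto t\cdot x$. Composing with $h$ gives $t\mapsto t\cdot h(x)$. Since $T$ acts on the $i$-th summand of $\A$ with weight $i\ge2>0$, this composite extends to a map $\A^1\to\A$ sending $0$ to $0$. The map $h$ is proper, so the valuative criterion applied to the punctured disc at $t=0$ (over the spectrum of the local ring of $\A^1$ at $0$) lifts the extension. This gives a limit $\lim_{t\to0}t\cdot x\in h^{-1}(0)$.

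\textbf{Projectivity of $X^T$.} By $T$-equivariance, $h(X^T)\subset\A^T=\{0\}$, so $X^T$ is closed in the nilpotent cone $h^{-1}(0)$. The nilpotent cone is proper over a point and, since $h$ is projective, it is projective. Hence $X^T$ is projective. Smoothness and quasi-projectivity of $X$ are Lemma~\ref{lem:smooth} together with the GIT construction. So $X$ is semiprojective.

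\textbf{The fixed loci $X^\gamma$.} Because $\Gamma$ is a finite group acting by automorphisms in characteristic zero, $X^\gamma$ is a smooth closed subvariety of $X$ (via linearization at a fixed point), and it is quasi-projective. It is $T$-stable because $\tau_\gamma$ commutes with $T$ (Lemma~\ref{lem:theta-symplectic-invariant}). Now take $x\in X^\gamma$. Its limit in $X$ lies in the closed subset $X^\gamma$, since the whole orbit lies there. Also $(X^\gamma)^T=X^\gamma\cap X^T$ is closed in the projective variety $X^T$, so it is projective. The same argument applies to each connected component $F$ of $X^\gamma$, because $T$ is connected and therefore preserves each component.

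\textbf{Expected obstacle.} The only delicate point is the limit argument: the valuative criterion has to be applied over the trait at $t=0$ and compatibly with the orbit map. This is routine once we note that $h$ is separated and proper and that the orbit map of the base extends over $0$.
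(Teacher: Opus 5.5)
Your proposal is correct and follows essentially the same argument as the paper: $X^T\subset h^{-1}(0)$ because $\A^T=\{0\}$, limits exist by applying the valuative criterion for the proper map $h$ to the extended base orbit $t\mapsto(t^2a_2,\ldots,t^ra_r)$, and $X^\gamma$ is smooth as the fixed locus of a finite-order automorphism, with $(X^\gamma)^T=X^\gamma\cap X^T$. The only cosmetic difference is that you keep the limit inside $X^\gamma$ because $X^\gamma$ is closed and $T$-stable, whereas the paper uses uniqueness of the valuative extension together with commutation of the $\Gamma$- and $T$-actions; either argument suffices.
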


\begin{proof}
All weights on $\mathbb{A}$ are positive, so $\mathbb{A}^T=\{0\}$. Hence $X^T\subset h^{-1}(0)$, which is projective because $h$ is proper. The fixed locus of a finite-order automorphism of a smooth complex variety is smooth. Thus $X^\gamma$ is smooth, and $(X^\gamma)^T=X^\gamma\cap X^T$ is projective.

For $x\in X$, the Hitchin image of the orbit extends to $t=0$ by
\[
 t\longmapsto(t^2a_2,\ldots,t^ra_r).
\]
The valuative criterion for the proper map $h$ uniquely extends the corresponding $\Spec\C(\!(t)\!)$-point of $X$ to $\Spec\C[\![t]\!]$. Its special point is $\lim_{t\to0}t\cdot x$.  If $x\in X^\gamma$, uniqueness and commutation of the two actions show that the limit remains in $X^\gamma$.
\end{proof}

We use the semiprojective, filtrable form of the Bia\l ynicki--Birula theorem: a smooth semiprojective variety has
finitely many projective fixed components and a finite filtration by closed $T$-stable subvarieties whose successive strata are equivariant affine bundles over those fixed components. This is the standard semiprojective extension of the projective theorem; see \cite{Hesselink} and \cite{HauselCompactification}.

\begin{lemma}
\label{lem:stabilizer-trivial}
Let a connected complex algebraic group $G$ act trivially on a connected space $F$. If $\mathscr L$ is a $G$-equivariant local system on $F$, the stabilizer action on every fiber is trivial. Hence
\[
 H_G^\bullet(F;\mathscr L) \cong H^\bullet(F;\mathscr L)\otimes H^\bullet(BG;\C).
\]
\end{lemma}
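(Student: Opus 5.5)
The proof splits into two parts. First I show that the stabilizer acts trivially on each fiber; then I deduce the cohomology splitting from the Borel construction.

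\emph{Triviality of the stabilizer action.} Since $G$ acts trivially on $F$, every point $x\in F$ has stabilizer all of $G$. The equivariant structure on $\mathscr L$ therefore gives a homomorphism
\[
 \rho_x:G\to\operatorname{GL}(\mathscr L_x).
\]
For a $G$-equivariant local system the equivariant structure is locally constant, so $\rho_x$ is continuous and factors through $\pi_0(G)$. Put differently, $\rho_x$ is a morphism from the connected group $G$ into the discrete automorphism group of the locally constant fiber data. A connected complex algebraic group is connected in the analytic topology, so $\pi_0(G)$ is trivial and $\rho_x$ is trivial. Because $F$ is connected, the same conclusion holds uniformly along $F$.

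\emph{The splitting.} I use the Borel construction $F\times^G EG=F\times BG$, which is available because the action is trivial. By the first part, the equivariant local system descends to the pullback $\mathrm{pr}_1^*\mathscr L$ along the projection $\mathrm{pr}_1:F\times BG\to F$. A little care is needed here: a $G$-equivariant structure on a trivial action is a homomorphism $G\to\operatorname{Aut}(\mathscr L)$ compatible with the local system. By the first part this homomorphism is trivial, so the equivariant local system is $\mathscr L\boxtimes\C$ on $F\times BG$.

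The Künneth formula with local coefficients then gives
\[
 H^\bullet(F\times BG;\mathscr L\boxtimes\C)\cong H^\bullet(F;\mathscr L)\otimes H^\bullet(BG;\C).
\]
This applies because $BG$ has cohomology of finite type over $\C$ in each degree, since $G$ is a complex algebraic group. Alternatively, one can argue with the Leray–Serre spectral sequence: $BG$ is simply connected because $G$ is connected, so the spectral sequence has $E_2$ page equal to the tensor product, and the product projection splits it.

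\emph{Main obstacle.} The key point is that the local system has no room to encode a nontrivial character. I expect this step to need the most care, because the notion of ``equivariant local system'' has to be made precise enough that $\rho_x$ is visibly continuous.

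I would handle it through finite Borel approximations $F\times U_N/G$, in the style of the construction in (\ref{eqn:Nth_Borel}). On these approximations the local system lives on $F\times(U_N/G)$, and its restriction to the connected, simply connected fibers $U_N/G$ is trivial for large $N$. That restriction computes the stabilizer representation, which is therefore trivial.
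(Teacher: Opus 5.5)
Your proof is correct and is essentially the paper's: restrict the equivariance isomorphism to $G\times\{x\}$, use that it is locally constant in $G$ together with connectedness and the unit condition to make it the identity, and then apply K\"unneth to the Borel construction $F\times BG$ with coefficient system $\mathscr L\boxtimes\C$. One point of wording should be tightened: the property of $\rho_x$ that does the work is local constancy, not continuity into $\operatorname{GL}(\mathscr L_x)$ with its usual topology, since the identity character $\C^*\to\C^*$ is continuous but nontrivial. Your remark that $H^\bullet(BG;\C)$ is of finite type usefully makes explicit the hypothesis under which K\"unneth applies.
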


\begin{proof}
Restrict the equivariance isomorphism to $G\times\{x\}$. It is an automorphism of a constant local system and is therefore locally constant in $G$. Connectedness makes it constant, and the cocycle condition at the identity forces it to be the identity. The Borel construction then has the exterior-product coefficient system, and the K\"unneth theorem applies.
\end{proof}

\begin{theorem}[Equivariant formality with finite-monodromy coefficients]
\label{thm:equivariant-formality}
Let $Z$ be a smooth semiprojective finite-type complex $T$-variety and let $\mathscr L$ be a finite-rank, finite-monodromy, $T$-equivariant local system. Then:
\begin{enumerate}[label=\textup{(\roman*)}]
\item $H_T^\bullet(Z;\mathscr L)$ is finite free over $\C[\hbar]$;

\item the forgetful map induces
\[
 H_T^\bullet(Z;\mathscr L)/(\hbar) \xrightarrow{\ \sim\ }H^\bullet(Z;\mathscr L);
\]

\item for every $k$, $H^k_{T,\per}(Z;\mathscr L)$ is finite free over $\C[z]$, reduction modulo $z$ is ordinary fixed-degree periodized cohomology, and its completion is finite free over $\C[\![z]\!]$.
\end{enumerate}
The same conclusions hold after taking invariants under a finite group action commuting with $T$ and preserving $\mathscr L$.
\end{theorem}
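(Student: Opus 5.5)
The natural route is the Białynicki-Birula decomposition recalled just before Lemma~\ref{lem:stabilizer-trivial}. Since $Z$ is smooth and semiprojective, it has finitely many projective fixed components $F_1,\dots,F_s$. It also has a finite filtration $\emptyset=Z_0\subset Z_1\subset\cdots\subset Z_s=Z$ by closed $T$-stable subvarieties, with strata $Z_i\setminus Z_{i-1}$ equivariant affine bundles $\pi_i:S_i\to F_i$ of rank $d_i$. I would prove (i) and (ii) together by induction on $i$, using the equivariant long exact sequences of the pairs $(Z_i,Z_{i-1})$. It is cleaner to use the open filtration $U_i=Z\setminus Z_{i-1}$, so that each stratum is closed in an open set and Thom isomorphisms apply with $\mathscr L$ coefficients. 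Each stratum contributes a Gysin term
\[
 H_T^{\bullet-2c_i}(S_i;\mathscr L)\cong H_T^{\bullet-2c_i}(F_i;\mathscr L|_{F_i}),
\]
where $c_i$ is the codimension of $S_i$. The isomorphism holds because the affine bundle retracts equivariantly onto its zero section, and the local system is pulled back from $F_i$ by homotopy invariance. Since $T$ is connected and acts trivially on $F_i$, Lemma~\ref{lem:stabilizer-trivial} gives
\[
 H_T^\bullet(F_i;\mathscr L)\cong H^\bullet(F_i;\mathscr L)\otimes\C[\hbar],
\]
which is finite free because $F_i$ is projective.

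The key step is to show that the Gysin long exact sequences split into short exact sequences. I would use the Atiyah--Bott argument. The composite of the Gysin map for $S_i$ with restriction to $F_i$ is multiplication by the equivariant Euler class of the normal bundle $N_i^-$ of $S_i$ along $F_i$. All weights of $N_i^-$ are nonzero, so this Euler class has the form $\hbar^{c_i}\cdot(\text{unit})+(\text{nilpotent in }H^{>0}(F_i))$. It is therefore a non-zero-divisor on the free module $H^\bullet(F_i;\mathscr L)[\hbar]$: twisting by a finite-monodromy local system does not affect this, since the Euler class acts through cup product by a class with constant coefficients. Hence the Gysin maps are injective, the connecting homomorphisms vanish, and the short exact sequences of $\C[\hbar]$-modules have free outer terms. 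Extensions of free modules are free, which proves (i).

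For (ii), the same splitting holds nonequivariantly by comparing Poincaré polynomials. Alternatively, freeness already yields degeneration of the Leray--Serre spectral sequence of $Z\times^T ET\to BT$ with $\mathscr L$ coefficients: the rank over $\C[\hbar]$ equals $\sum_i\dim H^\bullet(F_i;\mathscr L)$, and this equals $\dim H^\bullet(Z;\mathscr L)$ by the nonequivariant Białynicki-Birula additivity. Degeneration gives surjectivity of the restriction to a fiber with kernel $(\hbar)$, which is (ii).

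Part (iii) is then formal. Given a homogeneous $\C[\hbar]$-basis $e_j$ of degrees $2n_j+\epsilon_j$, where $\epsilon_j\in\{0,1\}$, the elements $e_ju^{-n_j}$ have total degree $\epsilon_j$. I would check directly from \eqref{eq:periodic-degree} that, up to the $u$-shift between $k$ and $k+2$, they form a $\C[z]$-basis of $H^k_{T,\per}$; this works because $\hbar^a u^{b}$ has total degree $k$ exactly when it is $z^a$ times the appropriate power of $u$. Reduction modulo $z$ is (ii) periodized, and $z$-adic completion of a finite free module is finite free over $\C[\![z]\!]$. For a finite group $G$ commuting with $T$, every statement is preserved because taking $G$-invariants is exact over $\C$. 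The invariants of $H_T^\bullet$ form a direct summand of a finite free graded $\C[\hbar]$-module, hence are graded-projective and thus free, and this summand is compatible with reduction mod $\hbar$ and with completion.

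The main obstacle is making the Białynicki-Birula strata and the Thom isomorphisms behave well with twisted coefficients when $Z$ is only semiprojective, rather than projective. I would handle it by noting that the filtration is finite and that all strata retract onto projective fixed components, exactly as in the semiprojective references cited.
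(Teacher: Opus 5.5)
Your proof is correct, but it reaches (i) by a different mechanism from the paper.

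\emph{The paper's route.} It first reduces finite-monodromy coefficients to constant ones on a $T$-equivariant finite \'etale cover. It then uses the Bia\l ynicki--Birula strata only to show that $H^j_c(Z)$ is pure of weight $j$, and by Poincar\'e duality so is $H^j(Z)$. The Borel spectral sequence then degenerates for weight reasons, and freeness and (ii) are read off from the degenerate spectral sequence via homogeneous lifts.

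\emph{Your route.} You run the Atiyah--Bott/Kirwan equivariant-perfection argument instead. The leading term $(\prod_j w_j)\,\hbar^{c_i}$ of $e_T(N_i^-)$ forces injectivity of the Gysin maps. This splits the equivariant Thom--Gysin sequences into short exact sequences of free $\C[\hbar]$-modules.

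\emph{What each buys.} Your argument is purely topological and never uses finite monodromy. It works for any finite-rank $T$-equivariant local system, so the finite-cover reduction and mixed Hodge theory become unnecessary. It also produces an explicit noncanonical splitting $H_T^\bullet(Z;\mathscr L)\cong\bigoplus_i H^{\bullet-2c_i}(F_i;\mathscr L)\otimes\C[\hbar]$. The paper's route instead yields the extra information that $H^j(Z)$ is pure.

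\emph{One tightening for (ii).} You should not lean on ``nonequivariant Bia\l ynicki--Birula additivity'' as a citable input. In the twisted semiprojective setting it is itself something to be proved. Once (i) holds, it is simpler to use the Gysin sequence of the circle bundle $Z\times ET\to Z\times_T ET$, whose Euler class is $\hbar$. Since $\hbar$ is injective on a free module, this sequence gives $0\to H_T^{k-2}(Z;\mathscr L)\xrightarrow{\hbar}H_T^k(Z;\mathscr L)\to H^k(Z;\mathscr L)\to 0$ directly. Alternatively, your Poincar\'e-polynomial comparison works if you spell it out as a chain of three inequalities: the nonequivariant Morse--Bott inequality $\dim H^\bullet(Z;\mathscr L)\le\sum_i\dim H^\bullet(F_i;\mathscr L)$, the Serre spectral sequence bound $\operatorname{rank}H_T^\bullet\le\dim H^\bullet(Z;\mathscr L)$, and the equality $\operatorname{rank}H_T^\bullet=\sum_i\dim H^\bullet(F_i;\mathscr L)$ from equivariant perfection.

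The periodization step (iii) and the finite-group invariants step agree with the paper's.
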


\begin{proof}
A finite-monodromy algebraic local system is a direct summand of the pushforward of the constant system from a finite \'etale cover. For an equivariant local system the cover can be chosen $T$-equivariant: use the finite monodromy cover of the Borel local system and pull it back to $Z$. The connected stabilizers act trivially on its finite fibers. The cover is smooth and semiprojective. Indeed it is finite over $Z$, its fixed locus is projective, and the lifted algebraic orbit extends across zero by properness. It suffices to prove the constant coefficient assertion on this cover and then take the appropriate finite-group representation summand.

The filtrable Bia\l ynicki--Birula decomposition has strata which are affine bundles of rank $a_i$ over smooth projective fixed components $F_i$. Their compactly supported cohomology is $H_c^j(\text{stratum}_i)=H^{j-2a_i}(F_i)(-a_i)$ and is pure of weight $j$. Induction on the closed filtration, using its compact-support long exact sequence, proves purity of $H_c^j(Z)$: the boundary maps between degrees $j$ and $j+1$ vanish because their weights differ. Poincar\'e duality then gives purity of $H^j(Z)$ of weight $j$.

The Borel spectral sequence
\[
 E_2^{p,q}=H^p(BT)\otimes H^q(Z)\Longrightarrow H_T^{p+q}(Z)
\]
is a spectral sequence of mixed Hodge structures, as seen on algebraic finite Borel approximations. Each $E_2^{p,q}$ is pure of weight $p+q$; every differential changes that weight by one. It therefore degenerates. The edge map to $H^*(Z)$ is surjective. Homogeneous lifts of a finite basis, together with the spectral-sequence filtration, give a free
$\C[\hbar]$-basis and the asserted reduction modulo $\hbar$. The same argument on the finite cover followed by the finite-monodromy summand proves (i)--(ii) for $\mathscr L$.

If $a_j$ are homogeneous lifts, then in fixed degree $k$ the elements $a_j u^{(k-\deg a_j)/2}$ with $\deg a_j\equiv k\pmod2$ form a $\C[z]$-basis. This proves (iii). For an additional finite group, Reynolds averaging is an exact projector preserving all filtrations; its image is a finite projective, hence free, module over $\C[\hbar]$ and over $\C[z]$. Invariants commute with reduction and completion.
\end{proof}

\begin{corollary}\label{cor:fixed-degree-description}
For a connected inertia component $\mathcal F=[F/\Gamma_F]$, put
\[
 V^k_{\mathcal F}:=
 \bigoplus_{p\equiv k\, (2)} H^p(\mathcal F;\mathscr L_{\alpha,\mathcal F})u^{(k-p)/2}.
\]
Then $\widehat H^k_{T,\per}(\mathcal F;\mathscr L_{\alpha_T,\mathcal F})$ is finite free over $\C[\![z]\!]$, its reduction is $V^k_{\mathcal F}$, and, after choosing homogeneous equivariant lifts, there is an isomorphism
\begin{equation}\label{eq:fixed-degree-expansion}
 \widehat H^k_{T,\per}(\mathcal F;\mathscr L_{\alpha_T,\mathcal F})
 \cong V^k_{\mathcal F}\widehat\otimes_\C\C[\![z]\!].
\end{equation}
The reduction and associated graded are canonical; the displayed splitting need not be canonical.
\end{corollary}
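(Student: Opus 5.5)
The plan is to derive the corollary directly from Theorem~\ref{thm:equivariant-formality}, applied to the finite cover $F$ with the $\Gamma_F$-invariants clause. First I fix the geometric input. The component $F$ is a connected component of $X^\gamma$, so by Lemma~\ref{lem:semiprojective} it is smooth and semiprojective: it is closed and $T$-stable in $X^\gamma$, and limits stay in $F$ because orbits are connected. The transgression line $\mathscr L_{\alpha_T,\mathcal F}$ pulls back to a rank-one $T$-equivariant local system on $F$ with finite monodromy, since $\alpha_T$ is $r$-torsion. This local system is $\Gamma_F$-equivariant, with $\Gamma_F$ commuting with $T$. Equivariant cohomology of $\mathcal F=[F/\Gamma_F]$ with these coefficients is then the $\Gamma_F$-invariant part of $H_T^\bullet(F;\mathscr L)$, by rational transfer exactly as in the proof of Proposition~\ref{prop:theta-descent}.

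Theorem~\ref{thm:equivariant-formality}(iii), in its finite-group form, then gives that $H^k_{T,\per}(\mathcal F;\mathscr L_{\alpha_T,\mathcal F})$ is finite free over $\C[z]$. It also gives that the completion $\widehat H^k_{T,\per}$ is finite free over $\C[\![z]\!]$. For the reduction, I reduce modulo $z$ and use (ii) together with the definition \eqref{eq:periodic-degree}. In total degree $k$, the classes modulo $z=\hbar u^{-1}$ are exactly the sums $\sum_n c_n u^n$ with $c_n\in H^{k-2n}(\mathcal F;\mathscr L)$, and the forgetful map sends $\mathscr L_{\alpha_T,\mathcal F}$ to $\mathscr L_{\alpha,\mathcal F}$. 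Writing $p=k-2n$ identifies this quotient with $V^k_{\mathcal F}$. This identification is canonical because it uses only the forgetful map, and reduction commutes with completion since the module is finite free.

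For \eqref{eq:fixed-degree-expansion}, I choose homogeneous equivariant lifts $a_j$ of a basis of $H^\bullet(\mathcal F;\mathscr L_{\alpha,\mathcal F})$, which is possible by (ii) after Reynolds averaging over $\Gamma_F$. As in the proof of the theorem, the elements $a_ju^{(k-\deg a_j)/2}$ with $\deg a_j\equiv k$ form a $\C[z]$-basis. Sending the corresponding basis element of $V^k_{\mathcal F}$ to this lift and extending $z$-adically gives the isomorphism. Its surjectivity and injectivity follow from Nakayama's lemma over the complete local ring $\C[\![z]\!]$, because it is an isomorphism modulo $z$.

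For noncanonicity, note that two lift choices differ by elements of $zH$. So the induced map on the $z$-adic associated graded, and on the reduction, is independent of the choice, while the splitting itself is not. The main point needing care is the first step: one must check that the transgression line is genuinely a finite-monodromy $T$-equivariant local system on $F$, compatible with $\Gamma_F$. The $T$-equivariance comes from the coherent lift in Definition~\ref{def:equivariant-gerbe}, and Lemma~\ref{lem:stabilizer-trivial} ensures that the connected torus acts trivially on fibers, so the hypotheses of Theorem~\ref{thm:equivariant-formality} are met.
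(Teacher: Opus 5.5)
Your proposal is correct and follows the paper's proof. The paper simply applies Theorem~\ref{thm:equivariant-formality} to $F$, passes to $\Gamma_F$-invariants, and reads off \eqref{eq:fixed-degree-expansion} from the basis $a_ju^{(k-\deg a_j)/2}$ built in that proof; your extra checks (semiprojectivity of the component $F$, the mod-$z$ computation, and the fact that two lifts differ by an element of $z\widehat H$) just make those steps explicit. One small point: Lemma~\ref{lem:stabilizer-trivial} is not needed to meet the theorem's hypotheses, which only require finite monodromy and $T$-equivariance of the transgression line, and it would not apply as you invoke it, since $T$ does not act trivially on $F$.
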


\begin{proof}
Apply Theorem~\ref{thm:equivariant-formality} to $F$ and then take $\Gamma_F$-invariants. The basis construction in its proof gives \eqref{eq:fixed-degree-expansion}.
\end{proof}

\begin{lemma}\label{lem:finite-KCW}
The analytic spaces $X$ and $X^\gamma$, with their actions of $K$ and of any commuting subgroup of $\Gamma$, have finite equivariant CW  homotopy type. Finite equivariant covers have the same property.
\end{lemma}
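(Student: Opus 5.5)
We need Lemma~\ref{lem:finite-KCW}: the analytic spaces $X$ and $X^\gamma$, with the $K=S^1$-action and a commuting finite subgroup of $\Gamma$, have finite equivariant CW homotopy type, and so do finite equivariant covers. The plan is to deformation retract everything onto a compact $K\times\Gamma'$-invariant real semialgebraic subset, and then triangulate that subset equivariantly.

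First, the retraction. By Lemma~\ref{lem:semiprojective}, every point of $X$ has a limit as $t\to0$, and $X^T\subset h^{-1}(0)$ is projective. Choose a $K$-invariant proper function on $\A$, for instance $\rho(a)=\sum_i\|a_i\|^{2/i}$ for a $\Gamma'$-invariant Hermitian metric. This function is weighted-homogeneous for the real flow $t\in\mathbb R_{>0}$, with $\rho(t\cdot a)=t^2\rho(a)$. Pull it back to $f=\rho\circ h$, which is $K\times\Gamma'$-invariant because $\Gamma$ acts over $\A$ and commutes with $T$. Then:
\begin{itemize}
\item $f$ is proper on $X$, since $h$ is proper (Theorem~\ref{thm:hitchin-proper}).
\item The real flow $x\mapsto t\cdot x$ for $t\in(0,1]$ decreases $f$ strictly outside $f^{-1}(0)=h^{-1}(0)$.
\item It extends continuously to $t=0$ by the valuative argument in Lemma~\ref{lem:semiprojective}.
\end{itemize}
Rescaling time so that points with $f>1$ are pushed into $f^{-1}([0,1])$ gives a $K\times\Gamma'$-equivariant deformation retraction of $X$ onto the compact set $X_{\le1}=f^{-1}([0,1])$. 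The flow commutes with $\Gamma'$ and preserves $X^\gamma$, so the same retraction works for $X^\gamma$. For a finite equivariant cover $\widetilde Z\to Z$, the homotopy lifts by the covering homotopy property, and uniqueness of lifts makes the lift equivariant.

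Second, the triangulation. The set $X_{\le1}$ is a compact real semialgebraic set: $X$ is quasi-projective, and $f$ is semialgebraic after replacing $\|a_i\|^{2/i}$ by a suitable power-free variant such as $\max_i\|a_i\|^{2r!/i}$. The group $K\times\Gamma'$ is a compact Lie group acting real-algebraically. The equivariant triangulation theorem for compact semialgebraic (or subanalytic) $G$-sets, due to Illman and Matumoto, then gives a finite $G$-CW structure. Finite covers of $X_{\le1}$ are again compact semialgebraic with the lifted action, so the same theorem applies to them.

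The main obstacle is justifying that the limit flow gives a continuous deformation retraction on the noncompact space, including continuity at $t=0$ jointly in $x$. I would avoid this step entirely. The alternative is to note that, for $\epsilon>0$, $f$ has no critical values in $(0,\infty)$ along the flow direction, since $df(\xi)>0$ for the Euler vector field $\xi$ off $h^{-1}(0)$. The inclusion $X_{\le1}\hookrightarrow X$ is then an equivariant homotopy equivalence by integrating $-\xi/\xi(f)$ on $f^{-1}([1,\infty))$, a flow that is only needed away from the nilpotent cone. Combined with equivariant triangulation of $X_{\le1}$, this gives the lemma.
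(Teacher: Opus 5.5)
Your argument is correct, but it takes a different route from the paper.

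\textbf{The two routes.} The paper argues analytically. It takes Hitchin's squared norm of the Higgs field as a proper, $\Gamma$-invariant moment map for $K$, and restricts it to the K\"ahler submanifold $X^\gamma$ and to finite covers via the pulled-back K\"ahler form. It then runs equivariant Morse--Bott theory, so that each space is built from finitely many equivariant disk-bundle attachments over compact critical manifolds. Those critical manifolds are triangulated by Illman's theorem.

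You use only algebraic input: properness of $h$, positivity of the weights on $\A$, and $h\circ\tau_\eta=h$. From these you get the proper function $f=\rho\circ h$ with $f(t\cdot x)=t^2f(x)$. You retract onto the compact sublevel set $f^{-1}([0,1])$ using the real part of the $\C^*$-action, and get finiteness from an equivariant triangulation theorem for compact semialgebraic $K\times\Gamma'$-spaces.

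Your route needs no hyperk\"ahler metric, no Morse--Bott verification, and not even smoothness of $X^\gamma$. It applies verbatim to any quasi-projective $T$-variety with a proper equivariant map to a positively weighted vector space. The paper's route gives more structure: a cell decomposition indexed by the fixed components, parallel to the Bia\l ynicki-Birula filtration used in Theorem~\ref{thm:equivariant-formality}.

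\textbf{Minor corrections.}

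The ``main obstacle'' you identify does not arise. The retraction $r_s(x)=\bigl(1-s+s\max(1,f(x))^{-1/2}\bigr)\cdot x$ uses only $t\in(0,1]$. It is jointly continuous because the action map is, it fixes $f^{-1}([0,1])$ pointwise, and it lands there at $s=1$. It is $K\times\Gamma'$-equivariant because $f$ is invariant and the actions commute. So you never need the limit $t\to0$.

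You should delete the bullet asserting a continuous extension to $t=0$, because joint continuity is false in general. The limit map $x\mapsto\lim_{t\to0}t\cdot x$ jumps between disjoint fixed components. For example, a Higgs bundle with unstable underlying bundle is a limit of ones with stable underlying bundle, and the latter flow to the component $\phi=0$.

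The triangulation theorem you need is the semialgebraic or subanalytic equivariant $G$-CW theorem (Park--Suh; Matumoto--Shiota), not Illman's theorem for smooth $G$-manifolds.

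For covers, it is cleaner to pull back the finite $G$-CW structure of $f^{-1}([0,1])$ cell by cell than to assert that the lifted action is semialgebraic. The preimage of a cell $G/H\times D^n$ is $\coprod_j G/H_j\times D^n$, with each $H_j\subset H$ of finite index.
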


\begin{proof}
In the differential-geometric description of Higgs moduli, the squared norm of the Higgs field is a proper moment map for the circle action \cite{HitchinSD,HauselCompactification}. The tensor action of the finite group $\Gamma$ is isometric and preserves this function. Restriction to a fixed locus $X^\gamma$ is again a proper Hamiltonian circle moment map; the fixed locus is a smooth K\"ahler submanifold. Its critical set is its circle-fixed locus. There are finitely many compact critical components, and the Hamiltonian circle local normal form makes the function Morse--Bott, with finite-rank equivariant negative normal bundles.
Equivariant Morse theory therefore gives finitely many disk-bundle attachments over compact critical manifolds. Those manifolds, carrying only the residual finite-group action, have finite equivariant triangulations \cite{IllmanTriangulation}. The attachments give finite $K$-equivariant CW homotopy type, simultaneously with the commuting finite group. A finite equivariant cover carries the pullback K\"ahler form and proper moment map. Its critical manifolds are finite covers of the preceding compact ones, so the same argument applies.
\end{proof}

\subsection{Circle-relative completion and the Chern character}
Let $Y=[V/G]$, where $G$ is finite and $V$ has finite $(K\times G)$-CW homotopy type. Let $\tau_T$ be an ungraded finite gerbe twist. All $K$-groups in this subsection are complexified \emph{before} completion. The augmentation ideal is only $I_T\subset R_K:=\C[\chi,\chi^{-1}]$; we do not complete the finite-group representation ring at its augmentation ideal.

\begin{lemma}\label{lem:trace-block-finite-generation}
For $k=0,1$, $M^k=\KU_{K\times G}^k(V;\tau_T)\otimes\C$ is finite over $R_K$. The same holds for finite relative equivariant CW pairs.
\end{lemma}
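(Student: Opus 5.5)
We argue by induction on the number of equivariant cells, via the finite $(K\times G)$-CW homotopy type of $V$ supplied by Lemma~\ref{lem:finite-KCW}. Fix such a finite structure. The twisted theory $M^\bullet=\KU^\bullet_{K\times G}(-;\tau_T)\otimes\C$ is a $\zz/2$-graded equivariant cohomology theory on finite $(K\times G)$-CW pairs. It has long exact sequences of pairs and excision, and both are compatible with the $R_K$-module structure, since $R_K$ acts through pullback from a point and tensoring with $\chi$. Complexification is exact, so these sequences survive tensoring with $\C$.

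\emph{Single cell.} A cell has the form $(K\times G)/H\times(D^n,S^{n-1})$ with $H\subset K\times G$ closed. The twist restricted to the orbit $(K\times G)/H$ is, up to equivalence, a central extension $\widetilde H$ of $H$ by $\mu_r$, or by $U(1)$ in topological form. By the induction isomorphism,
\[
\KU^\bullet_{K\times G}\bigl((K\times G)/H;\tau_T\bigr)\cong R(\widetilde H)_{\tau},
\]
the summand of the representation ring of $\widetilde H$ on which the central factor acts by the prescribed character. After suspension by $(D^n,S^{n-1})$ this appears in degree $n \bmod 2$ and vanishes in the other degree.

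We must show that this twisted representation group is finite over $R_K$, and we expect this to be the main point. Because $G$ is finite, $H\cap(K\times\{1\})$ has finite index in $H$. Its image in $K$ is therefore either a finite subgroup or all of $K=S^1$.
\begin{itemize}
\item If the image is all of $K$, then $H$ is a finite extension of $S^1$. Its (twisted) representation ring is finite over $R(S^1)$, with the $R_K$-action being restriction along the projection to $K$.
\item If the image is finite, then $\widetilde H$ is a finite central extension of a finite group (compact with a finite $U(1)$ ambiguity handled identically). In this case $R(\widetilde H)_\tau\otimes\C$ is a finite-dimensional $\C$-vector space. On it $\chi$ acts through restriction to a finite cyclic group, so it is finite over $R_K$, and in fact torsion.
\end{itemize}
Either way we obtain a finitely generated $R_K$-module. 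For this step we would cite Segal's theorem that $R(\widetilde H)$ is finite over $R(K\times G)$ for closed subgroups, in its twisted form (Freed--Hopkins--Teleman), together with the observation that $R(G)\otimes\C$ is finite-dimensional.

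\emph{Induction.} Attaching one cell gives an exact sequence
\[
M^\bullet(\text{cell rel boundary})\to M^\bullet(V_{j+1})\to M^\bullet(V_j).
\]
Here $R_K=\C[\chi^{\pm1}]$ is Noetherian, and finitely generated modules are closed under extensions, submodules and quotients. The middle term is an extension of a submodule of $M^\bullet(V_j)$ by a quotient of the cell term. Finiteness therefore propagates through the finitely many cells.

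\emph{Relative pairs.} For a finite relative CW pair, the same induction applies starting from the empty relative skeleton. Alternatively, one can use the long exact sequence of the pair together with the absolute case.

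Finally, homotopy invariance transfers the conclusion from the CW model to $V$ itself. For $Y=[V/G]$ the groups are by definition the $(K\times G)$-equivariant groups of $V$, so nothing further is needed.
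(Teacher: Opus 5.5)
Your proposal is correct and follows the paper's argument. The paper likewise inducts over the finitely many equivariant cells, splits the stabilizer into the cases $H$ finite versus $H^0=K$ with $H/K$ finite, shows the twisted representation module is finite over $R_K$, and uses Noetherianness of $R_K$ in the cell-attachment exact sequences, treating relative pairs the same way. The only difference is that for $H^0=K$ you cite a twisted form of Segal's finiteness theorem, whereas the paper argues directly: there are finitely many weight classes modulo tensoring with $\chi$, and each class contains only finitely many irreducibles of the finite quotient.
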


\begin{proof}
Use induction on equivariant cells. A cell has stabilizer a closed subgroup $H\subset K\times G$. If $H$ is finite, its (twisted) representation group has finite complex dimension. Otherwise $H^0=K$, and $H/K$ is finite. The corresponding finite central
extension for the twist has only finitely many weight classes modulo tensoring with $\chi$; within each class a finite group has finitely many irreducible representations. Thus its twisted representation module is finite over $R_K$. The six-term exact sequences for cell attachment and noetherianess of $R_K$ complete the induction. The same argument applies to relative pairs.
\end{proof}

Put $t=1-\chi^{-1}$; it generates $I_T$. Consider the sphere $E_N=S(\C^{N+1})$ with scalar $K$-action and $Y_N=[(E_N\times_KV)/G]$, carrying the descended twist $\tau_N$.

\begin{theorem}\label{thm:relative-completion}
For $k=0,1$ there is a natural isomorphism
\begin{equation}\label{eq:relative-completion}
 (\KU_T^k(Y,\tau_T)\otimes\C)^{\wedge}_{I_T}
   \xrightarrow{\sim}
 \varprojlim_N\bigl(\KU^k(Y_N,\tau_N)\otimes\C\bigr).
\end{equation}
The statement also holds for finite relative equivariant CW pairs. The finite-stage complexified tower is Mittag--Leffler, and its $\varprojlim^1$ vanishes. No completion in the finite-$G$ direction occurs.
\end{theorem}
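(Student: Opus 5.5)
This is an equivariant Atiyah--Segal completion theorem for the circle factor alone, relative to the finite group $G$. The plan is to compare $T$-equivariant twisted $K$-theory of $Y=[V/G]$ with the twisted $K$-theory of the finite Borel approximations $Y_N=[(E_N\times_K V)/G]$. The key observation is that $E_N\times V$ carries a free $K$-action, so $\KU^k(Y_N,\tau_N)\cong \KU^k_{K\times G}(E_N\times V;\tau_T)$. Thus the right side of \eqref{eq:relative-completion} is
\[
\varprojlim_N \KU^k_{K\times G}(E_N\times V;\tau_T)\otimes\C .
\]
The natural map is induced by the projections $E_N\times V\to V$. Since $G$ acts trivially on $E_N$, no completion in the $G$-direction is introduced.

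The first step is to check the statement on a single equivariant cell $(K\times G)/H$ and then extend by induction over the finite $(K\times G)$-CW structure of Lemma~\ref{lem:finite-KCW}.
\begin{itemize}
\item \emph{Finite stabilizer $H$.} Here $I_T$ acts nilpotently on the twisted representation module, since $\chi$ acts through a finite quotient. More precisely, after complexification $R(H)\otimes\C$ splits over characters, and $\chi-1$ is nilpotent on the summand where $\chi$ restricts trivially. One has to be careful: only the summand where $K\cap H$ acts trivially survives the completion. The Borel side $E_N\times_K (K/H')$ is a finite quotient of a sphere, so the comparison reduces to the classical Atiyah--Segal computation for a finite cyclic subgroup of $K$, truncated at $N$.
\item \emph{Stabilizer containing $K$.} The twisted representation module is a finite sum of copies of $R_K\otimes(\text{finite})$ twisted by weights, as in Lemma~\ref{lem:trace-block-finite-generation}. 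The Borel side is $\KU(\mathbb{CP}^N)=R_K/(t^{N+1})$ tensored with the same finite data. So the inverse limit is exactly the $t$-adic completion $\varprojlim R_K/(t^{N+1})$.
\end{itemize}

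The second step is the inductive gluing. Cell attachment gives six-term exact sequences on both sides. Completion at $I_T$ is exact on finitely generated $R_K$-modules by Lemma~\ref{lem:trace-block-finite-generation}, because $R_K$ is noetherian (Artin--Rees). On the right, exactness of $\varprojlim$ requires the vanishing of $\varprojlim^1$. The five lemma then propagates the isomorphism from cells to $V$, and equally to relative pairs.

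I expect the Mittag--Leffler/$\varprojlim^1$ claim to be the main obstacle. The plan is to show that the finite-stage tower $\{\KU^k(Y_N,\tau_N)\otimes\C\}$ is pro-isomorphic to the tower $\{M^k/I_T^{N}M^k\}$. This would follow from the cell computation, where the pro-isomorphism is visible (the stages are $R_K/(t^{N+1})$ or eventually constant finite pieces), together with the fact that pro-isomorphisms are preserved by the long exact sequences in the pro-category. Surjective transition maps on each cell-level tower then give Mittag--Leffler for the level-wise images. Since everything is a finite-dimensional $\C$-vector space at each stage (by finiteness of the CW type), Mittag--Leffler holds automatically and $\varprojlim^1=0$. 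Complexifying before taking limits is harmless for the same reason.

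Naturality of \eqref{eq:relative-completion} follows because all maps are induced by pullback along $E_N\times V\to V$ and by the projections $E_{N}\subset E_{N+1}$.
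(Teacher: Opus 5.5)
Your argument is correct in outline, but it takes a different route from the paper. You run the classical Atiyah--Segal strategy. Both sides are treated as (pro-)cohomology theories on finite $(K\times G)$-CW pairs, orbits $(K\times G)/H$ are checked by hand, and the cells are glued with the five lemma. The gluing uses Artin--Rees exactness of $I_T$-completion on finitely generated $R_K$-modules and exactness of $\varprojlim$ on towers of finite-dimensional spaces, which also gives you Mittag--Leffler for free.

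The paper needs no orbit analysis. It applies the Thom--Gysin sequence of the pair $(D(\C^{N+1}),S(\C^{N+1}))\times V$ once, for arbitrary $V$, and obtains
$0\to M^k/t^{N+1}M^k\to\KU^k(Y_N,\tau_N)\otimes\C\to M^{k+1}[t^{N+1}]\to0$.
The transition maps on the torsion term are multiplication by $t$. Finite generation (Lemma~\ref{lem:trace-block-finite-generation}) bounds the $t$-torsion exponent, so that term is pro-zero and the middle tower is pro-isomorphic to the surjective adic tower. This treats twists and finite stabilizers uniformly, and the same sequence is reused in Theorems~\ref{thm:general-completed-ch} and~\ref{thm:supported-completion}. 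Your route trades this for case work on orbits, in exchange for a cheap $\varprojlim^1$ argument.

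Two details in that case work need repair.

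\emph{Finite $H$.} $I_T$ does not act nilpotently on $R^{\tau}(H)\otimes\C$. Since $\chi|_H$ has finite order, $t$ is zero on the $\chi$-eigenvalue-one summand, namely twisted class functions supported on $H\cap(1\times G)$. It is invertible on the other summands, so completion is simply projection onto that summand. When $H\cap(1\times G)\ne1$ or the twist is nontrivial, the Borel side $[E_N/H]$ is a gerbe over a lens space, not literally a lens space. It still reduces to cyclic subgroups of $K$ by Clifford theory, because $H^2$ of a finite cyclic group with $\C^*$ coefficients vanishes.

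\emph{Odd degree.} The odd-degree finite stages are not eventually zero. The top cell of each lens space contributes a copy of $\C$ to $\KU^1(Y_N)\otimes\C$ for every $N$. The tower is pro-zero only because the transition maps vanish. So ``eventually constant finite pieces'' is false in odd degree. What you actually need there is exactly the pro-zero torsion term $M^{0}[t^{N+1}]$ of the paper's sequence.
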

\begin{proof}
Apply the equivariant Thom exact sequence to the representation disk $D(\C^{N+1})$ and sphere $E_N$, crossed with $V$ and with the same pulled-back twist. The Euler class is $t^{N+1}$, so it yields
\begin{equation}\label{eq:sphere-completion-sequence}
 0\longrightarrow M^k/t^{N+1}M^k
 \longrightarrow \KU^k(Y_N,\tau_N)\otimes\C
 \longrightarrow M^{k+1}[t^{N+1}]\longrightarrow0.
\end{equation}
Here the bracket denotes the subgroup killed by $t^{N+1}$, and $k+1$ is read modulo two. Naturality of the Thom class for $\C^{N+1}\subset\C^{N+2}$ identifies transition maps on the right with multiplication by $t$. On a finite $R_K$-module the $t$-primary
torsion is killed by a fixed power of $t$. The right-hand tower is therefore pro-zero.  The middle tower is pro-isomorphic to the left adic quotient tower, whose maps are surjective. Taking the inverse limit proves the claim and the Mittag--Leffler assertion. The disk, sphere and Thom arguments apply equally to a relative pair.
\end{proof}

\begin{remark}\label{rem:compensated-isotropy}
Let $V=K$ with $K$ acting by translation and $G=\mu_2$ acting by multiplication by $-1$. The nontrivial element $g\in G$ has no fixed point on $V$. Nevertheless $K\times G$ has stabilizer generated by $(-1,g)$, whose nontrivial trace block in equivariant $K$-theory is nonzero. In that block $\chi=-1$, so $\chi-1$ is invertible and
circle-relative completion annihilates the block. Thus literal $g$-fixed loci cannot in general describe uncompleted equivariant trace blocks. We do not posit an integral invertible $\KU$-module local system whose monodromy is an arbitrary complex character: a root of unity need not be an integral unit in $\pi_0\KU=\Z$.
\end{remark}

For a connected component $\mathcal F\subset IY$, let $\mathscr L_{\tau,\mathcal F}$ be the complex transgression line and put
\begin{equation}\label{eq:general-del-periodic}
 H^k_{T,\del,\per}(Y,\tau_T)
 =\bigoplus_{\mathcal F\subset IY}\ \bigoplus_{n\in\Z}
 H_T^{k-2n}(\mathcal F;\mathscr L_{\tau,\mathcal F})u^n.
\end{equation}
Its hat version $\widehat H^k_{T,\del,\per}(Y,\tau_T)$ means $z$-adic completion, with $z=\hbar/u$.

\begin{theorem}[Completed twisted equivariant Chern character]
\label{thm:general-completed-ch}
Under these hypotheses, the finite-stage complex twisted delocalized Chern characters induce a natural isomorphism
\begin{equation}\label{eq:general-completed-ch}
 \widetilde\ch^{\wedge}_{T,\tau,u}:
 (\KU_T^k(Y,\tau_T)\otimes\C)^{\wedge}_{I_T}
 \xrightarrow{\sim}\widehat H^k_{T,\del,\per}(Y,\tau_T),
 \qquad k=0,1.
\end{equation}
The target uses ordinary fixed-locus degrees on unrigidified inertia, not Chen--Ruan age shifts. The coefficient-ring map is $\chi\mapsto e^z$.
\end{theorem}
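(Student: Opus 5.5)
The plan is to pass through the finite Borel approximations of Theorem~\ref{thm:relative-completion}, apply the nonequivariant twisted delocalized Chern character (Freed--Hopkins--Teleman / Adem--Ruan type) at each finite stage, and then take inverse limits.

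\textbf{Step 1: finite stages.} For each $N$, $Y_N=[(E_N\times_K V)/G]$ is a finite quotient stack with finite CW homotopy type and a finite gerbe twist $\tau_N$. The classical complexified twisted Chern character gives a natural isomorphism
\[
\KU^k(Y_N,\tau_N)\otimes\C\xrightarrow{\sim}\bigoplus_{\mathcal F_N\subset IY_N}\bigoplus_{n}H^{k-2n}(\mathcal F_N;\mathscr L_{\tau,\mathcal F_N})u^n .
\]
Here the transgression lines carry finite monodromy, and the inertia is unrigidified with no age shift.

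\textbf{Step 2: identify the inertia of $Y_N$.} Since $K$ acts freely on $E_N$, the inertia of $Y_N$ is computed by $G$-elements only. Its components are the Borel approximations $E_N\times_K\mathcal F$ of the components $\mathcal F\subset IY$, and the transgression lines are compatible.

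One subtlety is the compensated isotropy of Remark~\ref{rem:compensated-isotropy}. Trace blocks where $K$ acts through a nontrivial root of unity do not appear at any finite stage, because the finite stages see only free $K$-orbits. This is consistent with the left side, since such blocks are killed by $I_T$-completion.

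This step identifies $H^\ast(E_N\times_K\mathcal F;\mathscr L)$ with $H_T^\ast(\mathcal F;\mathscr L)$ in degrees below roughly $2N$. More precisely, it gives a surjective tower with kernel $\hbar^{N+1}$, using the freeness from Theorem~\ref{thm:equivariant-formality} in the semiprojective case, or the Gysin sequence for $E_N\to\mathbb{CP}^N$ in general.

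\textbf{Step 3: take inverse limits.} Pass to the limit over $N$ on both sides.
\begin{itemize}
\item On the $K$-theory side, Theorem~\ref{thm:relative-completion} identifies the limit with $(\KU_T^k(Y,\tau_T)\otimes\C)^\wedge_{I_T}$, with $\varprojlim^1=0$.
\item On the cohomology side, one must check that the limit of the fixed-total-degree periodic groups of the approximations equals the $z$-adic completion. In fixed total degree $k$, the $N$-th stage is the quotient by $\hbar^{N+1}u^{\ast}$. In fixed total degree, $\hbar^{N+1}$ multiplied by the appropriate power of $u$ is exactly $z^{N+1}$, as in Lemma~\ref{lem:point-test}. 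So the tower is $H^k_{T,\del,\per}/(z^{N+1})$ up to pro-isomorphism, and its limit is $\widehat H^k_{T,\del,\per}(Y,\tau_T)$.
\item Naturality of the Chern character in $N$ makes Step~1 a morphism of towers, so the limit is an isomorphism.
\end{itemize}

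\textbf{Step 4: coefficient map.} The coefficient identification follows from \eqref{eq:weight-line-character}. The tautological character $\chi$ pulls back on $E_N$ to $\mathcal O(1)$ on $\mathbb{CP}^N$, whose Chern character is $e^{z}$. This is compatible with \eqref{eq:ringiso}.

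\textbf{Main obstacle.} I expect the hardest point to be the pro-isomorphism in Step~3 when $\mathcal F$ is not semiprojective. There $H_T^\ast(\mathcal F;\mathscr L)$ may carry $\hbar$-torsion, so $H^\ast(E_N\times_K\mathcal F)$ is not literally $H_T^\ast/\hbar^{N+1}$. I would handle this exactly as in \eqref{eq:sphere-completion-sequence}, using the Gysin sequence in cohomology. The correction term is $\hbar^{N+1}$-torsion of a finitely generated $\C[\hbar]$-module. Finiteness comes from the finite CW hypothesis, and such torsion is killed by a bounded power of $\hbar$, so the correction tower is pro-zero. The parallel structure of the two Thom/Gysin sequences then lets the Chern character intertwine them.
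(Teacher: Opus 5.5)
Your proposal is correct and follows essentially the same route as the paper. At each Borel stage you apply the Freed--Hopkins--Teleman twisted delocalized Chern character, using $(E_N\times_K V)^g=E_N\times_K V^g$ since $K$ acts freely on $E_N$; you use Theorem~\ref{thm:relative-completion} for the $K$-theory limit; and on each sector you use the sphere Gysin sequence with Euler class $\hbar^{N+1}$, which becomes $z^{N+1}$ in fixed total degree, so the bounded-exponent torsion tower is pro-zero and the limit is the $z$-adic completion, with $\chi\mapsto e^z$ read off from the point computation exactly as in the paper.
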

\begin{proof}
At each finite free Borel approximation, only the finite group $G$ remains. The complex twisted delocalized Chern character \cite{FHT} gives
\[
 \KU^k(Y_N,\tau_N)\otimes\C \xrightarrow{\sim}H^k_{\del,\per}(Y_N,\tau_N),
\]
naturally in $N$. Here $(E_N\times_KV)^g=E_N\times_KV^g$, since $K$ acts freely on $E_N$. Theorem~\ref{thm:relative-completion} identifies the inverse limit on the left.
On each fixed sector, the equivariant sphere Gysin sequence has Euler class $\hbar^{N+1}$. In fixed total degree, after multiplication by the compensating power  of Bott variable, its Euler class is $z^{N+1}$. The cohomological version of \eqref{eq:sphere-completion-sequence} therefore has a quotient term by $z^{N+1}$ and a torsion term in the opposite parity, with transition multiplication by $z$. Finite equivariant CW type makes these periodized modules finite over $\C[z]$, by the same cell induction as in Lemma \ref{lem:trace-block-finite-generation}. The torsion tower is pro-zero. The limit on the right is consequently the $z$-adic completion. Finite-group invariants and the finite inertia sum commute with this construction. The point computation \eqref{eq:weight-line-character} identifies the coefficient map.
\end{proof}

\begin{corollary}[The two Higgs state spaces]\label{thm:completed-ch}
For $k=0,1$, Theorem~\ref{thm:general-completed-ch} gives natural isomorphisms
\begin{align}
 \ch_{T,u}^{\wedge}:
 \left(\KU_T^k(X)\otimes\C\right)^{\wedge}_{I_T}
 &\xrightarrow{\ \sim\ }
 \widehat H^k_{T,\per}(X),
 \label{eq:completed-source-ch}\\
 \widetilde\ch_{T,\alpha,u}^{\wedge}:
 \left(\KU_T^k(\widehat X,\widetilde\alpha_T)\otimes\C\right)^{\wedge}_{I_T}
 &\xrightarrow{\ \sim\ }
 \widehat H^k_{T,\del,\per}(\widehat X,\alpha_T).
 \label{eq:completed-target-ch}
\end{align}
By Theorem~\ref{thm:equivariant-formality}, both targets are finite free
$\C[\![z]\!]$-modules; reduction modulo $z$ is the ordinary and twisted
delocalized nonequivariant periodic Chern character.
\end{corollary}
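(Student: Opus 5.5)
The statement to be proved is Corollary~\ref{thm:completed-ch}. I would obtain it by specializing Theorem~\ref{thm:general-completed-ch} twice.

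\textbf{Source isomorphism.} Take $Y=X$ (so $V=X$, $G$ trivial) with the trivial twist. The unrigidified inertia of a space is just $X$ itself, with trivial transgression line. The target of \eqref{eq:general-completed-ch} then reduces to $\widehat H^k_{T,\per}(X)$, and \eqref{eq:completed-source-ch} follows.

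\textbf{Target isomorphism.} Take $Y=\widehat X=[X/\Gamma]$ with $G=\Gamma$ and $\tau_T=\widetilde\alpha_T$, the topological realization of the fixed equivariant gerbe of Definition~\ref{def:equivariant-gerbe}. The hypotheses of Theorem~\ref{thm:general-completed-ch} are checked as follows.
\begin{enumerate}[label=(\alph*)]
\item $X$ has finite $(K\times\Gamma)$-CW homotopy type. This is Lemma~\ref{lem:finite-KCW}.
\item The twist is an ungraded finite gerbe twist. This holds because $\alpha_T$ is $r$-torsion and comes from a banded $\mu_r$-gerbe.
\item The $T$-action commutes with the quotient groupoid. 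This is built into Definition~\ref{def:equivariant-gerbe}.
\end{enumerate}
The sector decomposition $I\widehat X=\coprod_\gamma[X^\gamma/\Gamma]$, with transgression lines $\mathscr L_{\alpha_T,\mathcal F}$, matches \eqref{eq:general-del-periodic} with \eqref{eq:delperiodic}. This gives \eqref{eq:completed-target-ch}.

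\textbf{Freeness and reduction mod $z$.} Each sector $\mathcal F=[F/\Gamma_F]$ has $F$ a connected component of $X^\gamma$, which is smooth semiprojective by Lemma~\ref{lem:semiprojective}. The transgression line $\mathscr L_{\alpha_T,\mathcal F}$ has finite monodromy, since its values are $r$-th roots of unity. Theorem~\ref{thm:equivariant-formality}(iii), together with its finite-group-invariant clause, then shows each completed summand is finite free over $\C[\![z]\!]$; Corollary~\ref{cor:fixed-degree-description} gives the same conclusion sectorwise. Finitely many sectors occur, because $\Gamma$ is finite and each $X^\gamma$ has finitely many components. Hence the direct sum is finite free. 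Part (ii) of that theorem identifies reduction modulo $z$ with nonequivariant periodic cohomology. By naturality of the Chern character under the forgetful maps (the coefficient map $\chi\mapsto e^z$ sends $\chi=1$ to $z=0$), this is the nonequivariant, twisted delocalized Chern character of \cite{FHT}.

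\textbf{Main obstacle.} The delicate point is matching the coefficient systems. The transgression line produced by the finite-stage Freed--Hopkins--Teleman character on $Y_N$ must agree, compatibly in $N$ and $T$-equivariantly, with the line $\mathscr L_{\alpha_T,\mathcal F}$ used in \eqref{eq:delperiodic}. I would handle this by noting that both lines are the holonomy of the same gerbe around $\gamma$, and that the $K$-action on $E_N$ is free. The Borel descent is then unique, so the identification is canonical up to the equivariant structure fixed once and for all in Definition~\ref{def:equivariant-gerbe}.
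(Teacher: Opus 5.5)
Your proposal is correct and follows the paper's own proof: specialize Theorem~\ref{thm:general-completed-ch} with $G=1$ for $X$ and $G=\Gamma$ for $\widehat X$, check the finite $K$-CW hypothesis with Lemma~\ref{lem:finite-KCW}, and obtain finite freeness and the reduction statement from Theorem~\ref{thm:equivariant-formality} and Corollary~\ref{cor:fixed-degree-description}. Two small points: reduction modulo $z$ is part (iii) of Theorem~\ref{thm:equivariant-formality}, while part (ii) concerns reduction modulo $\hbar$; and the coefficient-matching issue you flag as the main obstacle is already built into the statement of Theorem~\ref{thm:general-completed-ch} once you set $\tau_T=\widetilde\alpha_T$, so it needs no separate argument here.
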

\begin{proof}
For $X$, take $G=1$. For $\widehat X=[X/\Gamma]$, take $G=\Gamma$. 
Lemma~\ref{lem:finite-KCW} verifies the finite $K$-CW hypothesis, and
Theorem~\ref{thm:equivariant-formality} together with
Corollary~\ref{cor:fixed-degree-description} give the finite freeness and the reduction statements.
\end{proof}

\subsection{Completed cohomology with support and Groechenig-Shen morphism}\label{subsec_GS_morphism}
Let $W$ be a finite quotient $T$-stack, let $Z\subset W$ be closed and $T$-invariant, and let $\mathscr{M}$ be a finite monodromy coefficient system. Define
\begin{align}
 H^k_{T,Z,\per}(W;\mathscr M)
 &:=\bigoplus_{n\in\Z}H^{k-2n}_{T,Z}(W;\mathscr M)u^n,
 \label{eq:supported-periodic}\\
 \widehat H^k_{T,Z,\per}(W;\mathscr M)
 &:=\limi_q H^k_{T,Z,\per}(W;\mathscr M)/(z^q).
 \label{eq:supported-completion-def}
\end{align}
For a twist $\tau_T$, the notation $\widehat H^k_{T,Z,\del,\per}(W,\tau_T)$ means the finite direct sum of \eqref{eq:supported-completion-def} over connected components of
unrigidified inertia, with support $IZ=I W\times_W Z$ and the corresponding transgression lines.

\begin{theorem}[Supported relative completion and Chern character]
\label{thm:supported-completion}
Assume that $(W,W\setminus Z)$ is represented by a finite $(K\times G)$-CW pair and that the twist and coefficient systems are finite. Put $W_N=E_N\times_KW$ and $Z_N=E_N\times_KZ$. Then, for $k\in\zz/2$, there are natural isomorphisms
\begin{equation}\label{eq:supported-completion}
 \left(\KU^k_{T,Z}(W,\tau_T)\otimes\C\right)^{\wedge}_{I_T}
 \xrightarrow{\ \sim\ }
 \varprojlim_N\bigl(\KU^k_{Z_N}(W_N,\tau_N)\otimes\C\bigr)
 \xrightarrow{\ \sim\ }
 \widehat H^k_{T,Z,\del,\per}(W,\tau_T).
\end{equation}
The two inverse systems are Mittag--Leffler; no $\varprojlim^1$ term occurs. The identifications are natural for morphisms of relative pairs, pullback, products, and excision.  
\end{theorem}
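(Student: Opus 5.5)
The plan is to reduce the supported statement to the unsupported Theorems~\ref{thm:relative-completion} and \ref{thm:general-completed-ch}, using the long exact sequences of the pair $(W,W\setminus Z)$ and a five-lemma argument. Supported $K$-theory and supported cohomology are relative groups of this pair: $\KU^k_{T,Z}(W,\tau_T)=\KU^k_T(W,W\setminus Z;\tau_T)$, and similarly for cohomology on each inertia sector, using $I(W\setminus Z)=IW\setminus IZ$. By hypothesis the pair is a finite $(K\times G)$-CW pair. Lemma~\ref{lem:trace-block-finite-generation} therefore applies in its relative form, so $M^k_Z:=\KU^k_{T,Z}(W,\tau_T)\otimes\C$ is finite over the noetherian ring $R_K$.

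\emph{First isomorphism.} I would repeat the proof of Theorem~\ref{thm:relative-completion} with $V$ replaced by the relative pair. Crossing $(D(\C^{N+1}),E_N)$ with $(W,W\setminus Z)$, the relative equivariant Thom sequence has Euler class $t^{N+1}$ and produces the short exact sequence
\[
0\to M^k_Z/t^{N+1}M^k_Z\to \KU^k_{Z_N}(W_N,\tau_N)\otimes\C\to M^{k+1}_Z[t^{N+1}]\to0 .
\]
Since $M^{k+1}_Z$ is finite, its $t$-primary torsion is killed by a fixed power of $t$, so the right-hand tower is pro-zero. The middle tower is then pro-isomorphic to the surjective adic tower. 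This gives the Mittag--Leffler property, the vanishing of $\varprojlim^1$, and the first isomorphism. The only change from Theorem~\ref{thm:relative-completion} is that the Thom isomorphism is applied to a product of pairs, which is standard.

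\emph{Second isomorphism.} At each finite stage only the finite group $G$ remains. The FHT twisted delocalized Chern character is a natural transformation of cohomology theories on finite $G$-CW pairs with finite twist. It is therefore compatible with the long exact sequences of pairs. The five lemma upgrades the absolute isomorphisms on $W_N$ and $W_N\setminus Z_N$ to an isomorphism $\KU^k_{Z_N}(W_N,\tau_N)\otimes\C\cong H^k_{Z_N,\del,\per}(W_N,\tau_N)$, compatibly in $N$. On the cohomology side I would use the supported Gysin sequence for $E_N$ on each sector of $IW$. After multiplying by the Bott power, this sequence has quotient term by $z^{N+1}$ and a torsion term with transition maps multiplication by $z$. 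The supported periodic groups are finite over $\C[z]$ by the same relative cell induction, so the torsion tower is pro-zero and the inverse limit is the $z$-adic completion \eqref{eq:supported-completion-def}. Composing with the first isomorphism gives the second, with $\chi\mapsto e^z$ as before.

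\emph{Naturality and the main obstacle.} Naturality for maps of pairs, pullback and products follows because every construction is functorial at each finite stage and the completions are functorial. Excision holds because both sides satisfy it at finite stages, the isomorphisms are natural, and completion preserves isomorphisms. The step I expect to need the most care is exactness of the passage to the limit in the five-lemma argument. I would avoid completing long exact sequences directly, since completion is only exact on finite modules. Instead I would apply the five lemma at each finite stage $N$ and take limits only at the end, where the Mittag--Leffler property already established guarantees that nothing is lost.
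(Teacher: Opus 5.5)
Your proposal is correct and follows the paper's proof essentially step for step. For the first isomorphism you use the relative-pair version of the sphere Thom sequence \eqref{eq:sphere-completion-sequence}, finite generation over $R_K$, and pro-zero $t$-torsion; for the second you use the finite-stage delocalized Chern character on pairs (absolute isomorphisms, the long exact sequence and the five lemma, applied at each $N$ before passing to the limit) together with the relative sphere Gysin sequence with Euler class $z^{N+1}$ on each inertia sector, exactly as the paper does.
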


\begin{proof}
Use the relative-pair form of Theorem~\ref{thm:relative-completion} for $(W,W\setminus Z)$. Its proof is the sphere Thom sequence \eqref{eq:sphere-completion-sequence}, with the relative twisted groups in place of $M^k$; the same finite-generation and pro-zero torsion arguments apply. At finite Borel approximations, the complex delocalized
Chern character is an isomorphism on relative pairs by the absolute isomorphism and the long exact sequence. On each inertia component, the relative sphere Gysin sequence similarly identifies the inverse limit with $z$-adic completion of cohomology with support. These constructions are natural for maps of pairs, products and excision, and both towers are pro-isomorphic to their surjective adic quotient towers. Proper pushforward is treated separately by Riemann--Roch; no unsupported integral trace-coefficient construction is involved.
\end{proof}

Let
\[
 \Phi^H_{\GS,k}:H^k_{\per}(X) \xrightarrow{\ \sim\ }
 H^k_{\del,\per}(\widehat X,\alpha)
\]
be the complexified cohomological realization of the chosen nonequivariant spectrum equivalence \eqref{eq:GS-noneq}.

\begin{theorem}[Noncanonical completed additive lift]\label{thm:additive-lift}
For $k=0,1$, there exists a noncanonical $\C[\![z]\!]$-linear isomorphism
\begin{equation}\label{eq:additive-lift}
 \widehat\Phi^{H,\mathrm{add}}_{T,k}:
 \widehat H^k_{T,\per}(X)
 \xrightarrow{\ \sim\ }
 \widehat H^k_{T,\del,\per}(\widehat X,\alpha_T)
\end{equation}
whose reduction modulo $z$ is $\Phi^H_{\GS,k}$. Via Theorem~\ref{thm:completed-ch}, this is equivalent to a noncanonical isomorphism of the completed complexified equivariant $K$-groups as $\widehat R_T$-modules.
\end{theorem}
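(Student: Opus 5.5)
The plan is to lift the nonequivariant isomorphism basis-by-basis, using only the module structure over $\C[\![z]\!]$; no kernel or equivariant spectrum is involved. By Corollary~\ref{thm:completed-ch}, both $\widehat H^k_{T,\per}(X)$ and $\widehat H^k_{T,\del,\per}(\widehat X,\alpha_T)$ are finite free $\C[\![z]\!]$-modules. By Theorem~\ref{thm:equivariant-formality}(ii)--(iii), applied to $X$ and, after $\Gamma_F$-invariants, to each inertia component $[F/\Gamma_F]$ with its transgression line as in Corollary~\ref{cor:fixed-degree-description}, their reductions modulo $z$ are canonically the ordinary periodic groups $H^k_{\per}(X)$ and $H^k_{\del,\per}(\widehat X,\alpha)$. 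Here I must check that the reduction of the equivariant transgression line $\mathscr L_{\alpha_T,\mathcal F}$ is the nonequivariant line $\mathscr L_{\alpha,\mathcal F}$ used by \cite{FHT} and by the Groechenig--Shen realization. This is where Definition~\ref{def:equivariant-gerbe}, which fixes the identification of underlying twists, enters.

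The construction then proceeds in three steps.
\begin{enumerate}[label=(\arabic*)]
\item Choose a $\C$-basis $v_1,\dots,v_n$ of $H^k_{\per}(X)$ and homogeneous lifts $\tilde v_i\in\widehat H^k_{T,\per}(X)$. By Nakayama over the complete local ring $\C[\![z]\!]$, these lifts form a free basis.
\item Set $w_i=\Phi^H_{\GS,k}(v_i)$. This is a basis of the target reduction, since $\Phi^H_{\GS,k}$ is an isomorphism. Choose lifts $\tilde w_i$; these form a free basis of the target by the same Nakayama argument.
\item Define $\widehat\Phi^{H,\mathrm{add}}_{T,k}$ to be the unique $\C[\![z]\!]$-linear map with $\tilde v_i\mapsto\tilde w_i$. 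It sends a basis to a basis, so it is an isomorphism, and its reduction modulo $z$ is $\Phi^H_{\GS,k}$ by construction.
\end{enumerate}
Noncanonicity is exactly the choice of lifts, consistent with the statement.

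For the $K$-theoretic reformulation, I compose with the completed Chern characters of Corollary~\ref{thm:completed-ch}, namely $(\widetilde\ch^\wedge_{T,\alpha,u})^{-1}\circ\widehat\Phi^{H,\mathrm{add}}_{T,k}\circ\ch^\wedge_{T,u}$. Linearity over $\widehat R_T$ follows from the ring isomorphism \eqref{eq:ringiso}, $\chi\mapsto e^z$, because both Chern characters intertwine the $\widehat R_T$-module structure with the $\C[\![z]\!]$-module structure. The converse direction follows by the same conjugation.

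The main obstacle I expect is the compatibility of reductions, not the lifting itself. I need the reduction modulo $z$ of the completed equivariant Chern character to agree with the nonequivariant delocalized Chern character through which $\Phi^H_{\GS,k}$ is defined. Corollary~\ref{thm:completed-ch} asserts this. I would verify it at the finite Borel stage $N=0$, where $Y_0=\widehat X$ and $E_0=S^1$ with the free action of $K$. Reduction modulo $t$ and modulo $z$ both correspond to restriction along the fiber inclusion. I also need to check that the twist $\tau_0$ is the nonequivariant $\widetilde\alpha$; the fixed identification in Definition~\ref{def:equivariant-gerbe} is what supplies this.
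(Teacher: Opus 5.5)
Your proposal is correct and follows the paper's proof: it uses finite freeness over $\C[\![z]\!]$ (Corollary~\ref{thm:completed-ch} with Theorem~\ref{thm:equivariant-formality}), lifts bases of the reductions, and takes the constant matrix of $\Phi^H_{\GS,k}$; your choice $\tilde v_i\mapsto\tilde w_i$ with $w_i=\Phi^H_{\GS,k}(v_i)$ is this construction with the target basis adapted so the matrix is the identity. The paper also makes noncanonicity precise: any two lifts differ by an automorphism $U\equiv\id\pmod z$, and conversely every such $U$ produces another lift.
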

\begin{proof}
By Theorem~\ref{thm:completed-ch} and Corollary~\ref{cor:fixed-degree-description}, the source and target of \eqref{eq:additive-lift} are finite free $\C[\![z]\!]$-modules.  Their reductions modulo $z$ are the source and target of $\Phi^H_{\GS,k}$, so they have the same rank. Choose bases lifting bases of the reductions and let $B\in\mathrm{GL}_N(\C)$ be the matrix of $\Phi^H_{\GS,k}$. The same constant matrix over $\C[\![z]\!]$ is invertible and gives \eqref{eq:additive-lift}.

If $\Phi_1$ and $\Phi_2$ are two lifts, then $U=\Phi_2\Phi_1^{-1}$ is an automorphism congruent to the identity modulo $z$; conversely every such $U$ produces another lift.  This proves noncanonicity and shows why the construction carries none of the
additional structures excluded in the statement.
\end{proof}

\begin{corollary}\label{cor:genuine-equivariant}
Suppose, as an independent input, that there is a $T$-equivariant equivalence of twisted $\KU$-spectra
\[
 \Phi^{\KU,T}:\KU_T(X)\xrightarrow{\ \simeq\ }
 \KU_T(\widehat X,\widetilde\alpha_T)
\]
whose forgetful map is \eqref{eq:GS-noneq}. Then completion and Theorem~\ref{thm:completed-ch} canonically produce completed additive maps \eqref{eq:additive-lift}. If the spectrum equivalence is induced by a global supported $T$-equivariant twisted kernel, its completed cohomological realization is the full inertial Riemann--Roch transform of that kernel, not an arbitrary matrix lift.
\end{corollary}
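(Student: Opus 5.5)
The plan is to use the functoriality of the completed Chern character isomorphisms in Theorem~\ref{thm:completed-ch}, applied to the hypothesized spectrum equivalence $\Phi^{\KU,T}$. First, I would take homotopy groups and complexify, obtaining an isomorphism of $R_T$-modules $\KU_T^k(X)\otimes\C\to\KU_T^k(\widehat X,\widetilde\alpha_T)\otimes\C$ for $k=0,1$. This map is $R_T$-linear because an equivariant map of $\KU_T$-module spectra commutes with the action of $R(K)=\pi_0\KU_K(\mathrm{pt})$. Since the map is $R_T$-linear and invertible, it preserves the $I_T$-adic filtrations in both directions. Passing to $I_T$-adic completions then gives an isomorphism of $\widehat R_T$-modules. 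The finite generation from Lemma~\ref{lem:trace-block-finite-generation} and Lemma~\ref{lem:finite-KCW} ensures that the completion is exact and well behaved.

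Next, I would conjugate by the isomorphisms \eqref{eq:completed-source-ch} and \eqref{eq:completed-target-ch} to define
$$\widehat\Phi^{H}_{T,k}:=\widetilde\ch^{\wedge}_{T,\alpha,u}\circ(\Phi^{\KU,T})^{\wedge}_{I_T}\circ(\ch^{\wedge}_{T,u})^{-1}.$$
Under \eqref{eq:ringiso}, an $\widehat R_T$-linear map is $\C[\![z]\!]$-linear. So $\widehat\Phi^{H}_{T,k}$ is a $\C[\![z]\!]$-linear isomorphism between the finite free modules of Theorem~\ref{thm:additive-lift}.

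To check that this is a lift, I would reduce modulo $z$. Theorem~\ref{thm:relative-completion} describes the completion as a limit over Borel approximations $Y_N$, and the $N=0$ stage restricts to the nonequivariant groups. This restriction is compatible with the forgetful map of spectra. The final clause of Corollary~\ref{thm:completed-ch} identifies reduction modulo $z$ with the nonequivariant Chern characters. Hence the reduction is $\Phi^H_{\GS,k}$, which is exactly the lift condition in \eqref{eq:additive-lift}. The construction is canonical because every step (completion, and the two natural Chern characters) is functorial; no choice of bases is made.

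For the kernel clause, I would suppose $\Phi^{\KU,T}$ is given by a supported $T$-equivariant twisted kernel on $X\times_\A\widehat X$, realized as pull, tensor, and proper push. I would apply Theorem~\ref{thm:supported-completion} at each Borel stage $N$, using its naturality for pullback, products, and excision. Proper pushforward is handled by twisted delocalized Riemann--Roch on each inertia sector, as in \cite{FHT}, and this Riemann--Roch is compatible with $N$. Passing to the limit identifies the completed map with the inertial Riemann--Roch transform of the kernel's Chern character.

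The main obstacle is this last compatibility of delocalized Riemann--Roch with the Borel tower. The proper pushforward along $W\to\widehat X$ must be controlled uniformly in $N$, including the Todd and inertial correction classes on each sector. I would first try to check that these correction classes are pulled back from the $T$-equivariant classes, so that they commute with the transition maps of the tower.
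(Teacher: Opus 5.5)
Your first part follows the paper's argument. You complete the $R_T$-linear map at $I_T$ and conjugate by the natural completed Chern characters of Corollary~\ref{thm:completed-ch}. Your explicit check that the result reduces modulo $z$ to $\Phi^H_{\GS,k}$, via the $N=0$ Borel stage and the last clause of that corollary, fills in what the paper leaves implicit. So does your remark that no bases are chosen.

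For the kernel clause, the paper does not rerun the Borel-tower argument. It simply invokes the commutative square of Theorem~\ref{thm:bivariant-RR}(ii). The proof of that theorem is essentially your plan: Riemann--Roch at each finite approximation, followed by passage to the Mittag--Leffler limits of Theorems~\ref{thm:general-completed-ch} and~\ref{thm:supported-completion}. The obstacle you flag is therefore already settled there, and you can cite it.

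That resolution, however, works only in the ambient-product convention, and your formulation should be adjusted to match. The kernel must be a supported class in the smooth product $X\times\widehat X$, with support in $X\times_\A\widehat X$ proper over $\widehat X$. The transform uses the supported complex-oriented pushforward along $\pr_{\widehat X}$, which is not itself proper. At the $N$-th stage this projection is $E_N\times_K(X\times\widehat X)\to E_N\times_K\widehat X$, and its vertical tangent bundle is the pullback of $TX$. This is exactly why the Todd factor is the restriction of $\Td_T(TX)$, hence compatible with the tower maps as you hoped. It also produces the periodic factor $u^{m}$.

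You instead describe the kernel as living on $X\times_\A\widehat X$ and push forward along a proper map $W\to\widehat X$. If that means taking the singular fiber product as the ambient space, its projection has no complex orientation and no relative Todd class of the required form. The normalization also comes out wrong (compare Remark~\ref{rem:identity-test}). Under that reading, the check you propose would not go through as stated.
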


\begin{proof}
Complete the assumed spectrum equivalence at $I_T$ and conjugate its maps on homotopy groups by the natural isomorphisms in Theorem~\ref{thm:completed-ch}. A kernel-induced assertion additionally uses Theorem~\ref{thm:bivariant-RR} below.
\end{proof}

\subsection{Ages, monodromy, and inertia conventions}

\subsubsection{Age and Poincar\'e duality are componentwise}

Fix a connected component $\mathcal F=[F/\Gamma_F]\subset I\widehat X$ represented by $(\gamma,F)$. Define
\[
 a_{\mathcal F}:=\age(\gamma\mid T_xX),\qquad
 d_{\mathcal F}:=\dim_\C F,
\]
for $x\in F$. $a_{\mathcal{F}}$ is locally constant and therefore constant on $F$. 

\begin{lemma}\label{lem:symplectic-age}
For every connected inertia component $\mathcal F$,
\begin{equation}\label{eq:component-dimension}
 d_{\mathcal F}=m-2a_{\mathcal F},\qquad
 a_{\mathcal F}\in\Z_{\ge0}.
\end{equation}
Inversion sends $\mathcal F$ to a component $\iota(\mathcal F)$ with
$d_{\iota(\mathcal F)}=d_{\mathcal F}$ and $a_{\iota(\mathcal F)}=a_{\mathcal F}$.
\end{lemma}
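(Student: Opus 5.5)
The proof is linear algebra on the tangent space at a point of $F$, using that $\gamma$ acts symplectically with finite order.

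\textbf{Eigenspace decomposition.} Fix $x\in F$ and write $V=T_xX$, which has dimension $m$ and carries the symplectic form $\omega_x$. The element $\gamma$ acts on $V$ preserving $\omega_x$, since $\tau_\gamma^*\omega=\omega$ by Lemma~\ref{lem:theta-symplectic-invariant}. It has finite order, so $V$ splits into eigenspaces
\[
V=\bigoplus_{\zeta\in\mu_s}V_\zeta .
\]
If $u\in V_\zeta$ and $v\in V_{\zeta'}$, then
\[
\omega(u,v)=\omega(\gamma u,\gamma v)=\zeta\zeta'\,\omega(u,v),
\]
so $V_\zeta\perp V_{\zeta'}$ unless $\zeta'=\zeta^{-1}$. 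Nondegeneracy of $\omega_x$ then gives a perfect pairing $V_\zeta\times V_{\zeta^{-1}}\to\C$. Hence $\dim V_\zeta=\dim V_{\bar\zeta}$, and $\omega_x$ restricts nondegenerately to $V_1$ and to $V_{-1}$.

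\textbf{Dimension and integrality.} The fixed locus of a finite-order automorphism is smooth with $T_xF=V_1$. Write $\zeta=e^{2\pi i\theta}$ with $\theta\in(0,1)$. The age is
\[
a_{\mathcal F}=\sum_{\zeta\neq1}\theta\dim V_\zeta .
\]
Group the eigenvalues into pairs $\{\zeta,\bar\zeta\}$ with $\zeta\neq\pm1$. Each such pair contributes $\theta+(1-\theta)=1$ per dimension of $V_\zeta$, and the pair occupies $2\dim V_\zeta$ dimensions. The eigenvalue $-1$ contributes $\tfrac12\dim V_{-1}$. Since $V_{-1}$ is symplectic, $\dim V_{-1}$ is even, so this contribution is an integer equal to half the dimension of $V_{-1}$. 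Summing, $a_{\mathcal F}=\tfrac12(m-\dim V_1)$, which is the same as $d_{\mathcal F}=m-2a_{\mathcal F}$. We have $a_{\mathcal F}\ge0$ because it is a sum of nonnegative terms, and it is an integer because every contribution is. Both $a_{\mathcal F}$ and $d_{\mathcal F}$ are locally constant along the connected $F$, so the identity holds on the whole component.

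\textbf{Inversion.} The component $\iota(\mathcal F)$ is represented by $(\gamma^{-1},F)$, because $X^{\gamma^{-1}}=X^\gamma$. Therefore $d_{\iota(\mathcal F)}=d_{\mathcal F}$, and the first part gives $a_{\iota(\mathcal F)}=\tfrac12(m-d_{\mathcal F})=a_{\mathcal F}$. Alternatively, one can check this directly: the general relation is $a(\gamma)+a(\gamma^{-1})=m-\dim V_1$, and by the first part both terms are equal.

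\textbf{Main point to check.} The only real subtlety is that $-1$ eigenspaces have even dimension. This comes from the restriction of $\omega_x$ to $V_{-1}$ being nondegenerate, which the perfect pairing in the first step supplies with $\zeta=\zeta^{-1}=-1$. Everything else is bookkeeping.
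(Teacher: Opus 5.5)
Your proof is correct and follows the same route as the paper. You decompose $T_xX$ into $\gamma$-eigenspaces, use the $\gamma$-invariant symplectic form to pair $V_\zeta$ with $V_{\zeta^{-1}}$ so that each pair contributes $1$ to the age for every $2$ dimensions of codimension, and use $X^{\gamma^{-1}}=X^\gamma$ for the inversion statement. Your explicit remark that $V_{-1}$ is symplectic, hence even-dimensional, makes precise the self-paired eigenvalue case that the paper's ``paired two-dimensional summand'' wording passes over.
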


\begin{proof}
The $\Gamma$-action preserves the holomorphic symplectic form. On the moving tangent space, an eigenvalue $e^{2\pi i\theta}$ with $0<\theta<1$ is paired with
$e^{-2\pi i\theta}=e^{2\pi i(1-\theta)}$. Each paired two-dimensional summand contributes $1$ to the age and $2$ to the complex codimension of $F$. Summing proves \eqref{eq:component-dimension} and integrality. Replacing $\gamma$ by $\gamma^{-1}$ exchanges the paired eigenvalues and leaves both the fixed locus and the total contribution unchanged.
\end{proof}

Transgression of a gerbe has a canonical inversion duality
\begin{equation}\label{eq:inversion-line}
 \iota^*\mathscr L_{\alpha_T,\iota(\mathcal F)}
 \xrightarrow{\ \sim\ }
 \mathscr L_{\alpha_T,\mathcal F}^{\vee}.
\end{equation}
The correct sectorwise equivariant Poincar\'e duality is
\begin{equation}\label{eq:component-PD}
 \PD_{\mathcal F}:
 H_T^p(\mathcal F;\mathscr L_{\alpha_T,\mathcal F})
 \xrightarrow{\ \sim\ }
 H^{T,\mathrm{BM}}_{2d_{\mathcal F}-p}
 (\mathcal F;\mathscr L_{\alpha_T,\mathcal F}).
\end{equation}
Together with \eqref{eq:inversion-line}, this gives the unrigidified orbifold pairing between $\mathcal F$ and $\iota(\mathcal F)$.

The reduced virtual dimension is computed componentwise. For a three-pointed genus-zero twisted stable map with evaluations in $\mathcal F_1,\mathcal F_2,\mathcal F_3$, the
standard dimension is
\[
 m-\sum_{i=1}^3a_{\mathcal F_i}.
\]
Removing the $1$-dimensional symplectic obstruction quotient raises, rather than lowers, virtual dimension. Thus
\begin{equation}\label{eq:correct-three-point-dimension}
 \operatorname{vdim}^{\red}=m+1-\sum_{i=1}^3a_{\mathcal F_i}.
\end{equation}
After the divisor equation, the two-point reduced virtual dimension is
\begin{equation}\label{eq:two-point-dimension}
 D_{12}=m-a_{\mathcal F_1}-a_{\mathcal F_2}
       =\frac{d_{\mathcal F_1}+d_{\mathcal F_2}}2.
\end{equation}
A coefficient-valued correspondence of this dimension sends ordinary degree $p$ on $\mathcal F_1$ to
\begin{equation}\label{eq:ordinary-degree-shift}
 p+2(d_{\mathcal F_2}-D_{12})
 =p+2(a_{\mathcal F_1}-a_{\mathcal F_2})
\end{equation}
on $\mathcal F_2$. Hence it preserves Chen--Ruan degree.

\subsubsection{The two directions of the age regrading}

Define the Chen--Ruan-periodized group by
\begin{equation}\label{eq:CR-periodic}
 H^k_{T,\CR,\per}(\widehat X,\alpha_T)
 :=\bigoplus_{\mathcal F}\bigoplus_{n\in\Z}
 H_T^{k-2n-2a_{\mathcal F}}(\mathcal F;\mathscr L_{\alpha_T,\mathcal F})u^n.
\end{equation}
The degree-preserving map from the Chen--Ruan grading to the unshifted delocalized grading is
\begin{equation}\label{eq:age-map-forward}
 \St_{\mathrm{age}}: H^k_{T,\CR,\per}\longrightarrow H^k_{T,\del,\per},
 \qquad
 \St_{\mathrm{age}}(xu^n)=xu^{n+a_{\mathcal F}}.
\end{equation}
Its inverse, which is the map needed to convert the unshifted Chern-character target into a Chen--Ruan-periodized target, is
\begin{equation}\label{eq:age-map-inverse}
 \St_{\mathrm{age}}^{-1}(xu^n)=xu^{n-a_{\mathcal F}}.
\end{equation}
Thus a horizontal map from $X$ into the Chen--Ruan state space would be
$\St_{\mathrm{age}}^{-1}\circ\Phi^{\del}$, not $\St_{\mathrm{age}}\circ\Phi^{\del}$.  These are additive regradings; they do not prove compatibility with the Chen--Ruan product.

\subsubsection{Two-point monodromy}

\begin{lemma}[Genus-zero two-point monodromy]
\label{lem:two-point-monodromy}
Let a connected genus-zero twisted nodal curve carry a principal $\Gamma$-bundle and two marked monodromies $\gamma_1,\gamma_2$. Then
\begin{equation}\label{eq:monodromy-product}
 \gamma_1\gamma_2=1.
\end{equation}
Consequently the two-evaluation cycle has labels $(\gamma,\gamma^{-1})$. After the output involution in Definition~\ref{defn_Steinberg_PGL}, the operator has label
$\gamma\to\gamma$. If both marked monodromies are trivial, all node monodromies are trivial and the bundle is trivial on each geometric source curve. Over a family it can be a nontrivial pullback of a $\Gamma$-torsor on the base.
\end{lemma}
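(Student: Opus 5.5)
The argument works with the principal $\Gamma$-bundle $P\to\mathcal C$ and its monodromy representation. Since $\Gamma=\Pic^0(C)[r]$ is abelian, monodromy is a homomorphism from the orbifold fundamental group of each component, and no conjugacy ambiguity arises. The first step is the case of a smooth irreducible genus-zero twisted curve whose only stacky points are $k$ markings and nodes. Its coarse space is $\mathbb P^1$, and its orbifold fundamental group is generated by small loops $\ell_1,\dots,\ell_k$ around the stacky points, subject to the single relation $\ell_1\cdots\ell_k=1$ (with the finite-order relations $\ell_i^{s_i}=1$). Applying the monodromy homomorphism gives $\prod_i\gamma_i=1$ in $\Gamma$. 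Here $\gamma_i$ is the monodromy at the $i$-th special point, taken with the orientation induced by the component. Representability ensures that the band at each gerbe marking embeds into $\Gamma$, so the marked monodromy is well defined.

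The second step handles nodes. At a balanced node the two branches carry inverse monodromies, $\gamma$ and $\gamma^{-1}$. This is the balancing condition, which for a principal bundle says that the local loops on the two branches are homotopic in the opposite orientation. Since the dual graph is a tree, I would induct on the number of components. Pick a leaf component $v$ attached at a single node $e$. If $v$ carries no marking, the relation of the first step forces the monodromy at $e$ to be trivial. If $v$ carries both markings, the rest of the tree carries none, and the same relation on $v$ gives $\gamma_1\gamma_2\gamma_e=1$ with $\gamma_e=1$. Otherwise $v$ carries one marking, say the first, so the monodromy at $e$ on $v$ is $\gamma_1^{-1}$, and on the other branch it is $\gamma_1$. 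Contracting $v$ then replaces the node by a marking with label $\gamma_1$, and the induction continues. Abelianness makes the bookkeeping order-independent. In all cases we obtain $\gamma_1\gamma_2=1$.

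The consequences then follow directly. The labels $(\gamma,\gamma^{-1})$ restate \eqref{eq:monodromy-product}. After composing $\ev_2$ with $\bar\iota$ as in Definition~\ref{defn_Steinberg_PGL}, the output label becomes $(\gamma^{-1})^{-1}=\gamma$. If $\gamma_1=\gamma_2=1$, the leaf induction shows that every node monodromy is trivial. Each component then has trivial monodromy on a simply connected orbifold, hence a trivial bundle, because the coarse $\mathbb P^1$ with no nontrivial isotropy in the monodromy is simply connected for this bundle. Trivial bundles glued along a tree are trivial, since the gluing data can be absorbed component by component along the tree with no cycle obstruction. For families, a local system on the total space that is fiberwise trivial is pulled back from the base via sections, which gives the final remark. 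I would illustrate nontriviality by the product of a nontrivial torsor on the base $S$ with a constant family.

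The main obstacle is making the orientation and sign conventions precise: that balanced nodes give inverse monodromies, and that the single relation among the loops holds with consistent orientations. I would settle both first on the local model $[\{xy=0\}/\mu_s]$, acting by $(\zeta,\zeta^{-1})$, and on the standard presentation of the orbifold fundamental group of a football. The argument should work the same way for twisted curves of arbitrary genus zero.
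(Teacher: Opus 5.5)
Your proof is correct and follows essentially the paper's route: the per-component relation $\prod\gamma_i=1$ from the orbifold fundamental group of the twisted $\mathbb P^1$, inverse monodromies at balanced nodes, leaf-stripping along the tree in the trivial-monodromy case, and then triviality of $\Gamma$-torsors on $\mathbb P^1$ together with tree gluing. The only difference is that the paper gets $\gamma_1\gamma_2=1$ in one step by multiplying the vertex relations over the whole tree, so that node contributions cancel because $\Gamma$ is abelian; this avoids your leaf case analysis, where the ``both markings on a leaf'' case asserts $\gamma_e=1$ without justification (you need the unmarked-subtree argument there, or simply pick the other leaf, since a tree with at least two vertices has at least two leaves).
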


\begin{proof}
On the normalization of each irreducible component, the product of the monodromies at marked points and node branches is $1$. Multiply these relations over all vertices of the tree dual graph. The two balanced monodromies at each node are inverse and cancel, giving \eqref{eq:monodromy-product}.

If the marked monodromies are both $1$, take a leaf of the dual tree. The relation on that component forces its unique node monodromy to be $1$. Remove the leaf and iterate. The bundle is then trivial on every coarse $\mathbb P^1$ because
$H^1_{\mathrm{\acute et}}(\mathbb P^1,\Gamma)=0$, and the gluing class is trivial because the dual graph is a tree.
\end{proof}

A correspondence can connect distinct connected components of the same fixed locus. Without equidimensionality of that fixed locus one must retain the componentwise shift \eqref{eq:ordinary-degree-shift}. Inversion itself preserves the age of each component. Independently of any degree-preservation result, the Chern-character target is
unshifted, and the displayed age conversion is necessary before comparing it with a Chen--Ruan state space.

\subsubsection{Inner-local-system data and rigidification}

The family of sectorwise lines alone is not the full coefficient theory. Transgression of the gerbe $\alpha_T$ supplies, on unrigidified inertia:
\begin{enumerate}[label=\textup{(\roman*)}]
\item the inversion duality \eqref{eq:inversion-line};

\item over double inertia, a fusion isomorphism
\begin{equation}\label{eq:fusion}
 \mu^*\mathscr L_{\alpha_T}
 \xrightarrow{\ \sim\ } e_1^*\mathscr L_{\alpha_T}\otimes e_2^*\mathscr L_{\alpha_T};
\end{equation}

\item the associativity coherence of \eqref{eq:fusion} over triple inertia and the unit trivialization on the identity sector.
\end{enumerate}
These are the inner-local-system data used to contract coefficient lines in products and convolutions. They follow directly from composition of automorphisms in the gerbe and the associativity of that composition; see e.g. \cite{LupercioUribe}.

Standard orbifold Gromov--Witten evaluation maps, however, take values in the rigidified cyclotomic inertia $\overline{I}\widehat X$, see \cite{AGV}. If
$\mathcal F=[F/\Gamma_F]$ is represented by $(\gamma,F)$ and 
$s_{\mathcal F}=\ord(\gamma)$, its rigidification is
\[
 \overline{\mathcal F} \simeq[F/(\Gamma_F/\langle\gamma\rangle)],
\]
and
\begin{equation}\label{eq:rigidification-map}
 \rho_{\mathcal F}:\mathcal F\longrightarrow\overline{\mathcal F}
\end{equation}
is a $B\langle\gamma\rangle$-gerbe.

\begin{proposition}\label{prop:canonical-rigidification-descent}
Let $\alpha_T$ be a banded, ungraded finite gerbe. On every connected unrigidified sector $\mathcal F$ represented by $(\gamma,F)$, the cyclic subgroup $\langle\gamma\rangle$ acts trivially on the transgression line
$\mathscr L_{\alpha_T,\mathcal F}$. Consequently, there is a canonical descended line
$\overline{\mathscr L}_{\alpha_T,\overline{\mathcal F}}$ and a canonical isomorphism
\begin{equation}\label{eq:line-descent}
 \rho_{\mathcal F}^*\overline{\mathscr L}_{\alpha_T,\overline{\mathcal F}}
 \xrightarrow{\ \sim\ }
 \mathscr L_{\alpha_T,\mathcal F}.
\end{equation}
They are compatible with inversion and the unit; fusion and associativity are compared after pullback to the common unrigidified multiple-inertia correspondences. 
\end{proposition}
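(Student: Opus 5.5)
The plan is to realize the transgression line concretely from the gerbe and to compute the action of $\langle\gamma\rangle$ on a fiber as a commutator in the gerbe's automorphism groups. Work étale locally on $F$, where $\alpha_T$ restricted to $[F/\Gamma_F]$ is described by a central extension
\[
1\to\mu_r\to\widetilde\Gamma_F\to\Gamma_F\to1
\]
together with the $T$-linearization. Here I use that on a finite quotient presentation a banded $\mu_r$-gerbe is, locally on the coarse space, given by such an extension; Definition~\ref{def:equivariant-gerbe} supplies the commuting $T$-structure. Transgression at a point $x\in F$ with stabilizer containing $\gamma$ assigns the line of lifts $\tilde\gamma\in\widetilde\Gamma_F$ of $\gamma$, regarded as a $\mu_r$-torsor, pushed out along the fixed embedding $\mu_r\subset\C^*$. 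An element $\eta$ of the stabilizer of $(\gamma,x)$ in $\Gamma_F$ acts on this fiber by conjugation $\tilde\gamma\mapsto\tilde\eta\tilde\gamma\tilde\eta^{-1}$, that is, by the scalar $[\tilde\eta,\tilde\gamma]\in\mu_r$. Since $\mu_r$ is central, this scalar is independent of the choice of lift $\tilde\eta$.

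The key step is then the trivial observation that for $\eta=\gamma^j$ one may choose $\tilde\eta=\tilde\gamma^{\,j}$. This lift commutes with $\tilde\gamma$, so the commutator is $1$. Hence $\langle\gamma\rangle$ acts trivially on every fiber of $\mathscr L_{\alpha_T,\mathcal F}$. The action is a locally constant character of the finite group, and $F$ is connected, so the trivial action on one fiber holds on the whole line. The $T$-linearization commutes with $\gamma$, because $T$ commutes with the quotient groupoid, so $T$-equivariance is preserved as well.

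For descent I then use that $\rho_{\mathcal F}:\mathcal F\to\overline{\mathcal F}$ is a banded $B\langle\gamma\rangle$-gerbe. This is the same weight-zero descent already used for the theta line in Section~\ref{subsec_SL_Higgs}, via \cite[Lemma~2.1.1.13]{Lieblich}. A line bundle, or flat line, on which the central band $\langle\gamma\rangle$ acts trivially is canonically the pullback of a unique line on the rigidification. This gives $\overline{\mathscr L}_{\alpha_T,\overline{\mathcal F}}$ together with \eqref{eq:line-descent}, and uniqueness makes the result canonical and $T$-equivariant.

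For the compatibilities, inversion sends $\tilde\gamma\mapsto\tilde\gamma^{-1}$, which induces \eqref{eq:inversion-line}. That map is $\langle\gamma\rangle$-equivariant for the trivial actions, so uniqueness of descent descends it. The unit sector has $\gamma=1$, and the trivialization there descends tautologically. For fusion and associativity I do not descend, since double inertia does not rigidify sectorwise. I would only state that they are compared after pulling back along $\rho$ to the unrigidified multiple-inertia correspondences, where they are \eqref{eq:fusion} itself.

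I expect the main obstacle to be making the local central-extension model global and coherent. The gerbe is only locally of that form, and the transgression line is defined intrinsically from the inertia of the gerbe. I would handle this by defining the commutator pairing intrinsically on the double inertia of $\mathcal G_T$: the pairing sends a pair of commuting automorphisms in $I\widehat X$ to the commutator of their lifts. I would then check that it vanishes on pairs $(\gamma^j,\gamma)$, which is again the statement that lifts can be chosen as powers of one lift.
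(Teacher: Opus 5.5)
Your proposal is correct and follows the paper's proof essentially step for step. The paper also identifies the fiber at $(x,\gamma)$ with the line of the $\mu_r$-torsor of lifts of $\gamma$ in the central isotropy extension, observes that conjugation by $\widetilde\gamma^{\,j}$ fixes every lift, descends via Lemma~\ref{lem:rigidification-descent} (in place of your Lieblich weight-zero argument), and treats inversion, unit, and pulled-back fusion exactly as you describe.

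The one difference is in the local model. The paper works pointwise with the isotropy extension $1\to\mu_r\to\widetilde{\Gamma_x}\to\Gamma_x\to1$ of a chosen gerbe object over each geometric point $x$, then glues by functoriality in $x$. This avoids your global central extension of all of $\Gamma_F$, which need not exist, and with it the globalization step you flagged.
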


\begin{proof}
Fix a geometric point $x\in F$ and an object of the gerbe over $x$. Its isotropy is a central extension
\begin{equation}\label{eq:sector-central-extension}
 1\longrightarrow\mu_r\longrightarrow
 \widetilde{\Gamma_x}\longrightarrow\Gamma_x\longrightarrow1.
\end{equation}
The fiber of the transgression line at $(x,\gamma)$ is the weight-one line associated to the $\mu_r$-torsor of lifts of $\gamma$. If $\widetilde\gamma$ is one lift, the centralizer acts on this torsor by conjugation. For every $\zeta\in\mu_r$,
\[
 \widetilde\gamma(\zeta\widetilde\gamma)
 \widetilde\gamma^{-1}=\zeta\widetilde\gamma,
\]
so $\gamma$ acts trivially on the associated line. The same calculation with $\widetilde\gamma^j$ shows that every element of $\langle\gamma\rangle$ acts trivially.  Because the extension is central, this conclusion is independent of the chosen object and lift. It is functorial under restriction in $x$, hence glues over the entire sector.
Lemma~\ref{lem:rigidification-descent} now gives the canonical descent.

Inversion sends a lift to its inverse, fusion multiplies two lifts, and the unit uses the identity lift. These operations commute with the central extension, and multiplication in $\widetilde{\Gamma_x}$ is associative. Thus the descended lines carry inversion and unit identifications; the fusion and associativity identifications agree after pullback to the common unrigidified multiple-inertia spaces. 
\end{proof}

\begin{lemma}
\label{lem:rigidification-descent}
Let $\rho:\mathcal F\to\overline{\mathcal F}$ be a gerbe banded by a finite cyclic group $H$, and let $\mathscr L$ be a rank-one local system on $\mathcal F$. Then $\mathscr L$ is isomorphic to the pullback of a local system on $\overline{\mathcal F}$ if and only if the inertia action of $H$ on one, equivalently every, fiber of $\mathscr L$ is trivial. Under this condition, rational pullback is an isomorphism and
\[
 \rho_*\rho^*=\frac1{|H|}\id,
 \qquad (\rho^*)^{-1}=|H|\rho_*.
\]
\end{lemma}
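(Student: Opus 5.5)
The plan is to reduce everything to the local model of a gerbe banded by a finite cyclic group, namely a quotient $[U/H]$ with $H$ acting trivially on $U$, and then globalize by descent. Since $\rho:\mathcal F\to\overline{\mathcal F}$ is an $H$-gerbe, étale locally on $\overline{\mathcal F}$ it is the trivial gerbe $\overline U\times BH\to\overline U$. A rank-one local system on $\overline U\times BH$ is the same thing as a local system on $\overline U$ carrying a commuting $H$-action, and that $H$-action is exactly the inertia action on the fibers. Because $H$ is the band, this action is central, so it is independent of the chosen trivialization. Since $\mathscr L$ has rank one, the action is given by a character $\chi:H\to\C^\times$, which is locally constant. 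Hence on a connected $\mathcal F$ it is the same at every point; this gives the ``one, equivalently every'' clause.

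For the descent criterion, one direction is clear: pullbacks carry the trivial inertia action, because the inertia of $\mathcal F$ maps to the identity in $\overline{\mathcal F}$. For the converse, I would argue as in the weight-zero descent for the $\mu_r$-gerbe $\mathfrak X_\Lambda\to X$ in Section~\ref{subsec_SL_Higgs}, using \cite[Lemma~2.1.1.13]{Lieblich}. Set $\overline{\mathscr L}:=(\rho_*\mathscr L)$, the sheaf of $H$-invariants. In the local model with $\chi$ trivial, it is a rank-one local system on $\overline U$, and the adjunction map $\rho^*\rho_*\mathscr L\to\mathscr L$ is an isomorphism. Both of these statements are local and canonical, so they glue by fpqc descent, since the local identifications are compatible on overlaps. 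Canonicity of this construction is exactly what Proposition~\ref{prop:canonical-rigidification-descent} uses.

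For the cohomological statements, I would use rational, indeed complex, coefficients and compute $R\rho_*\rho^*$. In the local model, $R\rho_*$ of a pulled-back system is $\overline{\mathscr L}\otimes H^*(BH;\Q)=\overline{\mathscr L}$, because finite-group cohomology vanishes rationally in positive degrees. By the Leray spectral sequence, $\rho^*:H^*(\overline{\mathcal F};\overline{\mathscr L})\to H^*(\mathcal F;\rho^*\overline{\mathscr L})$ is therefore an isomorphism, and the same holds $T$-equivariantly on Borel approximations. The constant $\rho_*\rho^*=|H|^{-1}\id$ comes from the convention that $\rho_*$ is the degree-normalized proper pushforward: $\rho$ has degree $1/|H|$, since the fiber is $BH$ of mass $1/|H|$. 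Equivalently, $\int_{\mathcal F}\rho^*a=|H|^{-1}\int_{\overline{\mathcal F}}a$. Then $(\rho^*)^{-1}=|H|\rho_*$ follows formally.

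The main obstacle is this normalization. We must fix that $\rho_*$ means the stacky pushforward whose composite with pullback is multiplication by $\deg\rho=1/|H|$, and check that this is consistent with the factor $1/s_{\mathcal F}$ in the pairing \eqref{eqn:pairing_2}. I would verify it via the projection formula $\rho_*\rho^*a=a\cdot\rho_*1$ and the computation $\rho_*1=1/|H|$ on $BH\to\mathrm{pt}$.
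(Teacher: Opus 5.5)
Your proposal is correct and follows the paper's route: neutralize the gerbe locally as $BH\times U\to U$, identify local systems there with local systems on $U$ together with an $H$-representation (the inertia action), glue, and read off the pushforward formulas from $\rho$ having degree $1/|H|$. Your use of the globally defined invariant sheaf $\rho_*\mathscr L$ (so that only the counit needs checking locally) and of the Leray spectral sequence with $H^{>0}(BH;\Q)=0$ simply makes explicit the gluing step and the ``rational pullback is an isomorphism'' step, which the paper states tersely.
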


\begin{proof}
The assertion is local on $\overline{\mathcal F}$ for a topology in which the gerbe is neutral. On such a chart the morphism is $BH\times U\to U$. A local system on $BH\times U$ is a local system on $U$ together with a representation of $H$ on its fiber, and it is a pullback precisely when that representation is trivial. These local descents glue because pullback is fully faithful on the trivial-isotropy summand. The pushforward formulas follow.
\end{proof}

By Proposition~\ref{prop:canonical-rigidification-descent} and
Lemma~\ref{lem:rigidification-descent}, rational pullback is an isomorphism for the transgression coefficient line, and pushforward satisfies
\begin{equation}\label{eq:rho-normalization}
 (\rho_{\mathcal F})_*(\rho_{\mathcal F})^* =\frac1{s_{\mathcal F}}\id.
\end{equation}
Hence the comparison maps are
\begin{equation}\label{eq:Jrig}
 \Jrig_{\mathcal F}=(\rho_{\mathcal F})^*,\qquad
 \Jrig_{\mathcal F}^{-1}=s_{\mathcal F}(\rho_{\mathcal F})_*.
\end{equation}
Put $$\Jrig=\bigoplus_{\mathcal F}\Jrig_{\mathcal F}.$$
A standard rigidified Chen--Ruan operator $F^{\rig,\CR}$ corresponds on the unshifted, unrigidified Chern-character target to
\begin{equation}\label{eq:transport-rigid-to-del}
 F^{\del}
 =\St_{\mathrm{age}}\Jrig F^{\rig,\CR}\Jrig^{-1}\St_{\mathrm{age}}^{-1}.
\end{equation}
On the identity sector $s_{\mathcal F}=1$, so rigidification has no effect.

\section{Fourier--Mukai and Steinberg operators}\label{sec_Fourier-Mukai_Steinberg}

We prove the untwisted identity-sector quotient formula and formulate precise sufficient conditions for a Fourier--Mukai comparison of the independently defined enumerative operators.  

\subsection{The trivial-torsor quotient formula}

Again let $q:X\to\widehat X=[X/\Gamma]$ be the quotient map. Let $B\subset H_2(X,\zz)$ be a finite $\Gamma$-stable set of nonzero effective curve classes and put
\[
 \mathfrak M_B=\coprod_{\beta\in B}\overline M_{0,2}(X,\beta).
\]

Let $$\mathfrak N_B^{\triv}\subset \coprod_{\beta\in B}\overline{M}_{0,2}(\widehat{X}, q_*\beta)$$ 
be the union of loci of maps to $\widehat X$ with identity marked sections whose geometric lifts have classes in $B$. We may work separately for each quotient class $\bar\beta$ occurring. The principal $\Gamma$-bundle is trivial on each geometric source curve, by Lemma~\ref{lem:two-point-monodromy}. In families, it is locally pulled
back from a torsor on the base, not necessarily globally trivial. The condition that the lift class belongs to the $\Gamma$-stable set $B$ is independent of the choice of local trivialization and is locally constant. Put
\[
 Z_B^X=\sum_{\beta\in B}Z_\beta^{\red},\qquad
 Z_B^{\widehat X,\triv}
 = (\ev_1,\ev_2)_*[\mathfrak N_B^{\triv}]^{\red}.
\]
These are sums and pushforwards of cycles, not disjoint unions of cycle classes. The reduced construction commutes with the finite \'etale base changes below: its standard obstruction complex, symplectic cosection, kernel cone and evaluation descend equivariantly.

\begin{theorem}
\label{thm:quotient-formula}
There is a canonical equivalence
\begin{equation}\label{eq:trivial-map-stack}
 \mathfrak N_B^{\triv}\simeq[\mathfrak M_B/\Gamma],
\end{equation}
where $\Gamma$ acts by postcomposition. In equivariant Borel--Moore homology, or in rational equivariant Chow groups, one has
\begin{equation}\label{eq:quotient-pullback}
 (q\times q)^*Z_B^{\widehat X,\triv}
 =\sum_{\eta\in\Gamma}(\id\times\tau_\eta)_*Z_B^X
\end{equation}
and
\begin{equation}\label{eq:quotient-pushforward}
Z_B^{\widehat X,\triv}=\frac1{|\Gamma|}(q\times q)_*Z_B^X.
\end{equation}
If $\beta$ itself is $\Gamma$-invariant, this specializes to
\begin{equation}\label{eq:single-beta-quotient}
 Z_{\{\beta\}}^{\widehat X,\triv}=\frac1{|\Gamma|}(q\times q)_*Z_\beta^X.
\end{equation}
\end{theorem}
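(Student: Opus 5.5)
The plan is to build the equivalence \eqref{eq:trivial-map-stack} directly from lifting maps along the finite \'etale cover $q$. Then transport the reduced classes through it, and read off the two cycle identities from the base-change and degree formulas for $q\times q$.

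\emph{Step 1: the stack equivalence.} Define a morphism $\mathfrak M_B\to\mathfrak N_B^{\triv}$ by composing a stable map $f:\mathcal C\to X$ with $q$. The pulled-back principal $\Gamma$-bundle is then $\mathcal C\times\Gamma$, with its tautological trivialization. This morphism is invariant under postcomposition by $\tau_\eta$, so it descends to $[\mathfrak M_B/\Gamma]\to\mathfrak N_B^{\triv}$.

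For the inverse, take an $S$-family in $\mathfrak N_B^{\triv}$. Let $P\to\mathcal C$ be the induced $\Gamma$-torsor with its $\Gamma$-equivariant map $P\to X$. Lemma~\ref{lem:two-point-monodromy} says $P$ is trivial on geometric fibers and is \'etale locally pulled back from a torsor $P_S\to S$. Since the marked sections have identity monodromy, the marked points are untwisted. The source is then an honest nodal curve, because all node monodromies are trivial by the leaf argument. Hence $P\simeq \mathcal C\times_S P_S$, and $P_S\to S$ together with the map $\mathcal C\times_S P_S\to X$ is exactly an $S$-point of $[\mathfrak M_B/\Gamma]$. The condition that lifts lie in $B$ is $\Gamma$-stable, so it matches the $\Gamma$-stability of $\mathfrak M_B$. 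I would check that the two constructions are mutually quasi-inverse, including on automorphisms. This uses that automorphisms of the trivialized torsor over a connected geometric fiber are constant elements of $\Gamma$.

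\emph{Step 2: reduced classes.} Under $\pi:\mathfrak M_B\to[\mathfrak M_B/\Gamma]\simeq\mathfrak N_B^{\triv}$, which is finite \'etale of degree $|\Gamma|$, the standard obstruction theory of maps to $\widehat X$ pulls back to that of maps to $X$, since $q$ is \'etale and $T_X=q^*T_{\widehat X}$. The symplectic cosections also correspond, because $\omega$ is $\Gamma$-invariant (Lemma~\ref{lem:theta-symplectic-invariant}). Intrinsic normal cones commute with \'etale base change, so the kernel cones and their Gysin classes correspond as well. This gives $\pi^*[\mathfrak N_B^{\triv}]^{\red}=[\mathfrak M_B]^{\red}$, as in the remark preceding the theorem. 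The $T$-action commutes throughout, so everything is $T$-equivariant.

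\emph{Step 3: the cycle formulas.} Consider the square formed by $(\ev_1,\ev_2):\mathfrak N_B^{\triv}\to\widehat X\times\widehat X$ and $q\times q$. The fiber product $\mathfrak N_B^{\triv}\times_{\widehat X\times\widehat X}(X\times X)$ parametrizes a map to $\widehat X$ together with lifts of both marked points. Given the lift at the first marking, the map lifts uniquely, and the lift at the second marking then differs from the lifted map's second evaluation by a unique $\eta\in\Gamma$. So this fiber product is $\coprod_{\eta\in\Gamma}\mathfrak M_B$, with evaluation $(\ev_1,\tau_\eta\circ\ev_2)$. Flat pullback commutes with proper pushforward, which gives \eqref{eq:quotient-pullback}.

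For \eqref{eq:quotient-pushforward}, factor $(q\times q)\circ(\ev_1,\ev_2)_{X}=(\ev_1,\ev_2)_{\widehat X}\circ\pi$ and use $\pi_*\pi^*=|\Gamma|$. Alternatively, apply $(q\times q)_*$ to \eqref{eq:quotient-pullback} and use $(q\times q)_*(q\times q)^*=|\Gamma|^2$ together with $(q\times q)_*(\id\times\tau_\eta)_*=(q\times q)_*$. Both routes should give the same normalization, and I would check that they agree. Finally, \eqref{eq:single-beta-quotient} is the case $B=\{\beta\}$.

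The main obstacle is Step 1 in families. One must show that the torsor is globally of the form $\mathcal C\times_S P_S$, and not merely locally, and that the identity-sector condition rules out twisted nodes scheme-theoretically. I would handle this by \'etale descent: form $P_S:=s_1^*P$ using the first marked section, and show that the natural map $\mathcal C\times_S s_1^*P\to P$ is an isomorphism fiberwise, hence globally.
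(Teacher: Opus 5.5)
Your proposal is correct and follows essentially the same route as the paper. The equivalence comes from trivializing the torsor via its restriction to an identity marking, the pullback formula from the disjoint-union presentation $\coprod_{\eta\in\Gamma}\mathfrak M_B$ of the base change along $q\times q$ with evaluations $(\ev_1,\tau_\eta\circ\ev_2)$, and the pushforward formula from $(q\times q)_*(q\times q)^*=|\Gamma|^2$ together with $q\circ\tau_\eta=q$; your alternative via $\pi_*\pi^*=|\Gamma|$ gives the same normalization, and the compatibility of reduced classes under the finite \'etale cover that you spell out in Step~2 is asserted in the paper just before the theorem.
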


\begin{proof}
After an \'etale localization on the parameter scheme, an object of 
$\mathfrak N_B^{\triv}$ admits a trivialization of its principal bundle and is then exactly a stable map to $X$ of class in $B$. To see that the bundle is locally pulled back from the base, its restriction to one identity marking defines a base torsor. The scheme of trivializations on each genus-zero fiber is the same torsor, because that fiber has trivial finite \'etale fundamental group. Proper base change for the finite bundle, or the relative scheme of these trivializations, gives the asserted identification in families. Changing the trivialization by $\eta\in\Gamma$ postcomposes the lift by $\tau_\eta$. Descent therefore gives \eqref{eq:trivial-map-stack}.  Automorphism groups are retained by the quotient-stack description, so no informal passage to coarse spaces occurs.

Now base-change the two evaluation maps by the atlas $q\times q:X^2\to\widehat X^2$. A lift of the first marked evaluation fixes a trivialization of the principal $\Gamma$-bundle at the first marking. Relative to it, a lift at the second marking differs by a
unique element $\eta\in\Gamma$. Consequently there is a canonical disjoint-union presentation
\begin{equation}\label{eq:evaluation-base-change}
 \mathfrak N_B^{\triv}\times_{\widehat X^2}X^2
 \simeq\coprod_{\eta\in\Gamma}\mathfrak M_B,
\end{equation}
and the evaluation on the $\eta$-summand is $(\ev_1,\tau_\eta\circ\ev_2)$. Flat pullback of the reduced cycle along the finite \'etale atlas and proper base change for evaluation give \eqref{eq:quotient-pullback}.

The map $q\times q$ has degree $|\Gamma|^2$, hence
\[
 (q\times q)_*(q\times q)^*=|\Gamma|^2
\]
on rational Chow and Borel--Moore homology. Moreover $q\circ\tau_\eta=q$, so pushing \eqref{eq:quotient-pullback} forward gives
\[
 |\Gamma|^2Z_B^{\widehat X,\triv}=|\Gamma|(q\times q)_*Z_B^X.
\]
Division by $|\Gamma|^2$ proves \eqref{eq:quotient-pushforward}, and the single-class formula follows. Since the identity sector has $s_{\mathcal F}=1$, rigidification does not change this calculation. 
\end{proof}

\begin{remark}
For the degree-zero diagonal, \eqref{eq:quotient-pushforward} gives
\[
 \frac1{|\Gamma|}(q\times q)_*[\Delta_X] =[\Delta_{\widehat X}],
\]
because $\Delta_X\to\Delta_{\widehat X}$ has degree
$|\Gamma|$.  This is a useful check that the factor is neither $1$ nor
$1/|\Gamma|^2$.
\end{remark}

\begin{proposition}
\label{prop:formal-transport}
Fix one additive lift $\widehat\Phi^{H,\mathrm{add}}_{T,k}$ from
Theorem~\ref{thm:additive-lift} and an endomorphism $F_X$ of its source. Then
\begin{equation}\label{eq:formal-transport}
 F_{\widehat X}^{\mathrm{transport}}
 :=\widehat\Phi^{H,\mathrm{add}}_{T,k}
 F_X(\widehat\Phi^{H,\mathrm{add}}_{T,k})^{-1}
\end{equation}
is the unique endomorphism intertwined with $F_X$ by that chosen lift. If a second lift is $U\widehat\Phi^{H,\mathrm{add}}_{T,k}$, then
\[
 F_{\widehat X}^{\mathrm{transport}\prime}
 =UF_{\widehat X}^{\mathrm{transport}}U^{-1}.
\]
\end{proposition}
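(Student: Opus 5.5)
The plan is pure linear algebra over $\C[\![z]\!]$. Write $\Phi:=\widehat\Phi^{H,\mathrm{add}}_{T,k}$. By Theorem~\ref{thm:additive-lift}, $\Phi$ is a $\C[\![z]\!]$-linear isomorphism between finite free $\C[\![z]\!]$-modules, so $\Phi^{-1}$ exists and is $\C[\![z]\!]$-linear. Hence $F_{\widehat X}^{\mathrm{transport}}=\Phi F_X\Phi^{-1}$ is a well-defined $\C[\![z]\!]$-linear endomorphism of $\widehat H^k_{T,\del,\per}(\widehat X,\alpha_T)$.

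For the intertwining property, I would compute directly:
\[
F_{\widehat X}^{\mathrm{transport}}\Phi=\Phi F_X\Phi^{-1}\Phi=\Phi F_X.
\]
For uniqueness, suppose $G$ is any endomorphism of the target with $G\Phi=\Phi F_X$. Composing on the right with $\Phi^{-1}$ gives $G=\Phi F_X\Phi^{-1}$, so $G$ equals the transported operator. Surjectivity of $\Phi$ is the only input needed here.

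For the change of lift, the proof of Theorem~\ref{thm:additive-lift} shows that any second lift has the form $\Phi'=U\Phi$, where $U$ is an automorphism of the target congruent to the identity modulo $z$. Then $\Phi'^{-1}=\Phi^{-1}U^{-1}$, and
\[
\Phi'F_X\Phi'^{-1}=U\Phi F_X\Phi^{-1}U^{-1}=UF_{\widehat X}^{\mathrm{transport}}U^{-1}.
\]

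There is no real obstacle in this argument. The one point to check is that $U$ is invertible, which holds because any endomorphism congruent to the identity modulo $z$ on a $z$-adically complete finite free module is invertible by the geometric series. Since the proposition is stated as $U\widehat\Phi^{H,\mathrm{add}}_{T,k}$ with $U$ invertible, this is given anyway.

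I would close with a remark that this result is purely formal: the transported operator depends on the noncanonical choice of lift, up to conjugation by $U$. So it need not agree with the enumerative operator $F^{\widehat X,\rig,\CR}_{\bar\beta}$ without the additional hypotheses considered later.
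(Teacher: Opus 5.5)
Your proposal is correct and follows essentially the same route as the paper: direct substitution for the intertwining identity, right-multiplication by $(\widehat\Phi^{H,\mathrm{add}}_{T,k})^{-1}$ for uniqueness, and a one-line conjugation computation for the change of lift. Your added observations (that surjectivity alone gives uniqueness, and that $U=\Phi'\Phi^{-1}$ is automatically invertible) are accurate but not needed beyond what the paper already uses.
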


\begin{proof}
The intertwining identity follows by direct substitution, and uniqueness follows by multiplying on the right by the inverse of the chosen lift. The change-of-lift formula is equally immediate. Since the matrix lift in Theorem~\ref{thm:additive-lift} is arbitrary beyond its residue modulo $z$, all geometric properties of the transported operator require separate proof.
\end{proof}

\subsection{Twisted \texorpdfstring{$K$}{K}-convolution, cohomological convolution and orbifold Riemann-Roch}

Let $Y_0,Y_1,Y_2$ be smooth finite-quotient complex Deligne--Mumford stacks with commuting $T$-actions and proper maps $h_i:Y_i\to \A$. Fix finite $T$-equivariant ungraded twists $\alpha_i$ on $Y_i$. For every pair $Y_i, Y_j$, put
\[
 W_{ij}:=Y_i\times Y_j, \qquad H_{ij}:=Y_i\times_\A Y_j\subset W_{ij}.
\]
The ordinary product $W_{ij}$ is the ambient correspondence space; $H_{ij}$ is only the support locus.

\begin{definition}\label{def:common-support}
For closed $T$-invariant subsets $Z\subset Z'\subset W$, the inclusions of relative pairs give canonical extension-of-support maps
\[
 \KU^*_{T,Z}(W;\tau)\longrightarrow\KU^*_{T,Z'}(W;\tau),
 \qquad
 \widehat H^*_{T,Z,\per}(IW;\mathscr M)
 \longrightarrow
 \widehat H^*_{T,Z',\per}(IW;\mathscr M).
\]
If two supported kernels have respective closed supports $Z_L$ and $Z_R$ in the same ambient outer product, an equality between them means the equality of their canonical images in the group with support $Z_L\cup Z_R$. This convention is used in both $K$-theory and completed cohomology and is compatible with pullback, product, fusion, and proper supported pushforward.
\end{definition}

A supported kernel from $(Y_i,\alpha_i)$ to $(Y_j,\alpha_j)$ is a class
\begin{equation}\label{eq:K-kernel-type}
 L_{ij}\in 
 \KU^0_{T,Z_{ij}}\bigl(W_{ij},\pr_i^*\alpha_i^{-1}\otimes\pr_j^*\alpha_j\bigr),
 \qquad Z_{ij}\subset H_{ij},
\end{equation}
where $Z_{ij}$ is closed and proper over $Y_j$. The pair 
$(W_{ij},W_{ij}\setminus Z_{ij})$ is assumed to have finite equivariant CW type whenever completed Chern characters are used. Pullbacks are derived when necessary; equivalently, one may assume the kernels are perfect and the displayed intersections are Tor-independent.

The supported transform is
\begin{equation}\label{eq:K-transform}
 \Phi^{\KU}_{L_{ij}}(x)=(\pr_j)_{!,Z_{ij}}(\pr_i^*x\cdot L_{ij}).
\end{equation}
Although $\pr_j:W_{ij}\to Y_j$ need not be proper, its restriction to the support is proper, so the complex-oriented supported index is defined. The two source twists cancel and the output is $\alpha_j$-twisted.

For composable kernels, work on the ordinary triple product 
$W_{012}=Y_0\times Y_1\times Y_2$. Assume
\[
 Z_{012}:=\pr_{01}^{-1}Z_{01}\cap\pr_{12}^{-1}Z_{12}
\]
is proper over $W_{02}=Y_0\times Y_2$. Its image is contained in $H_{02}$. Define
\begin{equation}\label{eq:K-convolution}
 L_{12}\star_{\KU}L_{01}
 :=(\pr_{02})_!\bigl(\pr_{01}^*L_{01}\cdot\pr_{12}^*L_{12}\bigr)
 \in
 \KU^0_{T,\pr_{02}(Z_{012})}\bigl(W_{02},\pr_0^*\alpha_0^{-1}\otimes\pr_2^*\alpha_2\bigr).
\end{equation}

\begin{proposition}\label{prop:K-convolution}
Under the preceding properness and perfectness hypotheses,
\begin{equation}\label{eq:K-composition}
 \Phi^{\KU}_{L_{12}\star_{\KU}L_{01}}
 =\Phi^{\KU}_{L_{12}}\circ\Phi^{\KU}_{L_{01}}.
\end{equation}
Consequently, a type-correct ambient-product kernel identity
\begin{equation}\label{eq:K-intertwining-kernels}
P\star_{\KU}K_X=K_{\widehat X}\star_{\KU}P
\end{equation}
implies the corresponding intertwining identity of $K$-theory operators. If the two proper outer supports are $Z_L$ and $Z_R$, then \eqref{eq:K-intertwining-kernels} is, by
Definition~\ref{def:common-support}, an equality in
\[
 \KU^0_{T,Z_L\cup Z_R}
 \bigl(W_{02},\pr_0^*\alpha_0^{-1}\otimes\pr_2^*\alpha_2\bigr).
\]  
If $P$ has a convolution inverse $Q$, the transported kernel is
\begin{equation}\label{eq:K-transported-kernel}
 K_{\widehat X}=P\star_{\KU}K_X\star_{\KU}Q.
\end{equation}
\end{proposition}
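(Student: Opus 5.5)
The argument is the standard composition-of-kernels argument on the triple product $W_{012}=Y_0\times Y_1\times Y_2$. Base change and the projection formula do the work; the twist and support bookkeeping are the only delicate points. For $x\in\KU^*_T(Y_0,\alpha_0)$ I would expand the right-hand side of \eqref{eq:K-composition}:
\[
 \Phi^{\KU}_{L_{12}}\Phi^{\KU}_{L_{01}}(x)
 =(\pr_2)_{!,Z_{12}}\Bigl(\pr_1^*(\pr_1)_{!,Z_{01}}(\pr_0^*x\cdot L_{01})\cdot L_{12}\Bigr).
\]
I would then apply supported base change to the cartesian square formed by $\pr_{01}:W_{012}\to W_{01}$ and $\pr_{12}:W_{012}\to W_{12}$ over $Y_1$. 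This rewrites $\pr_1^*(\pr_1)_{!}$ as $(\pr_{12})_!\pr_{01}^*$, so the inner class becomes $(\pr_{12})_!(\pr_{01}^*(\pr_0^*x\cdot L_{01}))$. The square is a product square, so base change is unproblematic once the pushforward is taken with proper supports. The support of $\pr_{01}^*L_{01}$ is $\pr_{01}^{-1}Z_{01}$, and this is proper over $W_{12}$ because $Z_{01}$ is proper over $Y_1$.

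Next I would use the projection formula for $(\pr_{12})_!$ to move $L_{12}$ inside. This gives a class on $W_{012}$ supported on $Z_{012}$, namely $\pr_0^*x\cdot\pr_{01}^*L_{01}\cdot\pr_{12}^*L_{12}$, pushed forward along $\pr_2\circ\pr_{12}=\pr_2\circ\pr_{02}$. Functoriality of supported pushforward lets me factor this pushforward as $(\pr_2)_!(\pr_{02})_!$. A second projection formula, now for $(\pr_{02})_!$ applied to the pulled-back factor $\pr_{02}^*\pr_0^*x$, then yields $(\pr_2)_!(\pr_0^*x\cdot L_{12}\star_{\KU}L_{01})$. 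This is $\Phi^{\KU}_{L_{12}\star L_{01}}(x)$ by \eqref{eq:K-convolution}.

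Along the way I would check two things. First, the twists multiply correctly: $\alpha_0^{-1}\alpha_1$ times $\alpha_1^{-1}\alpha_2$ gives $\alpha_0^{-1}\alpha_2$, using the canonical trivialization $\alpha_1^{-1}\otimes\alpha_1\simeq\cO$ coming from the fusion and inversion data of the gerbe. Second, each pushforward is taken on a support that is proper over its target. In particular $Z_{012}$ is proper over $W_{02}$ by hypothesis, and $\pr_{02}(Z_{012})$ is proper over $Y_2$ because it lies in $Z$-closed sets proper over $Y_2$ by the composition of proper supports. Perfectness, or Tor-independence, is what makes the derived pullbacks and products agree with the naive ones in this computation.

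The consequences are formal. Applying \eqref{eq:K-composition} to both sides of \eqref{eq:K-intertwining-kernels}, where the equality is read in the common-support group of Definition~\ref{def:common-support}, gives $\Phi_P\Phi_{K_X}=\Phi_{K_{\widehat X}}\Phi_P$. This uses the fact that the transform only depends on the image of the kernel under extension of support, which is compatible with $(\pr_j)_!$. For \eqref{eq:K-transported-kernel}, I would convolve \eqref{eq:K-intertwining-kernels} on the right with $Q$ and use associativity of $\star_{\KU}$. Associativity is proved by the same base-change argument on a fourfold product, together with $P\star Q=\Delta$, the unit kernel.

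I expect the main obstacle to be twisted supported base change and the projection formula for complex-oriented pushforward along non-proper maps, when the domain is a twisted $K$-group of a Deligne--Mumford quotient stack. I would handle this by passing to the atlas, that is, $G$-equivariant $K$-theory of the finite CW pairs, where these are standard properties of equivariant index maps. The support properness reduces each pushforward to a proper one.
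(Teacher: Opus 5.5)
Your proposal is correct and is essentially the paper's argument: pull the input and both kernels back to $W_{012}$, then use base change, the projection formula and functoriality of supported complex-oriented pushforward. The intertwining and transport statements then follow formally by convolving with $Q$; your explicit base-change, associativity and unit steps are exactly what the paper's terse proof leaves implicit. One small cleanup: properness of $\pr_{02}(Z_{012})$ over $Y_2$ is cleanest as follows. The map $Z_{012}=Z_{01}\times_{Y_1}Z_{12}\to Z_{12}$ is a base change of the proper map $Z_{01}\to Y_1$, so composing with the proper map $Z_{12}\to Y_2$ shows $Z_{012}$ is proper over $Y_2$, and its image in the separated $W_{02}$ is then proper over $Y_2$.
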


\begin{proof}
Pull the input and the two kernels to $W_{012}$. Associativity of the $\KU$-product, the projection formula, and functoriality of supported complex-oriented pushforward identify the transform of \eqref{eq:K-convolution} with the iterated transform. The stated properness is exactly what permits both supported pushforwards. The intertwining and transport assertions follow by applying this composition formula and, for the last assertion, convolving with $Q$.
\end{proof}

\begin{remark}
\label{rem:identity-test}
If $Y_i=Y_j=\A$, both structure maps are the identity, and the kernel is the supported class $\Delta_!1\in\KU^0_{T,\Delta_\A}(\A\times\A)$, then \eqref{eq:K-transform} is the identity. The relative projection is the ordinary ambient projection 
$\A\times \A\to \A$, restricted to diagonal support. This test rules out the earlier convention that treated the fiber product itself as the ambient kernel space while retaining the ambient-product degree and Todd normalization.
\end{remark}

Let $\mathcal F_i\subset IY_i$ be connected unrigidified inertia components, put $d_i=\dim_\C\mathcal F_i$, and abbreviate $\mathscr L_i=\mathscr L_{\alpha_i,\mathcal F_i}$. Let $Z_{ij}^{\mathcal F_i,\mathcal F_j}$ be the restriction of the inertia
support to the ordinary component $\mathcal F_i\times\mathcal F_j$; it is contained in $\mathcal F_i\times_{\mathbb{A}}\mathcal F_j$.

A total-degree-zero periodic kernel from $\mathcal F_i$ to $\mathcal F_j$ is
\begin{equation}\label{eq:H-kernel-type}
 \Xi_{ij}\in
 \widehat H^0_{T,Z_{ij}^{\mathcal F_i,\mathcal F_j},\per}
 \bigl(\mathcal F_i\times\mathcal F_j;
 \pr_i^*\mathscr L_i^\vee\otimes\pr_j^*\mathscr L_j\bigr).
\end{equation}
The completed supported group is the group defined in 
\eqref{eq:supported-completion-def}. Define
\begin{equation}\label{eq:H-transform}
 \Phi^H_{\Xi_{ij}}(a)
 :=u^{d_i}(\pr_j)_{*,Z_{ij}}\bigl(\pr_i^*a\cup\Xi_{ij}\bigr),
\end{equation}
where $\pr_j:\mathcal F_i\times\mathcal F_j\to\mathcal F_j$ is the ordinary ambient projection. Its relative complex dimension is $d_i$, so $u^{d_i}$ restores total periodic degree.

For composable kernels, fusion and evaluation contract the two middle coefficient lines. On the ordinary triple product, define
\begin{equation}\label{eq:H-convolution}
 \Xi_{12}\star_H\Xi_{01}
 :=u^{d_1}(\pr_{02})_*\bigl(\pr_{01}^*\Xi_{01}\cup \pr_{12}^*\Xi_{12}\bigr),
\end{equation}
with the sum taken over all connected middle inertia components.

\begin{proposition}
\label{prop:H-convolution}
Assume the pulled-back supports in \eqref{eq:H-convolution} are proper over the outer product. Then
\begin{equation}\label{eq:H-composition}
 \Phi^H_{\Xi_{12}\star_H\Xi_{01}}
 =\Phi^H_{\Xi_{12}}\circ\Phi^H_{\Xi_{01}}.
\end{equation}
Consequently, an equality
\begin{equation}\label{eq:H-intertwining-kernels}
 \Xi_P\star_H\Xi_X=\Xi_{\widehat X}\star_H\Xi_P
\end{equation}
of ambient-product, coefficient-valued supported kernels implies the corresponding intertwining identity on completed unshifted delocalized cohomology. Sectorwise, if the two proper outer supports are $Z_L$ and $Z_R$, the equality is taken in the explicit common group
\[
 \widehat H^0_{T,Z_L\cup Z_R,\per}
 \bigl(\mathcal F_0\times\mathcal F_2;
 \pr_0^*\mathscr L_0^\vee\otimes\pr_2^*\mathscr L_2\bigr).
\]
\end{proposition}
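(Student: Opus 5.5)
The plan is to mirror the proof of Proposition~\ref{prop:K-convolution} in completed cohomology, working sectorwise on the ordinary triple product $W_{012}=\mathcal F_0\times\mathcal F_1\times\mathcal F_2$ and then summing over middle inertia components. Fix an input $a\in\widehat H^*_{T,\per}(\mathcal F_0;\mathscr L_0)$. Unwinding \eqref{eq:H-transform} twice gives
\[
 \Phi^H_{\Xi_{12}}\Phi^H_{\Xi_{01}}(a)
 =u^{d_1}u^{d_0}(\pr_2)_{*}\Bigl(\pr_1^*(\pr_1)_{*}\bigl(\pr_0^*a\cup\Xi_{01}\bigr)\cup\Xi_{12}\Bigr),
\]
with the supported pushforwards understood as in \eqref{eq:supported-completion-def}. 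The first step is proper base change for the cartesian square formed by $\pr_{01}:W_{012}\to\mathcal F_0\times\mathcal F_1$ and $\pr_{12}:W_{012}\to\mathcal F_1\times\mathcal F_2$ over $\mathcal F_1$. This rewrites $\pr_1^*(\pr_1)_*$ as $(\pr_{12})_*\pr_{01}^*$. The second step is the projection formula along $\pr_{12}$, which moves $\Xi_{12}$ inside. Since $\pr_2\circ\pr_{12}=\pr_2\circ\pr_{02}$ on $W_{012}$, functoriality of pushforward then splits the total pushforward as $(\pr_2)_*(\pr_{02})_*$. A final projection formula along $\pr_{02}$ pulls $\pr_0^*a$ out, and the result is exactly $u^{d_0}(\pr_2)_*(\pr_0^*a\cup\Xi_{12}\star_H\Xi_{01})$ by \eqref{eq:H-convolution}. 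The powers of $u$ also match: the composite carries $u^{d_0+d_1}$, and so does $\Phi^H$ of the convolution, since the convolution kernel already carries $u^{d_1}$.

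Two bookkeeping points run alongside this computation. The first is coefficient lines. The middle factor carries $\mathscr L_1\otimes\mathscr L_1^\vee$, and I contract it by the evaluation pairing. That this is legitimate, and compatible with pulling back along $\pr_{01},\pr_{12}$, comes from the inner-local-system data (inversion \eqref{eq:inversion-line}, fusion \eqref{eq:fusion} with its associativity) and from naturality of the identifications in Theorem~\ref{thm:supported-completion}. Because the middle component is summed over, each sector of $IY_1$ appears once, matching the delocalized direct sum.

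The second point is supports and completion. Every pushforward is taken with support in a closed set that is proper over the target: $\pr_{01}^{-1}Z_{01}\cap\pr_{12}^{-1}Z_{12}$ is proper over $\mathcal F_0\times\mathcal F_2$ by the stated assumption, and over $\mathcal F_2$ because $Z_{02}$ is proper over $\mathcal F_2$. Base change and projection formula hold for supported pushforward at each finite Borel approximation $E_N\times_K(-)$. They pass to the $z$-adic completion because the towers are Mittag--Leffler (Theorem~\ref{thm:supported-completion}), so inverse limits commute with these natural maps.

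For the intertwining consequence, I apply \eqref{eq:H-composition} to both sides of \eqref{eq:H-intertwining-kernels}. Equality is meant in the group with support $Z_L\cup Z_R$ (Definition~\ref{def:common-support}). Since the transform only depends on the kernel's image after extension of support, which is compatible with proper supported pushforward, equal kernels give equal operators.

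I expect the main obstacle to be the supported proper base change over the non-proper ambient projections. The maps $\pr_1:\mathcal F_0\times\mathcal F_1\to\mathcal F_1$ need not be proper, so the base-change identity must be checked on the support, which is proper over the base, rather than on the ambient space. My first attempt would be to reduce to a compactly supported pushforward from the support $Z_{01}$ itself, for which ordinary proper base change applies, and then to verify excision compatibility at finite Borel stages.
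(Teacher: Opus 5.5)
Your proposal is correct and follows essentially the same route as the paper. You substitute \eqref{eq:H-transform} twice, match the Bott factor $u^{d_0+d_1}$ with the outer $u^{d_0}$ times the middle $u^{d_1}$ of \eqref{eq:H-convolution}, move the first supported pushforward past the second pullback (your explicit base change plus projection formula), identify the iterated pushforward with $\pr_{02}$ followed by $\pr_2$, cancel the middle coefficient lines, and apply \eqref{eq:H-composition} to both sides of \eqref{eq:H-intertwining-kernels}. Your additional care about supported base change over the non-proper ambient projections, and about passing to the $z$-adic completion, fills in details the paper leaves implicit rather than changing the argument.
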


\begin{proof}
Substitute \eqref{eq:H-transform} twice. The two periodic factors are $u^{d_0+d_1}$, which is the product of the outer factor $u^{d_0}$ and the middle factor in 
\eqref{eq:H-convolution}. The projection formula moves the first supported pushforward past the pullback of the second kernel, and functoriality identifies the iterated integration with the pushforward along $\pr_{02}$. Fusion cancels the middle transgression lines. This proves \eqref{eq:H-composition}; applying it to the two
sides of \eqref{eq:H-intertwining-kernels} proves the last assertion.
\end{proof}

\begin{definition}\label{defn_Td_quotient_stack}
Let $Y=[V/G]$ be a smooth finite quotient stack with a commuting $T$-action. On a connected component $\mathcal F\subset IY$ represented by $F\subset V^g$, decompose the moving normal bundle as $N_{F/V}=\bigoplus_{\lambda\ne1}N_\lambda$. If $x_{\lambda,j}$ are the $T$-equivariant Chern roots, put
\begin{align}
 D^T_{\mathcal F}(Y)
 &:=\prod_{\lambda\ne1}\prod_j
   \bigl(1-\lambda^{-1}e^{-x_{\lambda,j}}\bigr),
 \label{eq:moving-denominator}\\
 \widetilde\Td_T(TY)|_{\mathcal F}
 &:=\frac{\Td_T(T\mathcal F)}{D^T_{\mathcal F}(Y)}.
 \label{eq:orbifold-Todd}
\end{align}
The denominator has invertible degree-zero term. For an even class $c=\sum c_{2j}$, write $c_u=\sum c_{2j}u^{-j}$.
\end{definition}

For a kernel $L_{ij}$ of type \eqref{eq:K-kernel-type}, applying Definition \ref{defn_Td_quotient_stack}, then the supported trace Chern character on the ordinary inertia component has type
\begin{equation}\label{eq:trace-character-type}
 \widetilde\ch_{T,u}(L_{ij})_{\mathcal F_i,\mathcal F_j}
 \in\widehat H^0_{T,Z_{ij}^{\mathcal F_i,\mathcal F_j},\per}
 \bigl(\mathcal F_i\times\mathcal F_j;
 \pr_i^*\mathscr L_i^\vee\otimes\pr_j^*\mathscr L_j\bigr).
\end{equation}
Define
\begin{equation}\label{eq:tau-orb-definition}
 \tau^{T,\orb}(L_{ij})_{\mathcal F_i,\mathcal F_j}
 :=\widetilde\ch_{T,u}(L_{ij})_{\mathcal F_i,\mathcal F_j}
 \cup\pr_i^*\bigl(\widetilde\Td_T(TY_i)|_{\mathcal F_i}\bigr)_u,
\end{equation}
using the relative inertial Todd factor for the ordinary projection
$Y_i\times Y_j\to Y_j$.

\begin{theorem}[Completed supported ambient-product orbifold Riemann--Roch]
\label{thm:bivariant-RR}
Assume the state spaces and all relative pairs in question satisfy the finite equivariant CW hypotheses of Theorems~\ref{thm:general-completed-ch} and
\ref{thm:supported-completion}. Then:
\begin{enumerate}[label=\textup{(\roman*)}]
\item \eqref{eq:tau-orb-definition} is a well-defined completed supported class with exactly the coefficient line in \eqref{eq:trace-character-type}.

\item For $k\in\zz/2$, the diagram
\begin{equation}\label{eq:RR-square}
\begin{CD}
 (\KU_T^k(Y_i,\alpha_i)\otimes\C)^{\wedge}_{I_T}
 @>{\Phi^{\KU}_{L_{ij}}}>>
 (\KU_T^k(Y_j,\alpha_j)\otimes\C)^{\wedge}_{I_T}\\
 @V{\widetilde\ch^{\wedge}_{T,\alpha_i,u}}V{\cong}V
 @VV{\widetilde\ch^{\wedge}_{T,\alpha_j,u}}V\\
 \widehat H^k_{T,\del,\per}(Y_i,\alpha_i)
 @>{\Phi^H_{\tau^{T,\orb}(L_{ij})}}>>
 \widehat H^k_{T,\del,\per}(Y_j,\alpha_j)
\end{CD}
\end{equation}
is commutative.

\item For composable kernels with proper convolution support, put
\[
 Z_{02}^{\star}:=
 \pr_{02}\bigl(\pr_{01}^{-1}Z_{01}\cap\pr_{12}^{-1}Z_{12}\bigr)\subset W_{02}.
\]
For connected outer inertia components $\mathcal F_0\subset IY_0$ and
$\mathcal F_2\subset IY_2$, set
\[
 Z_{02}^{\star,\mathcal F_0,\mathcal F_2}
 :=IZ_{02}^{\star}\cap(\mathcal F_0\times\mathcal F_2).
\]
Then the following is an equality in the completed supported cohomology group of the ordinary outer product with the displayed common support:
\begin{equation}\label{eq:RR-convolution}
 \tau^{T,\orb}(L_{12}\star_{\KU}L_{01})
 =\tau^{T,\orb}(L_{12})\star_H
  \tau^{T,\orb}(L_{01})
\end{equation}
inside
\[
 \widehat H^0_{T,Z_{02}^{\star,\mathcal F_0,\mathcal F_2},\per}
 \bigl(\mathcal F_0\times\mathcal F_2;
 \pr_0^*\mathscr L_0^\vee\otimes\pr_2^*\mathscr L_2\bigr)
\]
for every pair $(\mathcal F_0,\mathcal F_2)$.
\end{enumerate}
\end{theorem}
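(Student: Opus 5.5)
The plan is to reduce everything to the finite Borel approximations $Y_{i,N}=E_N\times_K Y_i$. On these only the finite groups $G_i$ remain, so the classical twisted orbifold (Kawasaki/Toen) Riemann--Roch theorem for finite quotient stacks applies. We then pass to the limit using Theorems~\ref{thm:supported-completion} and \ref{thm:general-completed-ch}.

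For (i), start from the supported $K$-class $L_{ij}$ restricted along $W_{ij,N}$. The delocalized trace Chern character of \cite{FHT} on the inertia component $\mathcal F_i\times\mathcal F_j$ of the ordinary product takes values in cohomology twisted by the transgression line of $\pr_i^*\alpha_i^{-1}\otimes\pr_j^*\alpha_j$. Inversion duality \eqref{eq:inversion-line} and fusion \eqref{eq:fusion} for product gerbes identify this line with $\pr_i^*\mathscr L_i^\vee\otimes\pr_j^*\mathscr L_j$. Supports are preserved because the Chern character is natural for relative pairs. The Todd factor \eqref{eq:orbifold-Todd} has invertible constant term, and its $u$-periodization has total degree zero. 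Hence \eqref{eq:tau-orb-definition} lies in the completed supported group with the stated line. Compatibility in $N$ lets it pass to the inverse limit, and the supported Mittag--Leffler statement shows no $\varprojlim^1$ term appears.

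For (ii), apply at each stage $N$ the supported orbifold Riemann--Roch formula for the proper supported pushforward $(\pr_j)_{!,Z_{ij}}$. Concretely: multiplicativity of $\widetilde\ch$ under pullback and product, together with the Grothendieck--Riemann--Roch formula for a pushforward whose restriction to the support is proper. On inertia, this produces the moving-normal denominator \eqref{eq:moving-denominator} of the fiber direction, which is exactly the inertial Todd class of $Y_i$ pulled back along $\pr_i$. The factor $u^{d_i}$ in \eqref{eq:H-transform} is the periodic bookkeeping of the degree drop $2d_i$. The Borel spheres contribute equivariant Todd factors that are compatible in $N$. The inverse limit then yields commutativity of \eqref{eq:RR-square} by the natural isomorphisms of Theorem~\ref{thm:general-completed-ch}.

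For (iii), combine Proposition~\ref{prop:K-convolution} with (ii) applied twice on the triple product. Then $\tau^{T,\orb}(L_{12}\star_{\KU}L_{01})$ is obtained by Riemann--Roch for $\pr_{02}$ applied to $\pr_{01}^*L_{01}\cdot\pr_{12}^*L_{12}$; this contributes the middle Todd class of $Y_1$ and the factor $u^{d_1}$. The outer factor $\pr_0^*\widetilde\Td(Y_0)$ is carried along unchanged. Separately, $\tau(L_{01})$ already contains $\widetilde\Td(Y_0)$, and $\tau(L_{12})$ contains $\widetilde\Td(Y_1)$. Multiplicativity of $\widetilde\ch$ then matches the two sides term by term, and fusion cancels the middle coefficient lines as in Proposition~\ref{prop:H-convolution}. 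The equality is first obtained with support $\pr_{02}(Z_{012})$. It is then read in the common group of Definition~\ref{def:common-support} by extension of support.

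\textbf{Main obstacle.} The hardest step is the supported Riemann--Roch on inertia, (ii). The ambient projection is not proper, so the Todd/Euler denominator must be computed on the inertia component of the ordinary product, with only the support proper. We must also check that the inertia of $Z_{ij}$ is proper over $\mathcal F_j$. I would attempt this by factoring $\pr_j$ on the support through a closed embedding into a smooth proper-over-$Y_j$ model: use the finite CW pairs of Lemma~\ref{lem:finite-KCW}, apply Kawasaki's localized formula there, and verify that the normal contributions of the embedding cancel against the excess on supports.
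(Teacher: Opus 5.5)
Your proposal follows essentially the same route as the paper. It uses finite Borel approximations and applies the finite-group index formula (Atiyah--Singer families theorem with Kawasaki/Toen fixed-point terms) to the supported projection whose vertical tangent bundle is $\pr_i^*TY_i$, so that the target moving-normal factor cancels and the degree drop $2d_i$ becomes $u^{d_i}$. It uses the same formula on the middle projection of $Y_0\times Y_1\times Y_2$ for (iii), and passes to the limit via the Mittag--Leffler statements of Theorems~\ref{thm:general-completed-ch} and~\ref{thm:supported-completion}.

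The paper dispatches your ``main obstacle'' without any compactification. It identifies the stage-$N$ ambient product with $(Y_i)_N\times_{B_N}(Y_j)_N$, so the Borel base contributes no separate Todd factor, and applies the families index theorem directly to the fibrewise-proper support. It also reduces the twists to banded $\mu_s$-gerbes with central-weight idempotents, and treats odd parity by suspension, a step your sketch leaves out.
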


\begin{proof}
Use $K=S^1$ and the finite free spaces $E_N$. At the $N$-th Borel approximation, the ambient product is
\begin{equation}\label{eq:Borel-ambient-product}
 E_N\times_K(Y_i\times Y_j) \simeq (Y_i)_N\times_{B_N}(Y_j)_N.
\end{equation}
Its projection to $(Y_j)_N$ has vertical tangent bundle equal to the pullback of $TY_i$ and, on the fixed sector $\mathcal F_i$, relative complex dimension $d_i$. The support condition $Z_{ij}\subset Y_i\times_\A Y_j$ changes neither statement. This is where the ambient-product convention is essential.

Choose an integer $s$ annihilating the finite-order twisting classes $\alpha_i$ and represent all of them by banded $\mu_s$-gerbes; then use the central-weight idempotents. The trace of a source-inverse/target-positive kernel has coefficient line $\mathscr L_i^\vee\otimes\mathscr L_j$; weights and middle coefficient lines cancel under tensor product by fusion. Apply the Atiyah--Singer families theorem~\cite{ASFamilies} and the finite-group fixed-point and stack Riemann--Roch formulas~\cite{ASIndex}, \cite{KawasakiRR}, \cite{ToenRR}, \cite{EdidinDMRR} to the supported projection in \eqref{eq:Borel-ambient-product}. The fixed-point ratio of source and target moving-normal factors is precisely $\pr_i^*\widetilde\Td_T(TY_i)$; the target factor cancels. The family index lowers ordinary degree by $2d_i$, so periodization gives the factor $u^{d_i}$. This proves the finite-approximation version of \eqref{eq:RR-square} in even parity. Odd parity follows by external product with the suspension generator; pullback, multiplication by the even kernel, trace projectors, and supported pushforward commute with suspension.

Apply the same index formula to the supported middle projection on 
$Y_0\times Y_1\times Y_2$. At every finite stage, it gives the equality of supported kernel classes corresponding to \eqref{eq:RR-convolution}. The relative tangent bundle of that middle projection is the pullback of $TY_1$, so the factor is $u^{d_1}$ and
the middle inertial Todd class is exactly the one in the cohomological convolution. Functoriality of the family index, the projection formula, and associativity of fusion give the convolution identity.

Finally pass to the inverse limit. Theorem~\ref{thm:general-completed-ch} controls the state-space towers, while Theorem~\ref{thm:supported-completion} controls the relative-pair towers containing the supported kernel classes. Both are Mittag--Leffler and have no $\varprojlim^1$ term. Thus the finite-stage operator square and, separately, the finite-stage equality of supported classes pass to the completed limits.
\end{proof}

\begin{remark}
The theorem is unrigidified and additive. It does not construct any of the kernels, identify a Chow cycle with a virtual structure sheaf, or eliminate nonidentity Kawasaki sectors. Rigidified orbifold Gromov--Witten classes enter only after the canonical descent of Proposition~\ref{prop:canonical-rigidification-descent}, the
stack-normalized transport \eqref{eq:transport-rigid-to-del}, and an independently constructed coefficient-valued reduced orbifold theory.
\end{remark}

\subsection{FM vs Steinberg}\label{subsec_FM_Steinberg}

Fix either a $\Gamma$-invariant effective curve class $\beta$, or a finite orbit $B=\Gamma\cdot\beta$ and replace every $\beta$-indexed cycle, kernel, and operator below by the corresponding orbit sum. The notation $\beta$ below covers both conventions, and a genuinely single-class statement is made only in the $\Gamma$-invariant case.

For the reduced $\SL_r$ cycle, regarded as a class supported in the ordinary smooth ambient product $X\times X$, put
\begin{equation}\label{eq:SL-cohomological-kernel}
 \Xi^X_\beta:=\cl_{T,Z^X_\beta}(Z^X_\beta)u^{-m}.
\end{equation}
Here $\cl_{T,Z^X_\beta}$ is the supported cohomology class in the smooth ambient product corresponding, by duality, to the Borel--Moore cycle class. Its ordinary cohomological degree is $2m$, so the Bott factor gives total degree zero. Its support lies in $X\times_\A X$. The finite-gerbe transgression lines descend canonically by
Proposition~\ref{prop:canonical-rigidification-descent}. For the quotient, assume an independently constructed standard rigidified twisted reduced theory with a fusion-compatible coefficient-valued virtual class and the full inner-local-system data. Let $F^{\widehat X,\rig,\CR}_{\bar\beta}$ be its twisted two-point operator, with the inversion, pairing and stabilizer normalizations of Definition~\ref{defn_Steinberg_PGL}. That earlier definition constructs the untwisted operator; the present twisted coefficient theory remains an additional hypothesis. Set
\begin{equation}\label{eq:PGL-del-operator}
 F^{\widehat X,\del}_{\bar\beta}
 :=\St_{\mathrm{age}}\Jrig F^{\widehat X,\rig,\CR}_{\bar\beta}
 \Jrig^{-1}\St_{\mathrm{age}}^{-1}.
\end{equation}
Let $\Xi^{\widehat X,\del}_{\bar\beta}$ be its unshifted, unrigidified coefficient-valued kernel on ordinary inertia products. On $(\mathcal F_1,\mathcal F_2)$, the reduced cycle has dimension $D_{12}=m-a_{\mathcal F_1}-a_{\mathcal F_2}$ and is periodized by $u^{-D_{12}}$.

\begin{theorem}
\label{thm:cohomological-95}
Assume:
\begin{enumerate}[label=\textup{(C\arabic*)}]
\item an independently constructed standard rigidified twisted reduced orbifold theory whose coefficient-valued virtual class and two-point operator are compatible with the canonical descended transgression lines of 
Proposition~\ref{prop:canonical-rigidification-descent} and with the fusion, inversion, and unit coherences;

\item an ambient-product coefficient-valued kernel $\Xi_P$ from $X$ to $\widehat X$, supported in $X\times_\A\widehat X$, whose transform is an isomorphism and satisfies
\begin{equation}\label{eq:cohomological-GS-reduction}
 \Phi^H_{\Xi_P}\bmod z=\Phi^H_{\GS};
\end{equation}

\item the independently defined enumerative kernels $\Xi^X_\beta$ and 
$\Xi^{\widehat X,\del}_{\bar\beta}$; and

\item the ambient-product supported identity
\begin{equation}\label{eq:cohomological-global-convolution}
 \Xi_P\star_H\Xi^X_\beta=\Xi^{\widehat X,\del}_{\bar\beta}\star_H\Xi_P.
\end{equation}
Writing $Z_P^H$, $Z_X^H$, and $Z_{\widehat X}^H$ for the supports of the three cohomological kernels, define $Z_L^H$ and $Z_R^H$ to be the proper outer images of, respectively,
\[
 \pr_{01}^{-1}Z_X^H\cap\pr_{12}^{-1}Z_P^H,
 \qquad
 \pr_{01}^{-1}Z_P^H\cap\pr_{12}^{-1}Z_{\widehat X}^H.
\]
For every connected outer component $\mathcal F\subset I\widehat X$, identity \eqref{eq:cohomological-global-convolution} is required in
\[
 \widehat H^0_{T,(Z_L^H\cup Z_R^H)\cap(X\times\mathcal F),\per}
 \bigl(X\times\mathcal F;\pr_{\mathcal F}^*\mathscr L_{\alpha,\mathcal F}\bigr),
\]
after the canonical extension-of-support maps of Definition~\ref{def:common-support}.
\end{enumerate}
Then
\begin{equation}\label{eq:del-intertwining}
 F^{\widehat X,\del}_{\bar\beta}\Phi^H_{\Xi_P}=\Phi^H_{\Xi_P}F^X_\beta.
\end{equation}
The horizontal map to the standard rigidified Chen--Ruan-periodized state space is
\begin{equation}\label{eq:CR-horizontal}
 \Phi_P^{\rig,\CR}:=\Jrig^{-1}\St_{\mathrm{age}}^{-1}\Phi^H_{\Xi_P},
\end{equation}
and it satisfies
\begin{equation}\label{eq:CR-intertwining}
 F^{\widehat X,\rig,\CR}_{\bar\beta}\Phi_P^{\rig,\CR}=\Phi_P^{\rig,\CR}F^X_\beta.
\end{equation}
\end{theorem}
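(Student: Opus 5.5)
The plan is to reduce everything to the formal convolution calculus of Proposition~\ref{prop:H-convolution}, and then transport the resulting identity through the regrading and rigidification maps.

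First, I identify the two vertical operators with cohomological kernel transforms. On the $X$ side, I compare the definition \eqref{eq:Act-def} of $F^X_\beta=\mathbf F_{Z^X_\beta}$ with \eqref{eq:H-transform} applied to $\Xi^X_\beta=\cl_{T,Z^X_\beta}(Z^X_\beta)u^{-m}$. Here $d_i=m$, so the prefactor $u^{m}$ cancels $u^{-m}$. The supported pushforward of $\pr_1^*a\cup\cl_{T,Z}$ along the ambient projection equals $\operatorname{PD}_X^{-1}(p_2)_*(p_1^*a\cap\cl^{\mathrm{BM}}_T(Z))$. This follows from the compatibility of supported classes with Borel--Moore duality on the smooth product, using properness of $Z\subset X\times_\A X$ over the second factor. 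Hence $\Phi^H_{\Xi^X_\beta}=F^X_\beta$ on $\widehat H^k_{T,\per}(X)$.

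On the quotient side, hypothesis (C3) says $\Xi^{\widehat X,\del}_{\bar\beta}$ is the kernel of $F^{\widehat X,\del}_{\bar\beta}$ defined in \eqref{eq:PGL-del-operator}. I would still check the sectorwise Bott normalizations. The cycle has dimension $D_{12}$ and is periodized by $u^{-D_{12}}$. The prefactor is $u^{d_{\mathcal F_1}}$, and the age shifts from $\St_{\mathrm{age}}$ combine to total degree zero by \eqref{eq:ordinary-degree-shift}. I expect this step to be the main obstacle. It requires tracking $s_{\mathcal F}$ factors (via \eqref{eq:Jrig} and \eqref{eq:rho-normalization} against the pairing \eqref{eqn:pairing_2}), the inversion $\bar\iota$, and the transgression lines, using (C1) and Proposition~\ref{prop:canonical-rigidification-descent}. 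I would first try to verify it sector by sector on the identity sector, where $s_{\mathcal F}=1$ and $a_{\mathcal F}=0$, and then use the descent isomorphism \eqref{eq:line-descent} to handle the general sector.

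Second, I apply Proposition~\ref{prop:H-convolution}. The properness of the pulled-back supports is built into the definition of $Z^H_L$ and $Z^H_R$ in (C4). Together with (C2), this gives
\[
\Phi^H_{\Xi_P}\circ\Phi^H_{\Xi^X_\beta}=\Phi^H_{\Xi_P\star_H\Xi^X_\beta}=\Phi^H_{\Xi^{\widehat X,\del}_{\bar\beta}\star_H\Xi_P}=\Phi^H_{\Xi^{\widehat X,\del}_{\bar\beta}}\circ\Phi^H_{\Xi_P}.
\]
The middle equality is taken in the common supported group of Definition~\ref{def:common-support}, outer component by outer component. Its compatibility with the supported transform holds because extension of support commutes with supported pushforward. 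Combining this with the first step yields \eqref{eq:del-intertwining}.

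Finally, I substitute \eqref{eq:PGL-del-operator} into \eqref{eq:del-intertwining}, obtaining $\St_{\mathrm{age}}\Jrig F^{\widehat X,\rig,\CR}_{\bar\beta}\Jrig^{-1}\St_{\mathrm{age}}^{-1}\Phi^H_{\Xi_P}=\Phi^H_{\Xi_P}F^X_\beta$. Applying the invertible map $\Jrig^{-1}\St_{\mathrm{age}}^{-1}$ on the left gives \eqref{eq:CR-intertwining} with $\Phi_P^{\rig,\CR}$ as in \eqref{eq:CR-horizontal}. This map is an isomorphism because each factor is one: $\St_{\mathrm{age}}$ by \eqref{eq:age-map-inverse}, $\Jrig$ by Lemma~\ref{lem:rigidification-descent}, and $\Phi^H_{\Xi_P}$ by (C2). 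By \eqref{eq:cohomological-GS-reduction}, the identification $\Phi^H_{\Xi_P}=\widehat\Phi^{H,\mathrm{add}}_T$ is then a legitimate choice of additive lift in Theorem~\ref{thm:additive-lift}.
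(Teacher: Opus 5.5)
Your proposal is correct and follows the paper's proof: Proposition~\ref{prop:H-convolution} converts the supported identity (C4) into \eqref{eq:del-intertwining}, and applying $\Jrig^{-1}\St_{\mathrm{age}}^{-1}$ on the left, using \eqref{eq:PGL-del-operator}, yields \eqref{eq:CR-intertwining}. The sectorwise normalization check you flag as the main obstacle is not actually needed: by hypothesis (C3), $\Xi^{\widehat X,\del}_{\bar\beta}$ is defined as the kernel of $F^{\widehat X,\del}_{\bar\beta}$, and the paper uses this, like $\Phi^H_{\Xi^X_\beta}=F^X_\beta$, implicitly.
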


\begin{proof}
Proposition~\ref{prop:H-convolution} converts
\eqref{eq:cohomological-global-convolution} into \eqref{eq:del-intertwining}.  Conjugate this identity by $\Jrig^{-1}\St_{\mathrm{age}}^{-1}$ and use
\eqref{eq:PGL-del-operator}; the adjacent regrading and rigidification maps cancel and give \eqref{eq:CR-intertwining}. Condition \eqref{eq:cohomological-GS-reduction} identifies the nonequivariant reduction of the horizontal map with the chosen Groechenig--Shen map. Without that condition the theorem would concern an unspecified
integral transform.
\end{proof}

\begin{remark}\label{rem:independent-C4}
Condition \textup{(C1)} should be a straightforward adaptation of the work of \cite{PRY}. Condition \textup{(C4)} is not a consequence of 
Theorem~\ref{thm:block-decomposition} or of the definition of convolution. Character decomposition only says that each operator is block diagonal. Even when $\Gamma$ is trivial, take a one-dimensional state space, $F_X=0$, $F_{\widehat X}=\id$ and $\Phi=\id$. All equivariance requirements hold, but $F_{\widehat X}\Phi\ne\Phi F_X$.  This elementary example disproves the asserted formal implication; it is not a counterexample to an enumerative comparison for Higgs moduli. A geometric proof of
\eqref{eq:cohomological-global-convolution} is additional work.
\end{remark}

\begin{theorem}[A $K$-theoretic sufficient criterion]\label{thm:corrected-95}
Assume the following stronger data.
\begin{enumerate}[label=\textup{(H\arabic*)}]
\item The independently constructed twisted reduced orbifold theory of \textup{(C1)}, including its fusion-compatible coefficient-valued virtual class and standard rigidified two-point operator.

\item A global ambient-product supported kernel
\[
 P\in\KU^0_{T,Z_P}(X\times\widehat X,\pr_{\widehat X}^*\alpha_T),
 \qquad Z_P\subset X\times_{\mathbb{A}}\widehat X,
\]
whose transform is an isomorphism and whose kernel-induced map of $\KU$-module spectra satisfies
\begin{equation}\label{eq:P-recovers-GS}
 \mbox{For}_T(\Phi_P^{\KU})=\Phi_{\GS}^{\KU}
\end{equation}
after forgetting the $T$-action. $\mbox{For}_T(-)$. If only the weaker equality modulo
$z$ is known, replace \eqref{eq:P-recovers-GS} by $\Phi_P^H\bmod z=\Phi_{\GS}^H$ and call the result only a cohomological lift. If $P$ is called a Fourier equivalence, assume a type-correct convolution inverse $Q$.

\item Ambient-product supported kernels
\begin{align*}
 K_{X,\beta}&\in\KU^0_{T,Z_X}(X\times X),
 &Z_X&\subset X\times_{\mathbb{A}} X,\\
 K_{\widehat X,\bar\beta}&\in
 \KU^0_{T,Z_{\widehat X}}
 (\widehat X\times\widehat X,
 \pr_1^*\alpha_T^{-1}\otimes\pr_2^*\alpha_T),
 &Z_{\widehat X}&\subset\widehat X\times_{\mathbb{A}}\widehat X,
\end{align*}
with proper convolution supports. All three original support pairs, including the support pair of $P$, have finite equivariant CW type. Moreover, for every convolution in \textup{(H5)} and for every inverse convolution involving $Q$ when present, the relative pair defined on the ordinary triple product by the pulled-back intersection
\[
 \pr_{01}^{-1}Z_{01}\cap\pr_{12}^{-1}Z_{12}
\]
and the relative pair defined by its proper image in the ordinary outer product both have finite equivariant CW type. The same requirement is imposed on every support pair used to represent the resulting convolution class and on every finite union of outer supports used as a common support in Definition~\ref{def:common-support}.

\item Their completed ambient-product Riemann--Roch classes are the independently defined enumerative kernels:
\begin{equation}\label{eq:RR-enumerative-identification}
 \tau^{T,\orb}(K_{X,\beta})=\Xi^X_\beta,
 \qquad
 \tau^{T,\orb}(K_{\widehat X,\bar\beta})=\Xi^{\widehat X,\del}_{\bar\beta}.
\end{equation}

\item Define the two proper outer supports in $X\times\widehat X$ by
\begin{align*}
 Z_L^{\KU}
 &:={\pr}_{02}\bigl({\pr}_{01}^{-1}Z_X
       \cap {\pr}_{12}^{-1}Z_P\bigr)
 &&\text{on }X\times X\times\widehat X,\\
 Z_R^{\KU}
 &:={\pr}_{02}\bigl({\pr}_{01}^{-1}Z_P
       \cap {\pr}_{12}^{-1}Z_{\widehat X}\bigr)
 &&\text{on }X\times\widehat X\times\widehat X.
\end{align*}
The ambient-product supported $K$-kernel identity is required after extension of supports in the explicit common group
\begin{equation}\label{eq:missing-global-convolution}
 P\star_{\KU}K_{X,\beta}=K_{\widehat X,\bar\beta}\star_{\KU}P
 \quad\text{in }
 \KU^0_{T,Z_L^{\KU}\cup Z_R^{\KU}}(X\times\widehat X,\pr_{\widehat X}^*\alpha_T).
\end{equation}
\end{enumerate}
Then all hypotheses of Theorem~\ref{thm:cohomological-95} hold, and its intertwining conclusions follow. 
\end{theorem}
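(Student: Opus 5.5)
We verify (C1)--(C4) of Theorem~\ref{thm:cohomological-95} one at a time from (H1)--(H5), taking $\Xi_P:=\tau^{T,\orb}(P)$ as the cohomological horizontal kernel. Condition (C1) is (H1) verbatim. Condition (C3) is the existence of the enumerative kernels $\Xi^X_\beta$ and $\Xi^{\widehat X,\del}_{\bar\beta}$. The first is defined unconditionally in \eqref{eq:SL-cohomological-kernel}; the second is supplied by (H1) through \eqref{eq:PGL-del-operator}. By (H4) both are realized as Riemann--Roch classes of the $K$-kernels of (H3).

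For (C2) we apply Theorem~\ref{thm:bivariant-RR} to $P$ with $(Y_0,\alpha_0)=(X,1)$ and $(Y_1,\alpha_1)=(\widehat X,\alpha_T)$. The finite CW hypotheses on the support pair of $P$ are part of (H3).
\begin{itemize}
\item Part (i) shows that $\Xi_P$ is a well-defined ambient-product completed supported class with the correct coefficient lines. Its support lies in $Z_P\subset X\times_\A\widehat X$.
\item Part (ii) gives $\widetilde\ch^\wedge_{T,\alpha,u}\circ\Phi^{\KU}_P=\Phi^H_{\Xi_P}\circ\ch^\wedge_{T,u}$. The transform $\Phi^{\KU}_P$ is an isomorphism, so its completion is one, and the Chern characters are isomorphisms by Corollary~\ref{thm:completed-ch}. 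Hence $\Phi^H_{\Xi_P}$ is an isomorphism.
\end{itemize}
For the reduction modulo $z$, the completed towers of Theorem~\ref{thm:relative-completion} reduce modulo $I_T$ (equivalently modulo $z$, by \eqref{eq:ringiso}) to the nonequivariant groups. Reduction modulo $z$ of the completed Chern character is the nonequivariant delocalized Chern character (Corollary~\ref{thm:completed-ch}). Thus \eqref{eq:P-recovers-GS} gives $\Phi^H_{\Xi_P}\bmod z=\Phi^H_{\GS}$. In the weaker ``cohomological lift'' variant this identity is assumed directly.

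The main step is (C4), and we deduce it from (H5) via part (iii) of Theorem~\ref{thm:bivariant-RR}. Apply $\tau^{T,\orb}$ to both sides of \eqref{eq:missing-global-convolution}. We first check that $\tau^{T,\orb}$ commutes with the extension-of-support maps of Definition~\ref{def:common-support}. This holds because the supported Chern character and the relative completion of Theorem~\ref{thm:supported-completion} are natural for maps of relative pairs, and the Todd factor is an absolute class.

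Next apply \eqref{eq:RR-convolution} to each side: to $(L_{01},L_{12})=(K_{X,\beta},P)$ on the left and $(P,K_{\widehat X,\bar\beta})$ on the right. The finite CW requirements on the triple-product intersections and their outer images are exactly those in (H3). This gives
\[
\tau^{T,\orb}(P)\star_H\tau^{T,\orb}(K_{X,\beta})=\tau^{T,\orb}(K_{\widehat X,\bar\beta})\star_H\tau^{T,\orb}(P).
\]
The equality holds in the group supported on the images of $Z_L^{\KU}\cup Z_R^{\KU}$, sector by sector over $\mathcal F\subset I\widehat X$.

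Substituting (H4) yields \eqref{eq:cohomological-global-convolution}. It remains to match supports. The cohomological supports $Z^H_\bullet$ are contained in the $K$-theoretic ones $Z_\bullet$, so $Z_L^H\cup Z_R^H\subset Z_L^{\KU}\cup Z_R^{\KU}$. The identity in (C4) is required on the smaller support, so we cannot simply restrict. We take the supports of the cohomological kernels to be the $K$-theoretic supports themselves; the statement of (C4) allows any supports for which the kernels are defined. With this choice the two common groups coincide and (C4) follows.

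Once (C1)--(C4) hold, Theorem~\ref{thm:cohomological-95} gives \eqref{eq:del-intertwining} and \eqref{eq:CR-intertwining}. The only delicate point throughout is this support bookkeeping: the common-support group and the restriction to $X\times\mathcal F$ must be compatible with the sectorwise form of \eqref{eq:RR-convolution}, where $Y_0=X$ has only the identity sector.
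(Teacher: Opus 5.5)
Your proposal is correct and is essentially the paper's argument. You put $\Xi_P=\tau^{T,\orb}(P)$, read off invertibility and $\Phi^H_{\Xi_P}\bmod z=\Phi^H_{\GS}$ from the Riemann--Roch square of Theorem~\ref{thm:bivariant-RR}, and transport \eqref{eq:missing-global-convolution} through part (iii) of that theorem and (H4) to get (C4), after which Theorem~\ref{thm:cohomological-95} applies.

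Your extra support bookkeeping, which the paper leaves implicit, is fine with one correction: the containment $Z^H_\bullet\subset Z_\bullet$ is not given by the hypotheses. What actually justifies using the $K$-theoretic supports is that (H4), read via Definition~\ref{def:common-support}, lets you take $\tau^{T,\orb}(K_{X,\beta})$ and $\tau^{T,\orb}(K_{\widehat X,\bar\beta})$ as representatives of the enumerative kernels supported on $Z_X$ and $Z_{\widehat X}$, and these induce the same operators.
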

\begin{proof}
Proposition~\ref{prop:K-convolution} turns \eqref{eq:missing-global-convolution} into the corresponding equality of completed $K$-theory operators. Theorem~\ref{thm:bivariant-RR}, whose kernel-level passage to completion is justified by
Theorem~\ref{thm:supported-completion}, sends the $K$-kernel identity (\ref{eq:missing-global-convolution}) to
\[
 \tau^{T,\orb}(P)\star_H\Xi^X_\beta 
 =\Xi^{\widehat X,\del}_{\bar\beta}\star_H\tau^{T,\orb}(P).
\]
Put $\Xi_P=\tau^{T,\orb}(P)$. The general completed Chern-character square shows that its transform is an isomorphism. Forgetting the $T$-action in \eqref{eq:P-recovers-GS}, or merely reducing modulo $z$ in the weaker version, gives
\eqref{eq:cohomological-GS-reduction}. Thus Theorem~\ref{thm:cohomological-95} applies.
\end{proof}

\begin{corollary}[Kernel transport and the enumerative identification]
\label{cor:genuine-kernel-transport}
Suppose $P$ satisfies \textup{(H2)}, has convolution inverse $Q$, and all support and finite-type hypotheses needed below hold. Then
\[
K_{\widehat X,\bar\beta}^{\FM}:=P\star_{\KU}K_{X,\beta}\star_{\KU}Q
\]
is its transported kernel. Identifying the transported cohomological operator with the independently constructed enumerative operator still requires, for example, the separate supported equality
\begin{equation}\label{eq:transport-enumerative-test}
 \tau^{T,\orb}(K_{\widehat X,\bar\beta}^{\FM})=\Xi^{\widehat X,\del}_{\bar\beta}.
\end{equation}
\end{corollary}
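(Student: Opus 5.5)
The statement to prove is Corollary~\ref{cor:genuine-kernel-transport}. It has two parts. The first is that $K^{\FM}_{\widehat X,\bar\beta}:=P\star_{\KU}K_{X,\beta}\star_{\KU}Q$ is the transported kernel, meaning its transform equals $\Phi^{\KU}_P\circ\Phi^{\KU}_{K_{X,\beta}}\circ(\Phi^{\KU}_P)^{-1}$. The second is that matching this with the enumerative operator requires an additional identity such as \eqref{eq:transport-enumerative-test}.

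For the first part I would apply Proposition~\ref{prop:K-convolution} twice.
\begin{itemize}
\item The outer convolution: with the properness hypotheses assumed in the statement, \eqref{eq:K-composition} gives $\Phi^{\KU}_{P\star K_{X,\beta}\star Q}=\Phi^{\KU}_P\circ\Phi^{\KU}_{K_{X,\beta}}\circ\Phi^{\KU}_Q$.
\item Associativity: I would justify it by the same triple-product argument on the quadruple product $X\times X\times X\times\widehat X$, so no bracketing ambiguity arises.
\item The inverse: since $Q$ is a convolution inverse of $P$, applying \eqref{eq:K-composition} to $P\star Q$ and to $Q\star P$ shows that $\Phi_Q$ is a two-sided inverse of $\Phi_P$.
\end{itemize}
Hence $\Phi_{K^{\FM}}=\Phi_P\,\Phi_{K_{X,\beta}}\,\Phi_P^{-1}$, and the kernel intertwines exactly as in \eqref{eq:K-transported-kernel}. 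Uniqueness of the operator, though not of the kernel, then follows by the argument of Proposition~\ref{prop:formal-transport}.

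Next I would pass to cohomology. Theorem~\ref{thm:bivariant-RR}(ii) sends $\Phi^{\KU}_{K^{\FM}}$ to $\Phi^H_{\tau^{T,\orb}(K^{\FM})}$, and part (iii) gives
\[
\tau^{T,\orb}(K^{\FM})=\Xi_P\star_H\Xi^X_\beta\star_H\tau^{T,\orb}(Q),
\]
with $\Xi_P=\tau^{T,\orb}(P)$ as in the proof of Theorem~\ref{thm:corrected-95}. The transported cohomological operator is therefore $\Phi^H_{\Xi_P}F^X_\beta(\Phi^H_{\Xi_P})^{-1}$.

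For the second part, suppose the extra identity \eqref{eq:transport-enumerative-test} holds. Then $\Phi^H$ of both sides agree, which gives \eqref{eq:del-intertwining}. Conjugating by $\Jrig^{-1}\St_{\mathrm{age}}^{-1}$ as in Theorem~\ref{thm:cohomological-95} then gives \eqref{eq:CR-intertwining}. To see that the identity is genuinely needed rather than automatic, I would point to Remark~\ref{rem:independent-C4}: the transported kernel is determined by $P$ and $K_{X,\beta}$ alone, while $\Xi^{\widehat X,\del}_{\bar\beta}$ comes from an independent orbifold theory. The toy example there shows that no formal argument can identify them.

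The main obstacle is bookkeeping with supports. The identity $P\star Q=\Delta$ holds in a group with diagonal support, and the triple convolution lives in supports $Z^\star$ that must be extended to a common support via Definition~\ref{def:common-support}. I would handle this by assuming, as the statement allows, finite CW type for every intermediate support pair. I would then check that extension-of-support maps commute with convolution, which holds by naturality of supported pushforward.
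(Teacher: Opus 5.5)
Your proposal is correct and follows the paper's own argument. Proposition~\ref{prop:K-convolution} produces the transported kernel, the Riemann--Roch square of Theorem~\ref{thm:bivariant-RR} gives its cohomological action, and \eqref{eq:transport-enumerative-test} is exactly what turns that action into the enumerative one; your added checks that $\Phi^{\KU}_Q=(\Phi^{\KU}_P)^{-1}$ and on associativity are fine. Two small corrections: the quadruple product for $P\star_{\KU}K_{X,\beta}\star_{\KU}Q$ is $\widehat X\times X\times X\times\widehat X$, since $Q$ goes from $\widehat X$ to $X$; and writing $\tau^{T,\orb}(K_{X,\beta})=\Xi^X_\beta$, which you need to obtain $F^X_\beta$ and hence \eqref{eq:del-intertwining}, uses the first identity in \eqref{eq:RR-enumerative-identification} from (H4), which the corollary does not list among its hypotheses.
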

\begin{proof}
Convolution with $P$ and $Q$ gives the transported kernel by
Proposition~\ref{prop:K-convolution}; the Riemann--Roch square in Theorem \ref{thm:bivariant-RR} gives its cohomological action. That action is the enumerative one if \eqref{eq:transport-enumerative-test} holds. The supported Chern character, followed by the invertible relative Todd factor, is an isomorphism only after complexification and completion under the hypotheses used here.  
\end{proof}

\begin{proposition}[Additional conditions for the full divisor operator]
\label{prop:full-divisor-comparison}
Suppose an isomorphism $\Phi$, independent of the Novikov variables, intertwines all reduced positive-degree two-point operators. Suppose also that a chosen correspondence of curve degrees and divisors matches the Novikov monomials and divisor pairings, and that
\[
 \Phi(D_T\cup-)= (\widehat D_T\cup_{\CR}-)\Phi.
\]
Then, over a common Novikov completion, $\Phi M_D=M_{\widehat D}\Phi$.
\end{proposition}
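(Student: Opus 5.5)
The plan is to write both quantum multiplication operators as the classical cup product plus a Novikov series of reduced two-point operators. Then I compare the two decompositions term by term, using the assumed intertwinings.

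By Theorem~\ref{thm:divisor} on $X$ and Theorem~\ref{thm:divisor_PGL_r} on $\widehat X$, we have
\[
 M_D=D_T\cup+\hbar\sum_{\beta>0}(D\cdot\beta)q^\beta\mathbf F_{2,\beta},\qquad
 M_{\widehat D}=\widehat D_T\cup_{\CR}+\hbar\sum_{\bar\beta>0}(\widehat D\cdot\bar\beta)q^{\bar\beta}F^{\widehat X,\rig,\CR}_{\bar\beta}.
\]
The chosen correspondence of curve degrees is a bijection (or an orbit-sum matching, as in \S\ref{subsec_FM_Steinberg}) $\beta\mapsto\bar\beta$. It identifies the Novikov rings, so that $q^\beta\mapsto q^{\bar\beta}$, and satisfies $D\cdot\beta=\widehat D\cdot\bar\beta$.

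First I extend $\Phi$ Novikov-linearly to the completed tensor product. This is possible because $\Phi$ is independent of $q$. Being linear over $\C[\![z]\!]$ (equivalently, over the $\hbar$-coefficients used in the series), it commutes with multiplication by $\hbar$ and with the scalars $D\cdot\beta$.

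Next I apply $\Phi$ to $M_D$ and split off the classical term. The hypothesis $\Phi(D_T\cup-)=(\widehat D_T\cup_{\CR}-)\Phi$ handles the degree-zero part. For each $\beta>0$, the assumed intertwining $\Phi\mathbf F_{2,\beta}=F^{\widehat X,\rig,\CR}_{\bar\beta}\Phi$ turns the coefficient $(D\cdot\beta)\hbar\,\Phi\mathbf F_{2,\beta}$ into $(\widehat D\cdot\bar\beta)\hbar\,F^{\widehat X,\rig,\CR}_{\bar\beta}\Phi$. Summing over all classes gives $\Phi M_D=M_{\widehat D}\Phi$.

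The main obstacle is justifying that $\Phi$ commutes with the infinite Novikov sum. The $H$-degree support condition in $\Lambda_H$ gives only finitely many classes below each bound $N$. So the identity holds coefficientwise in each truncation, and hence also in the completion, provided the degree correspondence preserves this finiteness, i.e.\ it maps $\Lambda_H$ continuously into the common Novikov completion. I take this to be part of the hypothesis of a chosen matching. When orbit sums are used, the terms should be grouped by orbit before comparing, so that each coefficient is well defined.
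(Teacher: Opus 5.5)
Your proposal is correct and follows the paper's proof: you expand both operators by the divisor formulas of Theorems~\ref{thm:divisor} and~\ref{thm:divisor_PGL_r}, match the classical terms by the cup-product hypothesis, and match each positive-degree Novikov coefficient via the degree/pairing correspondence and coefficientwise intertwining before summing in the common completion. The two points you make explicit, namely that $\Phi$ commutes with $\hbar$ and that the degree matching respects the finiteness condition of the completion, are also used implicitly in the paper's one-line argument.
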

\begin{proof}
Use the divisor formulas on both sides. The hypothesis matches the degree-zero terms.  The curve-degree convention, divisor pairing and coefficientwise intertwining match every positive-degree term. Completeness permits their sum. This proves the operator assertion stated. 
\end{proof}

Let $\A^{\mathrm{sm}}\subset \A$ be the smooth-spectral locus and let $X^{\mathrm{sm}}$ and $\widehat X^{\mathrm{sm}}$ be its inverse images.

\begin{proposition}
\label{prop:smooth-vanishing}
For every nonzero effective curve class, the reduced two-pointed stable-map
correspondences of $X$ and of $\widehat X$ restrict to $0$ over $\A^{\mathrm{sm}}$.  Hence every positive-degree square in Theorem~\ref{thm:corrected-95} commutes after restriction to that locus, with both vertical maps zero.  
\end{proposition}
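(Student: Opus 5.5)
The plan is to show that the relevant stable-map stacks are empty over $\A^{\mathrm{sm}}$, so that both correspondences restrict to zero. Vanishing of the operators then comes from base change of the supported classes. The argument should be the one already used for Corollary~\ref{cor:discriminant}, extended to the twisted setting.

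\emph{The case of $X$.} Let $\beta\neq0$. By Lemma~\ref{lem:hitchin-constant}, every genus-zero stable map lies in a single Hitchin fiber. If the Hitchin value $a$ lies in $\A^{\mathrm{sm}}$, the Beauville--Narasimhan--Ramanan description shows that each connected component of $h^{-1}(a)$ is a torsor under an abelian variety. Any morphism $\mathbb P^1\to h^{-1}(a)$ is therefore constant. A connected stable map all of whose components are constant has class $0$, which contradicts $\beta\neq0$. Hence the open substack $\sM_{2,\beta}\times_\A\A^{\mathrm{sm}}$ is empty.

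Restriction of Chow or Borel--Moore classes to the open set $X^{\mathrm{sm}}\times_\A X^{\mathrm{sm}}$ commutes with proper pushforward along $(\ev_1,\ev_2)$. The restriction of the kernel-cone class $[\sM_{2,\beta}]^{\red}_T$ is the kernel-cone class of the open substack, because the intrinsic cone, the obstruction theory and the cosection all restrict. That open substack is empty, so $Z^{\red}_{2,\beta}$ restricts to $0$. The same holds for the supported cohomology class $\Xi^X_\beta$ of \eqref{eq:SL-cohomological-kernel}, since its support is contained in $h^{-1}(\Delta)\times_\A h^{-1}(\Delta)$.

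\emph{The case of $\widehat X$.} Lemma~\ref{lem_hitchin_constant_twisted} shows that every twisted stable map has constant Hitchin value. A twisted map $\mathcal C\to\widehat X$ of nonzero class pulls back to a principal $\Gamma$-cover $P\to\mathcal C$ with a $\Gamma$-equivariant map $P\to X$. Some component of $P$ maps nonconstantly, since otherwise the coarse class vanishes. That component is a proper curve which dominates a component of the coarse genus-zero curve.

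This is the main obstacle: a finite cover of $\mathbb P^1$ may have positive genus, and abelian varieties do contain images of such curves, so the untwisted argument does not transfer directly. I would handle it as follows. Over $\A^{\mathrm{sm}}$, the stabilizers of $\Gamma$ acting on $X$ are controlled by the Prym torsor structure, and $[h^{-1}(a)/\Gamma]$ is a quotient of an abelian-variety torsor by a finite group of translations. Its coarse space is again an abelian variety torsor, hence contains no rational curves. For this I would use that tensoring by $\eta\in\Gamma$ acts on the BNR fiber by translation by $\pi^*\eta$. A twisted genus-zero curve maps to the coarse space, whose image is then a point. The representable map therefore lands in a residual gerbe $B\Gamma_x$ over a point, and its coarse class is zero. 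This again contradicts $\bar\beta\neq0$, so $\mathfrak M_{2,\bar\beta}(\mathcal F_1,\mathcal F_2)$ is empty over $\A^{\mathrm{sm}}$ for every pair of sectors.

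\emph{Conclusion.} With both correspondences zero on this locus, each positive-degree square in Theorem~\ref{thm:corrected-95} has zero vertical maps after restriction to $\A^{\mathrm{sm}}$. It commutes trivially, because the horizontal kernels are supported over $\A$ and restriction is compatible with the convolutions of Proposition~\ref{prop:H-convolution}.
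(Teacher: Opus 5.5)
Your proposal is correct and follows essentially the same route as the paper. For $X$, the smooth Hitchin fibers over $\A^{\mathrm{sm}}$ are abelian torsors and contain no rational curves. For $\widehat X$, you use that $\Gamma$ acts by translations, so the coarse quotient is again an abelian torsor and every twisted genus-zero map has coarse class $0$. Your added remarks on restricting the kernel-cone class and commuting restriction with proper pushforward and convolution only make explicit what the paper leaves implicit.
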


\begin{proof}
A smooth fixed-determinant Hitchin fiber is a torsor under an Prym variety. After choosing a point, it becomes an abelian variety, and every morphism from $\mathbb P^1$ to it is constant: pulling back the translation-invariant one-forms shows that its differential vanishes Each irreducible component of a connected genus-zero nodal source is therefore mapped constantly, and connectedness forces the entire map to be constant.

For the quotient $\widehat{X}$, the finite group acts fiberwise by translations. The
coarse quotient of an abelian torsor by a finite translation subgroup is again a torsor under an abelian variety. The coarse map of a twisted genus-zero stable map to $\widehat{X}$ is therefore constant over $\A^{\mathrm{sm}}$. A map contained in a residual finite gerbe may carry orbifold monodromy, but its coarse curve class is $0$.  Thus no positive-degree stable map occurs on either side, proving the assertion.
\end{proof}

\appendix

\section{The cosection-reduced genus-zero virtual cycle}\label{sec:reduced}

Here we construct the reduced virtual fundamental cycle used in this paper, following \cite{KL}. 

\subsection{Rational curves are Hitchin-vertical}

\begin{lemma}\label{lem:hitchin-constant}
Let \(D\) be a connected proper nodal curve and let 
\(f:D\to X\) be a
morphism.  Then
\[
 h\circ f:D\longrightarrow\A
\]
is constant.
\end{lemma}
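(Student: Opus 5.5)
The plan is to use that $\A$ is affine and $D$ is proper and connected. Since $\A$ is affine, a morphism $D\to\A$ corresponds to a ring homomorphism $\Gamma(\A,\cO_\A)\to\Gamma(D,\cO_D)$. It therefore suffices to show that $\Gamma(D,\cO_D)$ is a field of constants.

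We will show $H^0(D,\cO_D)=\C$. Because $D$ is proper over $\C$, the space $H^0(D,\cO_D)$ is a finite-dimensional $\C$-algebra. Because $D$ is connected, this algebra has no nontrivial idempotents, so it is local Artinian. To exclude nilpotents we use reducedness: a nodal curve is reduced, so $H^0(D,\cO_D)$ is reduced. A reduced local Artinian $\C$-algebra is a finite field extension of $\C$, hence equals $\C$.

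Alternatively, and more concretely, one can argue component by component. Let $\nu:\widetilde D\to D$ be the normalization. Each component $\widetilde D_a$ is a smooth connected projective curve, so each coordinate function $x_i\circ h\circ f$ is constant on $\widetilde D_a$. Adjacent components share a node, so these constants agree, and connectedness of the dual graph makes the constant global.

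Consequently each coordinate of $h\circ f$, being a global regular function on $D$, is constant, and the image of $h\circ f$ is a single $\C$-point.

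This argument works in families only after one more step. Over a base $S$, the composite $h\circ f$ factors through $S$ because $\pi_*\cO_{\mathcal D}=\cO_S$ for a flat proper family with geometrically connected reduced fibers, using cohomology and base change. This gives the Hitchin-value map used later for (\ref{eq:fiber-product-support}).

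No step is a real obstacle. The only point needing care is that reducedness of nodal curves excludes nilpotent global functions. Without it, $H^0(\cO)$ could be larger, but its maximal ideal still consists of nilpotents, and a map to the reduced variety $\A$ on $\C$-points would remain constant anyway.
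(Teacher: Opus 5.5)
Your proposal is correct and follows the same route as the paper: since $\A$ is affine and $H^0(D,\cO_D)=\C$ for a connected proper reduced curve, every coordinate function pulls back to a constant, and the family version comes from $\pi_*\cO_{\mathcal D}=\cO_S$. The only difference is that you spell out why $H^0(D,\cO_D)=\C$ (finite-dimensional, no nontrivial idempotents, reduced), which the paper takes as standard.
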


\begin{proof}
The variety \(\A\) is affine.  Since \(D\) is connected and proper, $H^0(D,\cO_D)=\C$. Every coordinate function on the affine space \(\A\) hence pulls back to a constant function on \(D\), so \(h\circ f\) is constant.
\end{proof}

For a family of connected nodal curves
\(\pi:\mathcal D\to S\), one has
\(\pi_*\cO_{\mathcal D}=\cO_S\).  The preceding argument is consequently
compatible with families and gives a Hitchin-value morphism
\[
 \rho_\beta:\Mbar_{0,n}(X,\beta)\longrightarrow\A.
\]

\begin{proposition}\label{prop:stablemap-proper}
For every \(\beta\) and \(n\), the morphism $\rho_{\beta}$ is proper. For \(n\ge1\), every evaluation morphism
\[
 \ev:\Mbar_{0,n}(X,\beta)\longrightarrow X^n_\A
\]
is proper.
\end{proposition}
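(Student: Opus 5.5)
The plan is to reduce both assertions to the properness of the Hitchin morphism $h:X\to\A$ (Theorem~\ref{thm:hitchin-proper}) together with the standard properness of Kontsevich stable-map spaces to projective targets. The point is that, by Lemma~\ref{lem:hitchin-constant} and its family version, every stable map lies in a single Hitchin fiber. So $\overline M_{0,n}(X,\beta)$ should be the relative stable-map space of the projective morphism $h$.

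Concretely, I would first use the projectivity of $h$ to choose a closed $\A$-embedding $X\hookrightarrow \mathbb P^N\times\A$ over $\A$. The relative Kontsevich stack $\overline M_{0,n}(\mathbb P^N\times\A/\A,\iota_*\beta)\to\A$ is proper. This is the standard construction: the relative stable-map stack for a projective morphism is proper over the base, since it is a closed substack of a relative Hilbert/Quot construction of bounded degree, and the valuative criterion follows from stable reduction.

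Second, I would identify $\overline M_{0,n}(X,\beta)$ with the closed substack of maps factoring through $X$, taken over $\A$. Here Lemma~\ref{lem:hitchin-constant} is used in families. A map $f:\mathcal D\to X$ over $S$ yields $h\circ f:\mathcal D\to\A$, which factors uniquely through $S$ because $\pi_*\cO_{\mathcal D}=\cO_S$. This gives $\rho_\beta$ and shows that $\overline M_{0,n}(X,\beta)$ coincides with the relative moduli space $\overline M_{0,n}(X/\A,\beta)$.

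Being closed inside a proper $\A$-stack, $\overline M_{0,n}(X,\beta)$ is proper over $\A$, and this is properness of $\rho_\beta$. The bound on degree is harmless: fixing $\beta$ fixes the degree against the relatively ample line bundle $H$, and only finitely many classes in $\iota_*$ are involved.

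For the evaluation map, I would use the same fiberwise-constancy to see that $\ev$ lands in the $n$-fold fiber product $X^n_\A$. The structure map $X^n_\A\to\A$ is separated, being a base change of the separated morphism $h$. Then $\ev$ is proper by the cancellation property: $\rho_\beta=(\text{structure})\circ\ev$ is proper and the second map is separated, hence $\ev$ is proper.

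The main obstacle is the properness of the relative stable-map stack over a non-proper base. I would handle it with the valuative criterion directly. Given a family over a punctured DVR, the Hitchin values extend, since $\rho$ composed with a DVR point extends by affineness plus the limit in $\A$. Restricting to the corresponding DVR-point of the base, the target becomes $X\times_\A\Spec R$, which is projective over $\Spec R$. The usual stable reduction for maps to projective schemes over a DVR then gives existence and uniqueness of the limit.
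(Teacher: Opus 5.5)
Your proposal is correct and follows essentially the paper's route. Properness of $\rho_\beta$ comes from the relative stable-map stack for the projective Hitchin morphism, with maps fibrewise by Lemma~\ref{lem:hitchin-constant}, and properness of $\ev$ follows from the graph (cancellation) argument, using that $X^n_\A\to\A$ is separated.

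Two refinements. First, the maps to $\mathbb P^N\times\A$ that factor through $X$ form $\overline M_{0,n}(X/\A,d)$ with $d=H\cdot\beta$, not $\overline M_{0,n}(X,\beta)$; you must add, as the paper does, that the class-$\beta$ locus is open and closed in it by local constancy of $(f_s)_*[\mathcal C_s]$. Second, your ``main obstacle'' is vacuous: $\overline M_{0,n}(\mathbb P^N\times\A/\A,d)=\overline M_{0,n}(\mathbb P^N,d)\times\A$, and in the relative valuative criterion the $R$-point of $\A$ is given as data, not extended by affineness (affineness would not make a $K$-point extend).
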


\begin{proof}
By Theorem \ref{thm:hitchin-proper}, \(X\to\A\) is projective. Choose an \(h\)-ample line bundle \(H\) and put 
\(e=H\cdot\beta\). The stack of stable maps to the fibers of \(h\), with fixed genus, number of markings, and \(H\)-degree \(e\), is a finite-type proper stack over \(\A\); this is the relative form of the standard properness theorem
for stable maps, see \cite{BM} and \cite{FP}. In a connected family of stable maps, the pushforward fundamental class
\((f_s)_*[\mathcal C_s]\in H_2(X,\zz)\) is locally constant; equivalently, the moduli stack decomposes into open-and-closed substacks indexed by the integral curve class. Consequently, the locus of class \(\beta\) is open and closed in this finite-type stack. It is a finite union of connected components and is therefore proper over \(\A\). This also
proves that \(\Mbar_{0,n}(X,\beta)\) is of finite type.

For the evaluation map, write $\sM=\Mbar_{0,n}(X,\beta), Y=X_\A^n$. The morphism \(\sM\to\A\) is proper and \(Y\to\A\) is separated. The graph $\Gamma_{\ev}\hookrightarrow\sM\times_\A Y$ is therefore closed. The projection $\sM\times_\A Y\longrightarrow Y$ is the base change of the proper morphism \(\sM\to\A\), hence is proper.
Thus
\[
 \text{ev}: \sM\xrightarrow{\,\Gamma_{\ev}\,}\sM\times_\A Y\longrightarrow Y
\]
is proper.
\end{proof}

\begin{corollary}[Proper fixed stable-map locus]\label{cor:fixed-proper}
For every \(\beta\) and \(n\), $\Mbar_{0,n}(X,\beta)^T$ is proper.
\end{corollary}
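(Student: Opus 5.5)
The plan is to reduce properness of the fixed locus to properness over the Hitchin base via Proposition~\ref{prop:stablemap-proper}. The key point is that the fixed stable maps all sit over the single point $0\in\A$, whose preimage is proper.

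First, the Hitchin-value morphism $\rho_\beta:\Mbar_{0,n}(X,\beta)\to\A$ of Lemma~\ref{lem:hitchin-constant} is $T$-equivariant, where $T$ acts on $\A$ with the positive weights $2,\dots,r$. Equivariance holds because $h$ is $T$-equivariant and the $T$-action on stable maps is postcomposition. Consequently $\rho_\beta$ sends $\Mbar_{0,n}(X,\beta)^T$ into $\A^T=\{0\}$. At the level of closed points, a $T$-fixed stable map $[f]$ satisfies $t\cdot f\simeq f$ for all $t$, so $\rho_\beta([f])=t\cdot\rho_\beta([f])$ for all $t$, and hence $\rho_\beta([f])=0$. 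Scheme-theoretically, I would use that the fixed substack is closed and its reduced structure maps into $\A^T$. Since properness is insensitive to nilpotents, the reduced structure suffices.

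Second, $\rho_\beta^{-1}(0)$ is proper over $\Spec\C$. This is the base change of the proper morphism $\rho_\beta$ along the closed point $0\hookrightarrow\A$, and the base change of a proper morphism is proper.

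Third, $\Mbar_{0,n}(X,\beta)^T$ is a closed substack of $\Mbar_{0,n}(X,\beta)$. For a torus acting on a separated Deligne--Mumford stack of finite type, the fixed locus is closed. By the first step it is contained in $\rho_\beta^{-1}(0)$, so it is a closed substack of a proper stack, hence proper.

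The main obstacle is making the fixed-locus notion precise for a stack. One has to use the fixed substack in the sense of Graber--Pandharipande, as used in virtual localization, and check both that it is closed and that it lands in $\A^T$ scheme-theoretically. Set-theoretic containment suffices for properness: if the set of points lies in the closed proper substack $\rho_\beta^{-1}(0)$, then the reduced fixed locus is a closed substack of $\rho_\beta^{-1}(0)_{\red}$, and properness of a stack depends only on its reduction.

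An alternative route would use Corollary~\ref{cor:fixed-proper}'s analogue for $X^T\subset h^{-1}(0)$ together with the evaluation map. That route fails for $n=0$, so I would use $\rho_\beta$ instead.
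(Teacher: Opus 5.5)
Your proposal is correct and follows essentially the same route as the paper. Fixed stable maps have Hitchin value in $\A^T=\{0\}$, so the fixed locus is a closed substack of the fiber $\rho_\beta^{-1}(0)$, which is proper by Proposition~\ref{prop:stablemap-proper}; your extra remarks on reduced structures and the scheme-theoretic fixed locus only fill in details the paper leaves implicit.
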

\begin{proof}
A \(T\)-fixed stable map has \(T\)-fixed Hitchin value. Since $\A^{T}=\{0\}$, this value is \(0\). Therefore
\[
 \Mbar_{0,n}(X,\beta)^T \subset \rho_\beta^{-1}(0).
\]
The fiber \(\rho_\beta^{-1}(0)\) is proper by Proposition \ref{prop:stablemap-proper}, and the fixed locus is closed in it.
\end{proof}

The properness in Corollary \ref{cor:fixed-proper} allows us to define Gromov--Witten invariants of $X$ by virtual localization formula. 
Moreover, the two-point correspondence itself is an ordinary proper pushforward to \(X\times_\A X\), not merely a formal localization expression.

\subsection{The absolute Kiem--Li cosection}
Recall the notation $\sM_{n,\beta}=\Mbar_{0,n}(X,\beta)$. The virtual dimension formula is
\[
 \dim_\C[\Mbar_{0,n}(X,\beta)]^{\vir} = \dim X-3+n+\int_\beta c_1(T_X).
\]
Since \(c_1(T_X)=0\), for \(n=2\), we have 
\begin{equation}\label{eq:vdim-standard}
 \dim_\C [\sM_{2,\beta}]^{\vir}=m-1.
\end{equation}
Removing the symplectic obstruction quotient raises the virtual dimension by one:
\begin{equation}\label{eq:vdim-reduced}
 \dim_\C[\sM_{2,\beta}]^{\red}=m.
\end{equation}

Consider the universal family:
\[
\sM_{n,\beta}\xleftarrow{\,\pi\,}\mathcal{C}\xrightarrow{\, F\,} X.
\]
Relative to the Artin stack $\mathfrak M_{0,n}$ of prestable $n$-pointed curves, the standard perfect obstruction theory is
\[
 (R\pi_*F^*T_X)^\vee\longrightarrow L_{\sM_{n,\beta}/\mathfrak M_{0,n}}.
\]
Combining it with the cotangent triangle for \(\sM_{n,\beta}\to\mathfrak M_{0,n}\) gives the standard absolute perfect
obstruction theory \(E^\bullet\to L_{\sM_{n,\beta}}\).

The weight convention \(t^*\omega=t\omega\) means that contraction with \(\omega\) is an equivariant morphism
\[
 T_X\longrightarrow\Omega_X\otimes\C_1.
\]
Consequently the symplectic form and the differential of \(F\) give
\begin{equation}\label{eq:cosection-complex}
 F^*T_X
 \stackrel{\omega^{\flat}}{\longrightarrow}
 F^*\Omega_X\otimes\C_1
 \stackrel{dF}{\longrightarrow}
 \Omega_\pi\otimes\C_1
 \longrightarrow
 \omega_\pi\otimes\C_1 .
\end{equation}
After applying \(R\pi_*\), the map on relative obstruction sheaves has target
\[
 R^1\pi_*\omega_\pi\otimes\C_1
 \cong \cO_{\sM_{n,\beta}}\otimes\C_1.
\]

The point that must be checked here is absolute compatibility at moving and smoothing nodes. Kiem--Li prove precisely this statement: for a smooth quasi-projective target carrying a regular two-form, the relative map above descends canonically to a cosection of the \emph{absolute} stable-map obstruction sheaf; see \cite[Lemma~6.1 and Corollary~6.2]{KL}. Proposition~6.4 of \cite{KL} identifies its degeneracy locus with the two-form-null maps. Their argument is algebraic and allows a nonproper target. Applied equivariantly to \eqref{eq:cosection-complex}, it gives
\begin{equation}\label{eq:semireg-ob}
 \sigma_\omega:\operatorname{Ob}_{\sM_{n,\beta}}
 \longrightarrow
 \cO_{\sM_{n,\beta}}\otimes\C_1.
\end{equation}

\begin{lemma}[Surjectivity in nonzero degree]\label{lem:semireg-surj}
If \(\beta\ne0\), the cosection \eqref{eq:semireg-ob} is everywhere surjective.
\end{lemma}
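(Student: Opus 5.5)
The plan is to reduce surjectivity to a pointwise check on each geometric fiber, then exhibit an explicit tangent vector whose image under the cosection is nonzero. The target $\cO_{\sM_{n,\beta}}\otimes\C_1$ is a line bundle and $\operatorname{Ob}_{\sM_{n,\beta}}$ is coherent with base-change-compatible formation. By Nakayama, it therefore suffices to show that at each geometric point $[f:D\to X]$ the induced map
\[
 \sigma_\omega|_{[f]}:\operatorname{Ob}|_{[f]}\longrightarrow H^1(D,\omega_D)\cong\C
\]
is nonzero. The relative obstruction space $H^1(D,f^*T_X)$ surjects onto the absolute obstruction space, and by \cite[Lemma~6.1, Corollary~6.2]{KL} the absolute cosection is induced from the relative map obtained by applying $H^1$ to \eqref{eq:cosection-complex}. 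So it is enough to show that
\[
 H^1(D,f^*T_X)\xrightarrow{\ \omega^\flat\ }H^1(D,f^*\Omega_X)\xrightarrow{\ df\ }H^1(D,\omega_D)
\]
is nonzero.

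We dualize with Serre duality on the nodal curve $D$. The composite $f^*T_X\to\omega_D$ has dual
\[
 H^0(D,\cO_D)\longrightarrow H^0(D,\omega_D\otimes f^*T_X^\vee\otimes\omega_D^{-1})^{\vee\vee}.
\]
More concretely, the dual sends the constant section $1$ to the section of $f^*\Omega_X\otimes\omega_D\otimes\omega_D^{-1}$, equivalently of $f^*T_X\otimes\omega_D$ after using $\omega$ to identify $\Omega_X\cong T_X$, given by $df^\vee$ composed with $\omega^{\flat}$. The relative cosection is therefore nonzero exactly when the map of sheaves $f^*T_X\to\omega_D$ (or $\Omega_D$ composed into $\omega_D$) is nonzero. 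It suffices to check this on the smooth locus of one component.

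For the local check, note that $\beta\ne0$ gives an irreducible component $D_v\cong\mathbb P^1$ on which $f$ is nonconstant. At a general smooth point $p$ of $D_v$, $df_p\neq0$. Nondegeneracy of $\omega$ gives $w\in T_{f(p)}X$ with $\omega(w,df_p(\partial_z))\neq0$. Thus the sheaf map $f^*T_X\to\Omega_{D}\to\omega_D$ is generically nonzero on $D_v$, hence nonzero as a map of sheaves. Since $\omega_D$ restricted to $D_v$ is $\omega_{D_v}(\text{nodes})$ and the map to $H^1(D,\omega_D)=\C$ is the trace/residue sum, nonvanishing of the sheaf map alone does not yet give nonvanishing on $H^1$. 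This is the main obstacle.

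To handle it, I would use duality on $D$ in the other direction. A sheaf map $\psi:\mathcal V\to\omega_D$ induces a zero map $H^1(\mathcal V)\to H^1(\omega_D)$ iff $\psi$ lies in the kernel of $\operatorname{Hom}(\mathcal V,\omega_D)\to H^1(\mathcal V)^\vee$. By Serre duality $H^1(\mathcal V)^\vee=\operatorname{Hom}(\mathcal V,\omega_D)$ for locally free $\mathcal V$, so this map is an isomorphism. Hence a nonzero $\psi$ gives a nonzero functional on $H^1(D,f^*T_X)$. This is precisely the argument of \cite[Proposition~6.4]{KL}: the degeneracy locus consists of maps with $f^*\omega$-null image, equivalently $\psi=0$. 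For $\beta\neq0$ such maps do not exist, since $\psi\neq0$ by the pointwise computation. Surjectivity on every geometric fiber then gives surjectivity of \eqref{eq:semireg-ob}. The $T$-equivariance is automatic from the weight-one twist already built into \eqref{eq:cosection-complex}.
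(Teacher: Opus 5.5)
Your proof is correct and is essentially the paper's argument: the Serre dual of the relative obstruction map $H^1(D,f^*T_X)\to H^1(D,\omega_D)\cong\C$ sends $1$ to the sheaf map $f^*T_X\to\omega_D$ built from $\omega$ and $df$. That map is nonzero at a general point of a component on which $f$ is nonconstant, so the obstruction map is surjective. The differences are cosmetic. You pass from the relative to the absolute cosection using the surjection $\operatorname{Ob}_{\mathrm{rel}}\twoheadrightarrow\operatorname{Ob}$ together with Kiem--Li's factorization (Lemma~6.1), whereas the paper cites their Proposition~6.4. The ``obstacle'' you flag is already resolved by the Serre duality you then invoke. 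Finally, the target of the dual map should read $H^0(D,f^*\Omega_X\otimes\omega_D)=\operatorname{Hom}(f^*T_X,\omega_D)$, not the expression you wrote.
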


\begin{proof}
At a stable map \(f:D\to X\), the relative obstruction map is
\[
 H^1(D,f^*T_X)\longrightarrow H^1(D,\omega_D)\otimes\C_1\cong\C_1.
\]
Its Serre dual sends the generator of \(H^0(D,\cO_D)\) to the section obtained from \(df\) by the nondegenerate pairing \(\omega\). Since \(\beta\ne0\), the map is nonconstant on some irreducible component. In characteristic zero, \(df\) is nonzero at the generic point of that component. The dual map from a one-dimensional vector space is therefore
injective, so the displayed obstruction map is surjective. By \cite[Proposition~6.4]{KL}, the absolute and relative cosections have the same degeneracy locus. Hence \(\sigma_\omega\) is everywhere surjective. Equivalently, Kiem--Li's degeneracy locus consists of \(\omega\)-null maps; nondegeneracy of \(\omega\) makes such a map constant.
\end{proof}

\subsection{The abstract kernel-cone construction}

We isolate the bundle-stack foundation used below. This also fixes the line-bundle-valued and equivariant descent in the reduced-class construction.

\begin{proposition}[Kernel-cone lemma]\label{prop:kernel-cone-general}
Let \(M\) be a finite-type Deligne--Mumford stack with a \(T\)-equivariant perfect obstruction theory
\(E^\bullet\to L_M\) of virtual dimension \(v\). Let \(\mathcal L\) be a \(T\)-equivariant line bundle and suppose that there is a surjection
\[
 \sigma:\operatorname{Ob}_M=h^1\bigl((E^\bullet)^\vee\bigr)\twoheadrightarrow\mathcal L.
\]
Then \(\sigma\) canonically induces a morphism of vector-bundle stacks
\[
 \widetilde\sigma: \mathfrak E:=h^1/h^0\bigl((E^\bullet)^\vee\bigr) \longrightarrow\mathbb V(\mathcal L).
\]
Here \(\mathbb V(\mathcal L)=\operatorname{Spec}_M\operatorname{Sym}(\mathcal L^\vee)\) is the total-space vector bundle. Its kernel \(\mathfrak K\) is a vector-bundle stack of virtual rank \(-v-1\). The intrinsic-normal-cone cycle is supported in that kernel,
\[
 [\mathfrak C_M]\in Z_*^T(\mathfrak K).
\]
Writing \(\pi_{\mathfrak K}:\mathfrak K\to M\), homotopy invariance gives
\[
 \pi_{\mathfrak K}^*:A_j^T(M)_{\qq} \xrightarrow{\,\sim\,}A_{j-v-1}^T(\mathfrak K)_\qq.
\]
Consequently, \(0^!_{\mathfrak K}:=(\pi_{\mathfrak K}^*)^{-1}\) is the presentation-independent zero-section Gysin map, and the class
\begin{equation}\label{eq:abstract-kernel-class}
 [M]^{\mathrm{ker}}_T :=0^!_{\mathfrak K}[\mathfrak C_M] \in A_{v+1}^T(M)_\qq
\end{equation}
satisfies
\begin{equation}\label{eq:abstract-excess}
 [M]^{\vir}_T =c_1^T(\mathcal L)\cap[M]^{\mathrm{ker}}_T.
\end{equation}
\end{proposition}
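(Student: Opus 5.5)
Following Kiem--Li's kernel-cone construction \cite{KL}, I would work with a global presentation of the obstruction theory. Locally (or globally after choosing a resolution, which exists on a finite-type quotient DM stack with $T$-action), write $(E^\bullet)^\vee\simeq[E_0\xrightarrow{d}E_1]$ with $T$-equivariant vector bundles $E_0,E_1$, so that $\mathfrak E=[E_1/E_0]$ and $\operatorname{Ob}_M=\operatorname{coker}(d)$. The composite $E_1\twoheadrightarrow\operatorname{Ob}_M\xrightarrow{\sigma}\mathcal L$ is a surjection of bundles vanishing on the image of $d$. It therefore descends to an $E_0$-invariant map $\mathbb V(E_1)\to\mathbb V(\mathcal L)$, that is, to $\widetilde\sigma:\mathfrak E\to\mathbb V(\mathcal L)$. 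Independence of the presentation follows because quasi-isomorphisms between two-term resolutions can be dominated by a third, and $\sigma$ is defined on cohomology. Set $E_1'=\ker(E_1\to\mathcal L)$. This is a vector bundle because the map is surjective, and $d$ lands in it, so $\mathfrak K=[E_1'/E_0]$ has rank $\operatorname{rk}E_1-1-\operatorname{rk}E_0=-v-1$.

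The cone statement $[\mathfrak C_M]\in Z^T_*(\mathfrak K)$ is exactly Kiem--Li's theorem that the intrinsic normal cone lies in the kernel of the cosection (\cite[Theorem~4.5/Corollary~4.5]{KL}). On the degeneracy locus this statement is empty; here the locus is empty, so the cone is supported in $\mathfrak K$ everywhere. I would cite it, noting that their argument is local, algebraic, and $T$-equivariant when $\sigma$ is. The homotopy statement $\pi_{\mathfrak K}^*$ being an isomorphism is Kresch's homotopy invariance for vector-bundle stacks, applied on finite Borel approximations $M\times^TU_N$ as in Lemma~\ref{lem:top-equiv-chow}. This justifies defining $0^!_{\mathfrak K}=(\pi_{\mathfrak K}^*)^{-1}$ and gives presentation independence.

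For the excess formula \eqref{eq:abstract-excess}, I would use the inclusion $\iota:\mathfrak K\hookrightarrow\mathfrak E$. This is a closed embedding of bundle stacks whose normal bundle is $\pi^*\mathcal L$, since $\mathfrak K$ is the zero locus of the pulled-back section $\widetilde\sigma$ of $\pi_{\mathfrak E}^*\mathcal L$. By definition $[M]^{\vir}_T=0^!_{\mathfrak E}\iota_*[\mathfrak C_M]$. The zero section of $\mathfrak E$ factors through $\mathfrak K$, and the excess intersection formula then gives
\[
0^!_{\mathfrak E}\iota_*c=c_1^T(\mathcal L)\cap 0^!_{\mathfrak K}c .
\]
Concretely, on the presentation I would check this with $\pi^*_{\mathfrak E}$. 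We have $\iota_*\pi_{\mathfrak K}^*a=\pi_{\mathfrak E}^*a\cap[\text{zero locus of }\widetilde\sigma]=\pi^*_{\mathfrak E}(c_1^T(\mathcal L)\cap a)$, since the zero scheme of a regular section of $\pi^*\mathcal L$ represents its first Chern class. Applying $(\pi_{\mathfrak E}^*)^{-1}$ gives the formula with the correct sign: $\mathcal L$ is the quotient line being removed.

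I expect the main obstacle to be making these manipulations rigorous on bundle stacks rather than on bundles. This covers the Gysin pullback, pushforward along $\iota$, and the regularity of $\widetilde\sigma$ as a section on the Artin stack $\mathfrak E$. I would handle it by pulling everything back to the vector bundle $E_1$ (or $E_1'$) via the smooth atlas $E_1\to\mathfrak E$. There the cone becomes an honest $E_0$-invariant cone $C\subset E_1'\subset E_1$, and the identity reduces to the classical excess formula $0^!_{E_1}[C]=c_1(\mathcal L)\cap 0^!_{E_1'}[C]$ of Fulton (Example~6.3, Proposition~6.3). Equivariance and rational coefficients pass through finite approximations.
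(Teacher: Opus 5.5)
Your proposal is correct and follows essentially the same route as the paper. Both arguments use:
\begin{itemize}
\item a local two-term presentation with $E_1'=\ker(E_1\to\mathcal L)$, giving $\mathfrak K=[E_1'/E_0]$;
\item Kiem--Li cone reduction for the support statement;
\item Kresch's homotopy invariance to define $0^!_{\mathfrak K}=(\pi_{\mathfrak K}^*)^{-1}$;
\item the global identity $\iota_*\pi_{\mathfrak K}^*a=\pi_{\mathfrak E}^*(c_1^T(\mathcal L)\cap a)$, inverted by $\pi_{\mathfrak E}^*$, for the excess formula.
\end{itemize}

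Two small points. First, your fallback of checking the identity on the atlas $E_1$ only works if $E_1$ is a global resolution over $M$, because Chow identities cannot be verified on \'etale-local charts. This is why the paper, like your ``concretely'' step, proves that identity directly as a global operation on bundle stacks. Second, Kiem--Li's cone reduction is stated for $\mathcal O$-valued cosections, so the paper first trivializes $\mathcal L$ locally and then descends the support statement; your proposal leaves this step implicit.
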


\begin{proof}
Behrend--Fantechi construct \(h^1/h^0\) intrinsically and functorially from the derived object; see \cite[Propositions~2.4 and~2.5]{BF}. In the derived category, the canonical truncation followed by the cosection,
\[
 (E^\bullet)^\vee
 \longrightarrow h^1\bigl((E^\bullet)^\vee\bigr)[-1]
 \xrightarrow{\ \sigma[-1]\ }\mathcal L[-1],
\]
induces \(\widetilde\sigma\). Work first nonequivariantly and choose, \'etale locally on \(M\), a two-term vector-bundle presentation
\[
 (E^\bullet)^\vee=[E_0\xrightarrow{d}E_1] \quad\text{in degrees }[0,1].
\]
The composite
\[
 E_1\longrightarrow\operatorname{coker}(d)
 =\operatorname{Ob}_M\xrightarrow{\sigma}\mathcal L
\]
is surjective and annihilates \(d(E_0)\). If \(F_1\) is its kernel, then \(F_1\) is a vector bundle and the required morphism and its kernel have the local presentations
\[
 \widetilde\sigma:[E_1/E_0]\longrightarrow\mathbb V(\mathcal L),
 \qquad
 \mathfrak K=[F_1/E_0].
\]
On overlaps, quasi-isomorphisms between two presentations induce the canonical equivalence of their \(h^1/h^0\) stacks and commute with the map to \(\mathcal L\), because both maps induce the fixed sheaf morphism \(\sigma\). Thus the kernels and their zero sections satisfy descent and define the global vector-bundle stack \(\mathfrak K\), of virtual rank \((\rk F_1)-\rk E_0=-v-1\). Finite-type Deligne--Mumford stacks admit stratifications by global quotients
\cite[Proposition~4.5.5(iii)]{Kresch}. Hence homotopy invariance for vector-bundle stacks 
\cite[Proposition~5.3.2]{Kresch} applies to \(\pi_{\mathfrak K}:\mathfrak K\to M\), and shows directly that
\(0^!_{\mathfrak K}=(\pi_{\mathfrak K}^*)^{-1}\) is intrinsic and independent of every local two-term presentation.  For the standard obstruction stack \(\mathfrak E\), this inverse-pullback construction recovers the Behrend--Fantechi virtual class: independence from a chosen global resolution is \cite[Proposition~5.3]{BF}, and Kresch's
\cite[Theorem~6.2.1]{Kresch} removes the global-resolution hypothesis.

After trivializing \(\mathcal L\), Kiem--Li's cone-reduction theorem \cite[Proposition~4.3]{KL} places the intrinsic-cone cycle, with its multiplicities, in \([F_1/E_0]\). Multiplying a trivialization of \(\mathcal L\) by a unit changes neither the kernel nor that cycle, so these local statements descend and give \([\mathfrak C_M]\in Z_*(\mathfrak K)\). This proves that \eqref{eq:abstract-kernel-class} is globally defined.

The excess identity can be proved globally, without assuming that rational Chow identities descend from an \'etale cover. The map $\mathfrak E\to\mathbb V(\mathcal L)$ is locally a surjective linear map. Its zero locus 
$i:\mathfrak K\hookrightarrow\mathfrak E$ is therefore a regular codimension-one embedding with normal line
$\pi_{\mathfrak K}^*\mathcal L$. For $a\in A_*(M)$, flat base change along the vector-bundle-stack projection gives
\[
 i_*\pi_{\mathfrak K}^*a =\pi_{\mathfrak E}^*\bigl(c_1(\mathcal L)\cap a\bigr).
\]
One may also obtain this equality from the zero-section self-intersection formula for $\mathcal L$ and homotopy invariance. These are global Chow operations. Writing $B=\pi_{\mathfrak K}^*a$ and applying the inverse of $\pi_{\mathfrak E}^*$ proves
\[
 0^!_{\mathfrak E}i_*B =c_1(\mathcal L)\cap0^!_{\mathfrak K}B.
\]

Taking
\(B=[\mathfrak C_M]\) proves \eqref{eq:abstract-excess}. Finally, apply the entire construction to the Borel approximations. The descended obstruction sheaf and line bundle are \((\operatorname{Ob}_M\times U)/T\) and
\((\mathcal L\times U)/T\); intrinsic cones, Chern classes, and zero-section Gysin maps commute with the smooth transition maps between good pairs. The compatible ordinary identities therefore stabilize to the stated equivariant Chow identities and prove equivariant presentation independence.
\end{proof}

\begin{remark}\label{rem:kernel-caveat}
Everywhere surjectivity is essential in Proposition \ref{prop:kernel-cone-general}. With a nonempty degeneracy locus the kernel cone need not be a vector-bundle stack globally, and one must instead use Kiem--Li's localized Gysin map.  Moreover, the proposition asserts cycle-theoretic support of \([\mathfrak C_M]\) in \(\mathfrak K\); it does not assert a stronger scheme-theoretic factorization of the entire cone or construct a reduced perfect obstruction theory.
\end{remark}

\end{document}